\def\subsection{\@startsection{subsection}{2}
	\z@{.5\linespacing\@plus.7\linespacing}{.25\linespacing}%
	{\normalfont\bfseries}}
\def\subsubsection{\@startsection{subsubsection}{3}%
	\z@{.5\linespacing\@plus.7\linespacing}{.25\linespacing}%
	{\normalfont\itshape}}
\newcommand{\grandtraittop}{\rule{\textwidth}{0.25em}\par\vskip0.5\baselineskip}
\newcommand{\grandtraitbottom}{\par\vskip0.5\baselineskip\rule{\textwidth}{0.25em}}
\begin{document}
\title[Stability of a relative velocity lattice Boltzmann scheme]{Stability of a bidimensional\\ relative velocity lattice Boltzmann scheme}
\author{François Dubois}
\address[François Dubois]{Laboratoire des Structures et des Syst\`emes Coupl\'es,
Conservatoire National des Arts et M\'etiers, Paris -- d\'epartement de math\'ematiques, Univ Paris Sud, Laboratoire de math\'ematiques, UMR 8628, Orsay, F-91405} 
\email{Francois.Dubois@math.u-psud.fr}

\author{Tony Février}
\address[Tony Février]{Univ Paris Sud, Laboratoire de math\'ematiques, UMR 8628, Orsay, F-91405, Orsay, F-91405}
\email{Tony.Février@math.u-psud.fr}

\author{Benjamin Graille}
\address[Benjamin Graille]{Univ Paris Sud, Laboratoire de math\'ematiques, UMR 8628, Orsay, F-91405, CNRS, Orsay, F-91405}
\email{Benjamin.Graille@math.u-psud.fr}

\begin{abstract}
In this contribution, we study the theoretical and numerical stability of a bidimensional relative velocity lattice Boltzmann scheme. These relative velocity schemes introduce a velocity field parameter called ``relative velocity'' function of space and time. They generalize the d'Humières multiple relaxation times scheme and the cascaded automaton. This contribution studies the stability of a four velocities scheme applied  to a single linear advection equation according to the value of this relative velocity. We especially compare when it is equal to $0$ (multiple relaxation times scheme) or to the advection velocity (``cascaded like'' scheme). The comparison is made in terms of $\Li$ and $L^2$ stability. The $\Li$ stability area is fully described in terms of relaxation parameters and advection velocity for the two choices of relative velocity. These results establish that no hierarchy of these two choices exists for the $\Li$ notion. Instead, choosing the parameter equal to the advection velocity improves the numerical $L^2$ stability of the scheme. This choice cancels some dispersive terms and improve the numerical stability on a representative test case. We theoretically strengthen these results with a weighted $L^2$ notion of stability.
\end{abstract}

\date{\today}
\maketitle

\section*{Introduction}

Lattice Boltzmann schemes are applicable to many different fields such as hydrodynamics, acoustics, magnetohydrodynamics and multiphase fluids \cite{dHu:1992:0,LalLuo:2000:0,Ricot:2009:1,Dellar:2002:0,He:1999:0}, because the associated  algorithm is simple, fast and flexible. This algorithm, associated with a cartesian lattice and a finite set of velocities, consists in computing some particle distribution functions at discrete values of time: they are updated through two successive phases of transport (exact) and relaxation (also called collision).

 In the d'Humières multiple relaxation times (MRT) framework \cite{dHu:1992:0}, the relaxation is made thanks to a set of moments, linear combinations of the particle distributions. Each of these moments relaxes towards an equilibrium with a proper time scale.
The MRT schemes have been widely studied in terms of consistency \cite{Dub:2008:0,Dub:2009:0,Junk:2005:0} but slightly in terms of stability \cite{LalLuo:2000:0,Ginz:2010:0}. They can be particularly used to solve the Navier-Stokes equations \cite{Qian:1992:0,Chen:1998:0,dHu:1992:0,LalLuo:2000:0,Geier:2006:0}. They however undergo some instabilities in the small viscosity limit \cite{LalLuo:2000:0}.

A ``cascaded automaton'' \cite{Geier:2006:1}, introduced in two and three dimensions for the Navier-Stokes equations, reduces these instabilities \cite{Geier:2006:0}. Some studies aim to understand the algorithm of this automaton and its improvements in terms of stability. First, it has been written in the MRT framework with a generalized equilibrium depending on the non conserved quantities \cite{Asi:2008:0}. It has then been included in a new class of relative velocity schemes \cite{Fev:2014:0,Fev:2014:1} that are the center of interest of the authors. Both MRT and cascaded are particular cases of the relative velocity schemes.

These relative velocity schemes use a set of moments depending on a velocity field parameter called relative velocity for the relaxation. This parameter is a function of space and time. Their consistency has been studied for one and two conservation laws \cite{Fev:2014:0,Fev:2014:1}. Their stability has also been investigated numerically in the case of the $\ddqn$ scheme for the Navier-Stokes equations \cite{Fev:2014:3}. This study, using the $L^2$ von Neumann stability \cite{LalLuo:2000:0,Sterling:1996:0}, shows the importance of the choice of the polynomials defining the moments, that justifies the stability gain obtained by the cascaded automaton.

This contribution studies the theoretical and the numerical stability of a bidimensional scheme with four velocities for the simulation of a single linear advection equation. We expect to improve the stability for a relative velocity equal to the advection velocity (``cascaded like'' scheme) instead of $0$ (MRT scheme). This study is based on two stability notions. The first is a $\Li$ notion that has been used for one dimensional and vectorial schemes \cite{Dell:2013:0,Gra:2014:0}. The second is a weighted $L^2$ notion introduced in \cite{Yong:2006:0}. It has been applied to MRT schemes linearized around the zero velocity or in the BGK (Bhatnagar, Gross, Krook) case \cite{BGK:1954:0} for advection equations \cite{Yong:2006:0,Junk:2009:0,Rhein:2010:0}.

In the first part, we briefly recall the framework of the relative velocity schemes. Then we present the four velocities scheme of interest. In the second part, the effect of the relative velocity on the stability is illustrated thanks to the method of the equivalent equations \cite{War:1974:0,Dub:2008:0}. We particularly focus on the structure of the dispersion terms for two choices of relative velocity ($0$ or the advection velocity). These results are then illustrated by a numerical $L^2$ von Neumann stability study. The two last parts give theoretical stability results. In the third part, the $\Li$ stability area is fully described in terms of relaxation parameters and advection velocity for the two choices of relative velocity. In the fourth one, a weighted $L^2$ notion \cite{Junk:2009:0} is used to validate the numerical results obtained in the second part according to the relative velocity.  

\section{Framework}\label{se:pst}

This section presents first the relative velocity schemes in a general framework. It is then particularized to a bidimensional scheme with four velocities for the simulation of a linear advection equation.

\subsection{Presentation of the relative velocity schemes}

For $d\in\N^{\star}$, we consider a cartesian lattice $\lattice$ of $\R^d$ associated with a space step $\dx$. The space time is given by the acoustic scaling $\dt=\dx/\lambda$ for $\lambda\in\R$ a velocity scale. A set of $q\in\N^{\star}$ velocities denoted by $\vectv=\{\vj[0],\ldots,\vj[q-1]\}$ depending on the velocity scale is chosen such that for each node $\vectx\in\lattice$, $\vectx+\vj\dt$ is still a node of $\lattice$. The lattice Boltzmann method computes the discrete values of several particle distributions denoted by $\fj$, $0\leq j\leq q-1$, the distribution $\fj$ being associated with the velocity $\vj$. We denote by $\vectf=(\fj[0],\ldots,\fj[q-1])$ the vector of the particle distributions. An iteration of the algorithm consists in the succession of a phase of relaxation, nonlinear and local in space, and a phase of transport distributing the particles on the neighbouring nodes.

The relaxation operator of the relative velocity schemes originates from the framework of the MRT schemes \cite{dHu:1992:0}. Some moments, linear combinations of the particle distributions, relax with a proper relaxation rate. We define a matrix of moments depending on a velocity field $\utilde$ function of space and time.
This matrix, supposed to be invertible, is characterized by a set of polynomials $\Pk$, $0\leq k\leq q{-}1$, 
\begin{equation*}\label{eq:MatMu}
 \Miju[kj] = \Pk(\vj-\utilde), \qquad 0\leq k,j\leq q{-}1.
\end{equation*}
Let us note that the MRT scheme corresponds to $\utilde=\vectz$.
The vector of moments is then obtained from the particle distributions through a linear transformation
\begin{equation}\label{eq:ftomu}
\vectmu = \MatMu \; \vectf.
\end{equation}
The relaxation then carries on  the components $\mku$, $0\leq k\leq q-1$, of $\vectmu=(\mku[0],\ldots,\mku[q-1])$
\begin{equation}\label{eq:relaxationu}
\mkue = \mku{+}\sk (\mkueq{-}\mku), \qquad 0\leq k\leq q{-}1,
\end{equation}
where $\mkueq$ are the moments at equilibrium and $\sk$ the relaxation parameters. Some relaxation parameters are chosen null to enforce some conservation laws.
The equilibrium is supposed to derive from an equilibrium distribution $\vectfeq\in\R^q$ independent of $\utilde$
\begin{equation}\label{eq:mueq}
 \vectmequ = \MatMu \vectfeq.
\end{equation}
The equilibrium $\vectfeq$ is an {\it a priori} non linear function of these conserved moments.
The postcollision distributions are obtained by the linear transformation
\begin{equation}\label{eq:mtof}
\vectfe = \MatMinvu\vectmue.
\end{equation}
Then, the particles are advected from node to node
\begin{equation*}\label{eq:transport}
 \fj(\vectx,t+\dt) = \fje(\vectx-\vj\dt,t), \qquad 0\leq j\leq q{-}1.
\end{equation*}
In the following, the relative velocity $\utilde$ is a real constant. When $\utilde$ is specified, the relative velocity scheme is called scheme relative to $\utilde$. 

\subsection{The twisted relative velocity $\ddqqn$ scheme for a linear advection equation} 

In this section, we present a relative velocity $\ddqqn$ scheme for a linear advection equation. We expect the scheme relative to the advection velocity ($\utilde=\vectV$), also called ``cascaded like'' scheme in the following, to be more stable than the MRT scheme $\utilde=\vectz$ when this velocity $\vectV$ increases.

We want to approximate the following bidimensional advection equation
\begin{equation}\label{eq:edptrspt}
\Dt\rho(\vectx,t)+\vectV\cdot\nabla\rho(\vectx,t)=0,\quad \vectx\in\R^2, t\in\vars{\R}{+},
\end{equation}
where $\vectV = (\Vx,\Vy)\in\R^2$ is the advection velocity.
With this aim, we consider the four velocity scheme called twisted $\ddqqn$ defined by 
the set of velocities
$\vectv=\{(\lambda,\lambda),(-\lambda,\lambda),(-\lambda,-\lambda),(\lambda,-\lambda)\},$ for $\lambda=\dx/\dt$ the velocity scale (figure~\ref{fig:ddqqn}). The moments are associated with the polynomials $1,X,Y,XY$. 

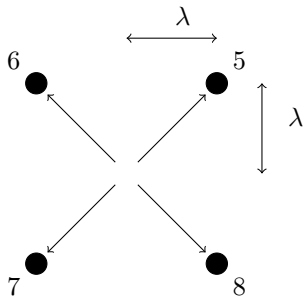
\begin{figure}[H]
\begin{center}
\begin{tikzpicture}[scale=1.5,inner sep = 1.mm] 
		\tikzstyle{every node}=[font=\small]
		\draw[<->](0,1.2) -- (0.8,1.2);\draw[<->](1.2,0) -- (1.2,0.8);
		\draw[<-] (-0.7,-0.7) -- (-0.1,-0.1); \draw[->] (0.1,0.1) -- (0.7,0.7);
		\draw[<-] (-0.7,0.7) -- (-0.1,0.1); \draw[->] (0.1,-0.1) -- (0.7,-0.7);
	         \node at (1,1) {5};\node at (-1,1) {6};\node at (-1,-1) {7};\node at (1,-1) {8}; \node at (0.5,1.4) {$ \lambda$};\node at (1.5,0.5) {$\lambda$};
	          \node[circle,draw,fill=black] at (0.8,-0.8) {};\node[circle,draw,fill=black] at (-0.8,-0.8) {};\node[circle,draw,fill=black] at (-0.8,0.8) {};\node[circle,draw,fill=black] at (0.8,0.8) {};
\end{tikzpicture}\end{center}
\caption{Velocities of the twisted $\ddqqn$ scheme.}
\label{fig:ddqqn}
\end{figure}

Another choice with four velocities could be the $\ddqqn$ defined by the set 
$\vectv=\{(\lambda,0),(0,\lambda),(-\lambda,0),(0,-\lambda)\},$ and by the moments associated with $1,X,Y,X^2-Y^2$. 
However, we have chosen to work with the twisted scheme because this scheme is exactly a $\duqd^2$ in terms of velocities and moments. This feature simplifies the proofs of consistency and stability. Moreover, the stability results of the $\ddqqn$ scheme can be deduced of the results for the twisted $\ddqqn$ scheme: the stability areas correspond thanks to the composition of a rotation and a homothety (Appendix \ref{se:app3}). 

The framework containing only one equation, we choose one conservation law on the density $$\rho=\sum_{j=0}^{3}\fj.$$ The other moments are relaxed with a relaxation parameter $\sk[q]\in\R$ for the first order moments and $\sk[xy]\in\R$ for the second order one. To approach the equation (\ref{eq:edptrspt}), the equilibrium must read
\begin{equation}\label{eq:eqd2q4}
\vectmequ = (\rho,(\Vx-\utx)\rho,(\Vy-\uty)\rho,\mkueq[3]),
\end{equation} 
where $\mkueq[3]$ is the second order moment equilibrium determining the diffusion terms. 
Two different equilibria are chosen: the non intrinsic equilibrium
\begin{equation}\label{eq:eqd2q4tre}
\mkueq[3]=\rho(\Vx-\utx)(\Vy-\uty),
\end{equation} 
and the intrinsic equilibrium
\begin{equation}\label{eq:eqd2q4treis}
\mkueq[3]= \rho(\utx\uty-\utx\Vy-\uty\Vx).
\end{equation} 
The first equilibrium results from the will to see the twisted $\ddqqn$ as the product of the $\duqd$ by himself: it could be seen as the product of two one dimensional equilibria. The second equilibrium is called intrinsic because it leads to a diffusion term that is independent of the basis of writing (proposition \ref{th:ordre3difixis}) but is not isotropic.
It is important to note that these equilibria are chosen such that the relation (\ref{eq:mueq}) is satisfied. Then the second order asymptotics do not depend on the relative velocity $\utilde$ \cite{Fev:2014:0,Fev:2014:1}.

\section{Exploitation of the equivalent equations for the stability}\label{se:eqeq}

The purpose of this section is to predict the stability behaviour of the twisted scheme defined in the section \ref{se:pst} according to the choice of $\utilde$. To do so, we present the third order asymptotics of the scheme: we discuss on the definite positivity of the diffusion tensor and on the dispersive third order terms. The discussion is then illustrated on a numerical study of $L^2$ stability. We show that the cascaded like scheme contrary to the MRT scheme cancels some dispersive terms and stabilizes the scheme in the same time.

\subsection{Discussion for the scheme with a non intrinsic diffusion}\label{sub:diffix}

We study here the third order equivalent equations of the twisted $\ddqqn$ with a non intrinsic diffusion. The following proposition results from the particularization of a general derivation of the equivalent equations for a $\ddqq$ scheme with one conservation law, $d, q\in\N^{\star}$ \cite{Fev:2014:1}. We are interested in the structure of the upper orders according to the velocity field $\utilde$ and their influence on the stability.

\begin{proposition}[Diffusion and dispersion operators]\label{th:ordre3difix}
Given $\vectV\in\R^2$, $(\sk[q]$, $\sk[xy])\in\R^2$, the twisted $\ddqqn$ scheme relative to a constant velocity $\utilde\in\R^2$ associated with the equilibrium (\ref{eq:eqd2q4},\ref{eq:eqd2q4tre}) and with the relaxation parameters $(0,\sk[q],\sk[q],\sk[xy])$, approaches the third order equation
\begin{multline}\label{eq:o3trsptdif}
\Dt\rho(\vectx,t)+\vectV\cdot\nabla\rho(\vectx,t)-\Delta t~\var{\mD}{2}:\nabla^2\rho(\vectx,t)+\Delta t^2 \var{\mathcal{D}}{3}:\nabla^3\rho(\vectx,t)={\rm \mathcal{O}}(\Delta t^3),\\
\quad \vectx\in\R^2, t\in\vars{\R}{+},
\end{multline} 
where the diffusion matrix $\var{\mD}{2}$ is given by 
\begin{equation}\label{eq:Drtw}
\var{\mD}{2}=\sig[q]\begin{pmatrix} (\lambda^2-(\Vx)^2)&0\\
0&(\lambda^2-(\Vy)^2)
\end{pmatrix},
\end{equation}
 and the dispersion matrix $\var{\mD}{3}$ by 
\begin{equation}\label{eq:Disptw}
\var{\mD}{3}=\begin{pmatrix} \frac{\Vx}{6}(\lambda^2-(\Vx)^2)(1-12\varp{\sigma}{q}{2})&2\sig[q](\lambda^2{-}(\Vy)^2)(\utx{-}\Vx)(\sig[q]{-}\sig[xy])\\
2\sig[q](\lambda^2{-}(\Vx)^2)(\uty{-}\Vy)(\sig[q]{-}\sig[xy])& \frac{\Vy}{6}(\lambda^2-(\Vy)^2)(1-12\varp{\sigma}{q}{2})
\end{pmatrix}.
\end{equation}
The Hénon's parameters reas $\sig[q]=1/\sk[q]-1/2$ and $\sig[xy]=1/\sk[xy]-1/2$, the second and third order derivative operators are given by 
\begin{equation}\label{eq:grad}
\nabla^2\rho=\begin{pmatrix} \Dxy[x]\rho&\Dxy[xy]\rho\\
\Dxy[xy]\rho&\Dxy[y]\rho
\end{pmatrix},\quad
\nabla^3\rho=\begin{pmatrix} \Dxxx[x]\rho&\Dxxx[xyy]\rho\\
\Dxxx[xxy]\rho&\Dxxx[y]\rho
\end{pmatrix},
\end{equation}
and $:$ is the scalar product for the matrices viewed as vectors of $\R^4$.
\end{proposition}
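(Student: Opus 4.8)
The plan is to apply the method of equivalent equations \cite{War:1974:0,Dub:2008:0} to the scheme written in moment space, exploiting the tensor product structure recorded in Section~\ref{se:pst}. First I would make the matrix $\MatMu$ explicit by evaluating the polynomials $1,X,Y,XY$ at the shifted velocities $\vj-\utilde$ over the four twisted velocities of $\vectv$, and compute the inverse $\MatMinvu$ needed for the transport step. Since the velocities are the products $(\pm\lambda)\otimes(\pm\lambda)$, the moment polynomials factorize as $1\otimes 1$, $X\otimes 1$, $1\otimes Y$, $X\otimes Y$, and the non intrinsic equilibrium (\ref{eq:eqd2q4tre}) is itself the product $(\Vx-\utx)(\Vy-\uty)\rho$, the scheme coincides in velocities, moments and equilibrium with the tensor square $\duqd^2$; only the independent relaxation rate $\sk[xy]$ of the top moment breaks this factorization. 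I would record this structure precisely so that the one dimensional computation can be reused along each axis.

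Next I would Taylor expand the transport $\fj(\vectx,t+\dt)=\fje(\vectx-\vj\dt,t)$ to third order in $\dt$, insert the relaxation relations (\ref{eq:relaxationu}) and the equilibrium (\ref{eq:eqd2q4},\ref{eq:eqd2q4tre}), and eliminate the non conserved moments order by order in favour of $\rho$ and its spatial derivatives. At leading order this returns the advection $\Dt\rho+\vectV\cdot\nabla\rho$; the first order moments, relaxed with $\sk[q]$, then produce the diffusion block $\var{\mD}{2}$ at the next order, and the dispersion block $\var{\mD}{3}$ one order higher. For the diagonal entries I would import the one dimensional $\duqd$ result along each direction: the $x$ axis contributes the factor $\lambda^2-(\Vx)^2$ in $\var{\mD}{2}$ and the term $\frac{\Vx}{6}(\lambda^2-(\Vx)^2)(1-12\varp{\sigma}{q}{2})$ in $\var{\mD}{3}$, with the symmetric statement along $y$. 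In particular the second order block involves only $\vectV$ and $\sk[q]$, so it is independent of $\utilde$, in agreement with the invariance noted after (\ref{eq:eqd2q4treis}); this is a useful check that the identity (\ref{eq:mueq}) has been used correctly during the elimination.

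The genuinely two dimensional content, and the main obstacle, is the off diagonal part of $\var{\mD}{3}$ multiplying the mixed derivatives $\Dxxx[xyy]\rho$ and $\Dxxx[xxy]\rho$: here the tensor square structure is broken because the moment associated with $XY$ is relaxed with the independent rate $\sk[xy]$ rather than with $\sk[q]$, so these cross terms must be computed from the genuine coupling of the two axes through that moment. I would track carefully how it feeds back into the first order moments during the elimination; the factor $\sig[q]-\sig[xy]$ then emerges as the obstruction to the two relaxations being compatible, and the relative velocity enters only here, through the equilibrium shifts $(\utx-\Vx)$ and $(\uty-\Vy)$ carried by the first order moment equilibria in (\ref{eq:eqd2q4}). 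Collecting these contributions yields the coefficient $2\sig[q](\lambda^2{-}(\Vy)^2)(\utx{-}\Vx)(\sig[q]{-}\sig[xy])$ and its symmetric counterpart, and exhibits the two mechanisms that annihilate the off diagonal dispersion, namely $\sig[q]=\sig[xy]$ or the cascaded like choice $\utilde=\vectV$. The third order bookkeeping is lengthy, so I would organise it by isolating the one dimensional pieces first and leaving only the $XY$ coupling to be computed by hand, the product form of the velocities and of the equilibrium (\ref{eq:eqd2q4tre}) guaranteeing that no lower order term depending on $\utilde$ survives. The whole computation is the specialisation to the twisted $\ddqqn$ of the general $\ddqq$ derivation of \cite{Fev:2014:1}.
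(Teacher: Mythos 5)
Your proposal is correct in substance but follows a genuinely different route from the paper, which never carries out the expansion by hand: there the proposition is presented as a particularization of the general equivalent-equation derivation of \cite{Fev:2014:1} for $\ddqq$ schemes with one conservation law, and the coefficients (\ref{eq:Drtw},\ref{eq:Disptw}) are produced by a formal calculus software implementing the linear algorithm of \cite{Augier:2014:0}. You instead propose an explicit Taylor expansion organized by the tensor structure: import the $\duqd$ results along each axis ($\sig[q](\lambda^2-(\Vx)^2)$ for diffusion, $\frac{\Vx}{6}(\lambda^2-(\Vx)^2)(1-12\varp{\sigma}{q}{2})$ for dispersion) and compute by hand only the coupling through the $XY$ moment. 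This buys structural understanding that the paper's proof does not provide --- why the cross terms factor as $2\sig[q](\lambda^2{-}(\Vy)^2)(\utx{-}\Vx)(\sig[q]{-}\sig[xy])$, hence why either $\sig[q]=\sig[xy]$ or $\utilde=\vectV$ cancels them --- at the cost of the lengthy bookkeeping that the software avoids. Two remarks if you execute it. (i) The one-dimensional import for the diagonal entries requires the observation that, the velocities being $(\pm\lambda,\pm\lambda)$, the flux moments of $X^2$, $X^2Y$, $XY^2$ degenerate to $\lambda^2\rho$, $\lambda^2 m_y$, $\lambda^2 m_x$ (writing $m_x,m_y$ for the fixed-basis first-order moments), so the $XY$ moment feeds the mass equation only through mixed derivatives and cannot pollute the $\partial_x^3$ and $\partial_y^3$ coefficients; this should be stated, since it is exactly what justifies reusing the $\duqd$ computation. (ii) Your attribution of the $\utilde$-dependence to the shifts $(\Vx-\utx)$ in the first-order equilibria of (\ref{eq:eqd2q4}) is not the right mechanism: relaxing $m_x-\utx\rho$ toward $(\Vx-\utx)\rho$ is strictly equivalent to relaxing $m_x$ toward $\Vx\rho$, so those shifts cancel identically. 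The dependence comes from the second-order relative moment, whose relaxation, rewritten in the fixed moment basis, acquires the extra source $(\sk[xy]-\sk[q])\big(\uty(m_x-\Vx\rho)+\utx(m_y-\Vy\rho)\big)$ coupling to the $\mathcal{O}(\Delta t)$ non-equilibrium of the first-order moments; this is precisely where the product $(\utx-\Vx)(\sig[q]-\sig[xy])$ originates, and it also makes transparent why the BGK case $\sk[q]=\sk[xy]$ is insensitive to $\utilde$. Neither remark invalidates your plan --- both are settled by the careful tracking you announce --- but they are the two points where the hand computation could silently go wrong.
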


These equations are obtained thanks to a formal calculus software, that implements an algorithm to obtain the equivalent equations in the linear case \cite{Augier:2014:0}. The following lemma exhibits the area of well-posedness of the second order truncation of (\ref{eq:o3trsptdif}).

\begin{lemme}[Well-posedness of the second order equation]\label{th:diffrtw}
The diffusion matrix $\mD$ defined by (\ref{eq:Drtw}) belongs to the space $\Sdpp$ of symmetric positive definite matrices if and only if $\normi<\lambda$.
\end{lemme}

\begin{proof}
The spectrum of the matrix $\var{\mD}{2}$ is $\{\lambda^2-(\Vx)^2, \lambda^2-(\Vy)^2\}$ that closes the proof.
\end{proof}

Because of the relation (\ref{eq:mueq}), the velocity parameter $\utilde$ appears only at the third order of the equations. Let us discuss of the structure of these third order dispersive terms according to the choice of $\utilde$.

First we choose the velocity field $\utilde$ equal to the advection velocity $\vectV$ (``cascaded like'' scheme). The dispersion terms are then small compared to the diffusion terms. Indeed, this choice cancels the two off-diagonal components of $\var{\mD}{3}$ depending on the parameter $\sig[xy]$. Thus resting dispersion terms are $\Vx(\lambda^2-(\Vx)^2)(1-12\varp{\sigma}{q}{2})/6$ and its symmetric in $y$. When $\sig[q]$ tends to $0$, those terms are equivalent to $\vectV(\lambda^2-(\Vx)^2)$ multiplied by a constant. Thus when the diffusion $(\lambda^2-(\Vx)^2)$ decreases, the dispersion decreases at least with the same speed.

On the contrary, the dispersion terms of the MRT scheme ($\utilde=\vectz$) create instabilities when the diffusion (represented by $\sig[q]$) is weak. Indeed, the off-diagonal terms are conserved for this choice of velocity parameter: they are given by
$-2\sig[q]\Vy(\lambda^2-(\Vx)^2)(\sig[q]-\sig[xy])$ and its symmetric in $y$. This term is equivalent to $2\sig[xy]\sig[q]\Vy(\lambda^2-(\Vx)^2),$ when $\sig[q]$ gets close to $0$. The dispersion terms are thus depending on the size of the parameter $\sig[xy]$. If $\sig[xy]$ is important and $\sig[q]$ tends to $0$, the numerical stability should be deteriorated by dispersion phenomena for $\Vx$ or $\Vy$ close to $\lambda$: indeed, the dispersion, behaving as $2\sig[xy]\sig[q]\lambda(\lambda^2-(\Vx)^2),$ becomes greater than the diffusion given by $\sig[q](\lambda^2-(\Vx)^2).$  Instead, taking $\sig[xy]$ close to $0$ should limit the dispersion effects.

\begin{remarque}[Exact third order scheme]
We note that the choices $\utilde=\vectV$ and $\smash{\sig[q]}=1/\sqrt{12}$ cancel the dispersion matrix $\var{\mD}{3}$ given by (\ref{eq:Disptw}).  Thus the scheme is consistent with the second order advection diffusion truncation of (\ref{eq:o3trsptdif}) at the third order. Moreover, there is still a degree of freedom with the parameter $\sig[xy]$. This is impossible for the MRT scheme ($\utilde=\vectz$) excepted when there is only one relaxation parameter (BGK scheme: $\sig[q]=\sig[xy]$).
\end{remarque}

\subsection{Discussion for the scheme with an intrinsic diffusion}\label{sub:diffixis}

We discuss on the structure of the third order equivalent equations for the twisted $\ddqqn$ intrinsic scheme obtained thanks to the linear algorithm derived in \cite{Augier:2014:0}. The study is completely analogous to the one of the section \ref{sub:diffix}.

\begin{proposition}[Diffusion and dispersion operators]\label{th:ordre3difixis}
Given $\vectV\in\R^2$, $(\sk[q]$, $\sk[xy])\in\R^2$, the twisted $\ddqqn$ scheme relative to a constant velocity $\utilde$ associated with the equilibrium (\ref{eq:eqd2q4},\ref{eq:eqd2q4treis}) and with the relaxation parameters $(0,\sk[q],\sk[q],\sk[xy])$, approaches the following third order equation 
\begin{multline*}\label{eq:o3trsptdifis}
\Dt\rho(\vectx,t)+\vectV\cdot\nabla\rho(\vectx,t)-\Delta t~\var{\mD}{2}:\nabla^2\rho(\vectx,t)+\Delta t^2 \var{\mD}{3}:\nabla^3\rho(\vectx,t)={\rm \mathcal{O}}(\Delta t^3),\\
\quad \vectx\in\R^2, t\in\vars{\R}{+},
\end{multline*} 
where the diffusion matrix $\var{\mD}{2}$ is given by 
\begin{equation}\label{eq:DHtw}
\var{\mD}{2}=\begin{pmatrix} \sig[q](\lambda^2-(\Vx)^2)&-\sig[q]\Vx\Vy\\
-\sig[q]\Vx\Vy&\sig[q](\lambda^2-(\Vy)^2)
\end{pmatrix},
\end{equation}
 and the dispersion matrix $\var{\mD}{3}$ by 
 \begin{equation*}\label{eq:Disptwis}
\var{\mD}{3}=\begin{pmatrix} \vars{\phi}{1}(\Vx)&\vars{\phi}{2}(\utx,\uty,\Vx,\Vy)\\
\vars{\phi}{2}(\uty,\utx,\Vy,\Vx)&\vars{\phi}{1}(\Vy)
\end{pmatrix},
\end{equation*}
with 
\begin{gather*}
\vars{\phi}{1}(\Vx)=\frac{\Vx}{6}(\lambda^2-(\Vx)^2)(1-12\varp{\sigma}{q}{2}),\\
\vars{\phi}{2}(\utx,\uty,\Vx,\Vy)=\frac{1}{2}\big(-\Vx(\Vy)^2+4\varp{\sigma}{q}{2}(\lambda^2(\utx-\Vx)+3\Vx(\Vy)^2-\uty\Vx\Vy\\-\utx(\Vy)^2)
-4\varp{\sigma}{q}{}\varp{\sigma}{xy}{}(\lambda^2(\utx-\Vx)-(\Vx\uty\Vy+\utx(\Vy)^2).
\end{gather*}
The Hénon's parameters $\sig[q]$ and $\sig[xy]$ are given by $1/\sk[q]-1/2$ and $1/\sk[xy]-1/2$, the operators $\nabla^2$ and $\nabla^3$ are given by (\ref{eq:grad}) and $:$ is the scalar product for the matrices viewed as vectors of $\R^4$.
\end{proposition}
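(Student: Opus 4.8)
The plan is to run the linear equivalent-equation algorithm of \cite{War:1974:0,Dub:2008:0,Augier:2014:0} on the concrete data of the twisted $\ddqqn$ scheme, exactly as in the proof of Proposition \ref{th:ordre3difix}. The scheme is linear: the equilibria (\ref{eq:eqd2q4},\ref{eq:eqd2q4treis}) are linear in the conserved moment $\rho$ and the advection equation (\ref{eq:edptrspt}) is linear. The velocities, the moment matrix $\MatMu$ with entries $\Miju[kj]=\Pk(\vj-\utilde)$ for $\Pk\in\{1,X,Y,XY\}$, and the relaxation parameters $(0,\sk[q],\sk[q],\sk[xy])$ are the same as in Section \ref{sub:diffix}; the only change is that the second order moment equilibrium $\mkueq[3]$ is now the intrinsic one (\ref{eq:eqd2q4treis}) instead of (\ref{eq:eqd2q4tre}). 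First I would compose the relaxation (\ref{eq:relaxationu}), the reconstruction (\ref{eq:mtof}) and the transport (\ref{eq:transport}) into a single time step acting on $\vectf$.

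Then I would Taylor-expand the transport in powers of $\dt$, using the acoustic scaling $\dx=\lambda\dt$, and eliminate the three non conserved moments order by order so as to produce a scalar partial differential equation on $\rho$ truncated at order $\dt^2$. The leading order reproduces the advection equation (\ref{eq:edptrspt}); the $\mathcal{O}(\dt)$ contribution yields the diffusion operator $\var{\mD}{2}:\nabla^2\rho$ and the $\mathcal{O}(\dt^2)$ contribution the dispersion operator $\var{\mD}{3}:\nabla^3\rho$, the operators $\nabla^2,\nabla^3$ being those of (\ref{eq:grad}). The consistency relation (\ref{eq:mueq}), which both equilibria satisfy by construction, guarantees that $\utilde$ enters only at the third order, so that the diffusion matrix is independent of $\utilde$ and can be evaluated at $\utilde=\vectz$.

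A useful shortcut is that the intrinsic equilibrium differs from the non intrinsic one only by the $\utilde$-independent term $\rho\Vx\Vy$ in $\mkueq[3]$: indeed $(\Vx-\utx)(\Vy-\uty)-(\utx\uty-\utx\Vy-\uty\Vx)=\Vx\Vy$. Since the scheme is linear in the equilibrium, the truncated equation depends linearly on $\mkueq[3]$, so the equivalent equation of the intrinsic scheme equals that of Proposition \ref{th:ordre3difix} minus the contribution of the single extra term $\rho\Vx\Vy$. Computing this correction reproduces the off-diagonal entry $-\sig[q]\Vx\Vy$ of the diffusion matrix (\ref{eq:DHtw}), leaves the diagonal coefficients $\vars{\phi}{1}$ equal to the diagonal of (\ref{eq:Disptw}), and leaves only the off-diagonal dispersion coefficient $\vars{\phi}{2}$ genuinely new. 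The $x\leftrightarrow y$ symmetry of the twisted lattice (which permutes $X$ and $Y$ and sends $\vectV,\utilde$ to their swaps) then gives the second row of $\var{\mD}{3}$ from the first by exchanging the arguments, so that only the first row has to be computed.

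The main obstacle is the order $\dt^2$ computation of $\vars{\phi}{2}$. For the non intrinsic equilibrium one may exploit that the twisted $\ddqqn$ is a $\duqd^2$ and read the equivalent equation off a product of two one dimensional schemes; the intrinsic $XY$-equilibrium $\utx\uty-\utx\Vy-\uty\Vx$ does not factor, so this tensor-product shortcut breaks down and the full two dimensional third order expansion must be carried out. This step is routine but long and error prone, which is precisely why it is delegated to the formal calculus implementation of \cite{Augier:2014:0}; retaining the full $\utilde$- and $\sig[xy]$-dependence and checking that the remainder is genuinely $\mathcal{O}(\dt^3)$ are the delicate points.
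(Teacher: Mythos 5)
Your proposal takes essentially the same route as the paper: the paper offers no hand derivation of this proposition either, stating only that the third order equivalent equations are ``obtained thanks to the linear algorithm derived in \cite{Augier:2014:0}'', i.e.\ exactly the Taylor expansion and order-by-order elimination of the non conserved moments, delegated to formal calculus software, that you describe. Your additional structural remarks are correct and merely streamline that computation: the intrinsic equilibrium is the non intrinsic one minus the single term $\rho\Vx\Vy$ in $\mkueq[3]$, so by linearity only this correction (which produces the off-diagonal entries $-\sig[q]\Vx\Vy$ of (\ref{eq:DHtw})) and the genuinely new coefficient $\vars{\phi}{2}$ require fresh computation, the second row of $\var{\mD}{3}$ following from the $x\leftrightarrow y$ symmetry.
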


\begin{remarque}
As expected, the diffusion is intrinsic. It is independent of the basis of writing. Indeed it is equal to 
$$\var{\mD}{2}:\nabla^2=\lambda^2(\Dxy[x]+\Dxy[y])-(\vectV\cdot\nabla)^2.$$
\end{remarque}

\begin{lemme}[Well-posedness of the second order equation]\label{th:diffHtw}
The diffusion matrix $\var{\mD}{2}$ defined by (\ref{eq:DHtw}) belongs to the space $\Sdpp$ of symmetric positive definite matrices if and only if $\norme< \lambda$.
\end{lemme}

\begin{proof}
The eigenvalues of the matrix $\var{\mD}{2}$ are $\lambda^2$ and $\lambda^2-\norme[2]$, that closes the proof.
\end{proof}

Let us compare the well-posedness areas of the schemes with a non intrinsic diffusion (section~\ref{sub:diffix}) and an intrinsic diffusion. These areas are represented on the figure~\ref{fig:ddqqn_diff}.
We note that the scheme with a non intrinsic diffusion has a greater area in $\vectV$. Consequently, it should authorize larger stable velocities than the scheme with an intrinsic diffusion.

We now get back to the intrinsic case and discuss on the dispersion terms. As in the non intrinsic case, the scheme should be stable for larger velocities with the choice $\utilde=\vectV$ than with $\utilde=\vectz$. Indeed, when $\sig[q]$ tends to $0$, the term $\vars{\phi}{2}(\utx,\uty,\Vx,\Vy)$ is of third order in $\vectV$ if $\utilde=\vectV$. Instead, if $\utilde=\vectz$, it is of first order in $\vectV$. More dispersion is created for $\utilde=\vectz$ when $\Vx$ or $\Vy$ are close to $\lambda$ and $\sig[xy]$ is big. 

\definecolor{ffqqqq}{rgb}{1,0,0}
\definecolor{qqqqff}{rgb}{0,0,1}
\definecolor{qqcctt}{rgb}{0,0.8,0.2}
\definecolor{uququq}{rgb}{0.25,0.25,0.25}
 \begin{figure}
\begin{center}
\begin{tikzpicture}[scale=0.7,line cap=round,line join=round,>=triangle 45,x=4.0cm,y=4.0cm]
\draw[->,color=black] (-1.2,0) -- (1.2,0);
\draw[->,color=black] (0,-1.2) -- (0,1.2);
\draw[color=black] (0pt,-10pt) node[right] {\footnotesize $0$};
\draw[shift={(1,0)},color=black] (0pt,2pt) -- (0pt,-2pt) node[below] {\footnotesize $\lambda$};
\draw[shift={(-1,0)},color=black] (0pt,2pt) -- (0pt,-2pt) node[below] {\footnotesize $-\lambda$};
\draw[shift={(0,1)},color=black] (2pt,0pt) -- (-2pt,0pt) node[left] {\footnotesize $\lambda$};
\draw[shift={(0,-1)},color=black] (2pt,0pt) -- (-2pt,0pt) node[left] {\footnotesize $-\lambda$};
\clip(-1.2,-1.2) rectangle (1.2,1.2);
\draw [fill=black,fill opacity=0.1] (0,0) circle (4cm);
\fill[color=black,fill=black,fill opacity=0.15] (-1,1) -- (-1,-1) -- (1,-1) -- (1,1) -- cycle;
\draw [color=black] (-1,1)-- (-1,-1);
\draw [color=black] (-1,-1)-- (1,-1);
\draw [color=black] (1,-1)-- (1,1);
\draw [color=black] (1,1)-- (-1,1);
\begin{scriptsize}
\fill [color=black] (0,0) circle (1.5pt);
\fill [color=black] (0,1) circle (1.5pt);
\fill [color=black] (1,0) circle (1.5pt);
\fill [color=black] (-1,0) circle (1.5pt);
\fill [color=black] (0,-1) circle (1.5pt);
\fill [color=black] (1,0) circle (1.5pt);
\fill [color=black] (0,1) circle (1.5pt);
\fill [color=black] (-1,1) circle (1.5pt);
\fill [color=black] (-1,-1) circle (1.5pt);
\fill [color=black] (1,-1) circle (1.5pt);
\fill [color=black] (1,1) circle (1.5pt);
\draw[color=black] (1.15,-0.075) node {$\Vx$};
\draw[color=black] (-0.085,1.15) node {$\Vy$};
\end{scriptsize}
\end{tikzpicture}
\end{center}
\caption{Well posedness areas in $\vectV$ for the twisted $\ddqqn$ schemes: with intrinsic diffusion (circle of radius $\lambda$), with a non intrinsic diffusion (square $[-\lambda,\lambda]^2$).}
\label{fig:ddqqn_diff}
\end{figure}
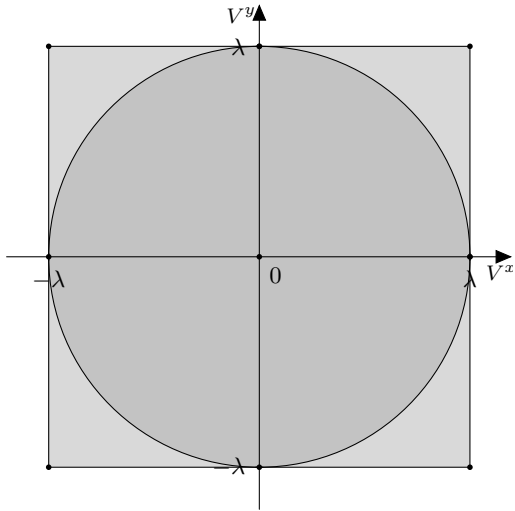

\subsection{Illustration with numerical stability experiments}\label{sub:stabeqeq}

In this section, we study the numerical $L^2$ stability for the twisted $\ddqqn$ scheme according to the choice of the velocity parameter $\utilde$. The link with the previous consistency study is made: the third order dispersion terms have an influence on the numerical stability. The choice $\utilde=\vectV$ provides larger stability areas in $\vectV$.

We present a first numerical experiment with the advection of a circular spot: the initial conditions are given by
$$\rho(\vectx,t)=1+\ind{\mC}(\vectx,t),\quad \vectx\in[0,1]^2, t\in\R,$$
where $\mC$ is the disc centered in $(1/2,1/2)$ of radius $0.1$ and $\ind{\mC}$ is the function equal to $1$ on this disc, null elsewhere. This spot moves with an advection velocity $\vectV\in\R^2$ in the domain $[0,1]^2$ constituted of $128^2$ points with periodic boundary conditions.

The scheme is considered as stable if it has not broken after 2000 iterations. We present the biggest advection velocities keeping the scheme stable for different choices of relaxation parameters. Two different directions of advection velocities are considered: $\theta=0$ and $\theta=\pi/4$.  We choose different values for the parameter $\sig[q]$, the parameter $\sig[xy]$ being set to $1/\sqrt{3}$. The results are presented in the tables \ref{table:dispdifix} and \ref{table:dispdifix2}.  
 
\begin{table}
\centering\small
\grandtraittop
\begin{tabular}{@{}p{5.3cm}p{1.4cm}p{1.4cm}p{1.4cm}p{1.4cm}p{1.4cm}@{}}
  $\sig[q]$& $1/10$& $1/20$&$1/50$&$1/100$&$1/200$  \\ \hline
     $\ddqqn$ non intrinsic $\utilde=\vectz$ & 1.00&0.80&0.49&0.34&0.23\\ 
     $\ddqqn$ non intrinsic $\utilde=\vectV$ & 1.00 &1.00 &1.00 &1.00 &1.00\\ 
      $\ddqqn$ intrinsic $\utilde=\vectz$ &1.00&0.79&0.48&0.33&0.23 \\ 
     $\ddqqn$ intrinsic $\utilde=\vectV$  & 1.00 &1.00 &1.00 &1.00 &1.00 \\ 
\end{tabular}
\grandtraitbottom
 \caption{Maximal $|\vectV|$ stable in $\lambda$ scale for $\theta=0$, $\sig[xy]=1/\sqrt{3}$ and $128^2$ points.}
 \label{table:dispdifix}
\end{table}

\begin{table}
\centering\small
\grandtraittop
\begin{tabular}{@{}p{5.3cm}p{1.4cm}p{1.4cm}p{1.4cm}p{1.4cm}p{1.4cm}@{}}
  $\sig[q]$& $1/10$& $1/20$&$1/50$&$1/100$&$1/200$  \\ \hline
      $\ddqqn$ non intrinsic $\utilde=\vectz$ & 1.41&0.80&0.42&0.28&0.20\\ 
     $\ddqqn$ non intrinsic $\utilde=\vectV$ & 1.41 &1.41 &1.41 &1.41 &1.41\\ 
      $\ddqqn$ intrinsic $\utilde=\vectz$ &0.75&0.56&0.36&0.26&0.18 \\ 
     $\ddqqn$ intrinsic $\utilde=\vectV$  &0.86&0.76&0.65&0.59&0.53 \\ 
\end{tabular}
\grandtraitbottom
 \caption{Maximal $|\vectV|$ stable in $\lambda$ scale  for $\theta=\pi/4$, $\sig[xy]=1/\sqrt{3}$ and $128^2$ points.}
 \label{table:dispdifix2}
\end{table}

The scheme relative to the advection velocity $\utilde=\vectV$ authorizes larger stable velocities than the MRT scheme ($\utilde=\vectz$). Most of the time, its stability areas do not depend on the relaxation parameters while they are bounded by $0$ and $2$.
Instead, the stability areas of the MRT scheme decreases when $\sig[q]$ tends to $0$.
This phenomenon occurs for the scheme relative to $\utilde=\vectV$ with an intrinsic diffusion in the direction $\theta=\pi/4$: however the instabilities appear for larger $\vectV$ than the MRT scheme.

The dispersion terms exhibited in the equivalent equations (section \ref{se:eqeq}) originate these phenomena. Their presence causes instabilities when $\sig[q]$ is too small compared to $\sig[xy]$ and $|\vectV|$. We have seen that these terms cancel for $\utilde=\vectV$ but are present when $\utilde=\vectz$. This explains the constance of the stability area for $\utilde=\vectV$ and its deterioration for $\utilde=\vectz$ when $\sig[q]$ decreases.

These dispersive phenomena are illustrated on the figures \ref{fig:tachedisp1} and \ref{fig:tachedisp2} for the scheme with an intrinsic diffusion. We choose $\sig[xy]=1/\sqrt{3}$, $\sig[q]=1/20$ and $\vectV=(0.9\lambda,0)$ for these draws. According to the data of the table \ref{table:dispdifix}, this configuration is a stable case for the scheme relative to $\utilde=\vectV$ and an unstable case for the MRT scheme. The spot is represented at the times $t=0$ and $t=0.4$ for $256^2$ points.

\begin{figure}
  \begin{minipage}{0.45\linewidth} 
\begin{center}
\includegraphics[trim = 50mm 10mm 50mm 10mm,width=1.\textwidth]{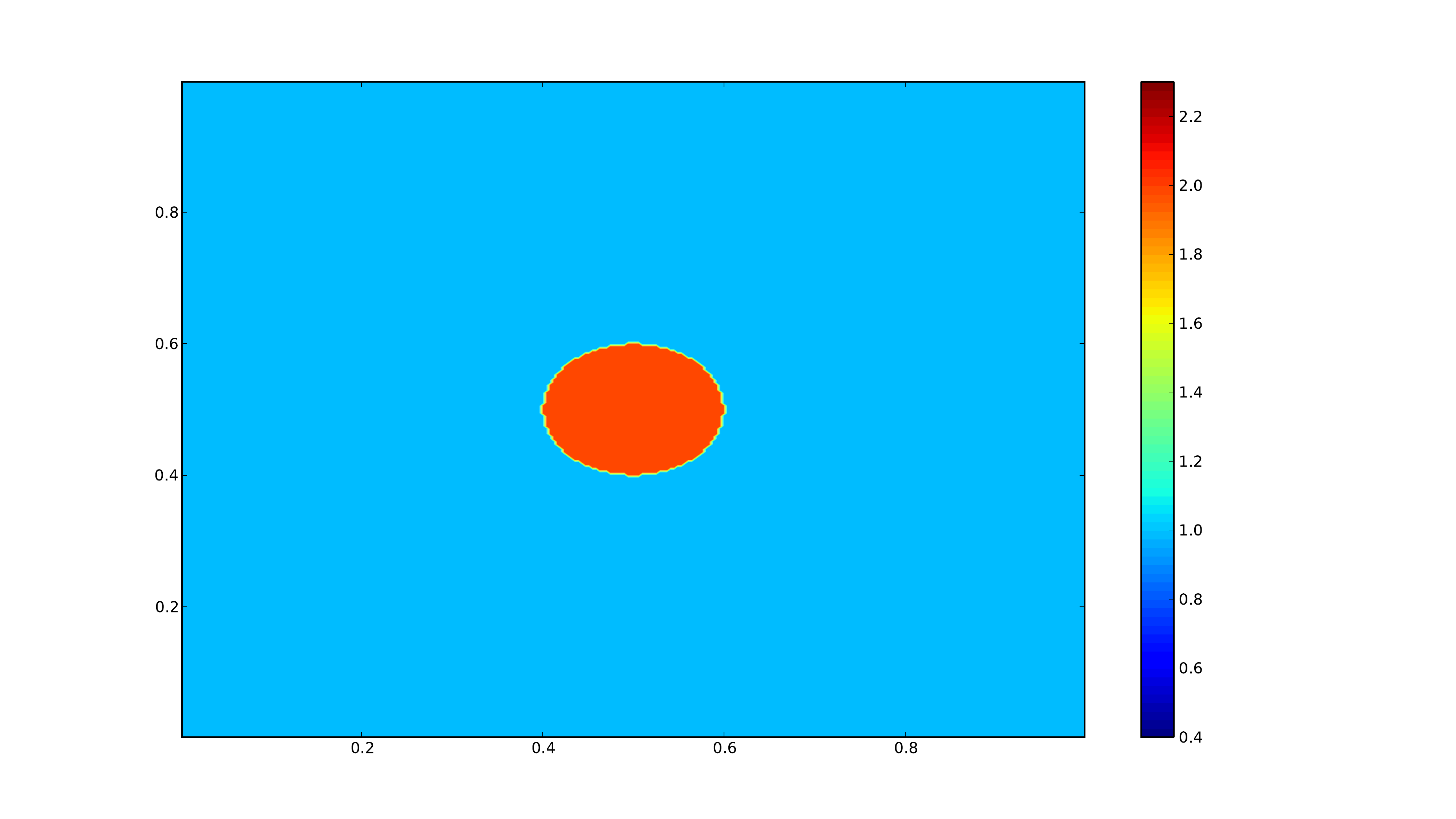}
\end{center}
\end{minipage}\hfill
\begin{minipage}{0.45\linewidth} 
\begin{center}
\includegraphics[trim = 50mm 10mm 50mm 10mm,width=1.\textwidth]{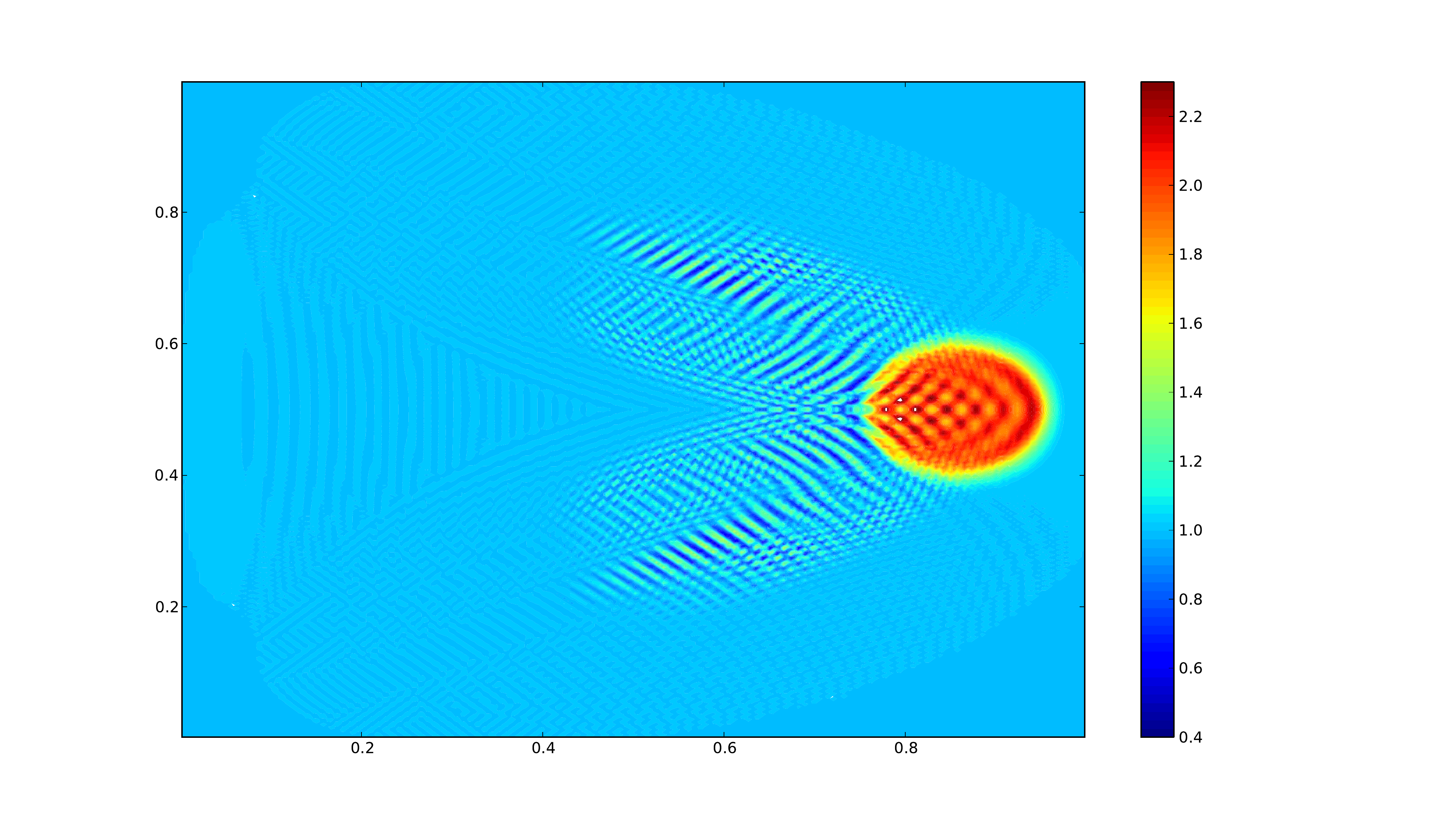}
\end{center}
\end{minipage}\\
\caption{Advected spot of velocity $\vectV=(0.9\lambda,0)$ for the twisted $\ddqqn$ scheme relative to $\utilde=\vectz$ (MRT) with an intrinsic diffusion at $t=0$ (left) and $t=0.4$ (right) for $\sig[xy]=1/\sqrt{3}$, $\sig[q]=1/20$.}
\label{fig:tachedisp1}
\end{figure}

\begin{figure}
  \begin{minipage}{0.45\linewidth} 
\begin{center}
\includegraphics[trim = 50mm 10mm 50mm 10mm,width=1.\textwidth]{images/Tache_disp_dHtw_t=0.pdf}
\end{center}
\end{minipage}\hfill
\begin{minipage}{0.45\linewidth} 
\begin{center}
\includegraphics[trim = 50mm 10mm 50mm 10mm,width=1.\textwidth]{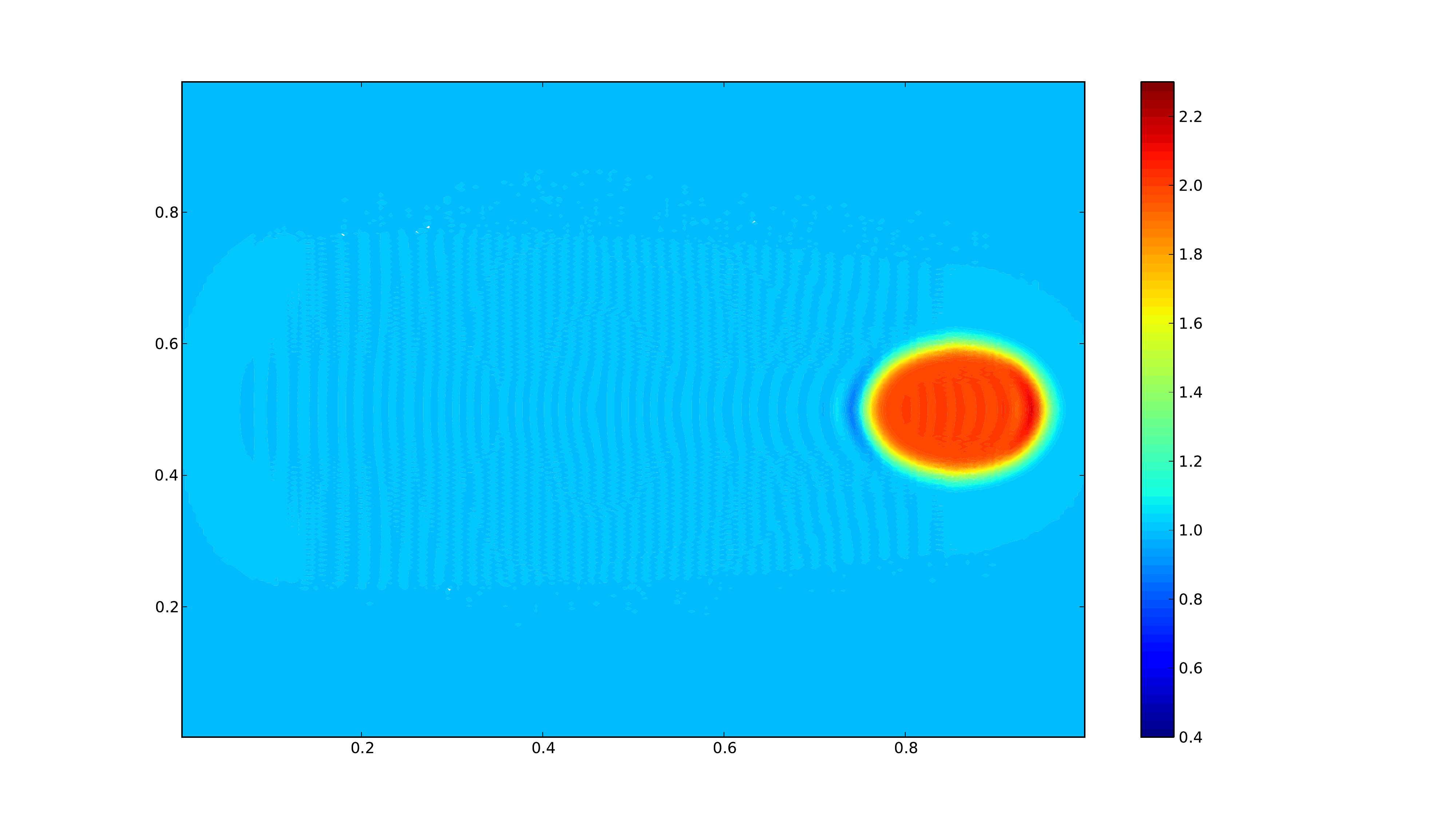}
\end{center}
\end{minipage}\\
\caption{Advected spot of velocity $\vectV=(0.9\lambda,0)$ for the twisted $\ddqqn$ scheme relative to $\utilde=\vectV$ with an intrinsic diffusion at $t=0$ (left) and $t=0.4$ (right) for $\sig[xy]=1/\sqrt{3}$, $\sig[q]=1/20$.}
\label{fig:tachedisp2}
\end{figure}

The dispersion for the MRT scheme is clearly visible on the figure~\ref{fig:tachedisp1}: some oscillations appear behind the advected spot. The scheme is going to break since the density $\rho$ is increasing. These phenomena are absent for the scheme relative to $\utilde=\vectV$ (figure~\ref{fig:tachedisp2}) that remains numerically stable.

Our second numerical experiment uses the $L^2$ linear stability of von Neumann. We discuss on the spectrum of the amplification matrix of the scheme. This matrix, characterizing an iteration of the scheme, has to be first determined. The equilibrium being linear (\ref{eq:eqd2q4},\ref{eq:eqd2q4tre},\ref{eq:eqd2q4treis}), there exists a matrix $\MatE=\MatE(\vectV,\vects)$ for $\vects=(0,\sk[q],\sk[q],\sk[xy])$ and $\vectV\in\R^2$ so that  $$\vectfeq=\MatE\vectf.$$  The relaxation phase of the relative velocity $\ddqqn$ scheme reads
\begin{equation*}\label{eq:rel}
\vectfe=(\MatI+\MatMu^{-1}\MatD\MatMu(\MatE-\MatI))\vectf,
\end{equation*}
where $\MatD={\rm diag}(\vects)$ is the diagonal matrix of the relaxation parameters. This expression holds for each node $\vectx$ of the lattice, the relaxation being local in space. The expression of the distribution after the transport phase is given by
\begin{equation}\label{eq:iter}
\fj(\vectx,t+\dt)=[(\MatI+\MatMu^{-1}\MatD\MatMu(\MatE-\MatI))\vectf]_j^{}(\vectx-\vj\dt,t),\quad\vectx\in\lattice, t\in\R.
\end{equation}
Taking the Fourier transform of (\ref{eq:iter}), the transport operator becomes local in space and is represented by the diagonal matrix $\MatA=\MatA(\vectk)$ for $\vectk\in\R^2$ whose diagonal components are given by $e^{i\dt\vectk.\vj}$, $0\leq j\leq3$.
 The amplification matrix then reads $\MatL(\utilde)=\MatL(\utilde,\vectV,\vectk,\vects)=\MatA(\MatI+\MatMu^{-1}\MatD\MatMu(\MatE-\MatI))$. It characterizes a time iteration of the scheme in the Fourier space
$$\widehat{\vectf}(\vectk,t+\dt)=\MatL(\utilde)\widehat{\vectf}(\vectk,t),\quad t\in\R,$$
where $\widehat{\vectf}$ is the Fourier transform of $\vectf$.
We want to exhibit the advection velocities $\vectV$ for which the scheme verifies the necessary condition of $L^2$ stability 
\begin{equation}\label{eq:CN}
\underset{\vectk\in\R^2}{\max}~r(\MatL(\utilde))\leq1,
\end{equation}
 where $r$ is the spectral radius.  We are thus interested in the set
\begin{equation*}\label{eq:maxVrho}
\{\vectV=(\Vx,\Vy),~ \underset{\vectk\in\R^2}{\max}~r(\MatL(\utilde))\leq1\}.
\end{equation*}

The figures  \ref{fig:vpddqq_1} to \ref{fig:vpddqq_5} present this set in the plan $(\Vx,\Vy)$  for the twisted $\ddqqn$ scheme with a non intrinsic diffusion and some relaxation parameters $\sk[q]$ and $\sk[xy]$. The left draw is about the MRT scheme ($\utilde=\vectz$) and the right one illustrates the case $\utilde=\vectV$. We expect the right areas to be larger than the left ones.

\begin{figure}
\begin{minipage}{0.45\linewidth} 
\begin{center}
\includegraphics[trim = 40mm 100mm 40mm 90mm,width=1.\textwidth]{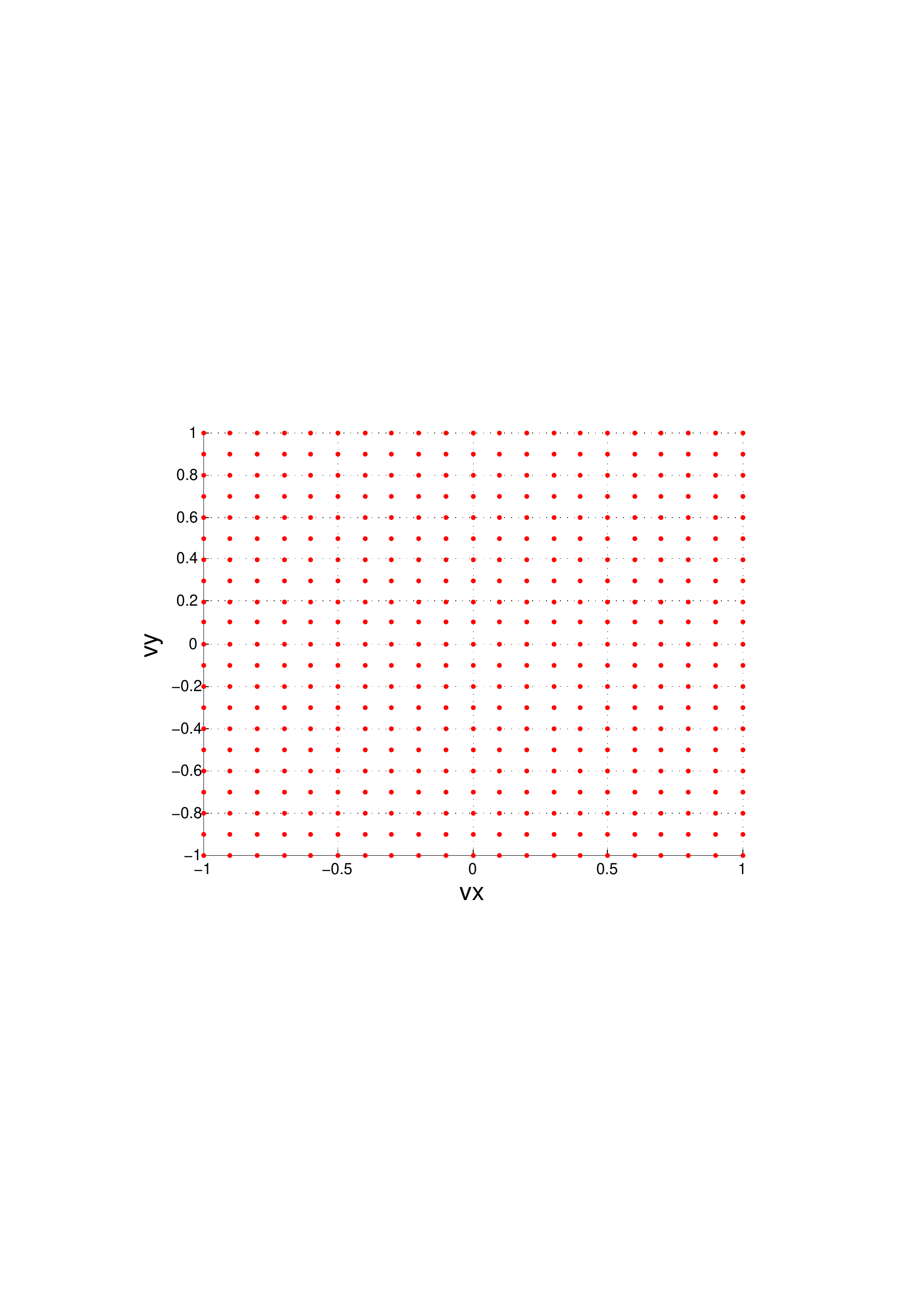}
\end{center}
\end{minipage}\hfill
\begin{minipage}{0.45\linewidth} 
\begin{center}
\includegraphics[trim = 40mm 100mm 40mm 90mm,width=1.\textwidth]{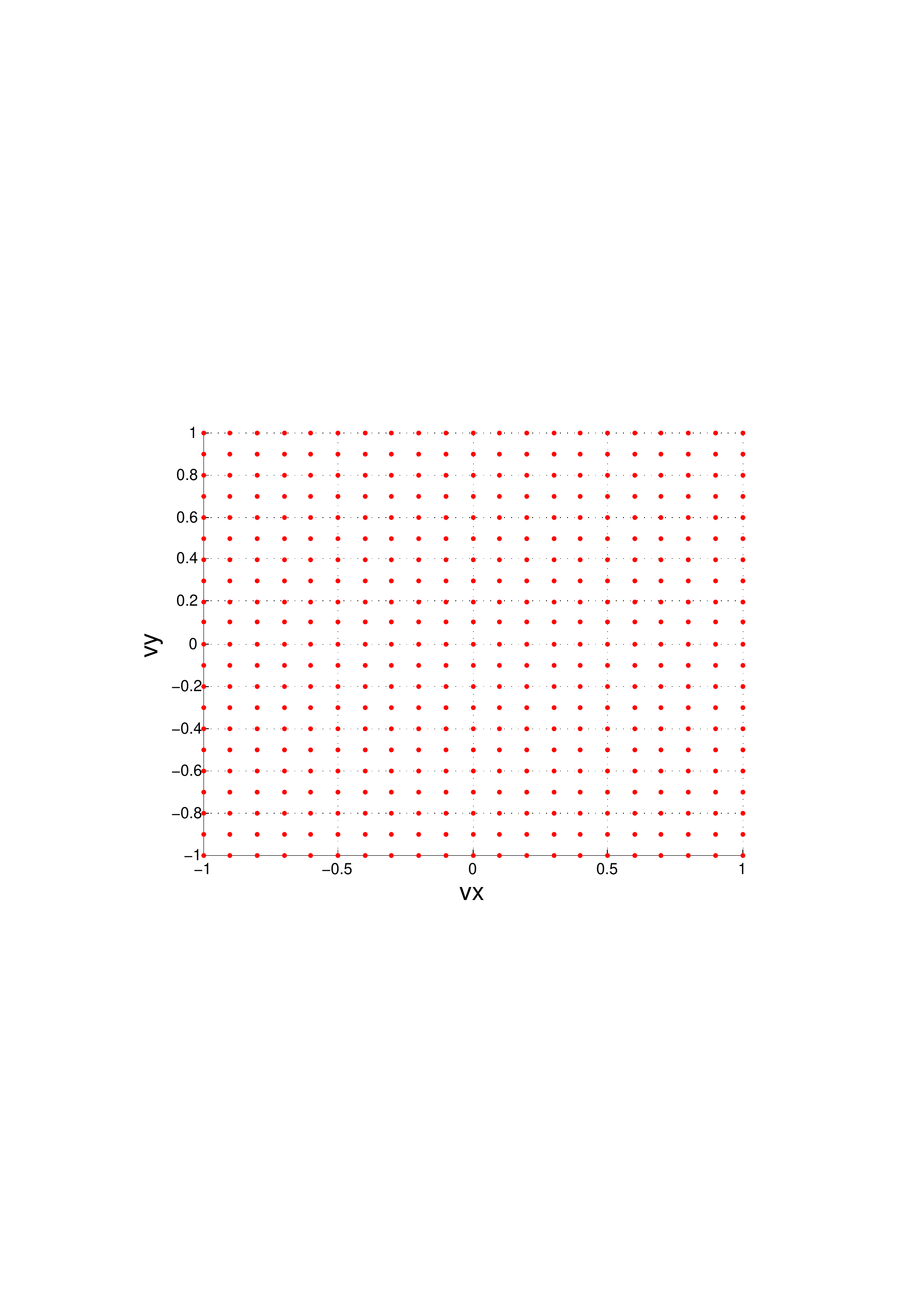}
\end{center}
\end{minipage}\\
\caption{Velocities $\vectV$ stable in $\lambda$ scale for the scheme relative to $\utilde=\vectz$ (MRT on the left), $\utilde=\vectV$ (on the right), for a non intrinsic diffusion with $\sk[q]=1$ and $\sk[xy]=1$.}
\label{fig:vpddqq_1}
\end{figure}

\begin{figure}
  \begin{minipage}{0.45\linewidth} 
\begin{center}
\includegraphics[trim = 40mm 100mm 40mm 90mm,width=1.\textwidth]{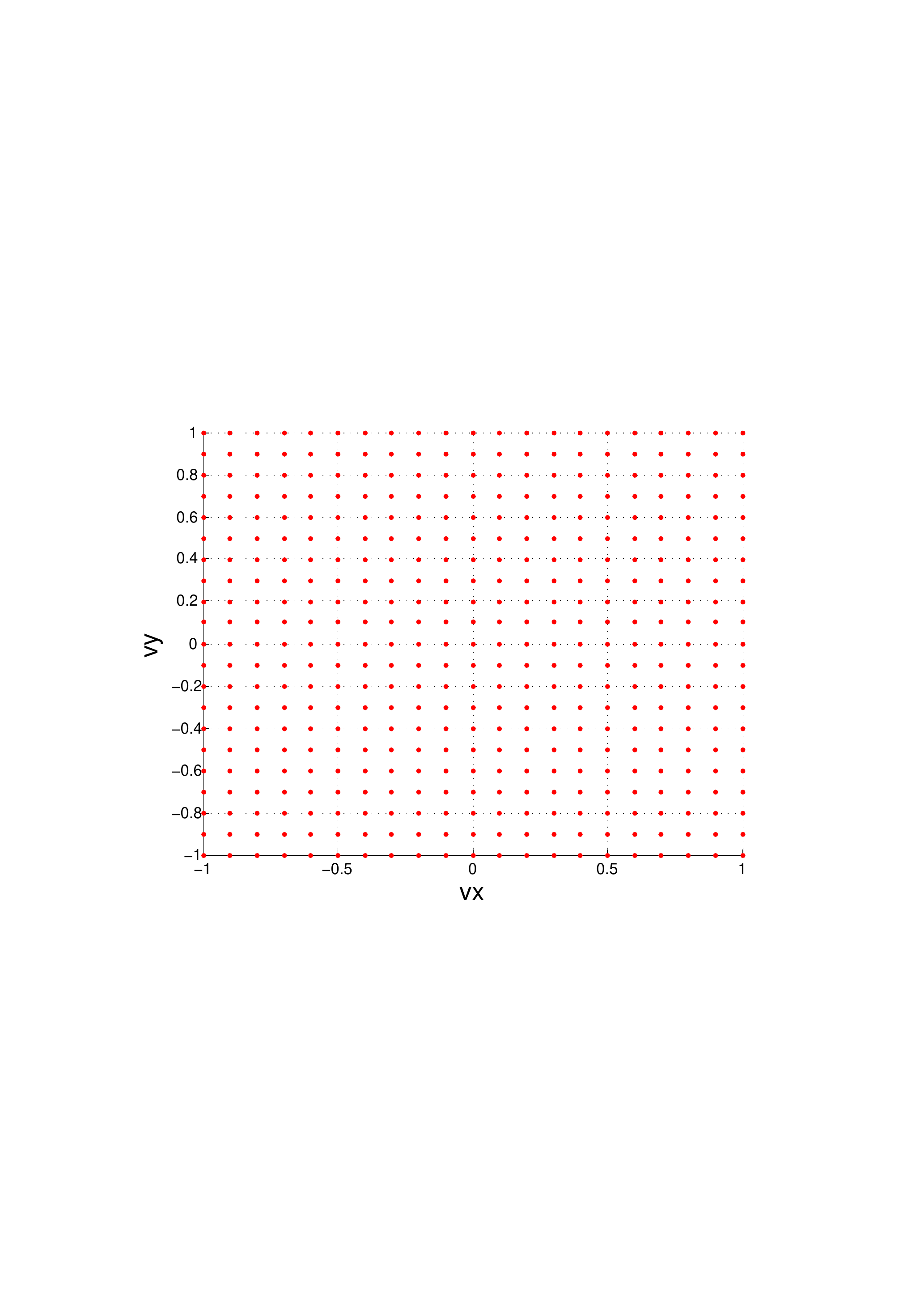}
\end{center}
\end{minipage}\hfill
\begin{minipage}{0.45\linewidth} 
\begin{center}
\includegraphics[trim = 40mm 100mm 40mm 90mm,width=1.\textwidth]{images/d2q4_t_nis_V_12.pdf}
\end{center}
\end{minipage}\\
\caption{Velocities $\vectV$ stable in $\lambda$ scale for the scheme relative to $\utilde=\vectz$ (MRT on the left), $\utilde=\vectV$ (on the right), for a non intrinsic diffusion with $\sk[q]=1$ and $\sk[xy]=1.5$.}
\label{fig:vpddqq_2}
\end{figure}

\begin{figure}
\begin{minipage}{0.45\linewidth} 
\begin{center}
\includegraphics[trim = 40mm 100mm 40mm 90mm,width=1.\textwidth]{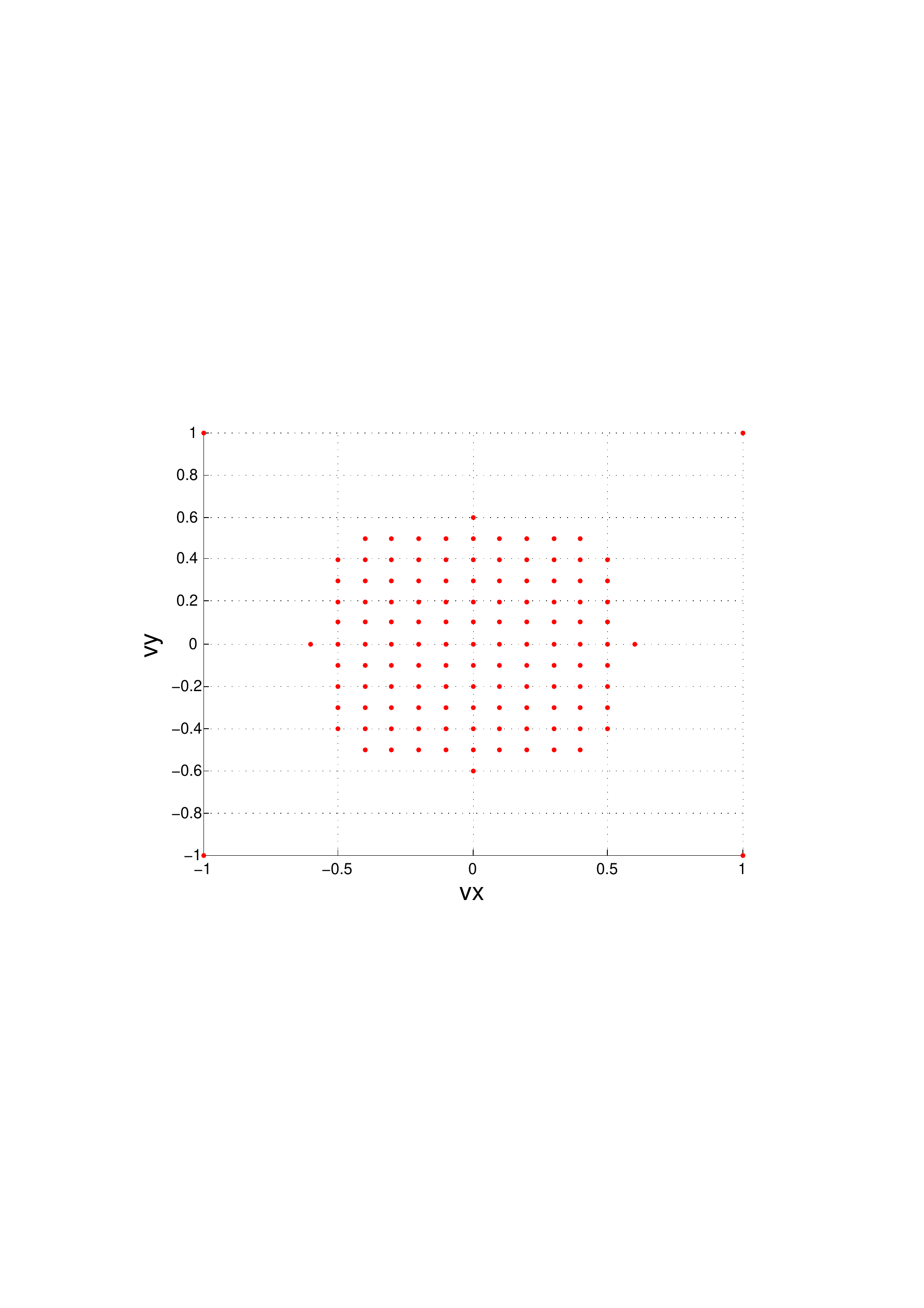}
\end{center}
\end{minipage}\hfill
\begin{minipage}{0.45\linewidth} 
\begin{center}
\includegraphics[trim = 40mm 100mm 40mm 90mm,width=1.\textwidth]{images/d2q4_t_nis_V_12.pdf}
\end{center}
\end{minipage}\\
\caption{Velocities $\vectV$ stable in $\lambda$ scale for the scheme relative to $\utilde=\vectz$ (MRT on the left), $\utilde=\vectV$ (on the right), for a non intrinsic diffusion with $\sk[q]=1$ and $\sk[xy]=1.9$.}
\label{fig:vpddqq_3}
\end{figure}

\begin{figure}
  \begin{minipage}{0.45\linewidth} 
\begin{center}
\includegraphics[trim = 40mm 100mm 40mm 90mm,width=1.\textwidth]{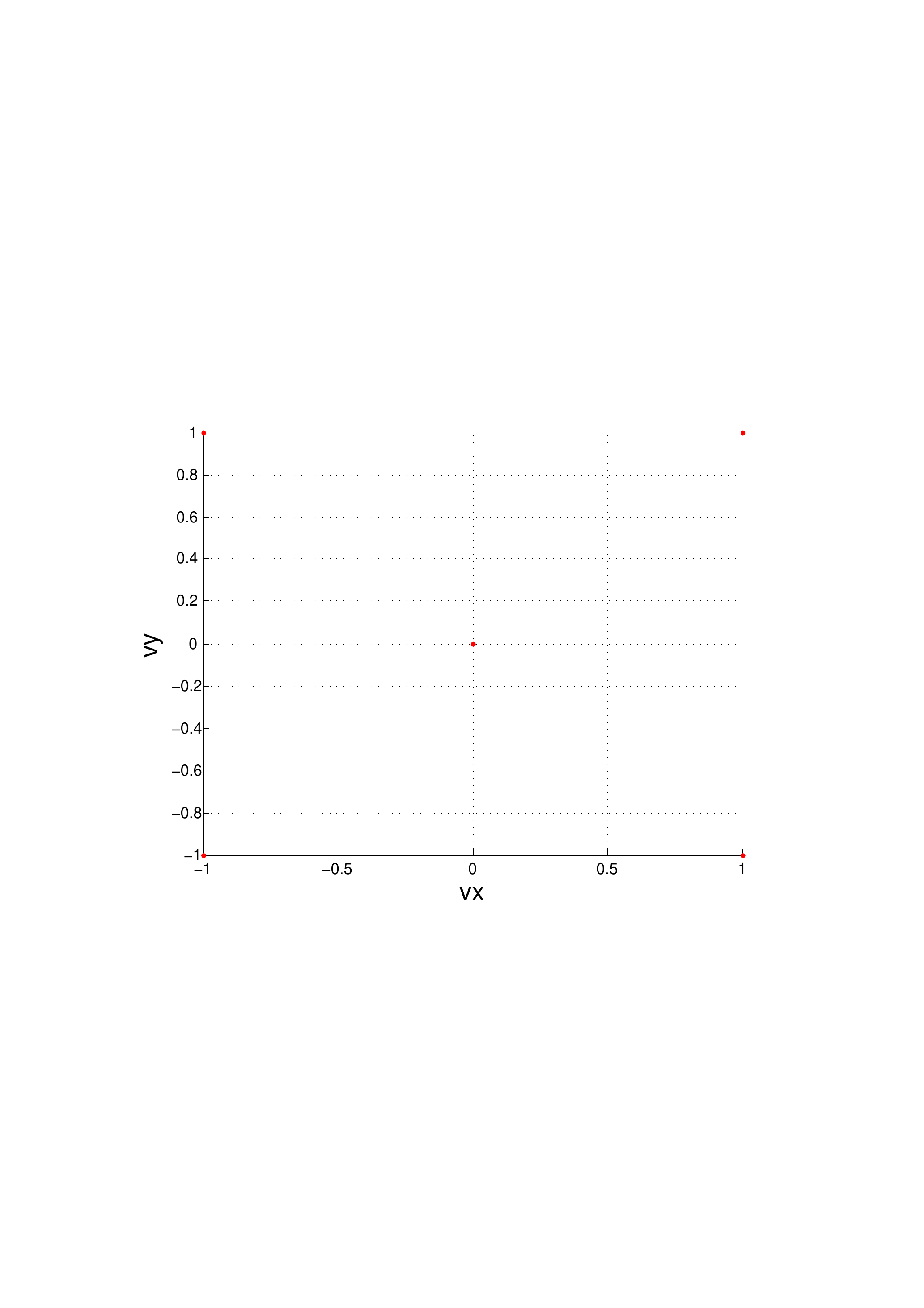}
\end{center}
\end{minipage}\hfill
\begin{minipage}{0.45\linewidth} 
\begin{center}
\includegraphics[trim = 40mm 100mm 40mm 90mm,width=1.\textwidth]{images/d2q4_t_nis_V_12.pdf}
\end{center}
\end{minipage}\\
\caption{Velocities $\vectV$ stable in $\lambda$ scale for the scheme relative to $\utilde=\vectz$ (MRT on the left), $\utilde=\vectV$ (on the right), for a non intrinsic diffusion with $\sk[q]=1$ and $\sk[xy]=2$.}
\label{fig:vpddqq_3b}
\end{figure}

\begin{figure}
  \begin{minipage}{0.45\linewidth} 
\begin{center}
\includegraphics[trim = 40mm 100mm 40mm 90mm,width=1.\textwidth]{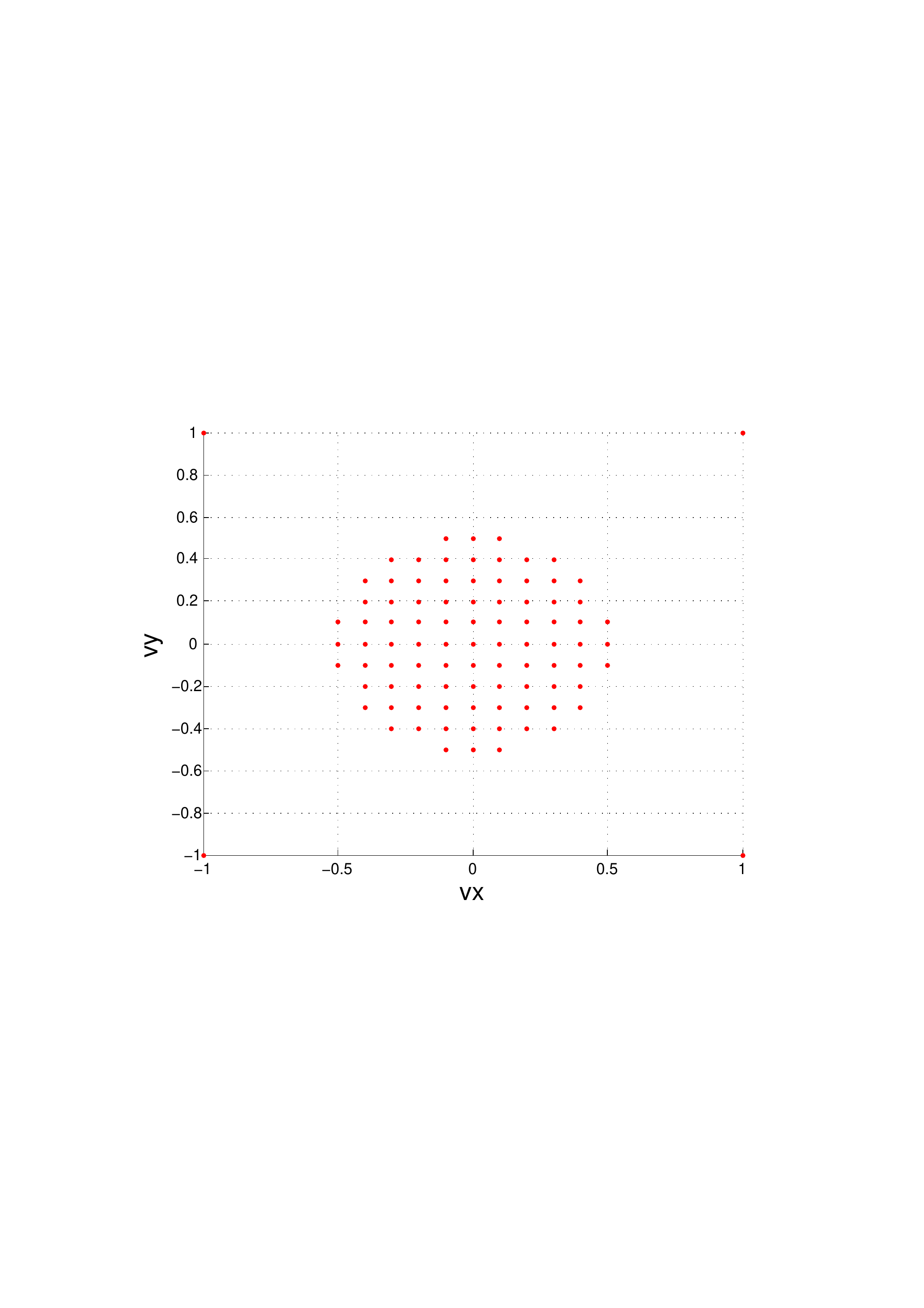}
\end{center}
\end{minipage}\hfill
\begin{minipage}{0.45\linewidth} 
\begin{center}
\includegraphics[trim = 40mm 100mm 40mm 90mm,width=1.\textwidth]{images/d2q4_t_nis_V_12.pdf}
\end{center}
\end{minipage}\\
\caption{Velocities $\vectV$ stable in $\lambda$ scale for the scheme relative to $\utilde=\vectz$ (MRT on the left), $\utilde=\vectV$ (on the right), for a non intrinsic diffusion with $\sk[q]=1.9$ and $\sk[xy]=1$.}
\label{fig:vpddqq_4}
\end{figure}

\begin{figure}
  \begin{minipage}{0.45\linewidth} 
\begin{center}
\includegraphics[trim = 40mm 100mm 40mm 90mm,width=1.\textwidth]{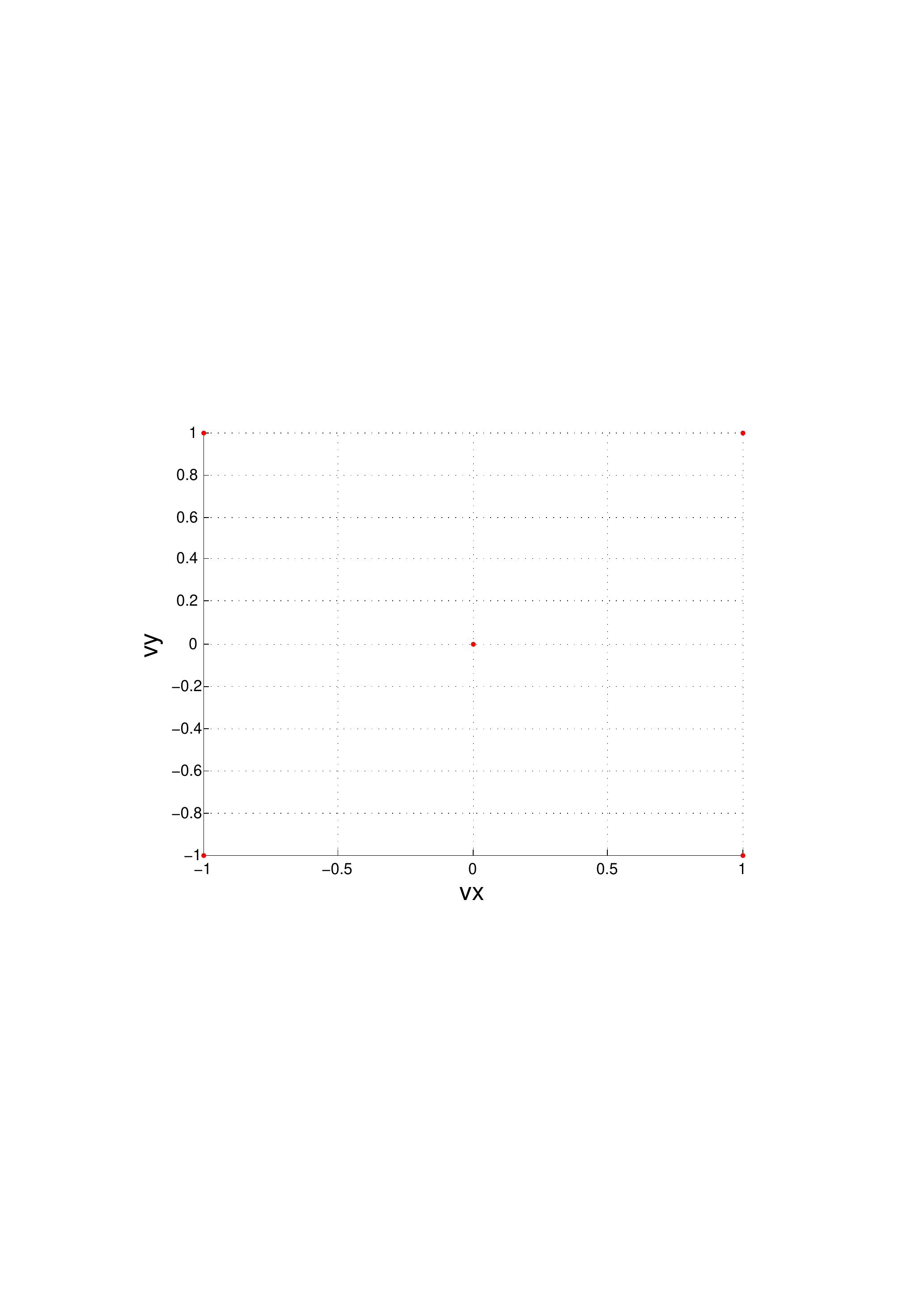}
\end{center}
\end{minipage}\hfill
\begin{minipage}{0.45\linewidth} 
\begin{center}
\includegraphics[trim = 40mm 100mm 40mm 90mm,width=1.\textwidth]{images/d2q4_t_nis_V_12.pdf}
\end{center}
\end{minipage}\\
\caption{Velocities $\vectV$ stable in $\lambda$ scale for the scheme relative to $\utilde=\vectz$ (MRT on the left), $\utilde=\vectV$ (on the right), for a non intrinsic diffusion with $\sk[q]=2$ and $\sk[xy]=1$.}
\label{fig:vpddqq_5}
\end{figure}

Firstly, the draws corresponding to a BGK scheme ($\sk[q]=\sk[xy]=1$) are independent of $\utilde$ (figure \ref{fig:vpddqq_1}). This result was expected because the velocity field $\utilde$ does not appear in the scheme for one single relaxation parameter since (\ref{eq:mueq}) is verified. Secondly, the scheme relative to $\utilde=\vectV$ verifies the necessary condition of stability on the biggest area $\normi\leq\lambda$ (CFL) for all the $\vects$ chosen (figure \ref{fig:vpddqq_1} to \ref{fig:vpddqq_5}). The constance and the optimality of these areas confirm the phenomena observed for the advected spot (tables \ref{table:dispdifix} and \ref{table:dispdifix2}). Finally, the stability areas of the MRT scheme decrease as the two relaxation parameters are moving away from each other. The stability area is reduced to $\vectV=\vectz$ for $\sk[q]=2$ or $\sk[xy]=2$. Whatever the choice of $\vects$, the areas of the MRT scheme are included in those of the scheme relative to $\utilde=\vectV$. These results are also consistent with the test case of the advected spot: the deterioration of the areas are due to the third order dispersive terms of the equivalent equations (proposition \ref{th:ordre3difix}).

\begin{figure}
  \begin{minipage}{0.45\linewidth} 
\begin{center}
\includegraphics[trim = 40mm 100mm 40mm 90mm,width=1.\textwidth]{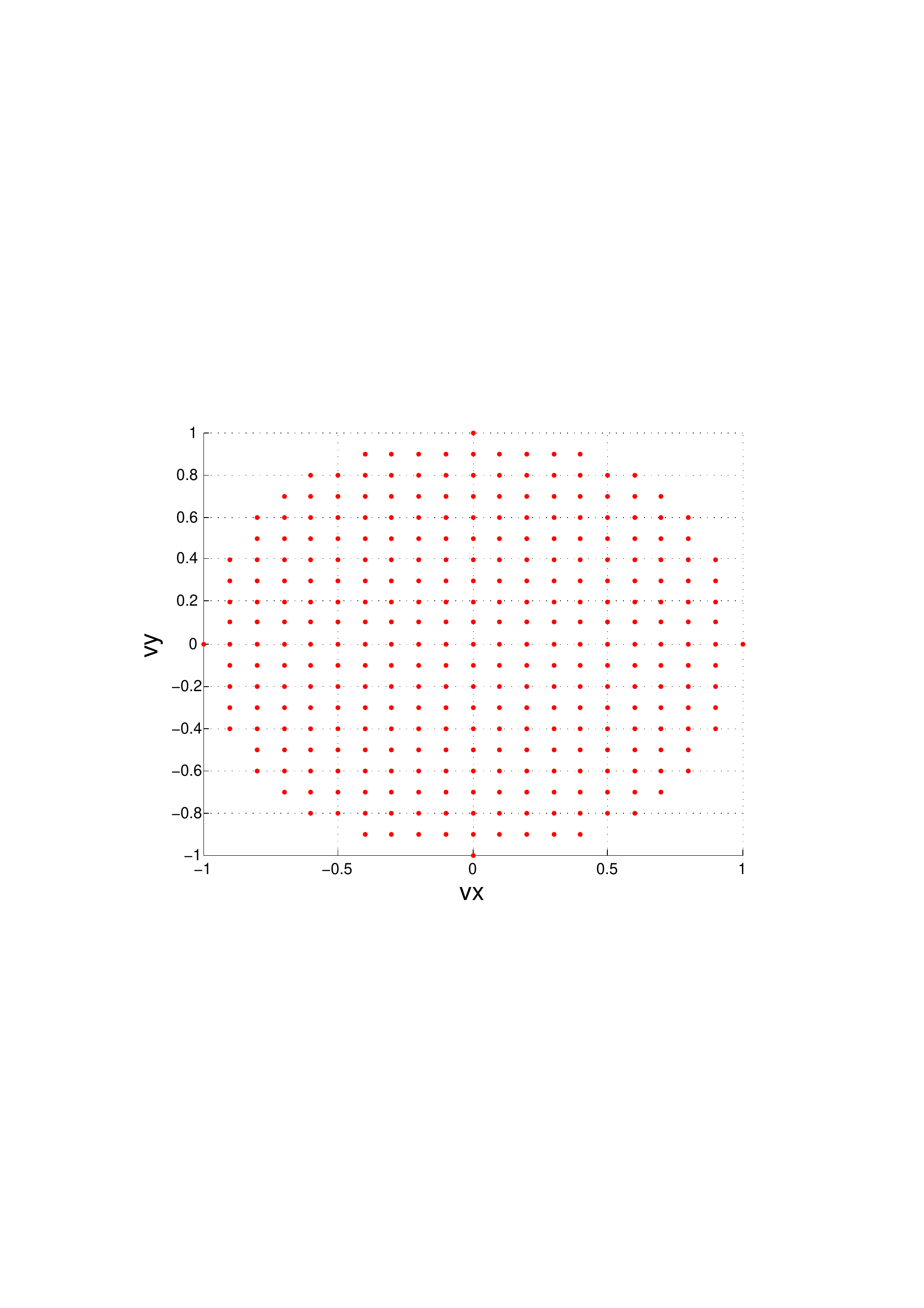}
\end{center}
\end{minipage}\hfill
\begin{minipage}{0.45\linewidth} 
\begin{center}
\includegraphics[trim = 40mm 100mm 40mm 90mm,width=1.\textwidth]{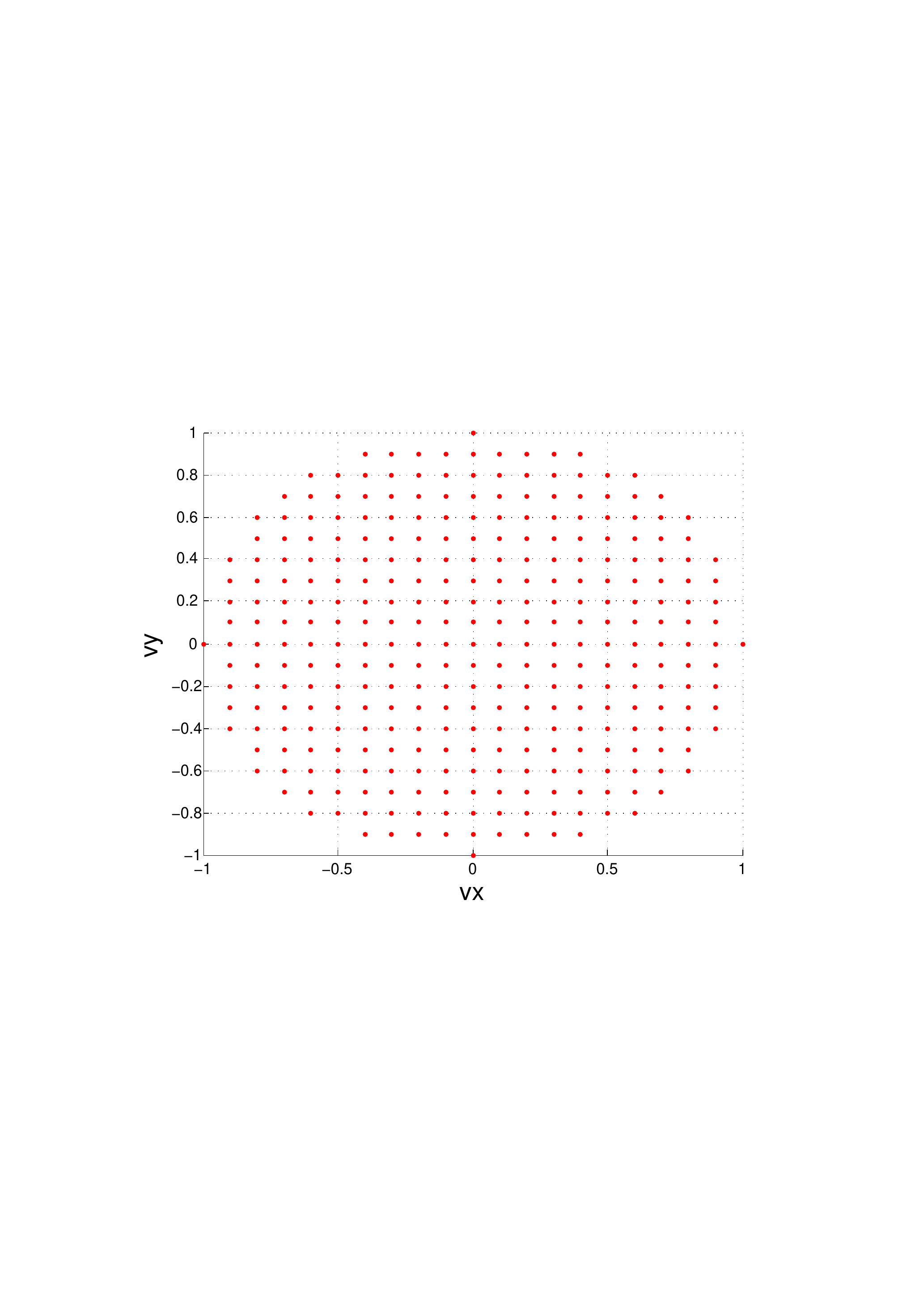}
\end{center}
\end{minipage}\\
\caption{Velocities $\vectV$ stable in $\lambda$ scale for the scheme relative to $\utilde=\vectz$ (MRT on the left), $\utilde=\vectV$ (on the right), for an intrinsic diffusion with $\sk[q]=1$ and $\sk[xy]=1$.}
\label{fig:vpddqq_6}
\end{figure}

\begin{figure}
  \begin{minipage}{0.45\linewidth} 
\begin{center}
\includegraphics[trim = 40mm 100mm 40mm 90mm,width=1.\textwidth]{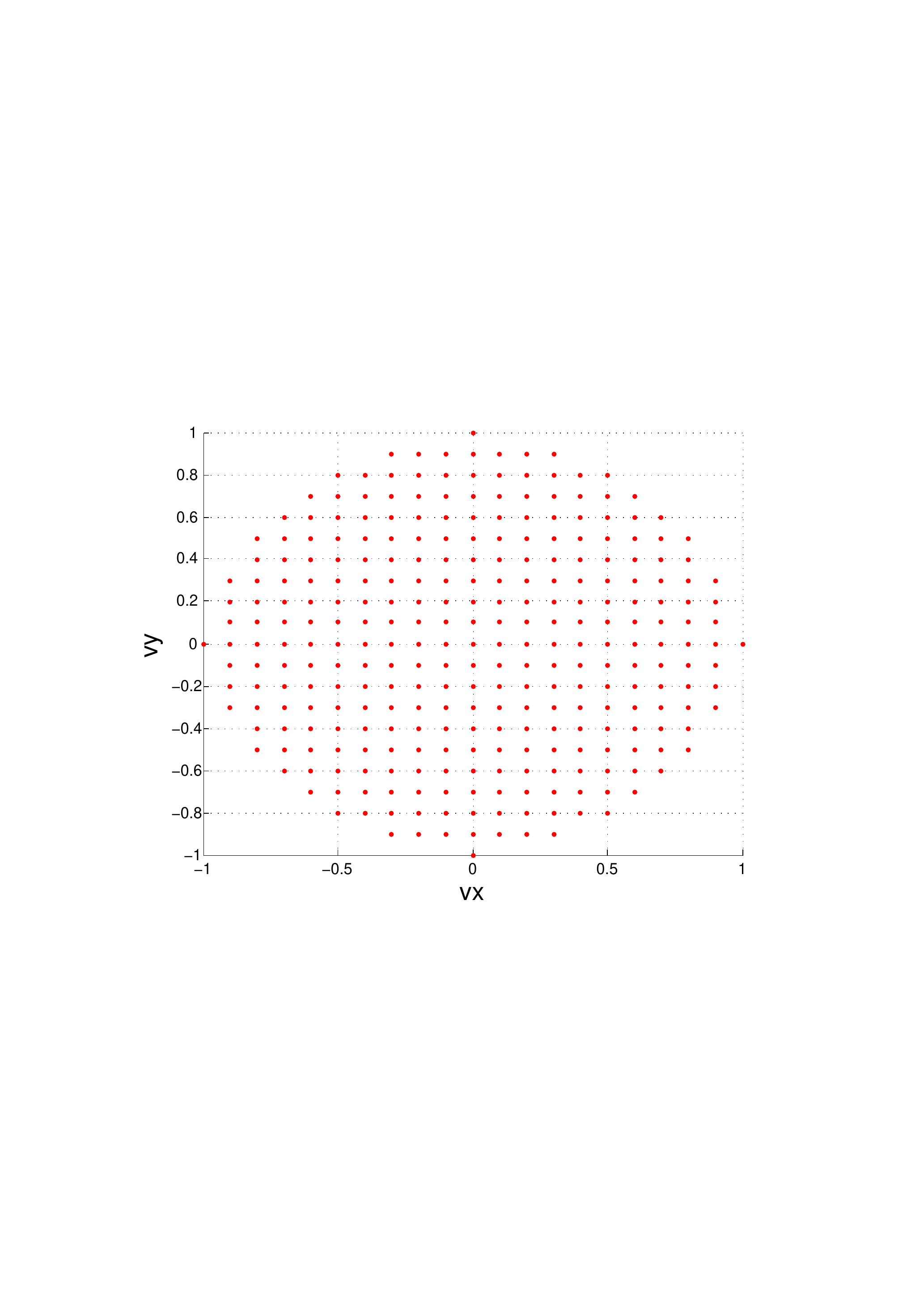}
\end{center}
\end{minipage}\hfill
\begin{minipage}{0.45\linewidth} 
\begin{center}
\includegraphics[trim = 40mm 100mm 40mm 90mm,width=1.\textwidth]{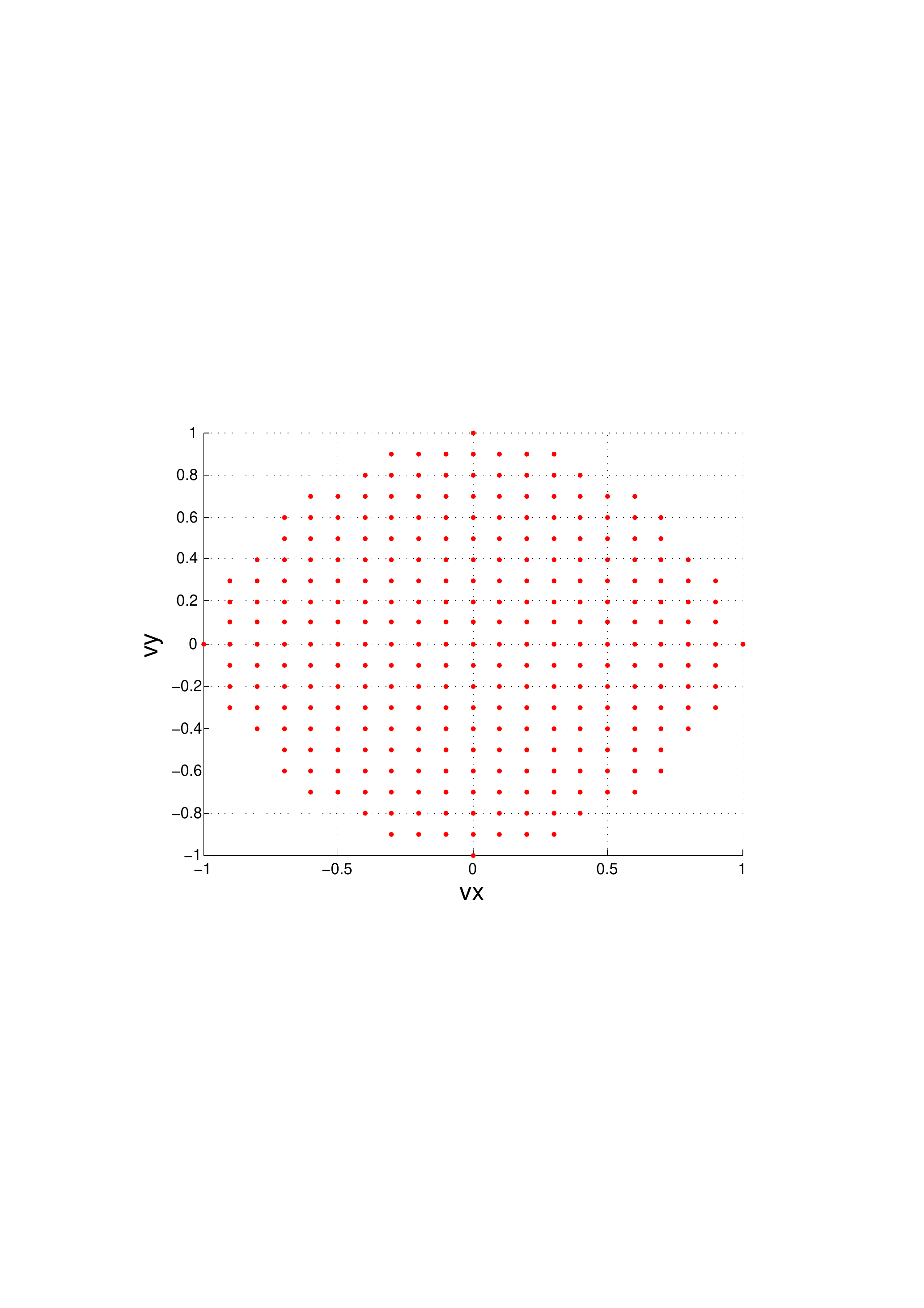}
\end{center}
\end{minipage}\\
\caption{Velocities $\vectV$ stable in $\lambda$ scale for the scheme relative to $\utilde=\vectz$ (MRT on the left), $\utilde=\vectV$ (on the right), for an intrinsic diffusion with $\sk[q]=1$ and $\sk[xy]=1.5$.}
\label{fig:vpddqq_7}
\end{figure}

\begin{figure}
  \begin{minipage}{0.45\linewidth} 
\begin{center}
\includegraphics[trim = 40mm 100mm 40mm 90mm,width=1.\textwidth]{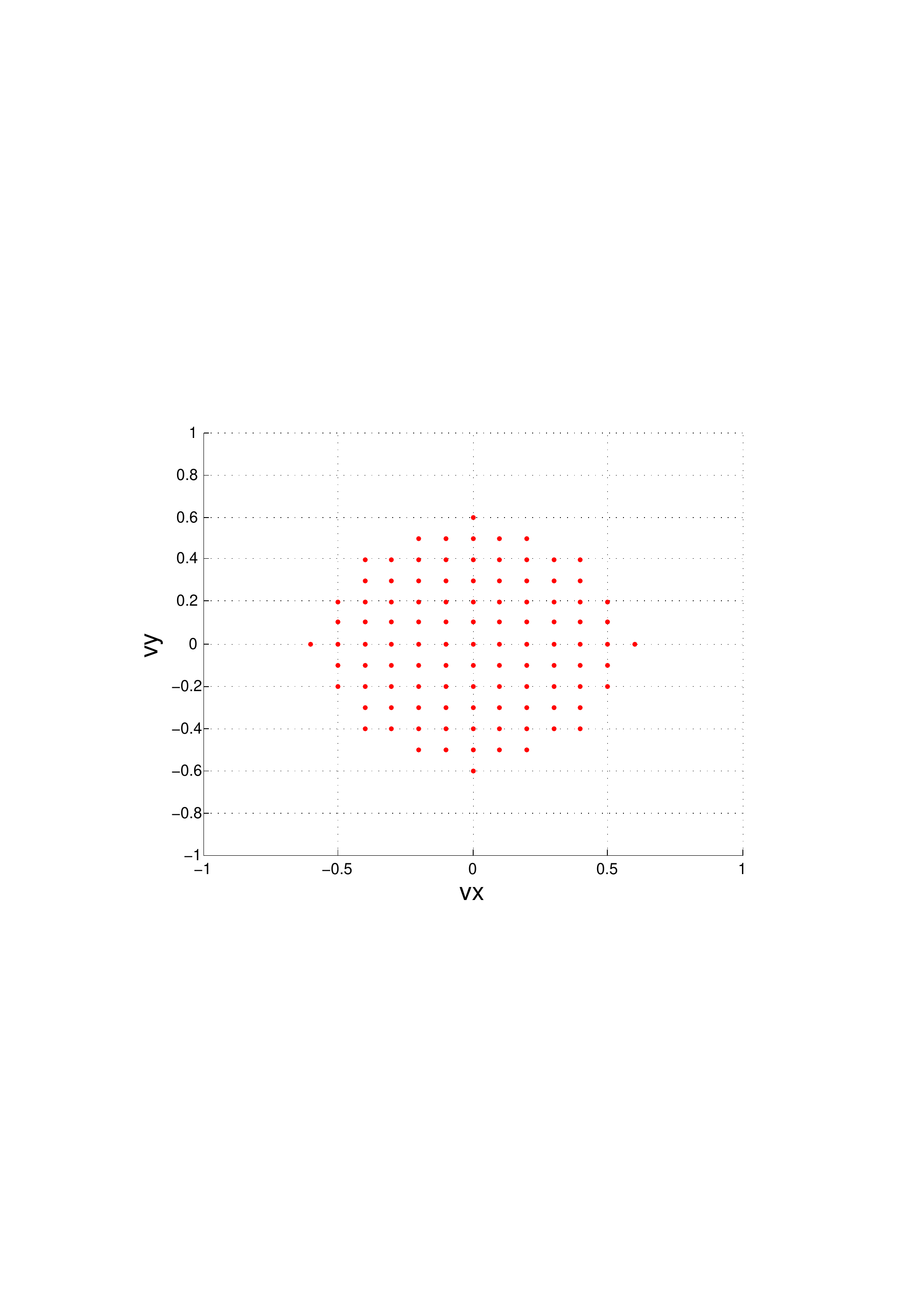}
\end{center}
\end{minipage}\hfill
\begin{minipage}{0.45\linewidth} 
\begin{center}
\includegraphics[trim = 40mm 100mm 40mm 90mm,width=1.\textwidth]{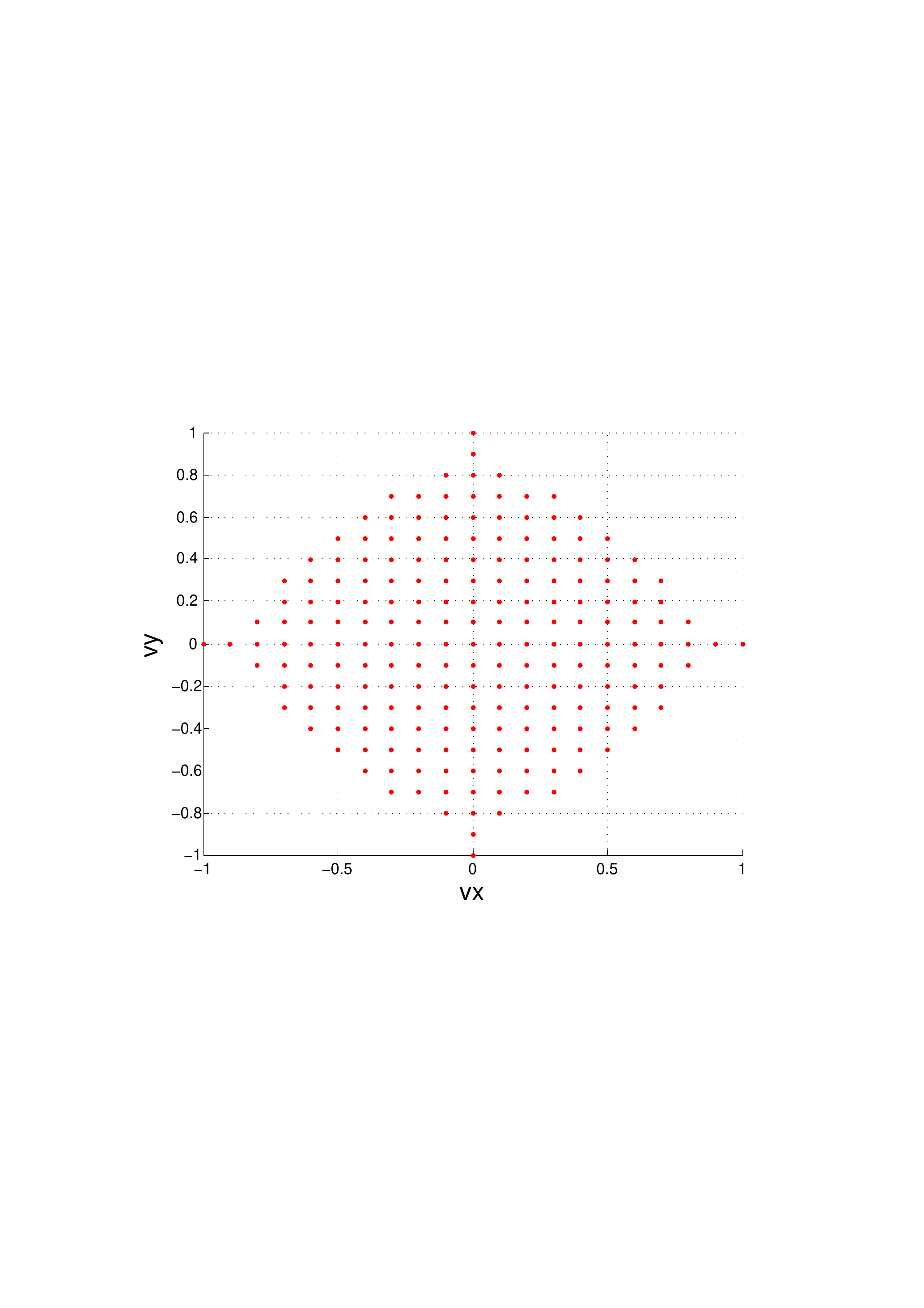}
\end{center}
\end{minipage}\\
\caption{Velocities $\vectV$ stable in $\lambda$ scale for the scheme relative to $\utilde=\vectz$ (MRT on the left), $\utilde=\vectV$ (on the right), for an intrinsic diffusion with $\sk[q]=1$ and $\sk[xy]=1.9$.}
\label{fig:vpddqq_8}
\end{figure}

\begin{figure}
  \begin{minipage}{0.45\linewidth} 
\begin{center}
\includegraphics[trim = 40mm 100mm 40mm 90mm,width=1.\textwidth]{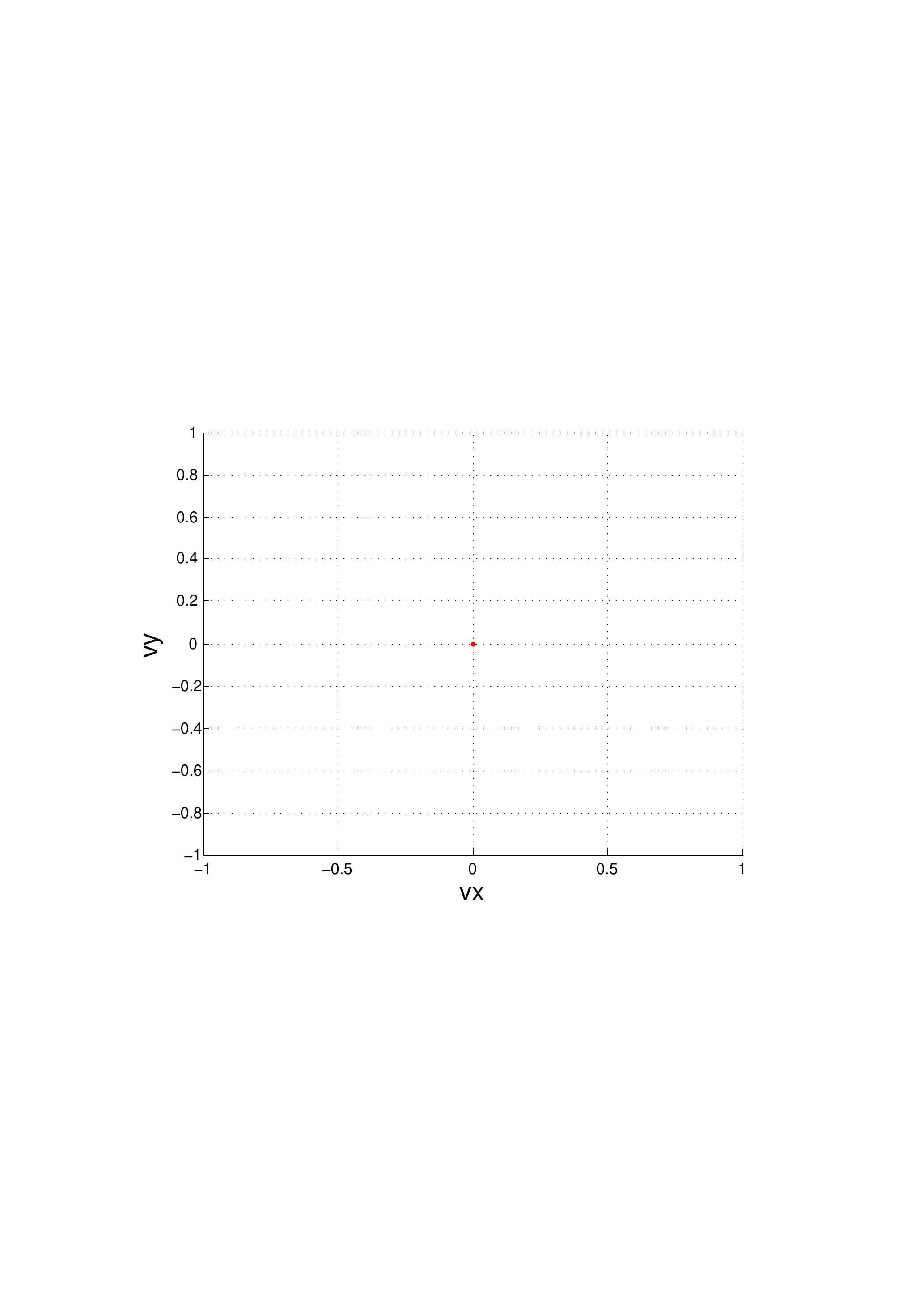}
\end{center}
\end{minipage}\hfill
\begin{minipage}{0.45\linewidth} 
\begin{center}
\includegraphics[trim = 40mm 100mm 40mm 90mm,width=1.\textwidth]{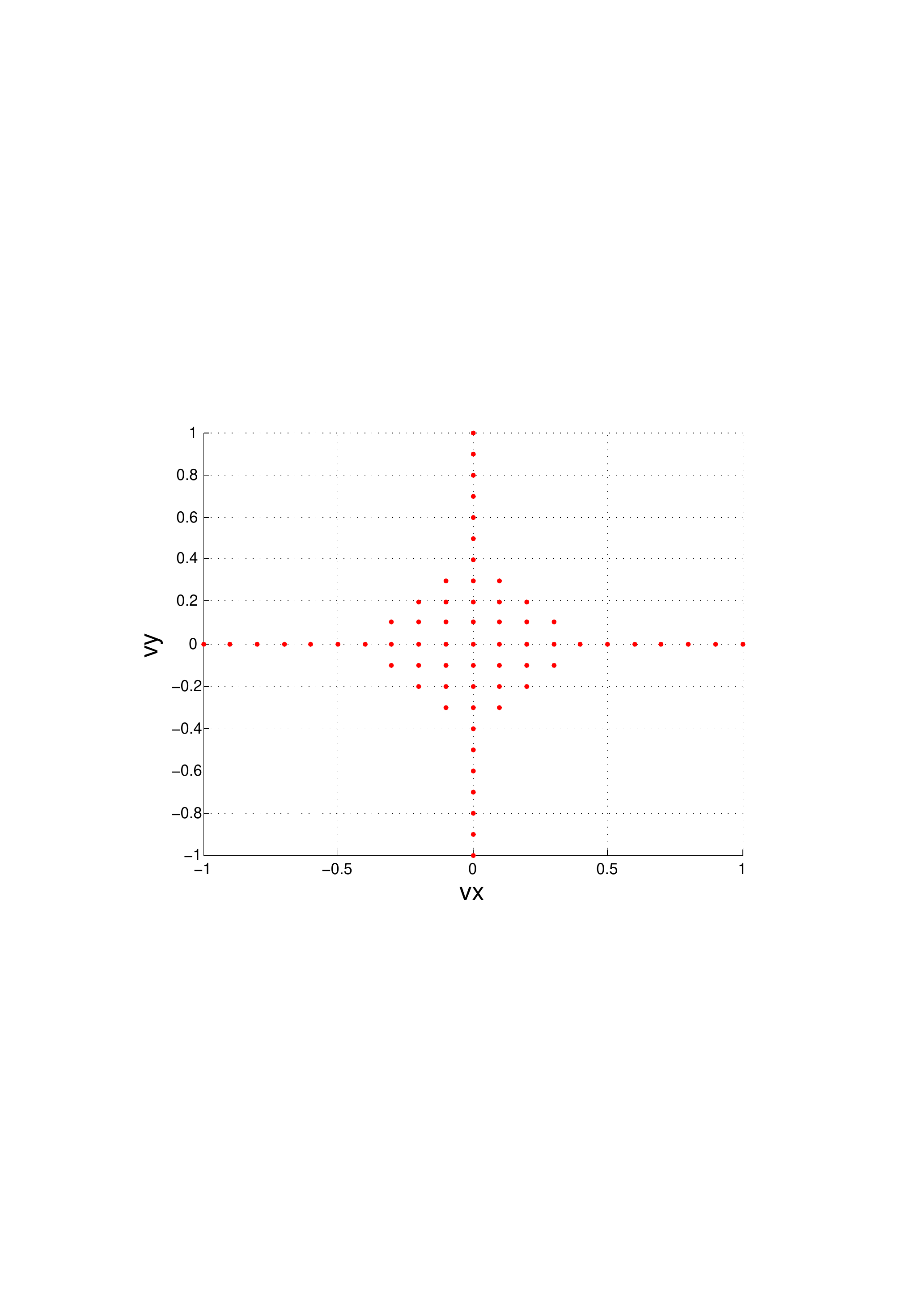}
\end{center}
\end{minipage}\\
\caption{Velocities $\vectV$ stable in $\lambda$ scale for the scheme relative to $\utilde=\vectz$ (MRT on the left), $\utilde=\vectV$ (on the right), for an intrinsic diffusion with $\sk[q]=1$ and $\sk[xy]=2$.}
\label{fig:vpddqq_8b}
\end{figure}

\begin{figure}
  \begin{minipage}{0.45\linewidth} 
\begin{center}
\includegraphics[trim = 40mm 100mm 40mm 90mm,width=1.\textwidth]{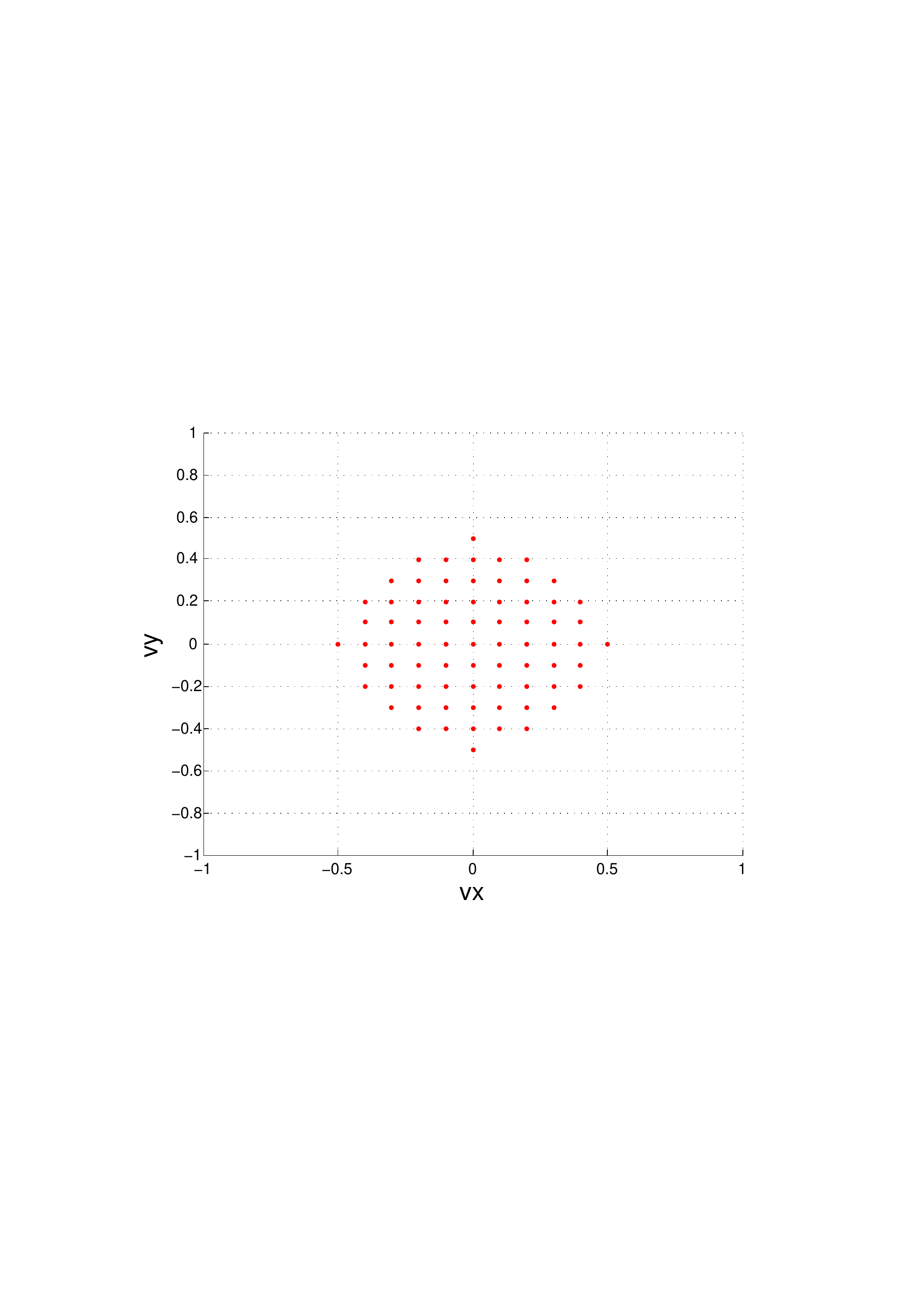}
\end{center}
\end{minipage}\hfill
\begin{minipage}{0.45\linewidth} 
\begin{center}
\includegraphics[trim = 40mm 100mm 40mm 90mm,width=1.\textwidth]{images/d2q4_t_is_V_11,9.pdf}
\end{center}
\end{minipage}\\
\caption{Velocities $\vectV$ stable in $\lambda$ scale for the scheme relative to $\utilde=\vectz$ (MRT on the left), $\utilde=\vectV$ (on the right), for an intrinsic diffusion with $\sk[q]=1.9$ and $\sk[xy]=1$.}
\label{fig:vpddqq_9}
\end{figure}

\begin{figure}
  \begin{minipage}{0.45\linewidth} 
\begin{center}
\includegraphics[trim = 40mm 100mm 40mm 90mm,width=1.\textwidth]{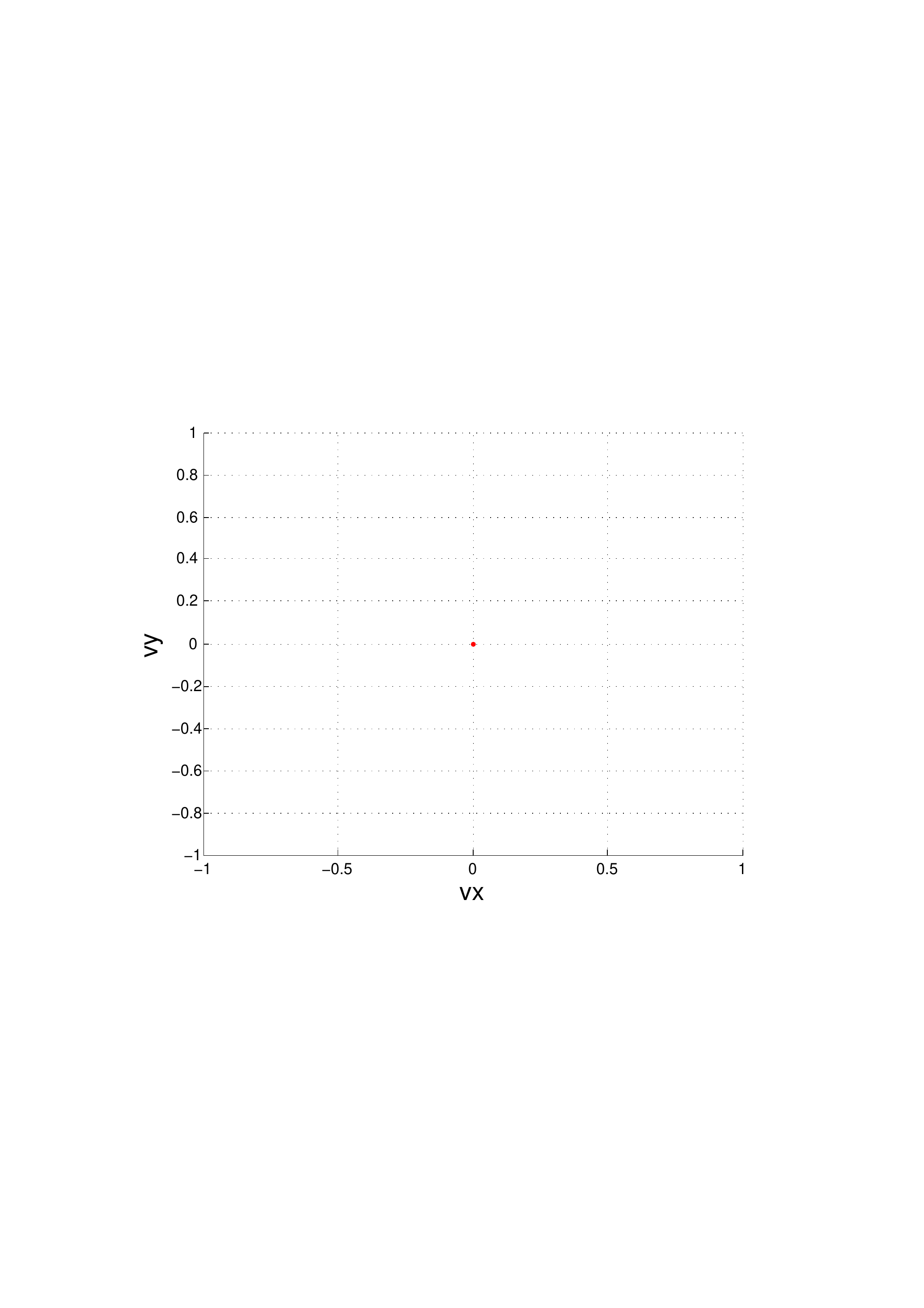}
\end{center}
\end{minipage}\hfill
\begin{minipage}{0.45\linewidth} 
\begin{center}
\includegraphics[trim = 40mm 100mm 40mm 90mm,width=1.\textwidth]{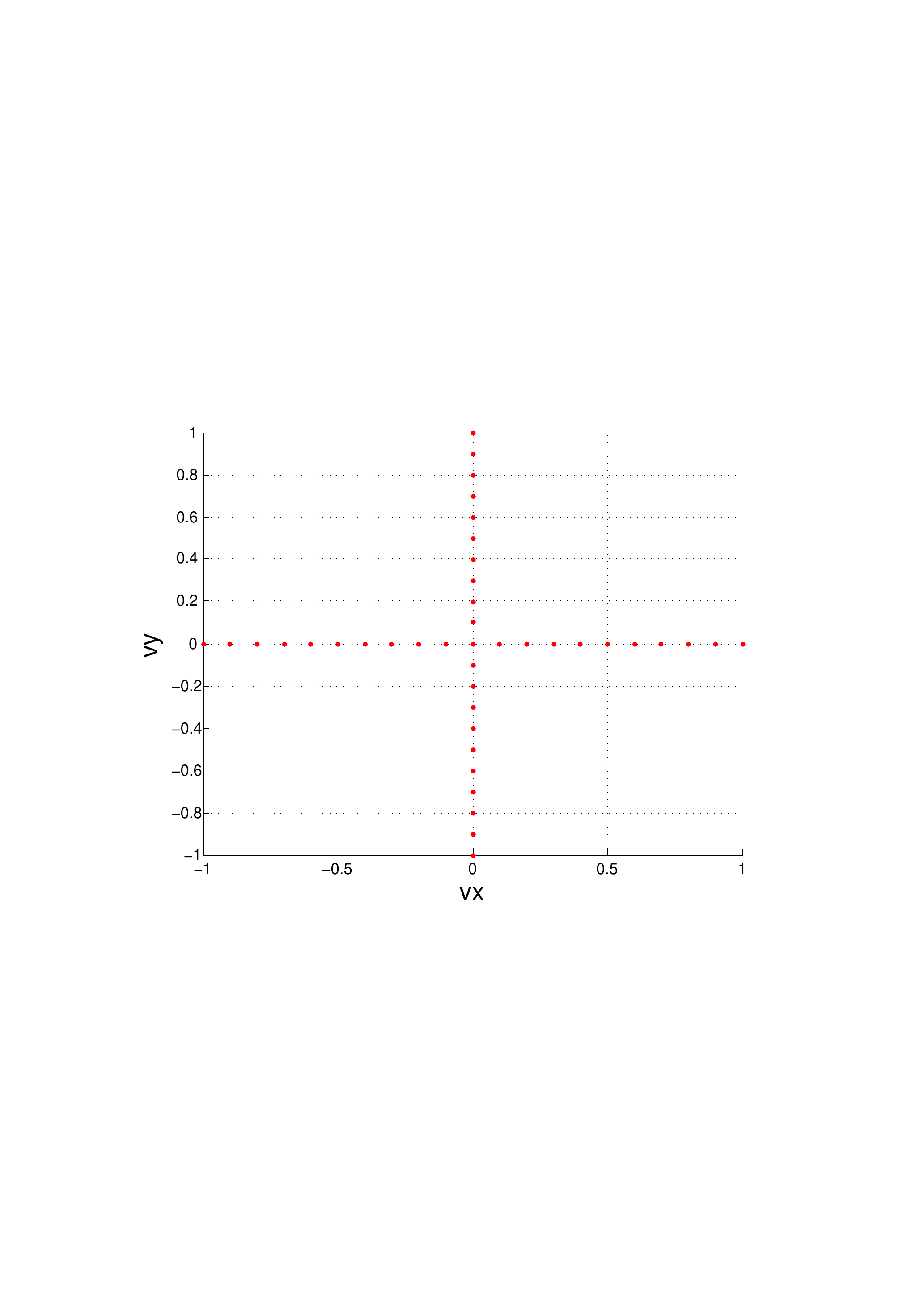}
\end{center}
\end{minipage}\\
\caption{Velocities $\vectV$ stable in $\lambda$ scale for the scheme relative to $\utilde=\vectz$ (MRT on the left), $\utilde=\vectV$ (on the right), for an intrinsic diffusion with $\sk[q]=2$ and $\sk[xy]=1$.}
\label{fig:vpddqq_10}
\end{figure}

For the $\ddqqn$ scheme with an intrinsic diffusion (figures \ref{fig:vpddqq_6} to \ref{fig:vpddqq_10}), the results are different but the general trend is the same. The scheme relative to $\utilde=\vectV$ verifies (\ref{eq:CN}) for larger sets of $\vectV$ than the MRT scheme, excepted when $\sk[q]=1$, $\sk[xy]=1.5$ (figure \ref{fig:vpddqq_7}).
However, the areas associated to $\utilde=\vectV$ are not optimal any more: they decrease when $\sk[q]$ and $\sk[xy]$ move away from each other. Both velocity fields ($\utilde=\vectz$ and $\utilde=\vectV$) undergo this phenomenon but the effect of the dispersion is bigger for the MRT scheme. These results confirm the advected spot ones: the same deterioration appears when $\sig[q]$ tends to $0$ (table \ref{table:dispdifix} and \ref{table:dispdifix2}) to a lesser extent for $\utilde=\vectV$ than for $\utilde=\vectz$.

The main conclusions of this section are the following. Choosing $\utilde=\vectV$ cancels some dispersive terms becoming important as $\vectV$ grows. This cancellation does not occur for $\utilde=\vectz$. We must view these results in parallel with the fact that the scheme is more stable (numerical $L^2$ notion) for $\utilde=\vectV$ than for $\utilde=\vectz$.

\section{Theoretical $\Li$ stability}\label{sub:stabLi}

We study in this section the stability of the relative velocity $\ddqqn$ scheme with respect to a notion of $\Li$ stability presented in \cite{Dell:2013:0,Gra:2014:0}. We use it to demonstrate some maximum principles for $\utilde$ equal to $\vectz$ (MRT scheme) and $\vectV$, the advection velocity (``cascaded like'' scheme). The $\Li$ stability area is fully described in terms of relaxation parameters and advection velocity for these two choices of relative velocity. We show that this $\Li$ notion does not differentiate $\utilde=\vectz$ from $\utilde=\vectV$ in terms of stable behaviour. In each case, the parameters $(\sk[q],\sk[xy])$ corresponding to a non empty area of $\Li$ stability in $\vectV$ are included in the square $[0,2]^2$.

\begin{definition}[$L^{\infty}$ stability]\label{th:stabli}
Let us consider a $\ddqq$ scheme on the cartesian lattice $\mathcal{L}\in\R^d$. The mass at time $t$ is given by 
\begin{equation*}\label{eq:masstot}
\mtot(t)=\sum_{\vectx\in\mathcal{L}}\sum_{j=0}^{q-1}\fj(\vectx,t),\quad t\in\R.
\end{equation*}
Let $C\in\varp{\R}{+}{\star}$, suppose that the initial distributions of particles are nonnegative and that the initial mass is bounded by $C$: 
\begin{equation*}
\fj(\vectx,0)\geqslant0,\quad 0\leq j\leq q-1,~ \vectx\in\mL,\quad and\quad \mtot(0)\leq C,
\end{equation*}
the scheme is said to be $\Li$ stable if for all time $t^n\in\R$, $n\in\N$,
\begin{equation*}
\fj(\vectx,t^n)\geqslant0,\quad 0\leq j\leq q-1,~ \vectx\in\mL,\quad and\quad
\mtot(t^n)\leq C.
\end{equation*}
\end{definition}

The proofs of this {\it a priori} non linear $\Li$ stability notion are only depending on the relaxation phase. Indeed, the transport exchanges the distributions between each other: it does not act on their positivity and on their mass. Thus it is sufficient to show that the postcollision distributions remain nonnegative and the mass bounded.
Note also that if $\mtot(0)$ is bounded, $\mtot(t^n)$ remains bounded for all time $t^n$ because this mass is conserved by the scheme. So there is only to study the positivity of the postcollision distributions functions.

To do so, we express the postcollision distributions of particles $\vectfe$ as a function of the precollision one $\vectf$ for a general $\ddqq$ scheme with one conservation law. These postcollision distributions are first expressed as functions of the postcollision moments thanks to (\ref{eq:mtof}). After applying the relaxation identity (\ref{eq:relaxationu}), the precollision moments are written in the frame of the particle distributions with (\ref{eq:ftomu}). These calculations lead to the following expression of the relaxation.
\begin{multline}\label{eq:relfgen}
\fje[l]=\Big(1-\sum_{k=1}^{q-1}\sk\Mijinvu[lk]\Miju[kl]\Big)\fj[l]-\sum_{\substack{j=0\\j\neq l}}^{q-1}\Big(\sum_{k=1}^{q-1}\sk\Mijinvu[lk]\Miju[kj]\Big)\fj\\+\sum_{k=1}^{q-1}\sk\Mijinvu[lk]\mkueq,\quad 0\leq l\leq q-1. 
\end{multline}

The framework of study is linear because we are approaching a linear advection equation. As a consequence, the equilibrium term of (\ref{eq:relfgen}) is a linear function of the conserved variable $\rho$ and thus of the distributions $\fj$. Consequently, there is a matricial relation between $\vectfe$ and $\vectf$: sufficient conditions of $\Li$ stability are obtained checking when this matrix is nonnegative.

\begin{definition}
For $q\in\N^{\star}$, a square matrix of $\espM(\R)$ is said to be nonnegative if all its coefficients are nonnegative.
\end{definition}

Note that this notion is different from a positive matrix that have all its eigenvalues nonnegative \cite{LasThe:1993:0}. In the following, we prove the results in the twisted case: the analogous ones for the $\ddqqn$ scheme are available in the appendix \ref{se:app4} and use the proposition \ref{th:trstab} of the appendix \ref{se:app3}. We first consider the relative velocity $\ddqqn$ scheme with a non intrinsic diffusion (equilibrium (\ref{eq:eqd2q4tre})). The proofs are quite technical and involves geometric inequalities on $\vectV$ and $\vects$.

\subsection{The non intrinsic diffusion case}

\begin{proposition}[$\Li$ stability areas for the MRT scheme]\label{th:stlitrt0}
Let $\vectV\in\R^2$, $(\sk[q],\sk[xy])\in\R^2$, consider the twisted $\ddqqn$ MRT scheme $(\utilde=\vectz)$ with the relaxation parameters $(0,\sk[q],\sk[q],\sk[xy])$, associated with the equilibrium (\ref{eq:eqd2q4},\ref{eq:eqd2q4tre}) $$\vectmeqz = \rho(1,\Vx,\Vy,\Vx\Vy).$$ Note $\vectVV=(\VVx,\VVy)$ where $\VVx=\Vx+\Vy$, $\VVy=\Vx-\Vy$ and $\gamma=\sk[q]/\sk[xy]$ :
\begin{itemize}
\item if $0<\sk[xy]\leq\min(\sk[q],2-\sk[q])$ (area BCD on the figure \ref{fig:zonzst_dhtw}), the scheme is $\Li$ stable for all $\vectV$ so that 
\begin{subequations}
\begin{gather}
(\VVx\pm2\lambda\gamma)^2-(\VVy)^2\geqslant 4\lambda^2(\gamma^2-1),\label{eq:TRTst1}\\
(\VVy\pm2\lambda\gamma)^2-(\VVx)^2\geqslant 4\lambda^2(\gamma^2-1).\label{eq:TRTst2}
\end{gather}
\end{subequations}
\item if $\sk[q]\leq\sk[xy]\leq2\sk[q]$ and $\sk[q]\leq1$ (area ABC on the figure \ref{fig:zonzst_dhtw}), the scheme is $\Li$ stable for all $\vectV$ so that 
\begin{subequations}
\begin{gather}
(\VVx\pm2\lambda\gamma)^2-(\VVy)^2\geqslant 4\lambda^2(\gamma-1)^2,\label{eq:TRTst3}\\
(\VVy\pm2\lambda\gamma)^2-(\VVx)^2\geqslant 4\lambda^2(\gamma-1)^2.\label{eq:TRTst4}
\end{gather}
\end{subequations}
\item if $2-\sk[q]\leq\sk[xy]\leq2(2-\sk[q])$ and $\sk[q]\geqslant1$ (area ACD on the figure \ref{fig:zonzst_dhtw}), the scheme is $\Li$ stable for all $\vectV$ so that  
\begin{subequations}
\begin{gather}
(\VVx\pm2\lambda\gamma)^2-(\VVy)^2\geqslant  4\lambda^2\Big((\gamma+1)^2-\frac{4}{\sk[xy]}\Big),\label{eq:TRTst5}\\
(\VVy\pm2\lambda\gamma)^2-(\VVx)^2\geqslant  4\lambda^2\Big((\gamma+1)^2-\frac{4}{\sk[xy]}\Big).\label{eq:TRTst6}
\end{gather}
\end{subequations}
\item if $\sk[xy]=0$ and $0<\sk[q]\leq2$ (ray ]BD] on the figure \ref{fig:zonzst_dhtw}), the scheme is $\Li$ stable for $\vectV=\vectz$.
\item if $\sk[xy]=\sk[q]=0$ (point B on the figure \ref{fig:zonzst_dhtw}), the scheme is unconditionally $\Li$ stable.
\item For all other $\vects$, there is no $\vectV$ corresponding to a $\Li$ stable scheme.
\end{itemize}
In particular, the parameters $(\sk[q],\sk[xy])$ corresponding to a non empty area of $\Li$ stability in $\vectV$ are included in the square $[0,2]^2$.
\end{proposition}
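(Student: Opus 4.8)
The plan is to use the reduction already established above: since the transport phase alters neither positivity nor total mass, and $\mtot$ is conserved, the scheme is $\Li$ stable as soon as the linear relaxation matrix $G$ defined by $\vectfe = G\,\vectf$ is nonnegative, i.e. has all its entries $\geq 0$. The whole statement thus amounts to describing, in the plane $(\sk[q],\sk[xy])$ and in $\vectV$, the region where $G\geq 0$. First I would make the structure of $G$ explicit. For $\utilde=\vectz$ the moment matrix factorises as $\MatMu=\mathrm{diag}(1,\lambda,\lambda,\lambda^2)\,H$, where $H$ is the $4\times4$ sign matrix recording the signs of $1,X,Y,XY$ on the four velocities; $H$ is a Hadamard matrix, $H^{-1}=\tfrac14 H^{\mathsf T}$. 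Carrying this through $G=\MatI+\MatMu^{-1}\MatD\MatMu(\MatE-\MatI)$ with the equilibrium $\rho(1,\Vx,\Vy,\Vx\Vy)$, one obtains the clean decomposition $G_{ij}=A_{ij}+B_i$, i.e. $G=A+B\,\mathbf{1}^{\mathsf T}$ with $B=(B_0,\dots,B_3)^{\mathsf T}$, where $A=\tfrac14 H^{\mathsf T}\mathrm{diag}(1,1-\sk[q],1-\sk[q],1-\sk[xy])H$ is a symmetric circulant matrix with first row $(\delta,\alpha,\beta,\alpha)$ and the $B_i$ collect the advection contribution. Here
\begin{equation*}
\alpha=\tfrac{\sk[xy]}{4},\qquad \beta=\tfrac{\sk[q]}{2}-\tfrac{\sk[xy]}{4},\qquad \delta=1-\tfrac{\sk[q]}{2}-\tfrac{\sk[xy]}{4}.
\end{equation*}

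The key observation is that every row of the circulant part $A$ contains each of the three values $\alpha,\beta,\delta$, so the minimal entry of row $i$ of $G$ equals $\min(\alpha,\beta,\delta)+B_i$. Hence
\begin{equation*}
G\geq0 \iff \min(\alpha,\beta,\delta)+\min_{0\leq i\leq3}B_i\geq0.
\end{equation*}
A direct computation of the $B_i$ in the rotated variables $\VVx=\Vx+\Vy$, $\VVy=\Vx-\Vy$ gives $\min_i B_i=\min\!\big(-\tfrac{\sk[q]|\VVx|}{4\lambda}+P,\,-\tfrac{\sk[q]|\VVy|}{4\lambda}-P\big)$ with $P=\tfrac{\sk[xy]}{16\lambda^2}\big((\VVx)^2-(\VVy)^2\big)$. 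The criterion therefore splits into the two inequalities $\min(\alpha,\beta,\delta)-\tfrac{\sk[q]|\VVx|}{4\lambda}+P\geq0$ and $\min(\alpha,\beta,\delta)-\tfrac{\sk[q]|\VVy|}{4\lambda}-P\geq0$; since $\gamma=\sk[q]/\sk[xy]\geq0$, each is exactly the pair of inequalities obtained by completing the square and writing the two signs $\pm$ (because $(|\VVx|-2\lambda\gamma)^2=\min((\VVx+2\lambda\gamma)^2,(\VVx-2\lambda\gamma)^2)$), which is the format of (\ref{eq:TRTst1})--(\ref{eq:TRTst6}).

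It then remains to identify $m:=\min(\alpha,\beta,\delta)$ in each region and to substitute. One checks that $m=\alpha$ precisely on $\sk[xy]\leq\min(\sk[q],2-\sk[q])$ (BCD), that $m=\beta$ on $\sk[q]\leq\sk[xy]\leq2\sk[q]$, $\sk[q]\leq1$ (ABC), and that $m=\delta$ on $2-\sk[q]\leq\sk[xy]\leq2(2-\sk[q])$, $\sk[q]\geq1$ (ACD); inserting each value of $m$ into the two inequalities above and completing the square yields respectively (\ref{eq:TRTst1},\ref{eq:TRTst2}), (\ref{eq:TRTst3},\ref{eq:TRTst4}) and (\ref{eq:TRTst5},\ref{eq:TRTst6}). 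The degenerate cases are handled by the same formula: for $\sk[xy]=0$ one has $m=0$ and $\min_i B_i=-\tfrac{\sk[q]}{4\lambda}\max(|\VVx|,|\VVy|)$, forcing $\vectV=\vectz$ when $0<\sk[q]\leq2$, while $\sk[q]=\sk[xy]=0$ gives $G=\MatI$ and unconditional stability. Finally, since $\sum_i B_i=0$ we always have $\min_i B_i\leq0$, with equality at $\vectV=\vectz$; thus the stable set in $\vectV$ is non-empty if and only if $m\geq0$, i.e. $\sk[xy]\geq0$, $\sk[xy]\leq2\sk[q]$ and $\sk[xy]\leq2(2-\sk[q])$. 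These three inequalities cut out the triangle with vertices $(0,0)$, $(2,0)$ and $(1,2)$, which is contained in $[0,2]^2$ — this proves the concluding ``in particular''.

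The main obstacle I anticipate is bookkeeping rather than conceptual difficulty: producing the decomposition $G=A+B\,\mathbf{1}^{\mathsf T}$ and verifying the circulant structure of $A$ demands care (the Hadamard factorisation makes this transparent), and the case analysis must be matched exactly to the boundaries between BCD, ABC and ACD, checking that the completed-square forms coincide with (\ref{eq:TRTst1})--(\ref{eq:TRTst6}), with the correct handling of the two signs and of the absolute values $|\VVx|,|\VVy|$. The one genuinely delicate point is the sharpness claim (``for all other $\vects$ there is no stable $\vectV$''), which relies on the identity $\sum_i B_i=0$ to guarantee that $\vectV=\vectz$ is the least demanding choice.
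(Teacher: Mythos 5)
Your proof is correct, and at its core it follows the same route as the paper: reduce $\Li$ stability to entrywise nonnegativity of the matrix $G$ sending $\vectf$ to $\vectfe$, obtain the scalar conditions $\min\big(\frac{\sk[xy]}{4},\frac{2\sk[q]-\sk[xy]}{4},1-\frac{2\sk[q]+\sk[xy]}{4}\big)\pm\frac{\sk[q]}{4\lambda}(\Vx+(-1)^i\Vy)+(-1)^i\frac{\sk[xy]}{4\lambda^2}\Vx\Vy\geqslant0$ (your $\alpha,\beta,\delta$ are exactly the three quantities in this minimum, which is the unnumbered display opening the paper's proof), then pass to the variables $\VVx,\VVy$ and complete squares to reach (\ref{eq:TRTst1})--(\ref{eq:TRTst6}). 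You differ in two respects, both of which buy something. First, the paper produces the scalar conditions with ``a formal calculus software'', while you derive them by hand from the factorization $\MatM(\vectz)={\rm diag}(1,\lambda,\lambda,\lambda^2)H$ with $H$ Hadamard, giving $G_{ij}=A_{ij}+B_i$ with $A$ circulant of first row $(\delta,\alpha,\beta,\alpha)$; this makes the argument self-contained and explains structurally why only the single quantity $\min(\alpha,\beta,\delta)$ matters. Second, and more substantively, your sharpness argument is genuinely different: the paper establishes non-emptiness of the stability region case by case, by computing the abscissa of the left pole of each hyperbola (with a separate paragraph eliminating $\sk[xy]<0$), whereas your observation that $\sum_iB_i=0$ forces $\min_iB_i\leq0$, with equality exactly at $\vectV=\vectz$, shows in one stroke that the stable set is non-empty if and only if $\min(\alpha,\beta,\delta)\geqslant0$; this single inequality yields the triangle $\sk[xy]\geqslant0$, $\sk[xy]\leq2\sk[q]$, $\sk[xy]\leq2(2-\sk[q])$, hence the inclusion in $[0,2]^2$, and it absorbs at once all the degenerate cases ($\sk[xy]<0$, $\sk[q]<0$, $\sk[q]>2$) that the paper eliminates one at a time. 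Two small points to tidy: your formula $\min_iB_i=\min\big({-}\sk[q]|\VVx|/(4\lambda)+P,\,{-}\sk[q]|\VVy|/(4\lambda)-P\big)$ presupposes $\sk[q]\geqslant0$ (harmless, since $\min(\alpha,\beta,\delta)<0$ whenever $\sk[q]<0$, so those parameters are already excluded by your own criterion), and the sets where $m$ equals $\alpha$, $\beta$, $\delta$ are slightly larger than BCD, ABC, ACD; they coincide with the regions in the statement only after intersection with the triangle $m\geqslant0$, so the word ``precisely'' should be qualified accordingly.
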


The stability areas in the plan $(\sk[q],\sk[xy])$ are represented on the figure \ref{fig:zonzst_dhtw}.The limit cases for the parameters $\sk[q]$ and $\sk[xy]$ lead to consistent inequalities for the velocity conditions.

\begin{figure}
\begin{center}
\definecolor{ffqqtt}{rgb}{1,0,0.2}
\definecolor{ttccqq}{rgb}{0.2,0.8,0}
\definecolor{xdxdff}{rgb}{0.49,0.49,1}
\definecolor{qqttff}{rgb}{0,0.2,1}
\definecolor{uququq}{rgb}{0.25,0.25,0.25}
\definecolor{qqqqff}{rgb}{0,0,1}
\begin{tikzpicture}[scale=0.7,line cap=round,line join=round,>=triangle 45,x=4.0cm,y=4.0cm]
\draw[->,color=black] (-0.5,0) -- (2.5,0);
\foreach \x in {,1,2}
\draw[shift={(\x,0)},color=black] (0pt,2pt) -- (0pt,-2pt) node[below] {\footnotesize $\x$};
\draw[->,color=black] (0,-0.5) -- (0,2.5);
\foreach \y in {,1,2}
\draw[shift={(0,\y)},color=black] (2pt,0pt) -- (-2pt,0pt) node[left] {\footnotesize $\y$};
\draw[color=black] (0pt,-10pt) node[right] {\footnotesize $0$};
\clip(-0.5,-0.5) rectangle (2.5,2.5);
\fill[color=black,fill=black,fill opacity=0.1] (1,2) -- (0,0) -- (1,1) -- cycle;
\fill[color=black,fill=black,fill opacity=0.1] (1,2) -- (1,1) -- (2,0) -- cycle;
\fill[color=black,fill=black,fill opacity=0.1] (0,0) -- (2,0) -- (1,1) -- cycle;
\draw [color=black] (1,2)-- (0,0);
\draw [color=black] (0,0)-- (1,1);
\draw [color=black] (1,1)-- (1,2);
\draw [color=black] (1,2)-- (1,1);
\draw [color=black] (1,1)-- (2,0);
\draw [color=black] (2,0)-- (1,2);
\draw [color=black] (0,0)-- (2,0);
\draw [color=black] (2,0)-- (1,1);
\draw [color=black] (1,1)-- (0,0);
\begin{scriptsize}
\draw (2.35,-0.05) node[anchor=north west] {$\sk[q]$};
\draw (-0.25,2.5) node[anchor=north west] {$\sk[xy]$};
\fill [color=black] (1,2) circle (1.5pt);
\draw[color=black] (1.08,2.13) node {$A$};
\fill [color=black] (0,0) circle (1.5pt);
\draw[color=black] (-0.08,0.13) node {$B$};
\fill [color=black] (1,1) circle (1.5pt);
\draw[color=black] (1.08,1.13) node {$C$};
\fill [color=black] (2,0) circle (1.5pt);
\draw[color=black] (2.08,0.13) node {$D$};
\end{scriptsize}
\end{tikzpicture}
\end{center}
\caption{$\Li$ stability areas in $\vects$ of the twisted $\ddqqn$ MRT scheme with anistropic diffusion.  BCD : (\ref{eq:TRTst1},\ref{eq:TRTst2}), ABC : (\ref{eq:TRTst3},\ref{eq:TRTst4}), ACD :  (\ref{eq:TRTst5},\ref{eq:TRTst6}).} 
\label{fig:zonzst_dhtw}
\end{figure}

\begin{proof}
The first step consists to express $\vectfe$ as a function of $\vectf$ thanks to the relation (\ref{eq:relfgen}). If the matrix sending $\vectf$ on $\vectfe$ is nonnegative, the scheme is $\Li$ stable.
A formal calculus software guarantees that it corresponds to ensure the following relations:
\begin{equation*}
\min\big(\frac{\sk[xy]}{4},\frac{2\sk[q]-\sk[xy]}{4},1-\frac{2\sk[q]+\sk[xy]}{4}\big)\pm\frac{\sk[q]}{4\lambda}(\Vx+(-1)^i\Vy)+(-1)^i\frac{\sk[xy]}{4\lambda^2}\Vx\Vy\geqslant0,\  i=0,1.
\end{equation*}

If $\sk[xy]=0$, these relations imply $\vectV=\vectz$ and $0\leq\sk[q]\leq2$. Otherwise, we factorise by $\sk[xy]/4\lambda^2$ and express these identities as functions of the coefficient $\gamma=\sk[q]/\sk[xy]$. This leads to
\begin{equation}\label{eq:ineq1}
\lambda^2\min\big(1,2\gamma-1,\frac{4}{\sk[xy]}-2\gamma-1\big)\pm\lambda\gamma(\Vx+(-1)^i\Vy)+(-1)^i\Vx\Vy\geqslant0,\quad i=0,1,
\end{equation}
if $\sk[xy]>0$ or 
\begin{equation*}\label{eq:ineq2}
\lambda^2\max\big(1,2\gamma-1,\frac{4}{\sk[xy]}-2\gamma-1\big)\pm\lambda\gamma(\Vx+(-1)^i\Vy)+(-1)^i\Vx\Vy\leq0,\quad i=0,1,
\end{equation*}
if $\sk[xy]<0$.
These inequalities are expressed thanks to quadratic forms in $\vectV$. To characterize the associated geometry, we write them in an adapted basis. To do so, we use the variable change $$\Vx\Vy=\frac{(\VVx)^2-(\VVy)^2}{4},\quad {\rm for}\quad \VVx=\Vx+\Vy,~ \VVy=\Vx-\Vy.$$
The inequalities (\ref{eq:ineq1}) then read
\begin{equation*}
4\lambda^2\min\big(1,2\gamma-1,\frac{4}{\sk[xy]}-2\gamma-1\big)\pm4\lambda\gamma\VVx+(\VVx)^2-(\VVy)^2\geqslant0,
\end{equation*}
completed by the ones obtained when the roles of $\VVx$ and $\VVy$ are exchanged.
We then center them in the adapted frame to obtain if $\sk[xy]>0$

\begin{equation}\label{eq:trtrtmax}
(\VVx\pm2\lambda\gamma)^2-(\VVy)^2\geqslant 4\lambda^2\max\big(\gamma^2-1,(\gamma-1)^2,(\gamma+1)^2-\frac{4}{\sk[xy]}\big),
\end{equation}
plus the inequalities obtained exchanging the roles of $\VVx$ and $\VVy$.
The case $\sk[xy]<0$ yields to
\begin{equation}\label{eq:trtrtmax2}
(\VVx\pm2\lambda\gamma)^2-(\VVy)^2\leq 4\lambda^2\min\big(\gamma^2-1,(\gamma-1)^2,(\gamma+1)^2-\frac{4}{\sk[xy]}\big).
\end{equation}

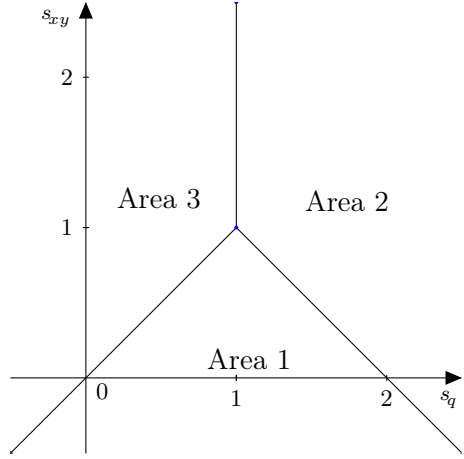
\begin{figure}
\begin{center}
\definecolor{xdxdff}{rgb}{0.49,0.49,1}
\definecolor{uququq}{rgb}{0.25,0.25,0.25}
\definecolor{qqqqff}{rgb}{0,0,1}
\begin{tikzpicture}[scale=0.5,line cap=round,line join=round,>=triangle 45,x=4.0cm,y=4.0cm]
\draw[->,color=black] (-0.5,0) -- (2.5,0);
\foreach \x in {,1,2}
\draw[shift={(\x,0)},color=black] (0pt,2pt) -- (0pt,-2pt) node[below] {\footnotesize $\x$};
\draw[->,color=black] (0,-0.5) -- (0,2.5);
\foreach \y in {,1,2}
\draw[shift={(0,\y)},color=black] (2pt,0pt) -- (-2pt,0pt) node[left] {\footnotesize $\y$};
\draw[color=black] (0pt,-10pt) node[right] {\footnotesize $0$};
\clip(-0.5,-0.5) rectangle (2.5,2.5);
\draw (-0.5,-0.5)-- (1,1);
\draw (2.5,-0.5)-- (1,1);
\draw (1,1)-- (1,2.5);
\draw (0.74,0.26) node[anchor=north west] {Area 1};
\draw (1.39,1.31) node[anchor=north west] {Area 2};
\draw (0.14,1.33) node[anchor=north west] {Area 3};
\begin{scriptsize}
\draw (2.3,-0.03) node[anchor=north west] {$\sk[q]$};
\draw (-0.35,2.5) node[anchor=north west] {$\sk[xy]$};
\fill [color=qqqqff] (1,1) circle (1.5pt);
\fill [color=uququq] (-0.5,-0.5) circle (1.5pt);
\fill [color=xdxdff] (2.5,-0.5) circle (1.5pt);
\fill [color=qqqqff] (1,2.5) circle (1.5pt);
\end{scriptsize}
\end{tikzpicture}
\end{center}
\caption{Value of the maximum of (\ref{eq:trtrtmax}). Area $1$ : $\gamma^2-1$, area $2$ : $(\gamma+1)^2-4/\sk[xy]$, area $3$ : $(\gamma-1)^2$.} 
\label{fig:zonemax_dhtw}
\end{figure}

We begin by showing that there is no velocity stable in the case $\sk[xy]<0$. The minimum of (\ref{eq:trtrtmax2}) is equal to $(\gamma-1)^2$ (nonnegative) if $\sk[q]\leq\sk[xy]<0$, to $\gamma^2-1$ (negative) if $\sk[xy]\leq\min(\sk[q],2-\sk[q])$ and $(\gamma+1)^2-4/\sk[xy]$ (nonnegative) if $2-\sk[q]\leq\sk[xy]<0$. In the first and third cases, (\ref{eq:trtrtmax2})  would have a solution if and only if the left pole of the hyperbole centered in $2\lambda|\gamma|$ is negative. We then should have
$$2\lambda|\gamma|-2\lambda|\gamma-1|\leq0,$$ in the first case and 
$$2\lambda|\gamma|-2\lambda((\gamma+1)^2-4/\sk[xy])^{\frac{1}{2}}\leq0,$$ in the third one.
Considering that $\sk[xy]<0$, the first case is equivalent to $\sk[xy]\leq2\sk[q]$ that has no intersection with $\sk[q]\leq\sk[xy]<0$ where $(\gamma-1)^2$ is the minimum.
The third one is equivalent to $\sk[xy]\leq2(2-\sk[q])$ whose intersection with $2-\sk[q]\leq\sk[xy]<0$ is empty. It remains the case 
\begin{equation*}
(\VVy)^2-(\VVx\pm2\lambda\gamma)^2\geqslant 4\lambda^2(1-\gamma^2),
\end{equation*}
when $\sk[xy]\leq\min(\sk[q],2-\sk[q])$. These inequations combined with the ones obtained exchanging the roles of $\VVx$ and $\VVy$ have no common intersection. This closes the case $\sk[xy]<0$.

We exhibit now the conditions for which (\ref{eq:trtrtmax}) has at least a solution $\vectV$. 
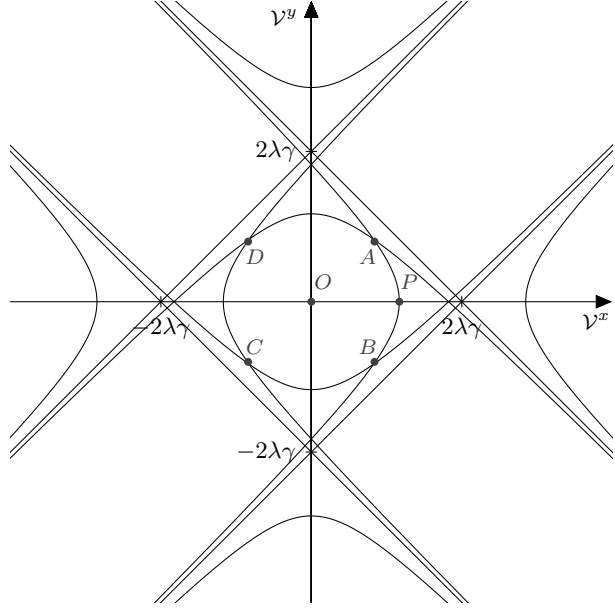
\begin{figure}
\begin{center}
\definecolor{qqqqcc}{rgb}{0,0,0.8}
\definecolor{ttffff}{rgb}{0.2,1,1}
\definecolor{uququq}{rgb}{0.25,0.25,0.25}
\definecolor{qqccqq}{rgb}{0,0.8,0}
\definecolor{ffqqqq}{rgb}{1,0,0}
\definecolor{qqqqff}{rgb}{0,0,1}
\definecolor{xdxdff}{rgb}{0.49,0.49,1}
\begin{tikzpicture}[scale=1.,line cap=round,line join=round,>=triangle 45,x=1.0cm,y=1.0cm]
\draw[->,color=black] (-4,0) -- (4,0);
\foreach \x in {-5,-4,-3,-2,-1,1,2,3,4}
\draw[->,color=black] (0,-4) -- (0,4);
\foreach \y in {-5,-4,-3,-2,-1,1,2,3,4}
\draw[color=black] (0pt,-10pt) node[right] {};
\draw[shift={(2,0)},color=black] (0pt,2pt) -- (0pt,-2pt) node[below] {\footnotesize $2\lambda\gamma$};
\draw[shift={(-2,0)},color=black] (0pt,2pt) -- (0pt,-2pt) node[below] {\footnotesize $-2\lambda\gamma$};
\draw[shift={(0,2)},color=black] (2pt,0pt) -- (-2pt,0pt) node[left] {\footnotesize $2\lambda\gamma$};
\draw[shift={(0,-2)},color=black] (2pt,0pt) -- (-2pt,0pt) node[left] {\footnotesize $-2\lambda\gamma$};
\clip(-4,-4) rectangle (4,4);
\draw [domain=-5:5] plot(\x,{(-4--2*\x)/-2});
\draw [domain=-5:5] plot(\x,{(-4--2*\x)/2});
\draw [domain=-5:5] plot(\x,{(--4--2*\x)/-2});
\draw [domain=-5:5] plot(\x,{(-4-2*\x)/-2});
\draw [samples=50,domain=-0.99:0.99,rotate around={0:(2.01,0)},xshift=2.01cm,yshift=0cm,color=black] plot ({0.84*(1+\x*\x)/(1-\x*\x)},{0.84*2*\x/(1-\x*\x)});
\draw [samples=50,domain=-0.99:0.99,rotate around={0:(2.01,0)},xshift=2.01cm,yshift=0cm,color=black] plot ({0.84*(-1-\x*\x)/(1-\x*\x)},{0.84*(-2)*\x/(1-\x*\x)});
\draw [samples=50,domain=-0.99:0.99,rotate around={180:(-2.01,0)},xshift=-2.01cm,yshift=0cm,color=black] plot ({0.84*(1+\x*\x)/(1-\x*\x)},{0.84*2*\x/(1-\x*\x)});
\draw [samples=50,domain=-0.99:0.99,rotate around={180:(-2.01,0)},xshift=-2.01cm,yshift=0cm,color=black] plot ({0.84*(-1-\x*\x)/(1-\x*\x)},{0.84*(-2)*\x/(1-\x*\x)});
\draw [samples=50,domain=-0.99:0.99,rotate around={90:(0,2.01)},xshift=0cm,yshift=2.01cm,color=black] plot ({0.84*(1+\x*\x)/(1-\x*\x)},{0.84*2*\x/(1-\x*\x)});
\draw [samples=50,domain=-0.99:0.99,rotate around={90:(0,2.01)},xshift=0cm,yshift=2.01cm,color=black] plot ({0.84*(-1-\x*\x)/(1-\x*\x)},{0.84*(-2)*\x/(1-\x*\x)});
\draw [samples=50,domain=-0.99:0.99,rotate around={-90:(0,-2.01)},xshift=0cm,yshift=-2.01cm,color=black] plot ({0.84*(1+\x*\x)/(1-\x*\x)},{0.84*2*\x/(1-\x*\x)});
\draw [samples=50,domain=-0.99:0.99,rotate around={-90:(0,-2.01)},xshift=0cm,yshift=-2.01cm,color=black] plot ({0.84*(-1-\x*\x)/(1-\x*\x)},{0.84*(-2)*\x/(1-\x*\x)});
\begin{scriptsize}
\fill [color=uququq] (0,0) circle (1.5pt);
\draw[color=uququq] (0.16,0.26) node {$O$};
\fill [color=uququq] (1.17,0) circle (1.5pt);
\draw[color=uququq] (1.3,0.26) node {$P$};
\fill [color=uququq] (0.84,0.8) circle (1.5pt);
\draw[color=uququq] (0.75,0.6) node {$A$};
\fill [color=uququq] (0.84,-0.8) circle (1.5pt);
\draw[color=uququq] (0.75,-0.6) node {$B$};
\fill [color=uququq] (-0.84,-0.8) circle (1.5pt);
\draw[color=uququq] (-0.75,-0.6) node {$C$};
\fill [color=uququq] (-0.84,0.8) circle (1.5pt);
\draw[color=uququq] (-0.75,0.6) node {$D$};
\draw (3.5,-0.05) node[anchor=north west] {$\VVx$};
\draw (-0.65,4.) node[anchor=north west] {$\VVy$};
\end{scriptsize}
\end{tikzpicture}
\end{center}
\caption{Layout of the bounds of the stability areas in $\vectV$ for the twisted $\ddqqn$ MRT scheme.} 
\label{fig:zonehyp_dhtw}
\end{figure}
The eventual stability area is located between the two hyperboles
\begin{equation*}
(\VVx\pm2\lambda\gamma)^2-(\VVy)^2= 4\lambda^2\max\big(\gamma^2-1,(\gamma-1)^2,(\gamma+1)^2-\frac{4}{\sk[xy]}\big),
\end{equation*}
plus the ones obtained exchanging the roles of $\VVx$ and $\VVy$. Let us note that this maximum is always nonnegative because $(\gamma-1)^2$ is nonnegative. This area is delimited by the points $A,B,C,D$ on the figure \ref{fig:zonehyp_dhtw}.
The inequalities (\ref{eq:trtrtmax}) have a solution $\vectV$ if and only if the left pole $P$ of the hyperbole centered in $2\lambda\gamma$ has a nonnegative abscissa (figure \ref{fig:zonehyp_dhtw}).
When the maximum is $\gamma^2-1$, this abscissa is $2\lambda(\gamma-|\gamma^2-1|^{\frac{1}{2}})$, that is nonnegative if $\sk[q]\geqslant0$. 
When it is $(\gamma-1)^2$, the abscissa $2\lambda(2\gamma-1)$ is nonnegative if and only if $\sk[xy]\leq2\sk[q]$.
Finally when the maximum is $(\gamma+1)^2-4/\sk[xy]$, the abscissa is equal to $2\lambda(\gamma-((\gamma+1)^2-4/\sk[xy])^{\frac{1}{2}})$. The positivity of this coefficient is equivalent to $\sk[xy]\leq2(2-\sk[q])$.

It remains to determine the maximum of (\ref{eq:trtrtmax}) in the case where there is at least one velocity $\vectV$ stable: three cases, represented on the figure \ref{fig:zonemax_dhtw}, appear. If $\sk[xy]\leq\sk[q]\leq2-\sk[xy]$, the maximum is $\gamma^2-1$, if $\sk[xy]\geqslant2-\sk[q]$ and  $\sk[q]\geqslant1$, it is $(\gamma+1)^2-4/\sk[xy]$, if $\sk[q]\leq\min(\sk[xy],1)$, it is $(\gamma-1)^2$.
\end{proof}

From the proposition \ref{th:stlitrt0}, we deduce the simpler case of the BGK scheme corresponding to one single relaxation parameter.
 
\begin{corollary}[The BGK case]\label{th:stlibgk0}
Let $\vectV\in\R^2$, $\utilde\in\R^2$, $s\in\R$, consider the twisted $\ddqqn$ scheme relative to $\utilde$, BGK of parameter $s$, associated with the equilibrium (\ref{eq:eqd2q4},\ref{eq:eqd2q4tre}) $$\vectmequ =  \rho\big(1,\Vx-\utx,\Vy-\uty,(\Vx-\utx)(\Vy-\uty)\big).$$ 
Note $\vectVV=(\VVx,\VVy)$ where $\VVx=\Vx+\Vy$, $\VVy=\Vx-\Vy$:
\begin{itemize}
\item if $0\leq s\leq1$, the scheme is $\Li$ stable for $\normi\leq\lambda$.
\item if $1\leq s\leq4/3$,  the scheme is $\Li$ stable for all $\vectV$ so that
\begin{subequations}
\begin{gather}
(\VVx\pm2\lambda)^2-(\VVy)^2\geqslant  16\lambda^2\Big(1-\frac{1}{s}\Big),\label{eq:BGKst1}\\
(\VVy\pm2\lambda)^2-(\VVx)^2\geqslant  16\lambda^2\Big(1-\frac{1}{s}\Big).\label{eq:BGKst2}
\end{gather}
\end{subequations}
\item If $s>4/3$, no $\vectV$ corresponds to a $\Li$ stable scheme.
\end{itemize}
In particular, the parameters $s$ corresponding to a non empty area of $\Li$ stability in $\vectV$ are included in $[0,2]$.
\end{corollary}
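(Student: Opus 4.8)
The plan is to obtain this corollary as a direct specialization of Proposition~\ref{th:stlitrt0} to a single relaxation parameter, i.e.\ to set $\sk[q]=\sk[xy]=s$. Under this choice the ratio $\gamma=\sk[q]/\sk[xy]$ equals $1$ as soon as $s\neq0$, so the entire argument amounts to following the diagonal point $(s,s)$ across the triangulation of the $(\sk[q],\sk[xy])$--plane of figure~\ref{fig:zonzst_dhtw} and inserting $\gamma=1$ in the corresponding velocity inequalities. The value $s=0$ is the point $B$, which is unconditionally $\Li$ stable, so a fortiori the scheme is stable for $\normi\leq\lambda$; and $s<0$ falls under the last bullet of Proposition~\ref{th:stlitrt0} (no admissible $\vectV$), consistently with the announced inclusion in $[0,2]$.

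First I would locate, for $s>0$, the region of figure~\ref{fig:zonzst_dhtw} containing $(s,s)$. For $0<s\leq1$ it lies on the segment $BC$, the common edge of the areas $BCD$ and $ABC$: the constraint $\sk[xy]\leq\min(\sk[q],2-\sk[q])$ reads $s\leq\min(s,2-s)=s$, while $\sk[q]\leq\sk[xy]\leq2\sk[q]$ with $\sk[q]\leq1$ reads $s\leq s\leq2s$ with $s\leq1$, both holding on $BC$. For $1\leq s\leq4/3$ the point enters the triangle $ACD$, since its three defining constraints $\sk[q]\geqslant1$, $2-\sk[q]\leq\sk[xy]$ and $\sk[xy]\leq2(2-\sk[q])$ become $s\geqslant1$, $s\geqslant1$ and $s\leq4/3$. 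For $s>4/3$ the point has crossed the edge $AD$ (where $\sk[xy]=2(2-\sk[q])$, i.e.\ $s=4/3$) and belongs to none of the three triangles, so by the final bullet of Proposition~\ref{th:stlitrt0} no $\vectV$ gives a $\Li$ stable scheme.

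It then remains to substitute $\gamma=1$ into the velocity conditions of each region. On $BC$ the right-hand sides of (\ref{eq:TRTst1},\ref{eq:TRTst2}) and of (\ref{eq:TRTst3},\ref{eq:TRTst4}) both collapse, since $\gamma^2-1=(\gamma-1)^2=0$, leaving the four inequalities $(\VVx\pm2\lambda)^2-(\VVy)^2\geqslant0$ and $(\VVy\pm2\lambda)^2-(\VVx)^2\geqslant0$. The one computational point of the proof is to return to the original variables: with $\VVx=\Vx+\Vy$, $\VVy=\Vx-\Vy$ these four quadratic forms factor respectively as $4(\lambda\pm\Vx)(\lambda\pm\Vy)\geqslant0$, covering all four sign combinations, and I would check that the four sign conditions hold simultaneously if and only if $|\Vx|\leq\lambda$ and $|\Vy|\leq\lambda$, that is $\normi\leq\lambda$. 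In the region $ACD$, putting $\gamma=1$ and $\sk[xy]=s$ turns the bound $4\lambda^2((\gamma+1)^2-4/\sk[xy])$ of (\ref{eq:TRTst5},\ref{eq:TRTst6}) into $16\lambda^2(1-1/s)$, which is exactly (\ref{eq:BGKst1},\ref{eq:BGKst2}); at $s=1$ this equals $0$, matching the previous case and confirming continuity at the vertex $C$. The only genuine obstacle is thus the factorization establishing the box $\normi\leq\lambda$; the rest is bookkeeping on figure~\ref{fig:zonzst_dhtw} and the substitution $\gamma=1$.
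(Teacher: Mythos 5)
Your proposal is correct, and at the top level it takes the same route as the paper: the corollary is obtained by specializing Proposition~\ref{th:stlitrt0} to $\sk[q]=\sk[xy]=s$, i.e.\ $\gamma=1$. The execution differs in a way worth noting. The paper's proof does not actually work from the final statement of the proposition but from the intermediate inequality (\ref{eq:trtrtmax}) inside its proof: with $\gamma=1$ the right-hand side collapses to $\max\big(0,16\lambda^2(1-1/s)\big)$, the case $s\leq1$ is asserted (without computation) to be equivalent to $\normi\leq\lambda$, and the threshold $s\leq 4/3$ is re-derived geometrically by requiring the left pole of the hyperbola $(\VVx-2\lambda)^2-(\VVy)^2=16\lambda^2(1-1/s)$, at abscissa $2\lambda\big(1-2(1-1/s)^{1/2}\big)$, to be nonnegative. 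You instead stay entirely at the level of the proposition's statement: tracking the diagonal point $(s,s)$ through the triangulation of figure~\ref{fig:zonzst_dhtw} makes the $4/3$ threshold fall out of the region constraint $\sk[xy]\leq2(2-\sk[q])$, and the emptiness for $s>4/3$ (as well as $s<0$) follows from the proposition's final bullet, with no separate pole argument needed; this is the cleaner reading of the word ``corollary''. You also fill in the one step the paper glosses over, namely the factorizations $(\VVx\pm2\lambda)^2-(\VVy)^2=4(\lambda\pm\Vx)(\lambda\pm\Vy)$ and $(\VVy\pm2\lambda)^2-(\VVx)^2=4(\lambda\mp\Vy)(\lambda\pm\Vx)$, which show that the four degenerate inequalities hold simultaneously exactly when $|\Vx|\leq\lambda$ and $|\Vy|\leq\lambda$ (if, say, $\Vx>\lambda$, two of the products force both $\Vy\geq\lambda$ and $\Vy\leq-\lambda$, a contradiction); this is precisely the point where the paper writes ``and so to $\normi\leq\lambda$'' with no justification, so your detail is a genuine, if small, improvement in completeness.
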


\begin{remarque}
The BGK scheme does not depend on the velocity field $\utilde$ since the relation (\ref{eq:mueq}) is verified.
\end{remarque}

\begin{proof}
In this context, the parameter $\gamma$ is equal to $1$. Thus the inequalities (\ref{eq:trtrtmax}) summarize in 
\begin{equation*}
(\VVx\pm2\lambda)^2-(\VVy)^2\geqslant  \max\big(0,16\lambda^2\big(1-\frac{1}{s}\big)\big),
\end{equation*}
plus the inequality obtained exchanging the roles of $\VVx$ and $\VVy$.
If $s\leq1$, it is equivalent to
\begin{equation*}
(\VVx\pm2\lambda)^2-(\VVy)^2\geqslant 0,
\end{equation*}
and so to $\normi\leq\lambda$, else we obtain (\ref{eq:BGKst1},\ref{eq:BGKst2}).

The inequalities (\ref{eq:BGKst1},\ref{eq:BGKst2}) admit a non empty set of stable velocities $\vectV$ if the abscissa of the left pole of the hyperbole $$(\VVx-2\lambda)^2-(\VVy)^2=16\lambda^2\Big(1-\frac{1}{s}\Big),$$ is nonnegative. This abscissa, equal to $2\lambda(1-2(1-1/s)^{\frac{1}{2}})$ for $s\geqslant1$, is nonnegative if and only if $s\leq4/3$.
\end{proof}

For $s\leq1$, the BGK scheme is optimal in terms of $\Li$ stability (CFL reached). When $\sk[q]$ and $\sk[xy]$ go far from each other (TRT scheme), the stability area decreases until it reaches $\vectV=\vectz$ at the boundaries of the triangle represented on the figure \ref{fig:zonzst_dhtw}. There is no stable velocities outside of this triangle. 

We now focus on the scheme relative to $\utilde=\vectV$ with a non intrinsic diffusion and two relaxation parameters. We show that the optimal area of stability spreads on some TRT schemes contrary to the MRT scheme.

\begin{proposition}[$\Li$ stability areas for a relative velocity scheme]\label{th:stlitrtv}
Let $\vectV\in\R^2$, $(\sk[q],\sk[xy])\in\R^2$, consider the twisted $\ddqqn$ scheme relative to $\utilde=\vectV$ associated with the relaxation parameters $(0,\sk[q],\sk[q],\sk[xy])$, and the equilibrium (\ref{eq:eqd2q4},\ref{eq:eqd2q4tre}) $$\vectmeq(\vectV) = \rho(1,0,0,0).$$ Let us note $\vectVV=(\VVx,\VVy)$ where $\VVx=\Vx+\Vy$, $\VVy=\Vx-\Vy$ and $\gamma=\sk[xy]/(2\sk[q]-\sk[xy])$, then: 
\begin{itemize}
\item if $\sk[q]\leq\sk[xy]\leq \min(1,2\sk[q])$ (area ABC on the figure \ref{fig:zonzst_rectw}), the scheme is $\Li$ stable on 
\begin{equation}\label{eq:carrei}
\normi\leq\lambda.
\end{equation} 
\item if $\sk[xy]<2\sk[q]$, $\max(\sk[q],1)\leq\sk[xy]\leq2(2-\sk[q])$ (area BCED on the figure \ref{fig:zonzst_rectw}), the scheme is $\Li$ stable on the intersection of (\ref{eq:carrei}) with 
\begin{subequations} \label{eq:ega}
\begin{gather}
(\VVx\pm2\lambda\gamma)^2-(\VVy)^2\geqslant\frac{16\lambda^2}{(2\sk[q]-\sk[xy])^2}(\sk[xy]-\sk[q](2-\sk[q])),\label{eq:ega3}\\
(\VVy\pm2\lambda\gamma)^2-(\VVx)^2\geqslant\frac{16\lambda^2}{(2\sk[q]-\sk[xy])^2}(\sk[xy]-\sk[q](2-\sk[q]))\label{eq:ega32}.
\end{gather} 
\end{subequations}
\item if $0\leq\sk[xy]\leq\min(\sk[q],\sk[q](2-\sk[q]))$ (area ACF on the figure \ref{fig:zonzst_rectw}), the scheme is $\Li$ stable on
\begin{equation}\label{eq:carreig}
\normi\leq\lambda\gamma.
\end{equation}
\item if $\sk[q](2-\sk[q])\leq\sk[xy]\leq\min(\sk[q],2(2-\sk[q])) $(area CEF on the figure \ref{fig:zonzst_rectw}), the scheme is $\Li$ stable on (\ref{eq:ega3},\ref{eq:ega32}).
\item For all other $\vects$, no $\vectV$ corresponds to a $\Li$ stable scheme.
\item if $\sk[xy]=2\sk[q]$ and $0<\sk[xy]\leq2$ (ray ]AD] on the figure \ref{fig:zonzst_rectw}), the scheme is $\Li$ stable on the intersection of (\ref{eq:carrei}) and 
\begin{equation}\label{eq:nor1s}
\normu\leq2\lambda\Big(\frac{2-\sk[xy]}{\sk[xy]}\Big).
\end{equation}

\item if $\sk[xy]=\sk[q]=0$ (A on the figure \ref{fig:zonzst_rectw}), the scheme is unconditionally $\Li$ stable.
\end{itemize}
In particular, the parameters $(\sk[q],\sk[xy])$ corresponding to a non empty area of $\Li$ stability in $\vectV$ are included in the square $[0,2]^2$.
\end{proposition}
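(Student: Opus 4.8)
The plan is to mirror the proof of Proposition \ref{th:stlitrt0}, taking advantage of the drastic simplification of the equilibrium: with $\utilde=\vectV$ the equilibrium moments collapse to $\vectmeq(\vectV)=\rho(1,0,0,0)$, so that the relaxation acts as $\mkue=(1-\sk)\mku$ on every non conserved moment. First I would express $\vectfe$ as a function of $\vectf$ through the relation (\ref{eq:relfgen}); since the transport phase preserves both the positivity of the distributions and the total mass, the scheme is $\Li$ stable as soon as the matrix $\MatI+\MatMu^{-1}\MatD\MatMu(\MatE-\MatI)$ sending $\vectf$ on $\vectfe$ is nonnegative, where now $\MatMu$ is the moment matrix built on the shifted velocities $\vj-\vectV$. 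A formal calculus software then furnishes the nonnegativity conditions on the entries of this matrix, which I expect to take the same shape as in the MRT case: a minimum of three explicit functions of $(\sk[q],\sk[xy])$, corrected by terms linear in $\Vx\pm\Vy$ and by the bilinear term $\Vx\Vy$.

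Next I would linearise the bookkeeping exactly as in the MRT proof. After factoring out the common positive coefficient I introduce $\gamma=\sk[xy]/(2\sk[q]-\sk[xy])$ and pass to the rotated variables $\VVx=\Vx+\Vy$, $\VVy=\Vx-\Vy$, using $\Vx\Vy=((\VVx)^2-(\VVy)^2)/4$. This recasts the conditions as hyperbola inequalities of the type $(\VVx\pm2\lambda\gamma)^2-(\VVy)^2\geqslant C$, together with their images under $\VVx\leftrightarrow\VVy$, the constant $C$ depending on which branch of the minimum is active. The genuinely new phenomenon, absent from the MRT analysis, is that over part of the parameter range the quadratic part degenerates and the constraint reduces to a box condition, which is precisely the CFL square (\ref{eq:carrei}) $\normi\leq\lambda$. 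This is best understood through the structure of the twisted scheme, which is a $\duqd\otimes\duqd$ in velocities and moments: the collision operator itself decouples into two one dimensional schemes exactly along the curve $\sk[xy]=\sk[q](2-\sk[q])$, which is precisely the boundary separating the regions ACF and CEF of figure \ref{fig:zonzst_rectw}.

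The core of the argument is then the geometric case analysis in the $(\sk[q],\sk[xy])$ plane, partitioned as on figure \ref{fig:zonzst_rectw}. In each region I determine which of the three candidates realises the relevant extremum and, as in the MRT proof, I test whether the admissible set of velocities is nonempty by examining the sign of the abscissa of the pole of the governing hyperbola. The regions ABC and ACF produce the pure box constraints (\ref{eq:carrei}) and (\ref{eq:carreig}); the regions BCED and CEF produce the hyperbola conditions (\ref{eq:ega3},\ref{eq:ega32}), intersected with the CFL square where needed in order to discard the unbounded branch. The degenerate ray $]$AD$]$, where $\sk[xy]=2\sk[q]$ and hence $2\sk[q]-\sk[xy]\to0$ and $\gamma\to\infty$, has to be treated apart: there the quadratic form becomes rank one and the condition linearises into the $\ell^{1}$-type bound (\ref{eq:nor1s}). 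The point A $(\sk[q]=\sk[xy]=0)$ is the identity relaxation and is trivially unconditionally stable, and for $\sk[xy]>2\sk[q]$ one checks that no velocity survives, which accounts for the catch-all case.

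The main obstacle is this case bookkeeping together with the correct handling of the degeneration $2\sk[q]-\sk[xy]\to0$. One must verify that, outside the product-like curve $\sk[xy]=\sk[q](2-\sk[q])$, the hyperbola branches have to be intersected with the CFL box (\ref{eq:carrei}) so as to remove the spurious unbounded directions, and that no stable velocity survives outside the region drawn on figure \ref{fig:zonzst_rectw}, which yields the inclusion of the admissible $(\sk[q],\sk[xy])$ in the square $[0,2]^2$. Checking that the boundary curves $\sk[xy]=\sk[q](2-\sk[q])$ and $\sk[xy]=2\sk[q]$ match continuously the velocity conditions of the adjacent regions is the delicate consistency verification that closes the proof.
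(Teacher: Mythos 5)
Your overall strategy --- positivity of the matrix sending $\vectf$ on $\vectfe$ via (\ref{eq:relfgen}), a formal computation of the entrywise conditions, passage to the rotated frame $(\VVx,\VVy)$, and a hyperbola--pole analysis organised by regions of the $(\sk[q],\sk[xy])$ plane, with the ray $\sk[xy]=2\sk[q]$ and the origin treated separately --- is indeed the paper's. But there is a genuine gap in the step where you anticipate the algebraic shape of the positivity conditions. You expect them to take ``the same shape as in the MRT case: a minimum of three explicit functions of $(\sk[q],\sk[xy])$, corrected by terms linear in $\Vx\pm\Vy$ and by the bilinear term $\Vx\Vy$''. This is not what happens, and the whole case analysis hinges on it. When $\utilde=\vectV$ the entries of the collision matrix factor: twelve of the sixteen conditions read $(2\sk[q]-\sk[xy])(\lambda\pm\Vx)(\lambda\pm\Vy)\geqslant0$ and $(\lambda\pm\Vx)(\lambda\sk[xy]\pm(2\sk[q]-\sk[xy])\Vy)\geqslant0$ (plus the symmetric ones in $\Vy$), and only the remaining four have the MRT-like quadratic shape, with a \emph{single} right-hand constant $16\lambda^2(\sk[xy]-\sk[q](2-\sk[q]))/(2\sk[q]-\sk[xy])^2$ rather than a branch-dependent minimum. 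It is precisely these factored conditions that yield the box constraints (\ref{eq:carrei}) and (\ref{eq:carreig}) on the whole regions ABC and ACF, and it is the sign of the prefactor $2\sk[q]-\sk[xy]$ in the factored family that kills every velocity when $\sk[xy]>2\sk[q]$.

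Consequently your proposed mechanism for the CFL box --- ``the quadratic part degenerates \ldots the collision operator itself decouples into two one dimensional schemes exactly along the curve $\sk[xy]=\sk[q](2-\sk[q])$'' --- is not the mechanism at work. That curve (equivalently $1-\sk[xy]=(1-\sk[q])^2$, your tensor-product locus) is where the right-hand side of the hyperbola condition (\ref{eq:ega3}) changes sign: below it the constraint is of ``between the branches'' type and contains the box, so the box alone survives (region ACF); above it the constraint is binding and its pole analysis produces the extra restrictions on $\vects$ (regions BCED and CEF). A decoupling argument valid only on a curve cannot give stability statements on the open regions ABC and ACF, and a bookkeeping organised around ``which branch of the minimum is active'' cannot reproduce the region boundaries $\sk[xy]=\sk[q]$, $\sk[xy]=1$, $\sk[xy]=\sk[q](2-\sk[q])$ and $\sk[xy]=2(2-\sk[q])$, which in the actual proof come respectively from comparing the two boxes (\ref{eq:carrei}) and (\ref{eq:carreig}), from whether the pole of the hyperbola (\ref{eq:ega3}) lies beyond the square (\ref{eq:carrei}), from the sign change just described, and from the pole-existence condition $|2-\sk[xy]|\geqslant2|\sk[q]-1|$. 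Your treatment of the degenerate cases is closer to the mark: on the ray $\sk[xy]=2\sk[q]$ the bilinear term indeed drops out and (\ref{eq:twtrtr4}) linearises to (\ref{eq:nor1s}), but note that the stability set there is the intersection of (\ref{eq:nor1s}) with the box (\ref{eq:carrei}), the latter coming from the surviving factored conditions (\ref{eq:twtrtr2}), not from the quadratic ones.
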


\begin{figure}
\begin{center}
\definecolor{zzttqq}{rgb}{0.6,0.2,0}
\definecolor{ffqqqq}{rgb}{1,0,0}
\definecolor{ttccqq}{rgb}{0.2,0.8,0}
\definecolor{uququq}{rgb}{0.25,0.25,0.25}
\definecolor{qqqqff}{rgb}{0,0,1}
\definecolor{xdxdff}{rgb}{0.49,0.49,1}
\begin{tikzpicture}[scale=0.7,line cap=round,line join=round,>=triangle 45,x=4.0cm,y=4.0cm]
\draw[->,color=black] (-0.5,0) -- (2.5,0);
\foreach \x in {,1,2}
\draw[shift={(\x,0)},color=black] (0pt,2pt) -- (0pt,-2pt) node[below] {\footnotesize $\x$};
\draw[->,color=black] (0,-0.5) -- (0,2.5);
\foreach \y in {,1,2}
\draw[shift={(0,\y)},color=black] (2pt,0pt) -- (-2pt,0pt) node[left] {\footnotesize $\y$};
\draw[color=black] (0pt,-10pt) node[right] {\footnotesize $0$};
\clip(-0.5,-0.5) rectangle (2.5,2.5);
\fill[color=black,fill=black,fill opacity=0.1] (0,0) -- (0.5,1) -- (1,1) -- cycle;
\fill[color=black,fill=black,fill opacity=0.1] (0.5,1) -- (1,2) -- (1.34,1.32) -- (1,1) -- cycle;
\draw[color=black,fill=black,fill opacity=0.1, smooth,samples=50,domain=0.0:2.0] plot(\x,{(\x+\x*(2-\x)-abs(\x-\x*(2-\x)))/2}) -- (2,0) -- (0,0) -- cycle;
\draw[color=black,fill=black,fill opacity=0.1] {[smooth,samples=50,domain=1.0:2.0] plot(\x,{\x*(2-\x)})} -- (2,0) {[smooth,samples=50,domain=2.0:1.0] -- plot(\x,{(\x+4-2*\x-abs(\x-(4-2*\x)))/2})} -- (1,1) -- cycle;
\draw[smooth,samples=100,domain=0.0:1.3333333333333333] plot(\x,{(\x)});
\draw[smooth,samples=100,domain=1.0:2.0] plot(\x,{4-2*(\x)});
\draw[smooth,samples=100,domain=0.0:1.0] plot(\x,{2*(\x)});
\draw[smooth,samples=100,domain=1.0:2.0] plot(\x,{(\x)*(2-(\x))});
\draw[smooth,samples=100,domain=0.5:1.0] plot(\x,{1});
\draw [color=black] (0,0)-- (0.5,1);
\draw [color=black] (0.5,1)-- (1,1);
\draw [color=black] (1,1)-- (0,0);
\draw [color=black] (0.5,1)-- (1,2);
\draw [color=black] (1,2)-- (1.34,1.32);
\draw [color=black] (1.34,1.32)-- (1,1);
\draw [color=black] (1,1)-- (0.5,1);
\begin{scriptsize}
\draw (2.35,-0.05) node[anchor=north west] {$\sk[q]$};
\draw (-0.25,2.5) node[anchor=north west] {$\sk[xy]$};
\fill [color=black] (0,0) circle (1.5pt);
\draw[color=black] (-0.09,0.1) node {$A$};
\fill [color=black] (0.5,1) circle (1.5pt);
\draw[color=black] (0.48,1.1) node {$B$};
\fill [color=black] (1,1) circle (1.5pt);
\draw[color=black] (1.0,1.1) node {$C$};
\fill [color=black] (1,2) circle (1.5pt);
\draw[color=black] (1.09,2.15) node {$D$};
\fill [color=black] (1.34,1.32) circle (1.5pt);
\draw[color=black] (1.42,1.47) node {$E$};
\fill [color=black] (2,0) circle (1.5pt);
\draw[color=black] (2.08,0.15) node {$F$};
\end{scriptsize}
\end{tikzpicture}
\end{center}
\caption{$\Li$ stability area in $\vects$ of the twisted $\ddqqn$ scheme relative to $\utilde=\vectV$ with a non intrinsic diffusion. ACF : (\ref{eq:carreig}), ABC : (\ref{eq:carrei}), BCED :  (\ref{eq:ega3},\ref{eq:ega32}), CEF : (\ref{eq:ega3}).} 
\label{fig:zonzst_rectw}
\end{figure}

\begin{proof}
We determine the sufficient conditions for the positivity of the matrix sending $\vectf$ on $\vectfe$. A formal calculus software leads to the following inequalities
\begin{gather}
(2\sk[q]-\sk[xy])(\lambda\pm\Vx)(\lambda\pm\Vy)\geqslant0,\label{eq:twtrtr1}\\
(\lambda\pm\Vx)(\pm(2\sk[q]-\sk[xy])\Vy+\lambda\sk[xy])\geqslant0,\label{eq:twtrtr2}\\
4\lambda^2{-}((2\sk[q]{+}\sk[xy])\lambda^2{\pm}\sk[xy]\lambda(\Vx{+}(-1)^i\Vy){+}(-1)^i(\sk[xy]{-}2\sk[q])\Vx\Vy)\geqslant0,\quad i =0,1,\label{eq:twtrtr4}
\end{gather}
plus the inequality coming from (\ref{eq:twtrtr2}) when the roles of $\Vx$ and $\Vy$ are exchanged. We begin by the study of (\ref{eq:twtrtr1}).

{\bf Case 1 : $2\sk[q]<\sk[xy]$.} 
These inequalities are equivalent to 
\begin{equation*}\label{eq:incomp}
(\lambda\pm\Vx)(\lambda\pm\Vy)\leq0.
\end{equation*}
No  $\vectV$ verifies this inequality: such a set of $\vectV$ would be the intersection of fourth areas that do not intersect. We must assume that $\sk[xy]\leq2\sk[q]$ to eventually have a non empty stability area in $\vectV$.

{\bf Case 2 : $\sk[xy]=2\sk[q]$.}

If $\sk[xy]=0$, all the inequalities are obviously verified and the scheme is unconditionally stable in $\vectV$. If $\sk[xy]<0$, $\lambda\pm\Vx\leq0$ that is impossible. If $\sk[xy]>0$, (\ref{eq:twtrtr2}) becomes $(\lambda\pm\Vx)\lambda\geqslant0$, plus the analogous inequality in $\Vy$: this is equivalent to (\ref{eq:carrei}). The inequalities (\ref{eq:twtrtr4}) read 

\begin{equation*}
1-\frac{\sk[xy]}{4\lambda}(2\lambda\pm(\Vx\pm\Vy))\geqslant0,
\end{equation*}
that is equivalent to (\ref{eq:nor1s}). The final stability area is the intersection of (\ref{eq:carrei}) and (\ref{eq:nor1s}). Three cases, represented on the figures \ref{fig:3cas_rectw3}, \ref{fig:3cas_rectw2} and \ref{fig:3cas_rectw1}, are then possible.

\begin{figure}
\begin{center}
\definecolor{uququq}{rgb}{0.25,0.25,0.25}
\definecolor{ffqqqq}{rgb}{1,0,0}
\definecolor{qqqqff}{rgb}{0,0,1}
\begin{tikzpicture}[scale=0.7,line cap=round,line join=round,>=triangle 45,x=2.0cm,y=2.0cm]
\draw[->,color=black] (-2.4,0) -- (2.4,0);
\draw[->,color=black] (0,-2.4) -- (0,2.4);
\draw[shift={(1,0)},color=black] (0pt,2pt) -- (0pt,-2pt) node[below] {\footnotesize $\lambda$};
\draw[shift={(-1,0)},color=black] (0pt,2pt) -- (0pt,-2pt) node[below] {\footnotesize $-\lambda$};
\draw[shift={(2,0)},color=black] (0pt,2pt) -- (0pt,-2pt) node[below] {\footnotesize $2\lambda$};
\draw[shift={(-2,0)},color=black] (0pt,2pt) -- (0pt,-2pt) node[below] {\footnotesize $-2\lambda$};
\draw[shift={(0,1)},color=black] (2pt,0pt) -- (-2pt,0pt) node[left] {\footnotesize $\lambda$};
\draw[shift={(0,-1)},color=black] (2pt,0pt) -- (-2pt,0pt) node[left] {\footnotesize $-\lambda$};
\draw[shift={(0,2)},color=black] (2pt,0pt) -- (-2pt,0pt) node[left] {\footnotesize $2\lambda$};
\draw[shift={(0,-2)},color=black] (2pt,0pt) -- (-2pt,0pt) node[left] {\footnotesize $-2\lambda$};
\draw[color=black] (0pt,-10pt) node[right] {\footnotesize $0$};
\clip(-2.4,-2.4) rectangle (2.4,2.4);
\fill[color=black,fill=black,fill opacity=0.15] (-1,1) -- (-1,-1) -- (1,-1) -- (1,1) -- cycle;
\fill[color=black,fill=black,fill opacity=0.1] (0,2.1) -- (-2.1,0) -- (0,-2.1) -- (2.1,0) -- cycle;
\draw [color=black] (-1,1)-- (-1,-1);
\draw [color=black] (-1,-1)-- (1,-1);
\draw [color=black] (1,-1)-- (1,1);
\draw [color=black] (1,1)-- (-1,1);
\draw [color=black] (0,2.1)-- (-2.1,0);
\draw [color=black] (-2.1,0)-- (0,-2.1);
\draw [color=black] (0,-2.1)-- (2.1,0);
\draw [color=black] (2.1,0)-- (0,2.1);
\begin{scriptsize}
\draw[color=black] (2.15,0.24) node {$P$};
\draw (2.05,-0.05) node[anchor=north west] {$\Vx$};
\draw (-0.45,2.4) node[anchor=north west] {$\Vy$};
\end{scriptsize}
\end{tikzpicture}
\end{center}
\caption{$\Li$ stability area for the scheme relative to $\utilde=\vectV$ with a non intrinsic diffusion for $\sk[xy]=2\sk[q]$. The point $P$ is of abscissa $\dsp{2\lambda(2-\sk[xy])/\sk[xy]\geqslant2\lambda}$.} 
\label{fig:3cas_rectw3}
\end{figure}

\begin{figure}
\begin{center}
\definecolor{xdxdff}{rgb}{0.49,0.49,1}
\definecolor{uququq}{rgb}{0.25,0.25,0.25}
\definecolor{ffqqqq}{rgb}{1,0,0}
\definecolor{qqqqff}{rgb}{0,0,1}
\begin{tikzpicture}[scale=0.7,line cap=round,line join=round,>=triangle 45,x=2.0cm,y=2.0cm]
\draw[->,color=black] (-2.4,0) -- (2.4,0);
\draw[->,color=black] (0,-2.4) -- (0,2.4);
\draw[shift={(1,0)},color=black] (0pt,2pt) -- (0pt,-2pt) node[below] {\footnotesize $\lambda$};
\draw[shift={(-1,0)},color=black] (0pt,2pt) -- (0pt,-2pt) node[below] {\footnotesize $-\lambda$};
\draw[shift={(2,0)},color=black] (0pt,2pt) -- (0pt,-2pt) node[below] {\footnotesize $2\lambda$};
\draw[shift={(-2,0)},color=black] (0pt,2pt) -- (0pt,-2pt) node[below] {\footnotesize $-2\lambda$};
\draw[shift={(0,1)},color=black] (2pt,0pt) -- (-2pt,0pt) node[left] {\footnotesize $\lambda$};
\draw[shift={(0,-1)},color=black] (2pt,0pt) -- (-2pt,0pt) node[left] {\footnotesize $-\lambda$};
\draw[shift={(0,2)},color=black] (2pt,0pt) -- (-2pt,0pt) node[left] {\footnotesize $2\lambda$};
\draw[shift={(0,-2)},color=black] (2pt,0pt) -- (-2pt,0pt) node[left] {\footnotesize $-2\lambda$};
\draw[color=black] (0pt,-10pt) node[right] {\footnotesize $0$};
\clip(-2.4,-2.4) rectangle (2.4,2.4);
\fill[color=black,fill=black,fill opacity=0.1] (-1,1) -- (-1,-1) -- (1,-1) -- (1,1) -- cycle;
\fill[color=black,fill=black,fill opacity=0.1] (0,1.5) -- (-1.5,0) -- (0,-1.5) -- (1.5,0) -- cycle;
\draw [color=black] (-1,1)-- (-1,-1);
\draw [color=black] (-1,-1)-- (1,-1);
\draw [color=black] (1,-1)-- (1,1);
\draw [color=black] (1,1)-- (-1,1);
\draw [color=black] (0,1.5)-- (-1.5,0);
\draw [color=black] (-1.5,0)-- (0,-1.5);
\draw [color=black] (0,-1.5)-- (1.5,0);
\draw [color=black] (1.5,0)-- (0,1.5);
\begin{scriptsize}
\draw[color=uququq] (1.59,0.14) node {$P$};
\draw (2.05,-0.05) node[anchor=north west] {$\Vx$};
\draw (-0.45,2.4) node[anchor=north west] {$\Vy$};
\end{scriptsize}
\end{tikzpicture}
\end{center}
\caption{$\Li$ stability area for the scheme relative to $\utilde=\vectV$ with a non intrinsic diffusion for $\sk[xy]=2\sk[q]$. The point $P$ is of abscissa $\dsp{\lambda\leq2\lambda(2-\sk[xy])/\sk[xy]\leq2\lambda}$.} 
\label{fig:3cas_rectw2}
\end{figure}
\begin{figure}
\begin{center}
\definecolor{xdxdff}{rgb}{0.49,0.49,1}
\definecolor{uququq}{rgb}{0.25,0.25,0.25}
\definecolor{ffqqqq}{rgb}{1,0,0}
\definecolor{qqqqff}{rgb}{0,0,1}
\begin{tikzpicture}[scale=0.7,line cap=round,line join=round,>=triangle 45,x=2.0cm,y=2.0cm]
\draw[->,color=black] (-2.4,0) -- (2.4,0);
\draw[shift={(1,0)},color=black] (0pt,2pt) -- (0pt,-2pt) node[below] {\footnotesize $\lambda$};
\draw[shift={(-1,0)},color=black] (0pt,2pt) -- (0pt,-2pt) node[below] {\footnotesize $-\lambda$};
\draw[shift={(2,0)},color=black] (0pt,2pt) -- (0pt,-2pt) node[below] {\footnotesize $2\lambda$};
\draw[shift={(-2,0)},color=black] (0pt,2pt) -- (0pt,-2pt) node[below] {\footnotesize $-2\lambda$};
\draw[shift={(0,1)},color=black] (2pt,0pt) -- (-2pt,0pt) node[left] {\footnotesize $\lambda$};
\draw[shift={(0,-1)},color=black] (2pt,0pt) -- (-2pt,0pt) node[left] {\footnotesize $-\lambda$};
\draw[shift={(0,2)},color=black] (2pt,0pt) -- (-2pt,0pt) node[left] {\footnotesize $2\lambda$};
\draw[shift={(0,-2)},color=black] (2pt,0pt) -- (-2pt,0pt) node[left] {\footnotesize $-2\lambda$};
\draw[->,color=black] (0,-2.4) -- (0,2.4);
\draw[color=black] (0pt,-10pt) node[right] {\footnotesize $0$};
\clip(-2.4,-2.4) rectangle (2.4,2.4);
\fill[color=black,fill=black,fill opacity=0.1] (-1,1) -- (-1,-1) -- (1,-1) -- (1,1) -- cycle;
\fill[color=black,fill=black,fill opacity=0.1] (0,0.8) -- (-0.8,0) -- (0,-0.8) -- (0.8,0) -- cycle;
\draw [line width=1.2pt,color=black] (-1,1)-- (-1,-1);
\draw [line width=1.2pt,color=black] (-1,-1)-- (1,-1);
\draw [line width=1.2pt,color=black] (1,-1)-- (1,1);
\draw [line width=1.2pt,color=black] (1,1)-- (-1,1);
\draw [color=black] (0,0.8)-- (-0.8,0);
\draw [color=black] (-0.8,0)-- (0,-0.8);
\draw [color=black] (0,-0.8)-- (0.8,0);
\draw [color=black] (0.8,0)-- (0,0.8);
\begin{scriptsize}
\draw[color=uququq] (0.9,0.14) node {$P$};
\fill [color=black] (-1,1) circle (1.5pt);
\fill [color=black] (-1,-1) circle (1.5pt);
\fill [color=black] (1,-1) circle (1.5pt);
\fill [color=black] (1,1) circle (1.5pt);
\fill [color=black] (0,1) circle (1.5pt);
\fill [color=black] (-1,0) circle (1.5pt);
\fill [color=black] (0,-1) circle (1.5pt);
\fill [color=black] (1,0) circle (1.5pt);
\draw (2.05,-0.05) node[anchor=north west] {$\Vx$};
\draw (-0.45,2.4) node[anchor=north west] {$\Vy$};
\end{scriptsize}
\end{tikzpicture}
\end{center}
\caption{$\Li$ stability area for the scheme relative to $\utilde=\vectV$ with a non intrinsic diffusion for $\sk[xy]=2\sk[q]$. The point $P$ is of abscissa $\dsp{2\lambda(2-\sk[xy])/\sk[xy]\leq\lambda}$.} 
\label{fig:3cas_rectw1}
\end{figure}
Let $\sk[xy]\leq1$, then $$\dsp{2\lambda\frac{(2-\sk[xy])}{\sk[xy]}\geqslant2\lambda},$$ and the stability area is given by (\ref{eq:carrei}) (figure \ref{fig:3cas_rectw3}), let $1\leq\sk[xy]\leq4/3$, then $$\dsp{\lambda\leq2\lambda\frac{(2-\sk[xy])}{\sk[xy]}\leq2\lambda},$$ and the stability area is the intersection of (\ref{eq:carrei}) and (\ref{eq:nor1s}) represented on the figure \ref{fig:3cas_rectw2}, let $\sk[xy]\geqslant4/3$, then $$\dsp{2\lambda\frac{(2-\sk[xy])}{\sk[xy]}\leq\lambda},$$ and the stability area is given by (\ref{eq:nor1s}) (figure \ref{fig:3cas_rectw1}).
This area is reduced to $\vectV=\vectz$ when $\sk[xy]=2$.

{\bf Case 3 : $\sk[xy]<2\sk[q]$.}

The inequalities (\ref{eq:twtrtr1}) read
\begin{equation*}
(\lambda\pm\Vx)(\lambda\pm\Vy)\geqslant0,
\end{equation*}
that is equivalent to (\ref{eq:carrei}). The inequalities (\ref{eq:twtrtr2}) write
\begin{equation}\label{eq:inegprov}
(\lambda\pm\Vx)(\lambda\gamma\pm\Vy)\geqslant0,
\end{equation}

for $\gamma=\sk[xy]/(2\sk[q]-\sk[xy])$. The inequalities (\ref{eq:inegprov}) have solutions if $\gamma\geqslant0$, that is equivalent to $\sk[xy]\geqslant0$. If $\sk[xy]\geqslant\sk[q]$, (\ref{eq:inegprov}) contains (\ref{eq:carrei}), that correspond to the same constraints that those imposed by (\ref{eq:twtrtr1}). If $\sk[xy]\leq\sk[q]$, then the stability area reduces to (\ref{eq:carreig}).

There is still to study the influence of (\ref{eq:twtrtr4}) on the stability area. These identities read
\begin{equation*}
\frac{\lambda^2}{2\sk[q]-\sk[xy]}(4-(2\sk[q]+\sk[xy]))\pm\gamma\lambda(\Vx+(-1)^i\Vy)+(-1)^i\Vx\Vy\geqslant0,\quad i=0,1,
\end{equation*}
after division by $2\sk[q]-\sk[xy]$. We change of frame, $\VVx=\Vx+\Vy,~ \VVy=\Vx-\Vy$, and obtain
\begin{equation*}
\frac{4\lambda^2}{2\sk[q]-\sk[xy]}(4-(2\sk[q]+\sk[xy]))\pm4\gamma\lambda\VVx+(\VVx)^2-(\VVy)^2\geqslant0,
\end{equation*}
and the analogous inequality exchanging the roles of $\VVx$ and $\VVy$. We center these equations to obtain (\ref{eq:ega3}) and (\ref{eq:ega32}).

{\bf Sub-case 1 : $\sk[xy]\leq\sk[q](2-\sk[q])$.}

The inequality (\ref{eq:ega3}) reads
\begin{equation}\label{eq:rectwhypnuy}
(\VVy)^2-(\VVx\pm2\lambda\gamma)^2\leq\frac{16\lambda^2}{(2\sk[q]-\sk[xy])^2}(\sk[q](2-\sk[q])-\sk[xy]).
\end{equation}
We represent the areas corresponding to the inequations (\ref{eq:carreig}) and (\ref{eq:rectwhypnuy}) on the figure \ref{fig:zonehyp_rectw}. The relation (\ref{eq:carreig}) is equivalent in the new frame to $\normunu\leq2\lambda\gamma$.
It is contained in the area delimited by (\ref{eq:rectwhypnuy}) (figure \ref{fig:zonehyp_rectw}). The final stability area is then given by (\ref{eq:carrei}) if $\gamma\geqslant1$ and by (\ref{eq:carreig}) otherwise.

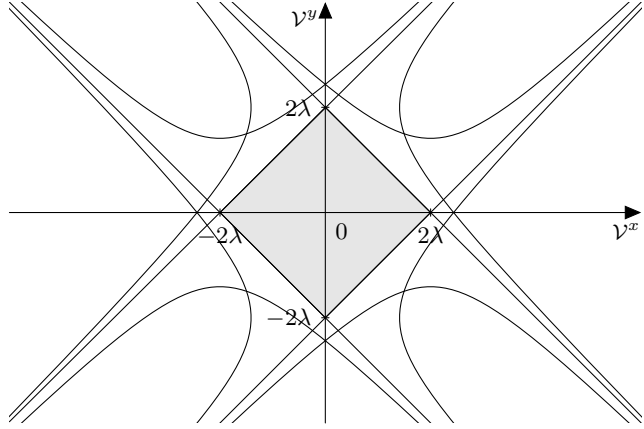
\begin{figure}
\begin{center}
\definecolor{ttffff}{rgb}{0.2,1,1}
\definecolor{uququq}{rgb}{0.25,0.25,0.25}
\definecolor{ffqqqq}{rgb}{1,0,0}
\definecolor{ttqqff}{rgb}{0.2,0,1}
\definecolor{qqccqq}{rgb}{0,0.8,0}
\definecolor{qqqqff}{rgb}{0,0,1}
\definecolor{xdxdff}{rgb}{0.49,0.49,1}
\begin{tikzpicture}[scale=0.7,line cap=round,line join=round,>=triangle 45,x=1.0cm,y=1.0cm]
\draw[->,color=black] (-6,0) -- (6,0);
\draw[->,color=black] (0,-4) -- (0,4);
\draw[color=black] (0pt,-10pt) node[right] {\footnotesize $0$};
\draw[shift={(2,0)},color=black] (0pt,2pt) -- (0pt,-2pt) node[below] {\footnotesize $2\lambda$};
\draw[shift={(-2,0)},color=black] (0pt,2pt) -- (0pt,-2pt) node[below] {\footnotesize $-2\lambda$};
\draw[shift={(0,2)},color=black] (2pt,0pt) -- (-2pt,0pt) node[left] {\footnotesize $2\lambda$};
\draw[shift={(0,-2)},color=black] (2pt,0pt) -- (-2pt,0pt) node[left] {\footnotesize $-2\lambda$};
\clip(-6,-4) rectangle (6,4);
\fill[color=black,fill=black,fill opacity=0.1] (0,2) -- (-2,0) -- (0,-2) -- (2,0) -- cycle;
\draw [domain=-6:6] plot(\x,{(-4--2*\x)/-2});
\draw [domain=-6:6] plot(\x,{(--3.36--1.68*\x)/1.69});
\draw [domain=-6:6] plot(\x,{(-4-2*\x)/2});
\draw [domain=-6:6] plot(\x,{(--4-2*\x)/-2});
\draw [samples=50,domain=-0.99:0.99,rotate around={90:(-2,0)},xshift=-2cm,yshift=0cm,color=black] plot ({1.41*(1+\x*\x)/(1-\x*\x)},{1.42*2*\x/(1-\x*\x)});
\draw [samples=50,domain=-0.99:0.99,rotate around={90:(-2,0)},xshift=-2cm,yshift=0cm,color=black] plot ({1.41*(-1-\x*\x)/(1-\x*\x)},{1.42*(-2)*\x/(1-\x*\x)});
\draw [samples=50,domain=-0.99:0.99,rotate around={90:(2,0)},xshift=2cm,yshift=0cm,color=black] plot ({1.41*(1+\x*\x)/(1-\x*\x)},{1.42*2*\x/(1-\x*\x)});
\draw [samples=50,domain=-0.99:0.99,rotate around={90:(2,0)},xshift=2cm,yshift=0cm,color=black] plot ({1.41*(-1-\x*\x)/(1-\x*\x)},{1.42*(-2)*\x/(1-\x*\x)});
\draw [color=black] (0,2)-- (-2,0);
\draw [color=black] (-2,0)-- (0,-2);
\draw [color=black] (0,-2)-- (2,0);
\draw [color=black] (2,0)-- (0,2);
\draw [samples=50,domain=-0.99:0.99,rotate around={180:(0,2)},xshift=0cm,yshift=2cm,color=black] plot ({1.41*(1+\x*\x)/(1-\x*\x)},{1.42*2*\x/(1-\x*\x)});
\draw [samples=50,domain=-0.99:0.99,rotate around={180:(0,2)},xshift=0cm,yshift=2cm,color=black] plot ({1.41*(-1-\x*\x)/(1-\x*\x)},{1.42*(-2)*\x/(1-\x*\x)});
\draw [samples=50,domain=-0.99:0.99,rotate around={180:(0,-2)},xshift=0cm,yshift=-2cm,color=black] plot ({1.41*(1+\x*\x)/(1-\x*\x)},{1.42*2*\x/(1-\x*\x)});
\draw [samples=50,domain=-0.99:0.99,rotate around={180:(0,-2)},xshift=0cm,yshift=-2cm,color=black] plot ({1.41*(-1-\x*\x)/(1-\x*\x)},{1.42*(-2)*\x/(1-\x*\x)});
\begin{scriptsize}
\draw (5.3,-0.05) node[anchor=north west] {$\VVx$};
\draw (-0.8,4) node[anchor=north west] {$\VVy$};
\end{scriptsize}
\end{tikzpicture}
\end{center}
\caption{Layout of the areas given by (\ref{eq:carreig}) and (\ref{eq:rectwhypnuy}).} 
\label{fig:zonehyp_rectw}
\end{figure}

{\bf Sub-case 2 : $\sk[xy]\geqslant\sk[q](2-\sk[q])$.}

The inequality (\ref{eq:ega3}) has a solution $\vectV$ if the abscissa of the left pole of the hyperbole centered in $2\lambda\gamma$ is nonnegative.
This abscissa, equal to $$2\lambda\gamma-\frac{4\lambda}{(2\sk[q]-\sk[xy])}(\sk[xy]-\sk[q](2-\sk[q]))^{\frac{1}{2}},$$ is nonnegative if
\begin{equation}\label{eq:cone}
|2-\sk[xy]|\geqslant2|\sk[q]-1|.
\end{equation}
Assuming that $\sk[xy]\geqslant 2$, then $\sk[xy]\geqslant 2|\sk[q]-1|+2\geqslant 2\sk[q]$, that gives an empty $\Li$ stability area in $\vectV$ (refer to the beginning of the proof). As a consequence $\sk[xy]\leq2$.
If $\sk[q]\leq1$, the inequality (\ref{eq:cone}) is equivalent to $\sk[xy]\leq2\sk[q]$, otherwise to 
$\sk[xy]\leq2(2-\sk[q])$.
These conditions carrying on $\vects$ ensure the existence of a non empty stability area in $\vectV$.

 To obtain the final stability area in $\vectV$ given by the set of inequations from (\ref{eq:twtrtr1}) to (\ref{eq:twtrtr4}), we compare those associated with (\ref{eq:ega3}) to (\ref{eq:carrei}) if $\gamma\geqslant1$ or (\ref{eq:carreig}) if $\gamma\leq1$. The two areas (\ref{eq:carrei}) and (\ref{eq:carreig}) read respectively $\normunu\leq2\lambda$,
 and
 \begin{equation}\label{eq:nor1mV}
\normunu\leq2\lambda\gamma.
 \end{equation}
 in the frame $(0,\VVx,\VVy)$.
 If $\gamma\leq1$, the stability area is included in the one given by (\ref{eq:nor1mV}): it is represented by the points $A,B,C,D$ on the figure \ref{fig:zonehyp_rectw3}.
 If $\gamma\geqslant1$ and $\sk[xy]\leq1$, the area (\ref{eq:carrei}) is included in the one given by (\ref{eq:ega3}): the abscissa of the left pole $P$ of the hyperbole centered $2\lambda\gamma$ is greater than $2\lambda$ (figure \ref{fig:zonehyp_rectw2}).
 If $\sk[xy]\geqslant1$, it is not the case any more and the area decreases when $\sk[xy]$ increases. \qedhere
 
 \begin{figure}
\begin{center}
\definecolor{qqffff}{rgb}{0,1,1}
\definecolor{qqttff}{rgb}{0,0.2,1}
\definecolor{zzttqq}{rgb}{0.6,0.2,0}
\definecolor{qqccqq}{rgb}{0,0.8,0}
\definecolor{uququq}{rgb}{0.25,0.25,0.25}
\definecolor{ffqqqq}{rgb}{1,0,0}
\definecolor{qqqqff}{rgb}{0,0,1}
\definecolor{xdxdff}{rgb}{0.49,0.49,1}
\begin{tikzpicture}[scale=1.,line cap=round,line join=round,>=triangle 45,x=1.0cm,y=1.0cm]
\draw[->,color=black] (-4,0) -- (4,0);
\draw[->,color=black] (0,-4) -- (0,4);
\draw[shift={(2,0)},color=black] (0pt,2pt) -- (0pt,-2pt) node[below] {\footnotesize $2\lambda\gamma$};
\draw[shift={(-2,0)},color=black] (0pt,2pt) -- (0pt,-2pt) node[below] {\footnotesize $-2\lambda\gamma$};
\draw[shift={(0,2)},color=black] (2pt,0pt) -- (-2pt,0pt) node[left] {\footnotesize $2\lambda\gamma$};
\draw[shift={(0,-2)},color=black] (2pt,0pt) -- (-2pt,0pt) node[left] {\footnotesize $-2\lambda\gamma$};
\draw[shift={(1,0)},color=black] (0pt,2pt) -- (0pt,-2pt) node[below] {\footnotesize $2\lambda$};
\draw[shift={(-1,0)},color=black] (0pt,2pt) -- (0pt,-2pt) node[below] {\footnotesize $-2\lambda$};
\draw[shift={(0,1)},color=black] (2pt,0pt) -- (-2pt,0pt) node[left] {\footnotesize $2\lambda$};
\draw[shift={(0,-1)},color=black] (2pt,0pt) -- (-2pt,0pt) node[left] {\footnotesize $-2\lambda$};
\draw[color=black] (0pt,-10pt) node[right] {\footnotesize $0$};
\clip(-4,-4) rectangle (4,4);
\fill[color=black,fill=black,fill opacity=0.1] (0,1) -- (-1,0) -- (0,-1) -- (1,0) -- cycle;
\draw [domain=-5:5] plot(\x,{(--4--2*\x)/2});
\draw [domain=-5:5] plot(\x,{(--4-2*\x)/2});
\draw [domain=-5:5] plot(\x,{(--4-2*\x)/-2});
\draw [domain=-5:5] plot(\x,{(--4--2*\x)/-2});
\draw [color=black] (0,1)-- (-1,0);
\draw [color=black] (-1,0)-- (0,-1);
\draw [color=black] (0,-1)-- (1,0);
\draw [color=black] (1,0)-- (0,1);
\draw [samples=50,domain=-0.99:0.99,rotate around={0:(-2,0)},xshift=-2cm,yshift=0cm,color=black] plot ({0.5*(1+\x*\x)/(1-\x*\x)},{0.5*2*\x/(1-\x*\x)});
\draw [samples=50,domain=-0.99:0.99,rotate around={0:(-2,0)},xshift=-2cm,yshift=0cm,color=black] plot ({0.5*(-1-\x*\x)/(1-\x*\x)},{0.5*(-2)*\x/(1-\x*\x)});
\draw [samples=50,domain=-0.99:0.99,rotate around={180:(2,0)},xshift=2cm,yshift=0cm,color=black] plot ({0.5*(1+\x*\x)/(1-\x*\x)},{0.5*2*\x/(1-\x*\x)});
\draw [samples=50,domain=-0.99:0.99,rotate around={180:(2,0)},xshift=2cm,yshift=0cm,color=black] plot ({0.5*(-1-\x*\x)/(1-\x*\x)},{0.5*(-2)*\x/(1-\x*\x)});
\draw [color=zzttqq] (1,0)-- (0,-1);
\draw [samples=50,domain=-0.99:0.99,rotate around={-90:(0,2)},xshift=0cm,yshift=2cm,color=black] plot ({0.5*(1+\x*\x)/(1-\x*\x)},{0.5*2*\x/(1-\x*\x)});
\draw [samples=50,domain=-0.99:0.99,rotate around={-90:(0,2)},xshift=0cm,yshift=2cm,color=black] plot ({0.5*(-1-\x*\x)/(1-\x*\x)},{0.5*(-2)*\x/(1-\x*\x)});
\draw [samples=50,domain=-0.99:0.99,rotate around={90:(0,-2)},xshift=0cm,yshift=-2cm,color=black] plot ({0.5*(1+\x*\x)/(1-\x*\x)},{0.5*2*\x/(1-\x*\x)});
\draw [samples=50,domain=-0.99:0.99,rotate around={90:(0,-2)},xshift=0cm,yshift=-2cm,color=black] plot ({0.5*(-1-\x*\x)/(1-\x*\x)},{0.5*(-2)*\x/(1-\x*\x)});
\begin{scriptsize}
\fill [color=uququq] (1.5,0) circle (1.5pt);
\draw[color=uququq] (1.6,0.17) node {$P$};
\draw (3.5,-0.05) node[anchor=north west] {$\VVx$};
\draw (-0.65,4.) node[anchor=north west] {$\VVy$};
\end{scriptsize}
\end{tikzpicture}
\end{center}
\caption{Layout of the bounds of the $\Li$ stability areas of the scheme relative to $\utilde=\vectV$ with a non intrinsic diffusion for $\gamma\geqslant1$ and $\sk[q](2-\sk[q])\leq\sk[xy]\leq1$.} 
\label{fig:zonehyp_rectw2}
\end{figure}

 \begin{figure}
\begin{center}
\definecolor{qqttff}{rgb}{0,0.2,1}
\definecolor{qqffff}{rgb}{0,1,1}
\definecolor{uququq}{rgb}{0.25,0.25,0.25}
\definecolor{ffqqqq}{rgb}{1,0,0}
\definecolor{qqzzqq}{rgb}{0,0.6,0}
\definecolor{qqqqff}{rgb}{0,0,1}
\definecolor{xdxdff}{rgb}{0.49,0.49,1}
\begin{tikzpicture}[scale=0.85,line cap=round,line join=round,>=triangle 45,x=2.0cm,y=2.0cm]
\draw[->,color=black] (-2,0) -- (2,0);
\draw[->,color=black] (0,-2) -- (0,2);
\draw[shift={(1,0)},color=black] (0pt,2pt) -- (0pt,-2pt) node[below] {\footnotesize $2\lambda\gamma$};
\draw[shift={(-1,0)},color=black] (0pt,2pt) -- (0pt,-2pt) node[below] {\footnotesize $-2\lambda\gamma$};
\draw[shift={(0,1)},color=black] (2pt,0pt) -- (-2pt,0pt) node[left] {\footnotesize $2\lambda\gamma$};
\draw[shift={(0,-1)},color=black] (2pt,0pt) -- (-2pt,0pt) node[left] {\footnotesize $-2\lambda\gamma$};
\draw[color=black] (0pt,-10pt) node[right] {\footnotesize $0$};
\clip(-2,-2) rectangle (2,2);
\draw [domain=-3:3] plot(\x,{(--1--1*\x)/1});
\draw [domain=-3:3] plot(\x,{(--1-1*\x)/-1});
\draw [domain=-3:3] plot(\x,{(--1--1*\x)/-1});
\draw [domain=-3:3] plot(\x,{(--1-1*\x)/1});
\draw [samples=50,domain=-0.99:0.99,rotate around={0:(1,0)},xshift=2cm,yshift=0cm,color=black] plot ({0.57*(1+\x*\x)/(1-\x*\x)},{0.57*2*\x/(1-\x*\x)});
\draw [samples=50,domain=-0.99:0.99,rotate around={0:(1,0)},xshift=2cm,yshift=0cm,color=black] plot ({0.57*(-1-\x*\x)/(1-\x*\x)},{0.57*(-2)*\x/(1-\x*\x)});
\draw [samples=50,domain=-0.99:0.99,rotate around={180:(-1,0)},xshift=-2cm,yshift=0cm,color=black] plot ({0.57*(1+\x*\x)/(1-\x*\x)},{0.57*2*\x/(1-\x*\x)});
\draw [samples=50,domain=-0.99:0.99,rotate around={180:(-1,0)},xshift=-2cm,yshift=0cm,color=black] plot ({0.57*(-1-\x*\x)/(1-\x*\x)},{0.57*(-2)*\x/(1-\x*\x)});
\draw [samples=50,domain=-0.99:0.99,rotate around={-90:(0,-1)},xshift=0cm,yshift=-2cm,color=black] plot ({0.57*(1+\x*\x)/(1-\x*\x)},{0.57*2*\x/(1-\x*\x)});
\draw [samples=50,domain=-0.99:0.99,rotate around={-90:(0,-1)},xshift=0cm,yshift=-2cm,color=black] plot ({0.57*(-1-\x*\x)/(1-\x*\x)},{0.57*(-2)*\x/(1-\x*\x)});
\draw [samples=50,domain=-0.99:0.99,rotate around={90:(0,1)},xshift=0cm,yshift=2cm,color=black] plot ({0.57*(1+\x*\x)/(1-\x*\x)},{0.57*2*\x/(1-\x*\x)});
\draw [samples=50,domain=-0.99:0.99,rotate around={90:(0,1)},xshift=0cm,yshift=2cm,color=black] plot ({0.57*(-1-\x*\x)/(1-\x*\x)},{0.57*(-2)*\x/(1-\x*\x)});
\begin{scriptsize}
\draw (1.75,-0.05) node[anchor=north west] {$\VVx$};
\draw (-0.4,2.) node[anchor=north west] {$\VVy$};
\fill [color=black] (0.42,0) circle (1.5pt);
\draw[color=black] (0.42,0.14) node {$P$};
\fill [color=black] (0.34,0.34) circle (1.5pt);
\draw[color=black] (0.35,0.45) node {$A$};
\fill [color=black] (-0.34,0.34) circle (1.5pt);
\draw[color=black] (-0.35,0.45) node {$B$};
\fill [color=black] (0.34,-0.34) circle (1.5pt);
\draw[color=black] (0.35,-0.45) node {$D$};
\fill [color=black] (-0.34,-0.34) circle (1.5pt);
\draw[color=black] (-0.35,-0.45) node {$C$};
\end{scriptsize}
\end{tikzpicture}
\end{center}
\caption{Layout of the bounds of the $\Li$ stability areas of the scheme relative to $\utilde=\vectV$ with a non intrinsic diffusion for $\gamma\leq1$ and $\sk[q](2-\sk[q])\leq\sk[xy]$.} 
\label{fig:zonehyp_rectw3}
\end{figure}
\end{proof}

The domain of $\Li$ stability in $\vects$ is identical for $\utilde=\vectz$ and $\utilde=\vectV$.
It is given by the triangle of the figures \ref{fig:zonzst_dhtw} and \ref{fig:zonzst_rectw}. However, the choice $\utilde=\vectV$ increases the set of the $\vects$ verifying the optimal stability area $\normi\leq\lambda$: indeed the area (\ref{eq:carrei}) spreads to TRT schemes consistent with $\sk[q]\leq\sk[xy]\leq \min(1,2\sk[q])$.

We now compare the areas in $\vectV$ associated with $\utilde=\vectz$ and $\utilde=\vectV$ for three $\vects$ that don't verify $\sk[q]\leq\sk[xy]\leq \min(1,2\sk[q])$. We want to know if $\utilde=\vectV$ provides a better $\Li$ stability behaviour than $\utilde=\vectz$. The associated inequalities are presented in the table \ref{tab:compzone1}. The solutions $\vectV$ are represented on the figures from \ref{fig:compzone1} to \ref{fig:compzone3} in the frame $(O,\VVx,\VVy)$: the areas delimited by the points $A,B,C,D$ are associated with the scheme relative to $\utilde=\vectV$, and those bounded by $E,F,G,H$ correspond to the MRT scheme ($\utilde=\vectz$).

\begin{table}
\centering\small
\grandtraittop
\begin{tabular}{@{}p{2.2cm}p{3.3cm}p{3.8cm}p{3.6cm}@{}}
Choice of $\vects$&$(0,1,1,1/2)$&$(0,1,1,3/2)$&$(0,3/2,3/2,3/4)$\\
Area for $\utilde{=}\vectz$&$(\VVx\pm4)^2-(\VVy)^2\geqslant12$

$(\VVy\pm4)^2-(\VVx)^2\geqslant12$ &$(\VVx\pm4/3)^2-(\VVy)^2\geqslant4/9$

$(\VVy\pm4/3)^2-(\VVx)^2\geqslant4/9$&$(\VVx\pm4)^2-(\VVy)^2\geqslant44/3$

$(\VVy\pm4)^2-(\VVx)^2\geqslant44/3$\\
Area for $\utilde{=}\vectV$&$\normi\leq1/3$&$(\VVx\pm6)^2-(\VVy)^2\geqslant32$

$(\VVy\pm6)^2-(\VVx)^2\geqslant32$&$\normi\leq1/3$\\
&&$\normi\leq1$& 
\end{tabular}
\grandtraitbottom
 \caption{$\Li$ stability areas of the schemes with a non intrinsic diffusion for $\lambda=1$.}
 \label{tab:compzone1}
\end{table}

\begin{figure}
\begin{center}
\definecolor{uququq}{rgb}{0.25,0.25,0.25}
\definecolor{xdxdff}{rgb}{0.49,0.49,1}
\definecolor{ffqqqq}{rgb}{1,0,0}
\begin{tikzpicture}[scale=0.45,line cap=round,line join=round,>=triangle 45,x=4.0cm,y=4.0cm]
\draw[->,color=black] (-2.2,0) -- (2.2,0);
\draw[->,color=black] (0,-2.2) -- (0,2.2);
\clip(-2.2,-2.2) rectangle (2.2,2.2);
\draw [samples=50,domain=-0.3:0.3,rotate around={0:(4,0)},xshift=16cm,yshift=0cm,color=ffqqqq] plot ({3.46*(1+\x*\x)/(1-\x*\x)},{3.46*2*\x/(1-\x*\x)});
\draw [samples=50,domain=-0.3:0.3,rotate around={0:(4,0)},xshift=16cm,yshift=0cm,color=ffqqqq] plot ({3.46*(-1-\x*\x)/(1-\x*\x)},{3.46*(-2)*\x/(1-\x*\x)});
\draw [samples=50,domain=-0.2:0.2,rotate around={180:(-4,0)},xshift=-16cm,yshift=0cm,color=ffqqqq] plot ({3.46*(1+\x*\x)/(1-\x*\x)},{3.46*2*\x/(1-\x*\x)});
\draw [samples=50,domain=-0.3:0.3,rotate around={180:(-4,0)},xshift=-16cm,yshift=0cm,color=ffqqqq] plot ({3.46*(-1-\x*\x)/(1-\x*\x)},{3.46*(-2)*\x/(1-\x*\x)});
\draw [samples=50,domain=-0.2:0.2,rotate around={90:(0,4)},xshift=0cm,yshift=16cm,color=ffqqqq] plot ({3.46*(1+\x*\x)/(1-\x*\x)},{3.46*2*\x/(1-\x*\x)});
\draw [samples=50,domain=-0.3:0.3,rotate around={90:(0,4)},xshift=0cm,yshift=16cm,color=ffqqqq] plot ({3.46*(-1-\x*\x)/(1-\x*\x)},{3.46*(-2)*\x/(1-\x*\x)});
\draw [samples=50,domain=-0.2:0.2,rotate around={-90:(0,-4)},xshift=0cm,yshift=-16cm,color=ffqqqq] plot ({3.46*(1+\x*\x)/(1-\x*\x)},{3.46*2*\x/(1-\x*\x)});
\draw [samples=50,domain=-0.3:0.3,rotate around={-90:(0,-4)},xshift=0cm,yshift=-16cm,color=ffqqqq] plot ({3.46*(-1-\x*\x)/(1-\x*\x)},{3.46*(-2)*\x/(1-\x*\x)});
\draw [domain=-2.2:2.2] plot(\x,{(-0.67--1*\x)/1});
\draw [domain=-2.2:2.2] plot(\x,{(-0.67--1*\x)/-1});
\draw [domain=-2.2:2.2] plot(\x,{(-0.67-1*\x)/-1});
\draw [domain=-2.2:2.2] plot(\x,{(-0.67-1*\x)/1});
\begin{scriptsize}
\fill [color=xdxdff] (0,0) circle (1.5pt);
\draw[color=black] (0.06,0.1) node {$O$};
\fill [color=uququq] (-0.67,0) circle (1.5pt);
\draw[color=uququq] (-0.61,0.1) node {$A$};
\fill [color=uququq] (0,0.67) circle (1.5pt);
\draw[color=uququq] (0.06,0.77) node {$B$};
\fill [color=uququq] (0.67,0) circle (1.5pt);
\draw[color=uququq] (0.73,0.1) node {$C$};
\fill [color=uququq] (0,-0.67) circle (1.5pt);
\draw[color=uququq] (0.06,-0.56) node {$D$};
\fill [color=uququq] (-0.5,0.5) circle (1.5pt);
\draw[color=uququq] (-0.44,0.6) node {$E$};
\fill [color=uququq] (0.5,0.5) circle (1.5pt);
\draw[color=uququq] (0.57,0.6) node {$F$};
\fill [color=uququq] (0.5,-0.5) circle (1.5pt);
\draw[color=uququq] (0.57,-0.4) node {$G$};
\fill [color=uququq] (-0.5,-0.5) circle (1.5pt);
\draw[color=uququq] (-0.46,-0.4) node {$H$};
\draw (1.9,-0.05) node[anchor=north west] {$\VVx$};
\draw (-0.4,2.2) node[anchor=north west] {$\VVy$};
\end{scriptsize}
\end{tikzpicture}
\end{center}
\caption{$\Li$ stability areas for $\vects=(0,1,1,1/2)$. Scheme relative to $\utilde=\vectV$ ($A,B,C,D$), to $\utilde=\vectz$ ($E,F,G,H$), with a non intrinsic diffusion.} 
\label{fig:compzone1}
\end{figure}

\begin{figure}
\begin{center}
\definecolor{uququq}{rgb}{0.25,0.25,0.25}
\definecolor{ffqqqq}{rgb}{1,0,0}
\begin{tikzpicture}[scale=0.45,line cap=round,line join=round,>=triangle 45,x=4.0cm,y=4.0cm]
\draw[->,color=black] (-2.2,0) -- (2.2,0);
\draw[->,color=black] (0,-2.2) -- (0,2.2);
\clip(-2.2,-2.2) rectangle (2.2,2.2);
\draw [samples=50,domain=-0.99:0.99,rotate around={0:(1.33,0)},xshift=5.33cm,yshift=0cm,color=ffqqqq] plot ({0.67*(1+\x*\x)/(1-\x*\x)},{0.67*2*\x/(1-\x*\x)});
\draw [samples=50,domain=-0.99:0.99,rotate around={0:(1.33,0)},xshift=5.33cm,yshift=0cm,color=ffqqqq] plot ({0.67*(-1-\x*\x)/(1-\x*\x)},{0.67*(-2)*\x/(1-\x*\x)});
\draw [samples=50,domain=-0.99:0.99,rotate around={0:(-1.33,0)},xshift=-5.33cm,yshift=0cm,color=ffqqqq] plot ({0.67*(1+\x*\x)/(1-\x*\x)},{0.67*2*\x/(1-\x*\x)});
\draw [samples=50,domain=-0.99:0.99,rotate around={0:(-1.33,0)},xshift=-5.33cm,yshift=0cm,color=ffqqqq] plot ({0.67*(-1-\x*\x)/(1-\x*\x)},{0.67*(-2)*\x/(1-\x*\x)});
\draw [samples=50,domain=-0.99:0.99,rotate around={90:(0,-1.33)},xshift=0cm,yshift=-5.33cm,color=ffqqqq] plot ({0.67*(1+\x*\x)/(1-\x*\x)},{0.67*2*\x/(1-\x*\x)});
\draw [samples=50,domain=-0.99:0.99,rotate around={90:(0,-1.33)},xshift=0cm,yshift=-5.33cm,color=ffqqqq] plot ({0.67*(-1-\x*\x)/(1-\x*\x)},{0.67*(-2)*\x/(1-\x*\x)});
\draw [samples=50,domain=-0.99:0.99,rotate around={90:(0,1.33)},xshift=0cm,yshift=5.33cm,color=ffqqqq] plot ({0.67*(1+\x*\x)/(1-\x*\x)},{0.67*2*\x/(1-\x*\x)});
\draw [samples=50,domain=-0.99:0.99,rotate around={90:(0,1.33)},xshift=0cm,yshift=5.33cm,color=ffqqqq] plot ({0.67*(-1-\x*\x)/(1-\x*\x)},{0.67*(-2)*\x/(1-\x*\x)});
\draw [samples=50,domain=-0.2:0.2,rotate around={0:(6,0)},xshift=24cm,yshift=0cm] plot ({5.66*(1+\x*\x)/(1-\x*\x)},{5.66*2*\x/(1-\x*\x)});
\draw [samples=50,domain=-0.3:0.3,rotate around={0:(6,0)},xshift=24cm,yshift=0cm] plot ({5.66*(-1-\x*\x)/(1-\x*\x)},{5.66*(-2)*\x/(1-\x*\x)});
\draw [samples=50,domain=-0.2:0.2,rotate around={90:(0,6)},xshift=0cm,yshift=24cm] plot ({5.66*(1+\x*\x)/(1-\x*\x)},{5.66*2*\x/(1-\x*\x)});
\draw [samples=50,domain=-0.2:0.2,rotate around={90:(0,6)},xshift=0cm,yshift=24cm] plot ({5.66*(-1-\x*\x)/(1-\x*\x)},{5.66*(-2)*\x/(1-\x*\x)});
\draw [samples=50,domain=-0.2:0.2,rotate around={180:(-6,0)},xshift=-24cm,yshift=0cm] plot ({5.66*(1+\x*\x)/(1-\x*\x)},{5.66*2*\x/(1-\x*\x)});
\draw [samples=50,domain=-0.2:0.2,rotate around={180:(-6,0)},xshift=-24cm,yshift=0cm] plot ({5.66*(-1-\x*\x)/(1-\x*\x)},{5.66*(-2)*\x/(1-\x*\x)});
\draw [samples=50,domain=-0.2:0.2,rotate around={-90:(0,-6)},xshift=0cm,yshift=-24cm] plot ({5.66*(1+\x*\x)/(1-\x*\x)},{5.66*2*\x/(1-\x*\x)});
\draw [samples=50,domain=-0.2:0.2,rotate around={-90:(0,-6)},xshift=0cm,yshift=-24cm] plot ({5.66*(-1-\x*\x)/(1-\x*\x)},{5.66*(-2)*\x/(1-\x*\x)});
\begin{scriptsize}
\fill [color=uququq] (0,0) circle (1.5pt);
\draw[color=uququq] (0.04,0.06) node {$O$};
\fill [color=uququq] (-0.33,0.33) circle (1.5pt);
\draw[color=uququq] (-0.3,0.39) node {$A$};
\fill [color=uququq] (0.33,0.33) circle (1.5pt);
\draw[color=uququq] (0.37,0.39) node {$B$};
\fill [color=uququq] (0.33,-0.33) circle (1.5pt);
\draw[color=uququq] (0.37,-0.27) node {$C$};
\fill [color=uququq] (-0.33,-0.33) circle (1.5pt);
\draw[color=uququq] (-0.3,-0.27) node {$D$};
\fill [color=uququq] (-0.5,0.5) circle (1.5pt);
\draw[color=uququq] (-0.47,0.56) node {$E$};
\fill [color=uququq] (0.5,0.5) circle (1.5pt);
\draw[color=uququq] (0.54,0.56) node {$F$};
\fill [color=uququq] (0.5,-0.5) circle (1.5pt);
\draw[color=uququq] (0.54,-0.44) node {$G$};
\fill [color=uququq] (-0.5,-0.5) circle (1.5pt);
\draw[color=uququq] (-0.48,-0.44) node {$H$};
\draw (1.9,-0.05) node[anchor=north west] {$\VVx$};
\draw (-0.4,2.2) node[anchor=north west] {$\VVy$};
\end{scriptsize}
\end{tikzpicture}
\end{center}
\caption{$\Li$ stability areas for $\vects=(0,1,1,3/2)$. Scheme relative to $\utilde=\vectV$ ($A,B,C,D$), to $\utilde=\vectz$ ($E,F,G,H$), with a non intrinsic diffusion.} 
\label{fig:compzone2}
\end{figure}

\begin{figure}
\begin{center}
\definecolor{ffqqqq}{rgb}{1,0,0}
\definecolor{uququq}{rgb}{0.25,0.25,0.25}
\definecolor{xdxdff}{rgb}{0.49,0.49,1}
\begin{tikzpicture}[scale=0.7,line cap=round,line join=round,>=triangle 45,x=4.0cm,y=4.0cm]
\draw[->,color=black] (-1.5,0) -- (1.5,0);
\draw[->,color=black] (0,-1.5) -- (0,1.5);
\clip(-1.5,-1.5) rectangle (1.5,1.5);
\draw [domain=-1.5:1.5] plot(\x,{(-0.67--1*\x)/1});
\draw [domain=-1.5:1.5] plot(\x,{(-0.67-1*\x)/-1});
\draw [domain=-1.5:1.5] plot(\x,{(-0.67-1*\x)/1});
\draw [domain=-1.5:1.5] plot(\x,{(-0.67--1*\x)/-1});
\draw [samples=50,domain=-0.2:0.2,rotate around={0:(4,0)},xshift=16cm,yshift=0cm,color=ffqqqq] plot ({3.83*(-1-\x*\x)/(1-\x*\x)},{3.83*(-2)*\x/(1-\x*\x)});
\draw [samples=50,domain=-0.2:0.2,rotate around={0:(-4,0)},xshift=-16cm,yshift=0cm,color=ffqqqq] plot ({3.83*(1+\x*\x)/(1-\x*\x)},{3.83*2*\x/(1-\x*\x)});
\draw [samples=50,domain=-0.2:0.2,rotate around={90:(0,-4)},xshift=0cm,yshift=-16cm,color=ffqqqq] plot ({3.83*(1+\x*\x)/(1-\x*\x)},{3.83*2*\x/(1-\x*\x)});
\draw [samples=50,domain=-0.2:0.2,rotate around={90:(0,4)},xshift=0cm,yshift=16cm,color=ffqqqq] plot ({3.83*(-1-\x*\x)/(1-\x*\x)},{3.83*(-2)*\x/(1-\x*\x)});
\begin{scriptsize}
\fill [color=uququq] (0,0.67) circle (1.5pt);
\draw[color=uququq] (0.1,0.7) node {$A$};
\fill [color=uququq] (-0.67,0) circle (1.5pt);
\draw[color=uququq] (-0.65,0.1) node {$B$};
\fill [color=uququq] (0,-0.67) circle (1.5pt);
\draw[color=uququq] (0.1,-0.66) node {$C$};
\fill [color=uququq] (0.67,0) circle (1.5pt);
\draw[color=uququq] (0.69,0.1) node {$D$};
\fill [color=uququq] (-0.17,0.17) circle (1.5pt);
\draw[color=uququq] (-0.15,0.21) node {$E$};
\fill [color=uququq] (0.17,0.17) circle (1.5pt);
\draw[color=uququq] (0.19,0.21) node {$F$};
\fill [color=uququq] (0.17,-0.17) circle (1.5pt);
\draw[color=uququq] (0.19,-0.13) node {$G$};
\fill [color=uququq] (-0.17,-0.17) circle (1.5pt);
\draw[color=uququq] (-0.14,-0.13) node {$H$};
\draw (1.3,-0.05) node[anchor=north west] {$\VVx$};
\draw (-0.2,1.5) node[anchor=north west] {$\VVy$};
\end{scriptsize}
\end{tikzpicture}
\end{center}
\caption{$\Li$ stability areas for $\vects=(0,3/2,3/2,3/4)$. Scheme relative to $\utilde=\vectV$ ($A,B,C,D$), to $\utilde=\vectz$ ($E,F,G,H$), with a non intrinsic diffusion.}  
\label{fig:compzone3}
\end{figure}

We can not say that a scheme is better than the other in terms of $\Li$ stability: generally the areas just intersect.
In the section \ref{sub:stabeqeq}, we saw that the velocity field $\utilde=\vectV$ improve the stability associated with the blow up of the scheme that rather corresponds to a $L^2$ notion than to a $\Li$ notion. We confirm this fact theoretically in the section \ref{sub:weiL2}.

\subsection{The intrinsic diffusion case}

As for the non intrinsic case, we first deal with the MRT scheme corresponding to $\utilde=\vectz$.

\begin{proposition}[$\Li$ stability areas for the MRT scheme]\label{th:stlitrtis0}
Let $\vectV\in\R^2$, $(\sk[q],\sk[xy])\in\R^2$, consider the twisted $\ddqqn$ MRT scheme $(\utilde=\vectz)$ associated with the relaxation parameters $(0,\sk[q],\sk[q],\sk[xy])$, and the equilibrium (\ref{eq:eqd2q4},\ref{eq:eqd2q4treis}) $$\vectmeqz = \rho(1,\Vx,\Vy,0).$$ Note $\gamma=\sk[xy]/\sk[q]$, 
\begin{itemize}
\item if $0\leq\sk[xy]\leq\min(\sk[q],2-\sk[q])$ (area BCD on the figure \ref{fig:zonzst_dhtwis}), the scheme is $\Li$ stable for all $\vectV$ such that 
\begin{equation}\label{eq:IS1}
\normu\leq\lambda\gamma.
\end{equation}
\item if $\sk[q]\leq\min(1,\sk[xy])$ and $\sk[xy]\leq2\sk[q]$ (area ABC on the figure \ref{fig:zonzst_dhtwis}), the scheme is $\Li$ stable for all $\vectV$ such that 
\begin{equation}\label{eq:IS2}
\normu\leq\lambda(2-\gamma).
\end{equation}
\item if $\sk[q]\geqslant\max(1,2-\sk[xy])$ and $\sk[xy]\leq2(2-\sk[q])$ (area ACD on the figure \ref{fig:zonzst_dhtwis}), the scheme is $\Li$ stable for all $\vectV$ such that 
\begin{equation}\label{eq:IS3}
\normu\leq\lambda\big(\frac{4}{\sk[q]}-2-\gamma\big).
\end{equation}
\item if $\sk[xy]=\sk[q]=0$ (point B on the figure \ref{fig:zonzst_dhtwis}), the scheme is unconditionally $\Li$ stable.
\item For all other $\vects$, no $\vectV$ corresponds to a $\Li$ stable scheme.
\end{itemize}
In particular, the parameters $(\sk[q],\sk[xy])$ corresponding to a non empty area of $\Li$ stability in $\vectV$ are included in the square $[0,2]^2$.
\end{proposition}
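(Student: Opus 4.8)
The plan is to reproduce the strategy of Propositions~\ref{th:stlitrt0} and~\ref{th:stlitrtv}: because the transport step neither changes the sign of the distributions nor the total mass, it is enough to show that the matrix $M$ carrying the precollision vector $\vectf$ onto the postcollision vector $\vectfe$ is nonnegative. First I would make $M$ explicit from the general relaxation identity~(\ref{eq:relfgen}). For $\utilde=\vectz$ the moment matrix built on $1,X,Y,XY$ has mutually orthogonal rows, so its inverse is immediate, and the intrinsic equilibrium reduces to $\vectmeqz=\rho(1,\Vx,\Vy,0)$, i.e. $\mkueq[3]=0$. Performing this computation (with the formal calculus software used elsewhere in the paper) I expect the row attached to the velocity $(\varepsilon_x\lambda,\varepsilon_y\lambda)$, $\varepsilon_x,\varepsilon_y\in\{\pm1\}$, to depend on $\vectV$ only through the single scalar $\sigma=\varepsilon_x\Vx+\varepsilon_y\Vy$, and to produce three types of entries. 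This yields the nonnegativity conditions
\[
\sk[xy]+\frac{\sk[q]}{\lambda}\sigma\geq0,\quad 2\sk[q]-\sk[xy]+\frac{\sk[q]}{\lambda}\sigma\geq0,\quad 4-2\sk[q]-\sk[xy]+\frac{\sk[q]}{\lambda}\sigma\geq0,
\]
coming respectively from the two neighbouring velocities, the opposite velocity and the diagonal entry, each required for the four sign choices $\sigma=\pm\Vx\pm\Vy$.

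The second step is geometric and, unlike the non intrinsic case, involves no hyperbola: the vanishing of $\mkueq[3]$ removes the bilinear term $\Vx\Vy$, so the boundaries are straight. Since $\sigma$ ranges over $\pm\Vx\pm\Vy$, whose minimum is $-\normu$, for $\sk[q]>0$ each family is binding at $\sigma=-\normu$. Dividing by $\sk[q]/\lambda$ and writing $\gamma=\sk[xy]/\sk[q]$, the three conditions become $\normu\leq\lambda\gamma$, $\normu\leq\lambda(2-\gamma)$ and $\normu\leq\lambda\big(\frac{4}{\sk[q]}-2-\gamma\big)$, so the stability area is the $\ell^1$-diamond $\normu\leq\lambda\min\big(\gamma,2-\gamma,\frac{4}{\sk[q]}-2-\gamma\big)$, which is exactly~(\ref{eq:IS1})--(\ref{eq:IS3}) once the minimising term is identified.

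The third step is the bookkeeping of the minimum. Comparing the three affine functions pairwise gives $\gamma\leq2-\gamma\Leftrightarrow\sk[xy]\leq\sk[q]$, $\gamma\leq\frac{4}{\sk[q]}-2-\gamma\Leftrightarrow\sk[xy]\leq2-\sk[q]$ and $2-\gamma\leq\frac{4}{\sk[q]}-2-\gamma\Leftrightarrow\sk[q]\leq1$. These three equivalences carve out precisely the regions BCD ($\gamma$ minimal, $\sk[xy]\leq\min(\sk[q],2-\sk[q])$), ABC ($2-\gamma$ minimal, $\sk[q]\leq\min(1,\sk[xy])$ and $\sk[xy]\leq2\sk[q]$) and ACD ($\frac{4}{\sk[q]}-2-\gamma$ minimal, $\sk[q]\geq\max(1,2-\sk[xy])$ and $\sk[xy]\leq2(2-\sk[q])$). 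Requiring the relevant minimum to be nonnegative (otherwise only $\vectV=\vectz$ survives) confines the admissible parameters to the triangle $0\leq\sk[xy]\leq\min(2\sk[q],4-2\sk[q])$, which lies in $[0,2]^2$.

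Finally I would settle the degenerate parameters. When $\sk[q]=\sk[xy]=0$ the matrix $M$ is the identity, hence the scheme is unconditionally $\Li$ stable (point $B$). When $\sk[q]<0$ or $\sk[xy]<0$, evaluating the neighbouring and opposite conditions at $\vectV=\vectz$ already forces $\sk[xy]\geq0$ together with $\sk[xy]\leq2\sk[q]$, which are incompatible, so no velocity is stable; the case $\sk[q]=0$, $\sk[xy]\neq0$ is excluded the same way. I expect the only real difficulty to be computational, namely certifying the exact sign pattern of the entries of $M$ that distinguishes the diagonal, opposite and neighbouring terms; once that pattern is established, the intrinsic equilibrium makes the remaining analysis a short comparison of three linear functions rather than the hyperbola discussion needed for the non intrinsic scheme.
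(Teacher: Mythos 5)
Your proposal is correct and follows essentially the same route as the paper: nonnegativity of the matrix sending $\vectf$ to $\vectfe$ yields exactly the three families of inequalities you list (the paper writes them as $\min\big(\sk[xy]/4,(2\sk[q]-\sk[xy])/4,1-(2\sk[q]+\sk[xy])/4\big)\pm\frac{\sk[q]}{4\lambda}(\Vx\pm\Vy)\geqslant0$, obtained by formal computation), which for $\sk[q]>0$ reduce to $\normu\leq\lambda\min\big(\gamma,2-\gamma,\frac{4}{\sk[q]}-2-\gamma\big)$, followed by the same bookkeeping of the minimum over the regions BCD, ABC, ACD and the same treatment of the degenerate parameters. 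The only slip is the parenthetical ``otherwise only $\vectV=\vectz$ survives'': when the relevant minimum is negative no velocity at all survives (since $\normu\geq0$), which is exactly what the final bullet of the statement requires and what your own last paragraph correctly concludes for $\sk[q]<0$ or $\sk[xy]<0$.
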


\begin{figure}
\begin{center}
\definecolor{ffqqtt}{rgb}{1,0,0.2}
\definecolor{ttccqq}{rgb}{0.2,0.8,0}
\definecolor{xdxdff}{rgb}{0.49,0.49,1}
\definecolor{qqttff}{rgb}{0,0.2,1}
\definecolor{uququq}{rgb}{0.25,0.25,0.25}
\definecolor{qqqqff}{rgb}{0,0,1}
\begin{tikzpicture}[scale=0.7,line cap=round,line join=round,>=triangle 45,x=4.0cm,y=4.0cm]
\draw[->,color=black] (-0.5,0) -- (2.5,0);
\foreach \x in {,1,2}
\draw[shift={(\x,0)},color=black] (0pt,2pt) -- (0pt,-2pt) node[below] {\footnotesize $\x$};
\draw[->,color=black] (0,-0.5) -- (0,2.5);
\foreach \y in {,1,2}
\draw[shift={(0,\y)},color=black] (2pt,0pt) -- (-2pt,0pt) node[left] {\footnotesize $\y$};
\draw[color=black] (0pt,-10pt) node[right] {\footnotesize $0$};
\clip(-0.5,-0.5) rectangle (2.5,2.5);
\fill[color=black,fill=black,fill opacity=0.1] (1,2) -- (0,0) -- (1,1) -- cycle;
\fill[color=black,fill=black,fill opacity=0.1] (1,2) -- (1,1) -- (2,0) -- cycle;
\fill[color=black,fill=black,fill opacity=0.1] (0,0) -- (2,0) -- (1,1) -- cycle;
\draw [color=black] (1,2)-- (0,0);
\draw [color=black] (0,0)-- (1,1);
\draw [color=black] (1,1)-- (1,2);
\draw [color=black] (1,2)-- (1,1);
\draw [color=black] (1,1)-- (2,0);
\draw [color=black] (2,0)-- (1,2);
\draw [color=black] (0,0)-- (2,0);
\draw [color=black] (2,0)-- (1,1);
\draw [color=black] (1,1)-- (0,0);
\begin{scriptsize}
\draw (2.35,-0.05) node[anchor=north west] {$\sk[q]$};
\draw (-0.25,2.5) node[anchor=north west] {$\sk[xy]$};
\fill [color=black] (1,2) circle (1.5pt);
\draw[color=black] (1.08,2.13) node {$A$};
\fill [color=black] (0,0) circle (1.5pt);
\draw[color=black] (-0.08,0.13) node {$B$};
\fill [color=black] (1,1) circle (1.5pt);
\draw[color=black] (1.08,1.13) node {$C$};
\fill [color=black] (2,0) circle (1.5pt);
\draw[color=black] (2.08,0.13) node {$D$};
\end{scriptsize}
\end{tikzpicture}
\end{center}
\caption{$\Li$ stability area in $\vects$ of the twisted $\ddqqn$ MRT scheme ($\utilde=\vectz$) with an intrinsic diffusion.
BCD : (\ref{eq:IS1}), ABC : (\ref{eq:IS2}), ACD : (\ref{eq:IS3}).} 
\label{fig:zonzst_dhtwis}
\end{figure}

\begin{proof}
The positivity of the matrix sending $\vectf$ on $\vectfe$ reads
\begin{equation}\label{eq:is1}
\min(\frac{\sk[xy]}{4},\frac{2\sk[q]-\sk[xy]}{4},1-\frac{2\sk[q]+\sk[xy]}{4})\pm\frac{\sk[q]}{4\lambda}(\Vx\pm\Vy)\geqslant0.
\end{equation}
If $\sk[q]=0$, then the inequalities (\ref{eq:is1}) impose $\sk[xy]=0$ and there is unconditional stability in $\vectV$. Otherwise these inequalities are equivalent to
\begin{equation}\label{eq:is7}
\normu\leq\min\big(\lambda\gamma,\lambda(2-\gamma),\lambda(\frac{4}{\sk[q]}-2-\gamma)\big).
\end{equation}

The conditions to have a non empty set of $\vectV$ are to be determined: for that we must study when the minimum in (\ref{eq:is7}) is nonnegative.
\begin{align*}
\gamma\geqslant0&\iff\sk[xy]\geqslant0,\\
2-\gamma\geqslant0&\iff\sk[xy]\leq2\sk[q],\\
\frac{4}{\sk[q]}-2-\gamma\geqslant0&\iff\sk[xy]\leq2(2-\sk[q]).
\end{align*}
We now suppose that these conditions are verified and we determine the minimum according to the choice of $\vects$:
if $\sk[xy]\leq\min(\sk[q],2-\sk[q])$, the area is given by (\ref{eq:IS1}),
if $\sk[q]\leq\min(1,\sk[xy])$, it is given by (\ref{eq:IS2}),
if $\sk[q]\geqslant\max(1,2-\sk[xy])$, we obtain (\ref{eq:IS3}).
This closes the proof.
\end{proof}

The areas are represented on the figure \ref{fig:zonzst_dhtwis}.
We can deduce the $\Li$ stability areas for the BGK scheme.

\begin{proposition}[The BGK case]\label{th:stlitrtisbgk}
Let $\vectV\in\R^2$, $s\in\R$, consider the twisted relative velocity $\ddqqn$ scheme, BGK of relaxation parameters $s$, and of equilibrium given by (\ref{eq:eqd2q4},\ref{eq:eqd2q4treis}) 
\begin{itemize}
\item if $0\leq s\leq1$, the scheme is $\Li$ stable for all $\vectV$ such that 
\begin{equation*}
\normu\leq\lambda.
\end{equation*}
\item if $1\leq s\leq 4/3$, the scheme is $\Li$ stable for all $\vectV$ such that 
\begin{equation*}
\normu\leq\lambda\big(\frac{4}{s}-3\big).
\end{equation*}
\item For all other $\vects$, no $\vectV$ corresponds to a $\Li$ stable scheme.
\end{itemize}
In particular, the parameter $s$ corresponding to a non empty area of $\Li$ stability in $\vectV$ are included in $[0,2]$.
\end{proposition}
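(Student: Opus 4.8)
The plan is to derive this statement directly from Proposition \ref{th:stlitrtis0}, exploiting the fact that a BGK scheme carries a single relaxation parameter. Setting $\sk[q]=\sk[xy]=s$ places the scheme on the diagonal of the parameter plane of figure \ref{fig:zonzst_dhtwis}, so that the ratio $\gamma=\sk[xy]/\sk[q]$ equals $1$. Moreover, since the relation (\ref{eq:mueq}) holds, the BGK scheme does not depend on the relative velocity $\utilde$, and it therefore suffices to specialise the MRT ($\utilde=\vectz$) analysis already carried out for the intrinsic diffusion.

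First I would recall that the proof of Proposition \ref{th:stlitrtis0} reduces the nonnegativity of the collision matrix, for $\sk[q]\neq0$, to the single inequality (\ref{eq:is7}),
\[
\normu\leq\min\Big(\lambda\gamma,\;\lambda(2-\gamma),\;\lambda\big(\frac{4}{\sk[q]}-2-\gamma\big)\Big).
\]
Substituting $\gamma=1$ and $\sk[q]=s$ collapses the first two arguments to $\lambda$ and the third to $\lambda(4/s-3)$, so the admissible velocities are exactly those satisfying $\normu\leq\lambda\min(1,\,4/s-3)$.

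Then I would analyse the scalar quantity $\min(1,4/s-3)$ for $s>0$. Since $4/s-3\geqslant1$ is equivalent to $s\leq1$, the minimum equals $1$ on $0<s\leq1$, yielding the sufficient condition $\normu\leq\lambda$; it equals $4/s-3$ on $1\leq s\leq4/3$, yielding $\normu\leq\lambda(4/s-3)$; and for $s>4/3$ the quantity $4/s-3$ is strictly negative, so no $\vectV$ satisfies the inequality and the stability area is empty. This reproduces the three alternatives in the statement, the two formulas agreeing at $s=1$ where both give $\normu\leq\lambda$.

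It remains to treat the degenerate parameters, which I expect to be the only delicate point. For $s=0$ the scheme reduces to the vertex $B$ of figure \ref{fig:zonzst_dhtwis} and is unconditionally $\Li$ stable, so the stated condition $\normu\leq\lambda$ on $0\leq s\leq1$ remains a correct (if non-optimal) sufficient condition there; for $s<0$ none of the three region constraints of Proposition \ref{th:stlitrtis0} is met, so again no stable $\vectV$ exists. Finally, since a non-empty stability area in $\vectV$ forces $4/s-3\geqslant0$, that is $s\in[0,4/3]\subset[0,2]$, the last assertion follows at once. The argument involves no genuine obstacle beyond verifying that the piecewise regions of figure \ref{fig:zonzst_dhtwis} glue consistently along the BGK diagonal $\sk[q]=\sk[xy]$.
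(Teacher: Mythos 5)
Your proposal is correct and matches the paper's own derivation: the paper introduces this proposition with ``We can deduce the $\Li$ stability areas for the BGK scheme'', i.e., it likewise specializes Proposition \ref{th:stlitrtis0} to $\sk[q]=\sk[xy]=s$ (so $\gamma=1$), which reduces (\ref{eq:is7}) to $\normu\leq\lambda\min(1,4/s-3)$ and splits at $s=1$ and $s=4/3$ exactly as you do, with your handling of the edge cases ($s=0$ unconditionally stable, $s<0$ covered by the catch-all clause) also consistent with that proposition. For completeness, note the paper re-verifies the same BGK computation independently at the start of the Appendix C proof of Proposition \ref{th:stlitrtisV}, specializing the positivity inequalities (\ref{eq:twis1})--(\ref{eq:twis7}) to $\sk[q]=\sk[xy]$ and arriving at the same bound $\normu\leq\min\big(\lambda,\lambda(\frac{4}{s}-3)\big)$.
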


We obtain a constant area of stability for $s\leq1$. When $s$ becomes larger than one (overrelaxation) the area decreases as $s$ increases. For $s$ greater than $4/3$, there is no velocity $\Li$ stable. Now we do the same job for  the TRT scheme with $\utilde=\vectV$.

\begin{proposition}[$\Li$ stability areas for a relative velocity scheme]\label{th:stlitrtisV}
Let $\vectV\in\R^2$, $(\sk[q],\sk[xy])\in\R^2$, consider the twisted $\ddqqn$ scheme relative to $\utilde=\vectV$ associated with the relaxation parameters $(0,\sk[q],\sk[q],\sk[xy])$, and the equilibrium (\ref{eq:eqd2q4},\ref{eq:eqd2q4treis}) $$\vectmeq(\vectV) = \rho(1,0,0,-\Vx\Vy).$$ Note $\gu[1]=(2\sk[q]-\sk[xy])/(\sk[q]-\sk[xy])$ and $\gu[2]=\sk[xy]/(\sk[q]-\sk[xy])$,
\begin{itemize}
\item if $\sk[q]<\sk[xy]\leq\min(2\sk[q],2(2-\sk[q]))$ (area ACD on the figure \ref{fig:zonzst_rectwis}), the scheme is stable for all $\vectV$ such that 
\begin{subequations}
\begin{gather}
(\VVx\pm\lambda\gu[1])^2-(\VVy)^2\leq \lambda^2\gu[1]\gu[2],\label{eq:hypisleq1}\\
(\VVx\pm\lambda\frac{\gu[1]+\gu[2]}{2})^2-(\VVy\pm\lambda)^2\leq\lambda^2\varp{\gamma}{2}{2},\label{eq:hypisleq3}\\
(\VVx\pm\lambda\frac{\gu[1]+\gu[2]}{2})^2-(\VVy\mp\lambda)^2\leq\lambda^2\varp{\gamma}{2}{2},\label{eq:hypisleq2}\\
(\VVx\pm\lambda\gu[2])^2-(\VVy)^2\leq\frac{\lambda^2}{(\sk[q]-\sk[xy])^2}(4\varp{s}{q}{2}-2\sk[q]\sk[xy]-\varp{s}{xy}{2}+8(\sk[xy]-\sk[q])), \label{eq:hypisleq4}
\end{gather} 
\end{subequations}
plus the analogous inequalities exchanging $\VVx$ and $\VVy$.
\item if $\sk[xy]\leq\min(\sk[q],2(2-\sk[q]))$ (area ABD on the figure \ref{fig:zonzst_rectwis}), the scheme is stable for all $\vectV$ such that
\begin{subequations}
\begin{gather}
(\VVx\pm\lambda\gu[1])^2-(\VVy)^2\geqslant \lambda^2\gu[1]\gu[2],\label{eq:hypisgeq1}\\
(\VVx\pm\lambda\frac{\gu[1]+\gu[2]}{2})^2-(\VVy\pm\lambda)^2\geqslant\lambda^2\varp{\gamma}{2}{2},\label{eq:hypisgeq3}\\
(\VVx\pm\lambda\frac{\gu[1]+\gu[2]}{2})^2-(\VVy\mp\lambda)^2\geqslant\lambda^2\varp{\gamma}{2}{2},\label{eq:hypisgeq2}\\
(\VVx\pm\lambda\gu[2])^2-(\VVy)^2\geqslant\frac{\lambda^2}{(\sk[q]-\sk[xy])^2}(4\varp{s}{q}{2}-2\sk[q]\sk[xy]-\varp{s}{xy}{2}+8(\sk[xy]-\sk[q])),\label{eq:hypisgeq4}
\end{gather} 
\end{subequations}
plus the analogous inequalities exchanging $\VVx$ and $\VVy$.
\item if $\sk[xy]=\sk[q]$ (segment [AD] on the figure \ref{fig:zonzst_rectwis}), the $\Li$ stability area is given by the proposition \ref{th:stlitrtisbgk}.
\item For all other $\vects$, no $\vectV$ corresponds to a $\Li$ stable scheme.
\end{itemize}
In particular, the parameters $(\sk[q],\sk[xy])$ corresponding to a non empty area of $\Li$ stability in $\vectV$ are included in the square $[0,2]^2$.
\end{proposition}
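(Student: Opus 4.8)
The plan is to follow the template established for the previous propositions. By the reduction explained after Definition~\ref{th:stabli}, it suffices to prove that the matrix sending the precollision distributions $\vectf$ onto the postcollision ones $\vectfe$ is nonnegative, since the transport phase acts neither on positivity nor on the total mass. First I would instantiate the general relaxation formula (\ref{eq:relfgen}) with the data of the twisted scheme relative to $\utilde=\vectV$ and the intrinsic equilibrium (\ref{eq:eqd2q4treis}), whose moment vector here is $\vectmeq(\vectV)=\rho(1,0,0,-\Vx\Vy)$. Contrary to the non intrinsic case, where the equilibrium relative to $\vectV$ collapses to $\rho(1,0,0,0)$, the surviving fourth component $-\Vx\Vy$ produces a nontrivial coupling that does not factorise; this is precisely what will generate four distinct families of conditions. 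A formal calculus software then yields the complete list of nonnegativity conditions on the entries of the resulting $4\times4$ transition matrix, which are affine in $\sk[q],\sk[xy]$ and quadratic in $\vectV/\lambda$.

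The next step is to organise this list into the four quadratic families appearing in the statement. Because of the cross term $-\Vx\Vy$, the conditions mix first- and second-order monomials in $\Vx,\Vy$, and dividing the relevant ones by $\sk[q]-\sk[xy]$ naturally introduces the two coefficients $\gu[1]=(2\sk[q]-\sk[xy])/(\sk[q]-\sk[xy])$ and $\gu[2]=\sk[xy]/(\sk[q]-\sk[xy])$. As in the proofs of Propositions~\ref{th:stlitrt0} and \ref{th:stlitrtv}, I would pass to the rotated frame $\VVx=\Vx+\Vy$, $\VVy=\Vx-\Vy$, using $\Vx\Vy=((\VVx)^2-(\VVy)^2)/4$, so that each quadratic form becomes a difference of squares, and then complete the square to centre the hyperbolas. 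This should reproduce exactly the centred forms (\ref{eq:hypisleq1})--(\ref{eq:hypisleq4}) together with their counterparts obtained by exchanging $\VVx$ and $\VVy$.

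The crucial bookkeeping is the sign of $\sk[q]-\sk[xy]$, which governs the orientation of every inequality. When $\sk[xy]\leq\sk[q]$ the division preserves the sense and one obtains the ``$\geqslant$'' system (\ref{eq:hypisgeq1})--(\ref{eq:hypisgeq4}) (area ABD), whereas $\sk[q]<\sk[xy]$ flips it into the ``$\leq$'' system (\ref{eq:hypisleq1})--(\ref{eq:hypisleq4}) (area ACD). On the diagonal $\sk[xy]=\sk[q]$ the coefficients $\gu[1],\gu[2]$ degenerate, the two relaxation rates coincide, the scheme is BGK, and the area is already given by Proposition~\ref{th:stlitrtisbgk}; this segment [AD] is therefore treated by direct reference. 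It then remains to determine for which $\vects$ each system admits a solution $\vectV$: following the pole-abscissa argument used repeatedly above, I would require that the vertices of the bounding hyperbolas occupy compatible positions, which yields the threshold conditions separating ABD from ACD recorded in the statement and, in particular, confines the admissible $\vects$ to $[0,2]^2$.

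The main obstacle is the geometric analysis of the intersection of four distinct hyperbola families, rather than the single family encountered for the MRT intrinsic case in Proposition~\ref{th:stlitrtis0}. Tracking which of these inequalities is actually active on the boundary of the stable region, verifying that the $\gu[1]$-centred and $\gu[2]$-centred hyperbolas do not compete to produce a spurious empty area, and checking that the sign flip at $\sk[xy]=\sk[q]$ glues the ABD and ACD descriptions consistently with the BGK limit, is considerably more delicate than in the non intrinsic case. I expect most of the effort to go into this compatibility check and into confirming that the combination of the four families and their $\VVx\leftrightarrow\VVy$ symmetrisations leaves the regions claimed in the statement.
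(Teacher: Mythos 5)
Your proposal follows essentially the same route as the paper's proof: reduce $\Li$ stability to nonnegativity of the matrix sending $\vectf$ to $\vectfe$, extract the quadratic conditions by formal computation, divide by $\sk[q]-\sk[xy]$ to introduce $\gu[1]$ and $\gu[2]$ (with the sign of that factor deciding between the ``$\geqslant$'' system for area ABD and the ``$\leq$'' system for area ACD), pass to the frame $(\VVx,\VVy)$ and centre the hyperbolas, handle $\sk[xy]=\sk[q]$ as the BGK case, and settle non-emptiness of the areas by the pole-abscissa argument. The geometric compatibility analysis you flag as the main remaining effort is exactly what occupies the bulk of the paper's appendix proof, and it proceeds by the very method you describe, so your plan is sound.
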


\begin{figure}
\begin{center}
\definecolor{uququq}{rgb}{0.25,0.25,0.25}
\definecolor{xdxdff}{rgb}{0.49,0.49,1}
\definecolor{qqqqff}{rgb}{0,0,1}
\definecolor{ffqqqq}{rgb}{1,0,0}
\begin{tikzpicture}[scale=0.7,line cap=round,line join=round,>=triangle 45,x=4.0cm,y=4.0cm]
\draw[->,color=black] (-0.5,0) -- (2.5,0);
\foreach \x in {1,2}
\draw[shift={(\x,0)},color=black] (0pt,2pt) -- (0pt,-2pt) node[below] {\footnotesize $\x$};
\draw[->,color=black] (0,-0.5) -- (0,2.5);
\foreach \y in {1,2}
\draw[shift={(0,\y)},color=black] (2pt,0pt) -- (-2pt,0pt) node[left] {\footnotesize $\y$};
\draw[color=black] (0pt,-10pt) node[right] {\footnotesize $0$};
\clip(-0.5,-0.5) rectangle (2.5,2.5);
\draw[color=black,fill=black,fill opacity=0.1] {[smooth,samples=50,domain=0.0:1.33333] plot(\x,{(4-2*\x+2*\x-abs(4-2*\x-2*\x))/2})} -- (1.33,1.33) {[smooth,samples=50,domain=1.33333:0.0] -- plot(\x,{\x})} -- (0,0) -- cycle;
\draw[color=black,fill=black,fill opacity=0.1, smooth,samples=50,domain=0.0:2.0] plot(\x,{(4-2*\x+\x-abs(4-2*\x-\x))/2}) -- (2,0) -- (0,0) -- cycle;
\draw[smooth,samples=100,domain=1.0:2.0] plot(\x,{4-2*(\x)});
\draw[smooth,samples=100,domain=0.0:1.0] plot(\x,{2*(\x)});
\draw[smooth,samples=100,domain=0.0:1.3333333333333333] plot(\x,{(\x)});
\begin{scriptsize}
\draw (2.35,-0.05) node[anchor=north west] {$\sk[q]$};
\draw (-0.25,2.5) node[anchor=north west] {$\sk[xy]$};
\fill [color=black] (0,0) circle (1.5pt);
\draw[color=black] (-0.05,0.07) node {$A$};
\fill [color=black] (2,0) circle (1.5pt);
\draw[color=black] (2.04,0.07) node {$B$};
\fill [color=black] (1,2) circle (1.5pt);
\draw[color=black] (1.04,2.07) node {$C$};
\fill [color=black] (1.33,1.33) circle (1.5pt);
\draw[color=black] (1.38,1.41) node {$D$};
\end{scriptsize}
\end{tikzpicture}
\end{center}
\caption{$\Li$ stability area in $\vects$ of the twisted $\ddqqn$ scheme relative to $\utilde=\vectV$ with an intrinsic diffusion.
ACD : (\ref{eq:hypisleq1}) to (\ref{eq:hypisleq4}), ABD : (\ref{eq:hypisgeq1}) to (\ref{eq:hypisgeq4}).} 
\label{fig:zonzst_rectwis}
\end{figure}

We now compare the $\Li$ stability areas of the schemes relative to the velocities $\utilde=\vectz$ or $\utilde=\vectV$ with an intrinsic diffusion for different $\vects$. We draw the areas given by the propositions \ref{th:stlitrtis0} and \ref{th:stlitrtisV} on the figures \ref{fig:compzoneis1} and \ref{fig:compzoneis2}.

\begin{figure}
\begin{center}
\definecolor{ffqqqq}{rgb}{1,0,0}
\definecolor{xdxdff}{rgb}{0.49,0.49,1}
\definecolor{uququq}{rgb}{0.25,0.25,0.25}
\begin{tikzpicture}[scale=0.45,line cap=round,line join=round,>=triangle 45,x=4.0cm,y=4.0cm]
\draw[->,color=black] (-2,0) -- (2,0);
\draw[->,color=black] (0,-2) -- (0,2);
\clip(-2,-2) rectangle (2,2);
\draw [samples=50,domain=-0.5:0.5,rotate around={0:(-1,0)},xshift=-4cm,yshift=0cm] plot ({1.73*(1+\x*\x)/(1-\x*\x)},{1.73*2*\x/(1-\x*\x)});
\draw [samples=50,domain=-0.5:0.5,rotate around={0:(-1,0)},xshift=-4cm,yshift=0cm] plot ({1.73*(-1-\x*\x)/(1-\x*\x)},{1.73*(-2)*\x/(1-\x*\x)});
\draw [samples=50,domain=-0.5:0.5,rotate around={180:(1,0)},xshift=4cm,yshift=0cm] plot ({1.73*(1+\x*\x)/(1-\x*\x)},{1.73*2*\x/(1-\x*\x)});
\draw [samples=50,domain=-0.5:0.5,rotate around={180:(1,0)},xshift=4cm,yshift=0cm] plot ({1.73*(-1-\x*\x)/(1-\x*\x)},{1.73*(-2)*\x/(1-\x*\x)});
\draw [samples=50,domain=-0.5:0.5,rotate around={90:(0,-1)},xshift=0cm,yshift=-4cm] plot ({1.73*(1+\x*\x)/(1-\x*\x)},{1.73*2*\x/(1-\x*\x)});
\draw [samples=50,domain=-0.5:0.5,rotate around={90:(0,-1)},xshift=0cm,yshift=-4cm] plot ({1.73*(-1-\x*\x)/(1-\x*\x)},{1.73*(-2)*\x/(1-\x*\x)});
\draw [samples=50,domain=-0.5:0.5,rotate around={-90:(0,1)},xshift=0cm,yshift=4cm] plot ({1.73*(1+\x*\x)/(1-\x*\x)},{1.73*2*\x/(1-\x*\x)});
\draw [samples=50,domain=-0.5:0.5,rotate around={-90:(0,1)},xshift=0cm,yshift=4cm] plot ({1.73*(-1-\x*\x)/(1-\x*\x)},{1.73*(-2)*\x/(1-\x*\x)});
\draw [samples=50,domain=-0.5:0.5,rotate around={0:(2,1)},xshift=8cm,yshift=4cm] plot ({3*(1+\x*\x)/(1-\x*\x)},{3*2*\x/(1-\x*\x)});
\draw [samples=50,domain=-0.5:0.5,rotate around={0:(2,1)},xshift=8cm,yshift=4cm] plot ({3*(-1-\x*\x)/(1-\x*\x)},{3*(-2)*\x/(1-\x*\x)});
\draw [samples=50,domain=-0.5:0.5,rotate around={0:(-2,-1)},xshift=-8cm,yshift=-4cm] plot ({3*(1+\x*\x)/(1-\x*\x)},{3*2*\x/(1-\x*\x)});
\draw [samples=50,domain=-0.5:0.5,rotate around={0:(-2,-1)},xshift=-8cm,yshift=-4cm] plot ({3*(-1-\x*\x)/(1-\x*\x)},{3*(-2)*\x/(1-\x*\x)});
\draw [samples=50,domain=-0.5:0.5,rotate around={0:(2,-1)},xshift=8cm,yshift=-4cm] plot ({3*(1+\x*\x)/(1-\x*\x)},{3*2*\x/(1-\x*\x)});
\draw [samples=50,domain=-0.5:0.5,rotate around={0:(2,-1)},xshift=8cm,yshift=-4cm] plot ({3*(-1-\x*\x)/(1-\x*\x)},{3*(-2)*\x/(1-\x*\x)});
\draw [samples=50,domain=-0.5:0.5,rotate around={0:(-2,1)},xshift=-8cm,yshift=4cm] plot ({3*(1+\x*\x)/(1-\x*\x)},{3*2*\x/(1-\x*\x)});
\draw [samples=50,domain=-0.5:0.5,rotate around={0:(-2,1)},xshift=-8cm,yshift=4cm] plot ({3*(-1-\x*\x)/(1-\x*\x)},{3*(-2)*\x/(1-\x*\x)});
\draw [samples=50,domain=-0.5:0.5,rotate around={90:(1,-2)},xshift=4cm,yshift=-8cm] plot ({3*(1+\x*\x)/(1-\x*\x)},{3*2*\x/(1-\x*\x)});
\draw [samples=50,domain=-0.5:0.5,rotate around={90:(1,-2)},xshift=4cm,yshift=-8cm] plot ({3*(-1-\x*\x)/(1-\x*\x)},{3*(-2)*\x/(1-\x*\x)});
\draw [samples=50,domain=-0.5:0.5,rotate around={90:(1,2)},xshift=4cm,yshift=8cm] plot ({3*(1+\x*\x)/(1-\x*\x)},{3*2*\x/(1-\x*\x)});
\draw [samples=50,domain=-0.5:0.5,rotate around={90:(1,2)},xshift=4cm,yshift=8cm] plot ({3*(-1-\x*\x)/(1-\x*\x)},{3*(-2)*\x/(1-\x*\x)});
\draw [samples=50,domain=-0.5:0.5,rotate around={90:(-1,2)},xshift=-4cm,yshift=8cm] plot ({3*(1+\x*\x)/(1-\x*\x)},{3*2*\x/(1-\x*\x)});
\draw [samples=50,domain=-0.5:0.5,rotate around={90:(-1,2)},xshift=-4cm,yshift=8cm] plot ({3*(-1-\x*\x)/(1-\x*\x)},{3*(-2)*\x/(1-\x*\x)});
\draw [samples=50,domain=-0.5:0.5,rotate around={90:(-1,-2)},xshift=-4cm,yshift=-8cm] plot ({3*(1+\x*\x)/(1-\x*\x)},{3*2*\x/(1-\x*\x)});
\draw [samples=50,domain=-0.5:0.5,rotate around={90:(-1,-2)},xshift=-4cm,yshift=-8cm] plot ({3*(-1-\x*\x)/(1-\x*\x)},{3*(-2)*\x/(1-\x*\x)});
\draw [samples=50,domain=-0.5:0.5,rotate around={0:(3,0)},xshift=12cm,yshift=0cm] plot ({3.32*(1+\x*\x)/(1-\x*\x)},{3.32*2*\x/(1-\x*\x)});
\draw [samples=50,domain=-0.5:0.5,rotate around={0:(3,0)},xshift=12cm,yshift=0cm] plot ({3.32*(-1-\x*\x)/(1-\x*\x)},{3.32*(-2)*\x/(1-\x*\x)});
\draw [samples=50,domain=-0.5:0.5,rotate around={90:(0,3)},xshift=0cm,yshift=12cm] plot ({3.32*(1+\x*\x)/(1-\x*\x)},{3.32*2*\x/(1-\x*\x)});
\draw [samples=50,domain=-0.5:0.5,rotate around={90:(0,3)},xshift=0cm,yshift=12cm] plot ({3.32*(-1-\x*\x)/(1-\x*\x)},{3.32*(-2)*\x/(1-\x*\x)});
\draw [samples=50,domain=-0.5:0.5,rotate around={180:(-3,0)},xshift=-12cm,yshift=0cm] plot ({3.32*(1+\x*\x)/(1-\x*\x)},{3.32*2*\x/(1-\x*\x)});
\draw [samples=50,domain=-0.5:0.5,rotate around={180:(-3,0)},xshift=-12cm,yshift=0cm] plot ({3.32*(-1-\x*\x)/(1-\x*\x)},{3.32*(-2)*\x/(1-\x*\x)});
\draw [samples=50,domain=-0.5:0.5,rotate around={-90:(0,-3)},xshift=0cm,yshift=-12cm] plot ({3.32*(1+\x*\x)/(1-\x*\x)},{3.32*2*\x/(1-\x*\x)});
\draw [samples=50,domain=-0.5:0.3,rotate around={-90:(0,-3)},xshift=0cm,yshift=-12cm] plot ({3.32*(-1-\x*\x)/(1-\x*\x)},{3.32*(-2)*\x/(1-\x*\x)});
\draw [color=ffqqqq] (0.5,-2) -- (0.5,2);
\draw [color=ffqqqq] (-0.5,-2) -- (-0.5,2);
\draw [color=ffqqqq,domain=-2:2] plot(\x,{(--0.5-0*\x)/1});
\draw [color=ffqqqq,domain=-2:2] plot(\x,{(-0.5-0*\x)/1});
\begin{scriptsize}
\fill [color=uququq] (0,0) circle (1.5pt);
\draw[color=uququq] (0.07,0.1) node {$O$};
\fill [color=uququq] (-0.33,0.33) circle (1.5pt);
\draw[color=uququq] (-0.27,0.44) node {$A$};
\fill [color=uququq] (0.33,0.33) circle (1.5pt);
\draw[color=uququq] (0.4,0.44) node {$B$};
\fill [color=uququq] (0.33,-0.33) circle (1.5pt);
\draw[color=uququq] (0.39,-0.23) node {$C$};
\fill [color=uququq] (-0.33,-0.33) circle (1.5pt);
\draw[color=uququq] (-0.28,-0.23) node {$D$};
\fill [color=uququq] (-0.5,0.5) circle (1.5pt);
\draw[color=uququq] (-0.43,0.61) node {$E$};
\fill [color=uququq] (0.5,0.5) circle (1.5pt);
\draw[color=uququq] (0.57,0.61) node {$F$};
\fill [color=uququq] (0.5,-0.5) circle (1.5pt);
\draw[color=uququq] (0.54,-0.39) node {$G$};
\fill [color=uququq] (-0.5,-0.5) circle (1.5pt);
\draw[color=uququq] (-0.45,-0.39) node {$H$};
\draw (1.7,-0.05) node[anchor=north west] {$\VVx$};
\draw (-0.3,2.) node[anchor=north west] {$\VVy$};
\end{scriptsize}
\end{tikzpicture}
\end{center}
\caption{$\Li$ stability areas for $\vects=(0,1,1,3/2)$. Scheme relative to $\utilde=\vectV$ ($A,B,C,D$), $\utilde=\vectz$ ($E,F,G,H$), with an intrinsic diffusion.}
\label{fig:compzoneis1}
\end{figure}

\begin{figure}
\begin{center}
\definecolor{xdxdff}{rgb}{0.49,0.49,1}
\definecolor{ffqqqq}{rgb}{1,0,0}
\definecolor{uququq}{rgb}{0.25,0.25,0.25}
\begin{tikzpicture}[scale=0.45,line cap=round,line join=round,>=triangle 45,x=4.0cm,y=4.0cm]
\draw[->,color=black] (-2,0) -- (2,0);
\draw[->,color=black] (0,-2) -- (0,2);
\clip(-2,-2) rectangle (2,2);
\draw [samples=50,domain=-0.8:0.8,rotate around={90:(0,3)},xshift=0cm,yshift=12cm] plot ({1.73*(1+\x*\x)/(1-\x*\x)},{1.73*2*\x/(1-\x*\x)});
\draw [samples=50,domain=-0.8:0.8,rotate around={90:(0,3)},xshift=0cm,yshift=12cm] plot ({1.73*(-1-\x*\x)/(1-\x*\x)},{1.73*(-2)*\x/(1-\x*\x)});
\draw [color=ffqqqq] (0.5,-2) -- (0.5,2);
\draw [color=ffqqqq,domain=-2:2] plot(\x,{(--0.5-0*\x)/1});
\draw [color=ffqqqq,domain=-2:2] plot(\x,{(-0.5-0*\x)/1});
\draw [color=ffqqqq] (-0.5,-2) -- (-0.5,2);
\draw [samples=50,domain=-0.8:0.8,rotate around={-90:(0,-3)},xshift=0cm,yshift=-12cm] plot ({1.73*(1+\x*\x)/(1-\x*\x)},{1.73*2*\x/(1-\x*\x)});
\draw [samples=50,domain=-0.8:0.8,rotate around={-90:(0,-3)},xshift=0cm,yshift=-12cm] plot ({1.73*(-1-\x*\x)/(1-\x*\x)},{1.73*(-2)*\x/(1-\x*\x)});
\draw [samples=50,domain=-0.8:0.8,rotate around={0:(3,0)},xshift=12cm,yshift=0cm] plot ({1.73*(1+\x*\x)/(1-\x*\x)},{1.73*2*\x/(1-\x*\x)});
\draw [samples=50,domain=-0.8:0.8,rotate around={0:(3,0)},xshift=12cm,yshift=0cm] plot ({1.73*(-1-\x*\x)/(1-\x*\x)},{1.73*(-2)*\x/(1-\x*\x)});
\draw [samples=50,domain=-0.8:0.8,rotate around={180:(-3,0)},xshift=-12cm,yshift=0cm] plot ({1.73*(1+\x*\x)/(1-\x*\x)},{1.73*2*\x/(1-\x*\x)});
\draw [samples=50,domain=-0.8:0.8,rotate around={180:(-3,0)},xshift=-12cm,yshift=0cm] plot ({1.73*(-1-\x*\x)/(1-\x*\x)},{1.73*(-2)*\x/(1-\x*\x)});
\draw [samples=50,domain=-0.8:0.8,rotate around={90:(1,2)},xshift=4cm,yshift=8cm] plot ({1*(1+\x*\x)/(1-\x*\x)},{1*2*\x/(1-\x*\x)});
\draw [samples=50,domain=-0.8:0.8,rotate around={90:(1,2)},xshift=4cm,yshift=8cm] plot ({1*(-1-\x*\x)/(1-\x*\x)},{1*(-2)*\x/(1-\x*\x)});
\draw [samples=50,domain=-0.8:0.8,rotate around={90:(1,-2)},xshift=4cm,yshift=-8cm] plot ({1*(1+\x*\x)/(1-\x*\x)},{1*2*\x/(1-\x*\x)});
\draw [samples=50,domain=-0.8:0.8,rotate around={90:(1,-2)},xshift=4cm,yshift=-8cm] plot ({1*(-1-\x*\x)/(1-\x*\x)},{1*(-2)*\x/(1-\x*\x)});
\draw [samples=50,domain=-0.8:0.8,rotate around={90:(-1,-2)},xshift=-4cm,yshift=-8cm] plot ({1*(1+\x*\x)/(1-\x*\x)},{1*2*\x/(1-\x*\x)});
\draw [samples=50,domain=-0.8:0.8,rotate around={90:(-1,-2)},xshift=-4cm,yshift=-8cm] plot ({1*(-1-\x*\x)/(1-\x*\x)},{1*(-2)*\x/(1-\x*\x)});
\draw [samples=50,domain=-0.8:0.8,rotate around={90:(-1,2)},xshift=-4cm,yshift=8cm] plot ({1*(1+\x*\x)/(1-\x*\x)},{1*2*\x/(1-\x*\x)});
\draw [samples=50,domain=-0.8:0.8,rotate around={90:(-1,2)},xshift=-4cm,yshift=8cm] plot ({1*(-1-\x*\x)/(1-\x*\x)},{1*(-2)*\x/(1-\x*\x)});
\draw [samples=50,domain=-0.8:0.8,rotate around={0:(2,-1)},xshift=8cm,yshift=-4cm] plot ({1*(1+\x*\x)/(1-\x*\x)},{1*2*\x/(1-\x*\x)});
\draw [samples=50,domain=-0.8:0.8,rotate around={0:(2,-1)},xshift=8cm,yshift=-4cm] plot ({1*(-1-\x*\x)/(1-\x*\x)},{1*(-2)*\x/(1-\x*\x)});
\draw [samples=50,domain=-0.8:0.8,rotate around={0:(2,1)},xshift=8cm,yshift=4cm] plot ({1*(1+\x*\x)/(1-\x*\x)},{1*2*\x/(1-\x*\x)});
\draw [samples=50,domain=-0.8:0.8,rotate around={0:(2,1)},xshift=8cm,yshift=4cm] plot ({1*(-1-\x*\x)/(1-\x*\x)},{1*(-2)*\x/(1-\x*\x)});
\draw [samples=50,domain=-0.8:0.8,rotate around={0:(-2,1)},xshift=-8cm,yshift=4cm] plot ({1*(1+\x*\x)/(1-\x*\x)},{1*2*\x/(1-\x*\x)});
\draw [samples=50,domain=-0.8:0.8,rotate around={0:(-2,1)},xshift=-8cm,yshift=4cm] plot ({1*(-1-\x*\x)/(1-\x*\x)},{1*(-2)*\x/(1-\x*\x)});
\draw [samples=50,domain=-0.8:0.8,rotate around={0:(-2,-1)},xshift=-8cm,yshift=-4cm] plot ({1*(1+\x*\x)/(1-\x*\x)},{1*2*\x/(1-\x*\x)});
\draw [samples=50,domain=-0.8:0.8,rotate around={0:(-2,-1)},xshift=-8cm,yshift=-4cm] plot ({1*(-1-\x*\x)/(1-\x*\x)},{1*(-2)*\x/(1-\x*\x)});
\draw [samples=50,domain=-0.8:0.8,rotate around={90:(-1,0)},xshift=-4cm,yshift=0cm] plot ({2.24*(1+\x*\x)/(1-\x*\x)},{2.24*2*\x/(1-\x*\x)});
\draw [samples=50,domain=-0.8:0.8,rotate around={90:(-1,0)},xshift=-4cm,yshift=0cm] plot ({2.24*(-1-\x*\x)/(1-\x*\x)},{2.24*(-2)*\x/(1-\x*\x)});
\draw [samples=50,domain=-0.8:0.8,rotate around={90:(1,0)},xshift=4cm,yshift=0cm] plot ({2.24*(1+\x*\x)/(1-\x*\x)},{2.24*2*\x/(1-\x*\x)});
\draw [samples=50,domain=-0.8:0.8,rotate around={90:(1,0)},xshift=4cm,yshift=0cm] plot ({2.24*(-1-\x*\x)/(1-\x*\x)},{2.24*(-2)*\x/(1-\x*\x)});
\draw [samples=50,domain=-0.8:0.8,rotate around={0:(0,1)},xshift=0cm,yshift=4cm] plot ({2.24*(1+\x*\x)/(1-\x*\x)},{2.24*2*\x/(1-\x*\x)});
\draw [samples=50,domain=-0.8:0.8,rotate around={0:(0,1)},xshift=0cm,yshift=4cm] plot ({2.24*(-1-\x*\x)/(1-\x*\x)},{2.24*(-2)*\x/(1-\x*\x)});
\draw [samples=50,domain=-0.8:0.8,rotate around={0:(0,-1)},xshift=0cm,yshift=-4cm] plot ({2.24*(1+\x*\x)/(1-\x*\x)},{2.24*2*\x/(1-\x*\x)});
\draw [samples=50,domain=-0.8:0.8,rotate around={0:(0,-1)},xshift=0cm,yshift=-4cm] plot ({2.24*(-1-\x*\x)/(1-\x*\x)},{2.24*(-2)*\x/(1-\x*\x)});
\begin{scriptsize}
\fill [color=uququq] (0,0) circle (1.5pt);
\draw[color=uququq] (0.03,0.06) node {$O$};
\fill [color=uququq] (-0.59,0) circle (1.5pt);
\draw[color=uququq] (-0.56,0.06) node {$A$};
\fill [color=uququq] (-0.33,0.33) circle (1.5pt);
\draw[color=uququq] (-0.25,0.25) node {$B$};
\fill [color=uququq] (0,0.59) circle (1.5pt);
\draw[color=uququq] (0.03,0.65) node {$C$};
\fill [color=uququq] (0.33,0.33) circle (1.5pt);
\draw[color=uququq] (0.25,0.25) node {$D$};
\fill [color=uququq] (0.59,0) circle (1.5pt);
\draw[color=uququq] (0.62,0.06) node {$E$};
\fill [color=uququq] (0,-0.59) circle (1.5pt);
\draw[color=uququq] (0.03,-0.53) node {$F$};
\fill [color=uququq] (-0.33,-0.33) circle (1.5pt);
\draw[color=uququq] (-0.25,-0.25) node {$G$};
\fill [color=uququq] (0.33,-0.33) circle (1.5pt);
\draw[color=uququq] (0.25,-0.25) node {$H$};
\fill [color=uququq] (-0.5,0.5) circle (1.5pt);
\draw[color=uququq] (-0.4,0.6) node {$I$};
\fill [color=uququq] (0.5,0.5) circle (1.5pt);
\draw[color=uququq] (0.58,0.6) node {$J$};
\fill [color=uququq] (0.5,-0.5) circle (1.5pt);
\draw[color=uququq] (0.6,-0.4) node {$K$};
\fill [color=uququq] (-0.5,-0.5) circle (1.5pt);
\draw[color=uququq] (-0.42,-0.4) node {$L$};
\draw (1.7,-0.05) node[anchor=north west] {$\VVx$};
\draw (-0.3,2.) node[anchor=north west] {$\VVy$};
\end{scriptsize}
\end{tikzpicture}
\end{center}
\caption{$\Li$ stability areas for $\vects=(0,3/2,3/2,3/4)$. Scheme relative to $\utilde=\vectV$  ($A,B,C,D,E,F,G,H$), $\utilde=\vectz$ ($I,J,K,L$), with an intrinsic diffusion.}
\label{fig:compzoneis2}
\end{figure}

The results are similar to the ones obtained in the non intrinsic case. We can't distinguish a scheme from an other in terms of $\Li$ stability. The MRT scheme is better on the figure \ref{fig:compzoneis1} but on the figure \ref{fig:compzoneis2}, the areas just intersect.

\section{Weighted $L^2$ stability}\label{sub:weiL2}

In this section, we present some theoretical weighted $L^2$ stability results for the relative velocity $\ddqqn$ schemes. These results, based on the notion of stability proposed by Yong and al in \cite{Yong:2006:0}, confirm the phenomena numerically observed in the section \ref{sub:stabeqeq}. In particular, taking $\utilde$ equal to the advection velocity $\vectV$ (``cascaded like'' scheme) provides good stability features. These results also extend the $\Li$ stability ones (section \ref{sub:stabLi}): the $\Li$ stability areas are included in the $L^2$ ones.

\subsection{The weighted $L^2$ stability notion}

The general framework of the relative velocity $\ddqq$ schemes, $d,q\in\N^*$, is chosen to present the stability notion introduced in \cite{Yong:2006:0} and studied in \cite{Junk:2009:0,Rhein:2010:0}.

An iteration of a lattice Boltzmann scheme splits into a transport step plus a relaxation step that is here linear since the equilibrium is linear (\ref{eq:eqd2q4},\ref{eq:eqd2q4tre},\ref{eq:eqd2q4treis}). Given $\vectf(.,t)$ the matrix composed of the distribution vectors taken on all the lattice points, we have
$$\vectf(.,t+\dt)=\MatT\MatRu\vectf(.,t),\quad t\in\R,$$
where $\MatRu$ is the relaxation matrix and $\MatT$ the linear non local transport operator. The size of the matrix $\vectf(.,t)$ is equal to the number of velocities $q$ multiplied by the number of lattice points. In the case of the relative velocity $\ddqq$ scheme, the collision matrix reads $\MatRu=\var{\MatId}{q}+\MatJu$ where $\MatJu$ is given by 
\begin{equation*}\label{eq:matju}
\MatJu=\MatMu^{-1}\MatD\MatMu(\MatB-\var{\MatI}{q}),
\end{equation*}
where $\MatD={\rm diag}(\vects)$ is the diagonal matrix of the relaxation parameters and $\MatB$ defines the linear equilibrium thanks to $\vectfeq=\MatB\vectf$.

 The calculus of the spectrum of the amplification operator $\MatT\MatRu$ being difficult because of the matrix size, the idea is to study separately the transport and the collision: we want $\MatT$ and $\MatRu$ to be bounded by one in a well-chosen norm. The amplification operator $\MatT\MatRu$ is then bounded by one and the scheme is stable for this norm. In the following, we say that an operator is stable in a given norm if the norm of this operator is bounded by one. 

The idea proposed in \cite{Yong:2006:0} consists in weighting the $L^2$ norm defined by
 \begin{equation*}
\normd{\vecty}=\big(\sum_{j=0}^{q-1}\varp{|y_j^{}|}{}{2}\big)^{\frac{1}{2}}, \quad \vecty=(y_0^{},\ldots,y_{q-1}^{})\in\R^q,
  \end{equation*}
  so that the transport is an isometry and the collision operator is stable in the weighted norm. The classical $L^2$ norm keeps the transport as isometric but the collision norm is difficult to evaluate. Introducing a weight allows to overcome this difficulty. First, we define a norm on $\R^q$ depending on an invertible matrix $\MatP\in\espM(\R)$, $$\normepv{\vecty}=\normd{\MatP\vecty},\quad \vecty\in\R^q.$$
We can then define a norm for a matrix $\vectg$ of the size $q$ multiplied by the number of lattice points with
 \begin{equation*}\label{eq:normepond}
\normepL{\vectg}=\big(\sum_{\vectx\in\mL}\varp{|\vectg(\vectx)|}{\MatP}{2}\big)^{\frac{1}{2}}, 
  \end{equation*}
  where $\vectg(\vectx)$ is the column of $\vectg$ associated with the node $\vectx$.
 The necessity to define such a non local norm is due to the non locality of the transport.
 The matrix $\MatP$ is chosen such that $^t\!\MatP\MatP$ is diagonal. Thus the transport is an isometry for the norm $\normepL{\cdot}$ for the periodic and bounceback boundary conditions \cite{Junk:2009:0}. Indeed, the hypothesis of ``quasi orthogonality'' ($^t\!\MatP\MatP$ diagonal) carrying on $\MatP$ cancels all the cross terms in the calculation of $\normepL{\vectf(.,t)}$. Then the isometry of the transport is obtained thanks to a simple change of variable: it uses the bijection between the nodes of the mesh and the nodes after the transport at a given velocity.
 
 Contrary to the transport, the collision is a local operator so that its study reduces to the vectorial norm $\normepv{\cdot}$. If the matrix $\MatRu$ is stable in a particular node $\vectx$ of the lattice $\lattice$ for $\normepv{\cdot}$, it is stable for $\normepL{\cdot}$. That's why, we use the local operator norm 
 $$\normep{\MatRu}=\underset{\normepv{\vectx}=1}{\sup}\normepv{\MatRu\vectx},$$ for the collision.

The matrix $\MatP$ is requested to diagonalize the collision: $\normep{\MatRu}$ is then easy to evaluate. These requirements define a notion of stability structure first evocated in \cite{Yong:2006:0}.

\begin{definition}[\cite{Rhein:2010:0}]\label{de:strstb}
A matrix $\MatN\in\espM(\R)$ has a pre-structure of stability if exists an invertible matrix $\MatP\in\espM(\R)$ and some vectors $\vects, \vectp\in\R^q$ so that 
\begin{gather*}
\MatP\MatN\MatP^{-1}=-{\rm diag}(\vects),\\
^t\!\MatP\MatP={\rm diag}(\vectp),
\end{gather*}
where ${\rm diag}(\vectp)$ is the diagonal matrix whose diagonal is constituted of the coefficients of $\vectp$.
The pre-structure of stability becomes a structure of stability if
\begin{equation}\label{eq:strstb}
\sk\in[0,2],\quad 0\leq k\leq q-1.
\end{equation}
\end{definition} 

Suppose that $\MatJu$ has a pre-structure of stability and consider the collision in norm $\normepv{\cdot}$. Knowing that the eigenvalues of $\MatRu$ are $1-\sk$ for $0\leq k\leq q-1$,
$$\normep{\MatRu}=\underset{\normd{\MatP\vectx}=1}{\sup}\normd{\MatP\MatRu\MatP^{-1}\MatP\vectx}=\normem{\MatP\MatRu\MatP^{-1}}=\max(|1-\vects|),$$
where $\normem{\cdot}$ is the operator norm associated with $\varp{|\cdot|}{2}{}$.
Thus the collision is stable for $\normep{\cdot}$ if the condition (\ref{eq:strstb}) is verified. Under these conditions, the scheme is stable in the norm $\normepL{\cdot}$ since the transport is isometric.

We present a theorem from \cite{Rhein:2010:0} giving a necessary and sufficient condition of existence of a pre-structure of stability. In the following, this theorem is the tool used to obtain some stability results for our relative velocity schemes.

\begin{theoreme}[\cite{Rhein:2010:0}]\label{th:cnspstr}
A matrix $\MatN\in\espM(\R)$ has a pre-structure of stability if and only if there exists a diagonal positive definite matrix $\MatLa\in\espM(\R)$ so that 
\begin{equation}\label{eq;syslinstrst}
\MatN\MatLa=\MatLa ^t\!\MatN.
\end{equation}
\end{theoreme}

This criteria gives a practical criteria of existence of a pre-structure of stability through the resolution of a linear system in the coefficients of $\MatLa$. The size of this linear system is $q(q-1)/2$ since the matrix $\MatN\MatLa-\MatLa ^t\!\MatN$ is antisymmetric.

The matrix $\MatP$ defining the weighted norm is explicitely derived in the proof of this theorem \cite{Rhein:2010:0}. We exhibit $\MatP$ through the proof of the sufficient condition of pre-structure. The identity (\ref{eq;syslinstrst}) is equivalent to the fact that $\MatLa^{-1}\MatN\MatLa$ is symmetric with respect to the scalar product
\begin{equation*}
\var{(\vectx,\vecty)}{\MatLa}=\sum_{i,j=0}^{q-1} \var{\matLa}{ij}\var{x}{i}\var{y}{j},\quad\vectx,\vecty\in\R^q.
\end{equation*}
This implies the existence of an orthonormal matrix $\MatQ$ in the sense of $\var{(\cdot,\cdot)}{\MatLa}$ diagonalizing $\MatLa^{-1}\MatN\MatLa$. Then, the matrix $\MatP=(\MatLa\MatQ)^{-1}$ diagonalizes the collision. The matrix $^t\!\MatP\MatP$ is diagonal because of the orthonormality of $\MatQ$ for the scalar product $\var{(\cdot,\cdot)}{\MatLa}$.

\subsection{Stability results for the twisted relative velocity $\ddqqn$ scheme}

In this section, we apply the stability criteria given by the theorem \ref{th:cnspstr} to obtain weighted $L^2$ results for the twisted relative velocity $\ddqqn$ scheme. Our motivation is to compare the cases $\utilde=\vectz$ and $\utilde=\vectV$. To do so, the scheme must have at least two different relaxation parameters because the BGK scheme does not depend on the relative velocity since (\ref{eq:mueq}) is verified. We begin by studying the MRT scheme ($\utilde=\vectz$) whose results are limited. We then obtain more results for the cascaded like scheme ($\utilde=\vectV$).

\subsubsection{The MRT scheme}

We focus on the MRT scheme corresponding to $\utilde=\vectz$. We limit to $\vectV=\vectz$ because when $\utilde=\vectz$ and $\vectV\neq\vectz$, there is no pre-structure of stability. The framework does not distinguish the non intrinsic and intrinsic case because for $\vectV=\vectz$, the equilibria (\ref{eq:eqd2q4tre}) and (\ref{eq:eqd2q4treis}) are identical.

\begin{proposition}[$L^2$ stability for the MRT scheme]\label{th:l2an}
Consider the twisted $\ddqqn$ scheme relative to $\utilde=\vectz$, of equilibrium $\vectmeqz=(\rho,0,0,0)$ associated with the relaxation parameters $\vects=(0,\sk[q],\sk[q],\sk[xy])\in\R^4$. The associated matrix $\MatJ(\vectz)$ has a pre-structure of stability. Moreover, if $0\leq\sk[q],\sk[xy]\leq2$, then $\MatJ(\vectz)$ has a structure of stability. The collision matrix $\MatR(\vectz)$ is then stable for $\normep{\cdot}$ and the scheme is stable in norm $\normepL{\cdot}$.
\end{proposition}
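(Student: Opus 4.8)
The plan is to reduce everything to Theorem~\ref{th:cnspstr} by showing that the collision operator $\MatJ(\vectz)$ is \emph{symmetric}, after which the diagonal positive definite matrix required by the criterion can simply be taken to be the identity.

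First I would write $\MatJ(\vectz)$ explicitly. For $\utilde=\vectz$ the moment matrix $\MatMu$ reduces to a constant matrix, which I denote $\MatM$, built from the polynomials $1,X,Y,XY$ evaluated at the four twisted velocities. The equilibrium $\vectmeqz=(\rho,0,0,0)$ means that the equilibrium matrix is $\MatB=\MatM^{-1}\,{\rm diag}(1,0,0,0)\,\MatM$, so that $\MatB-\var{\MatId}{q}=\MatM^{-1}\,{\rm diag}(0,-1,-1,-1)\,\MatM$. Substituting into $\MatJu=\MatMu^{-1}\MatD\MatMu(\MatB-\var{\MatId}{q})$ with $\MatD={\rm diag}(0,\sk[q],\sk[q],\sk[xy])$, cancelling the inner $\MatM\MatM^{-1}$ and using ${\MatD\,{\rm diag}(0,-1,-1,-1)=-\MatD}$, gives the clean expression $\MatJ(\vectz)=-\MatM^{-1}\MatD\MatM$.

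The heart of the argument is the symmetry of $\MatJ(\vectz)$. The crucial structural fact is that the rows of the twisted moment matrix $\MatM$ are pairwise orthogonal (this reflects its $\duqd^2$ tensor structure), so that $\MatG:=\MatM\,{}^t\!\MatM={\rm diag}(4,4\lambda^2,4\lambda^2,4\lambda^4)$ is diagonal. Being diagonal, $\MatG$ commutes with $\MatD$; and multiplying the identity ${}^t\!(\MatM^{-1}\MatD\MatM)={}^t\!\MatM\,\MatD\,({}^t\!\MatM)^{-1}$ on the left by $\MatM$ and on the right by ${}^t\!\MatM$ turns the requirement of symmetry into $\MatD\MatG=\MatG\MatD$, which holds. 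Hence $\MatM^{-1}\MatD\MatM$, and therefore $\MatJ(\vectz)$, is symmetric. Taking $\MatLa=\var{\MatId}{q}$, which is diagonal and positive definite, the relation $\MatJ(\vectz)\MatLa=\MatLa\,{}^t\!\MatJ(\vectz)$ is nothing but this symmetry, so Theorem~\ref{th:cnspstr} yields a pre-structure of stability.

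Finally, for the structure of stability I would identify the relaxation parameters produced by the pre-structure. By Definition~\ref{de:strstb} they are the negatives of the eigenvalues of $\MatJ(\vectz)$; since $\MatJ(\vectz)$ is similar to $-\MatD$, its eigenvalues are $0,-\sk[q],-\sk[q],-\sk[xy]$, so $\vects=(0,\sk[q],\sk[q],\sk[xy])$. Condition~(\ref{eq:strstb}) asks each entry to lie in $[0,2]$; as $0$ is automatic this is exactly $0\leq\sk[q],\sk[xy]\leq2$, the hypothesis. The conclusion on $\MatR(\vectz)$ and $\normepL{\cdot}$ is then immediate from the discussion preceding Theorem~\ref{th:cnspstr}: a structure of stability gives $\normep{\MatR(\vectz)}=\max(|1-\vects|)\leq1$, so the collision is stable for $\normepv{\cdot}$, and since the transport is an isometry for $\normepL{\cdot}$ the whole scheme is stable in that norm. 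The only real obstacle is spotting and verifying the symmetry of $\MatJ(\vectz)$; once the orthogonality of the rows of $\MatM$ is used to commute the two diagonal matrices, the rest follows mechanically from the cited criterion.
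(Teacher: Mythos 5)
Your proof is correct and follows essentially the same route as the paper's: reduce $\MatJ(\vectz)$ to $-\MatM(\vectz)^{-1}\MatD\MatM(\vectz)$, establish its symmetry from the orthogonality of the rows of the moment matrix, take $\MatLa=\var{\MatId}{q}$ in Theorem \ref{th:cnspstr}, and read off the structure of stability from the spectrum $\{0,-\sk[q],-\sk[q],-\sk[xy]\}$. If anything, your handling of the symmetry is slightly more careful than the paper's, which asserts $\MatM(\vectz)^{-1}={}^t\!\MatM(\vectz)/4$ (literally valid only for $\lambda=1$), whereas your argument through the diagonal Gram matrix ${\rm diag}(4,4\lambda^2,4\lambda^2,4\lambda^4)$ and its commutation with $\MatD$ works for any velocity scale $\lambda$.
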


\begin{proof}
Since $\utilde=\vectV=\vectz$, the matrix $\MatJ(\vectz)$ reads 
\begin{equation*}
\MatJ(\vectz)=\MatM(\vectz)^{-1}\MatD(\MatE-\var{\MatI}{4})\MatM(\vectz),
\end{equation*}
with the diagonal matrix $\MatE={\rm diag}(1,0,0,0)$. 
The matrix $\MatD(\MatE-\var{\MatI}{4})$ is then diagonal as a product of two diagonal matrices. Since $\MatM(\vectz)^{-1}=~^t\!\MatM(\vectz)/4$, the matrix $\MatJ(\vectz)=~^t\!\MatM(\vectz)\MatD(\MatE-\var{\MatI}{4})\MatM(\vectz)/4$ is symmetric. So the identity is solution of the equation (\ref{eq;syslinstrst}) and $\MatJ(\vectz)$ has a pre-structure of stability. The spectrum of $\MatJ(\vectz)$ being composed by $0,-\sk[q],-\sk[q],-\sk[xy]$, the result on the structure of stability is obvious.
\end{proof}

\subsubsection{The scheme relative to the advection velocity (``cascaded like'' scheme)}

We now focus on the choice $\utilde=\vectV$ beginning by the non intrinsic case.

\begin{proposition}[$L^2$ stability for a non intrinsic relative velocity scheme]
Consider the twisted $\ddqqn$ scheme relative to $\utilde=\vectV=(\Vx,\Vy)\in\R^2$, of equilibrium $$\vectmeq(\vectV) = \rho(1,0,0,0),$$ associated with the relaxation parameters $\vects=(0,\sk[q],\sk[q],\sk[xy])\in\R^4$. The matrix $\MatJ(\vectV)$ has a pre-structure of stability if and only if $\normi<\lambda$. Moreover, if $0\leq\sk[q],\sk[xy]\leq2$, then $\MatJ(\vectV)$ has a structure of stability. The collision matrix $\MatR(\vectV)$ is then stable for $\normep{\cdot}$ and the scheme is stable in norm $\normepL{\cdot}$.
\end{proposition}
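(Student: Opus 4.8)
The plan is to apply the criterion of Theorem~\ref{th:cnspstr}: $\MatJ(\vectV)$ has a pre-structure of stability if and only if there is a diagonal positive definite matrix $\MatLa\in\espM(\R)$ with $\MatJ(\vectV)\MatLa=\MatLa\,^t\!\MatJ(\vectV)$, i.e. with $\MatJ(\vectV)\MatLa$ symmetric. First I would make $\MatJ(\vectV)$ explicit. Since $\utilde=\vectV$ the equilibrium moments are $\rho(1,0,0,0)$, so the equilibrium projection in moment variables is $\MatE={\rm diag}(1,0,0,0)$ and, exactly as in the proof of Proposition~\ref{th:l2an}, $\MatJ(\vectV)=\MatM(\vectV)^{-1}\Delta_0\,\MatM(\vectV)$ with $\Delta_0=\MatD(\MatE-\var{\MatI}{4})={\rm diag}(0,-\sk[q],-\sk[q],-\sk[xy])$. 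The difference with the MRT case is that $\MatM(\vectV)$ is no longer proportional to an orthogonal matrix, so $\MatJ(\vectV)$ is not symmetric and the choice $\MatLa=\var{\MatI}{4}$ no longer works.

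The decisive simplification is the $\duqd\otimes\duqd$ structure: the moments $1,X,Y,XY$ and the velocities both factorize, so up to the reordering of moments and velocities $\MatM(\vectV)$ is the Kronecker product $\MatM_x\otimes\MatM_y$, where $\MatM_x=\bigl(\begin{smallmatrix}1&1\\\lambda-\Vx&-\lambda-\Vx\end{smallmatrix}\bigr)$ and $\MatM_y$ is its analogue in $\Vy$ (this reordering leaves $\Delta_0$ unchanged because the two first-order moments carry the same rate $\sk[q]$). Multiplying the target identity on the left by $\MatM(\vectV)$ and on the right by $^t\!\MatM(\vectV)$, and setting $\MatS=\MatM(\vectV)\MatLa\,^t\!\MatM(\vectV)$, it becomes $\Delta_0\MatS=\MatS\Delta_0$, i.e. $\MatS$ must commute with the diagonal matrix $\Delta_0$; equivalently, the entries of the symmetric matrix $\MatS$ coupling distinct eigenspaces of $\Delta_0$ must vanish.

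For the ``if'' direction I would exhibit the solution $\MatLa=\MatLa_x\otimes\MatLa_y$ with $\MatLa_x={\rm diag}(\lambda+\Vx,\lambda-\Vx)$ and $\MatLa_y={\rm diag}(\lambda+\Vy,\lambda-\Vy)$, so that $\MatS=(\MatM_x\MatLa_x\,^t\!\MatM_x)\otimes(\MatM_y\MatLa_y\,^t\!\MatM_y)$. A one-line $2\times2$ check shows each factor is diagonal, its off-diagonal entry being $(\lambda-\Vx)\bigl((\lambda+\Vx)+(-\lambda-\Vx)\bigr)=0$ and its $\Vy$ analogue; hence $\MatS$ is diagonal and commutes with $\Delta_0$ for \emph{every} choice of $\sk[q],\sk[xy]$. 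The resulting weights are the four products $(\lambda\pm\Vx)(\lambda\pm\Vy)$, one per velocity, and $\MatLa$ is positive definite precisely when $\lambda\pm\Vx>0$ and $\lambda\pm\Vy>0$, that is when $\normi<\lambda$.

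The ``only if'' direction, i.e. the uniqueness of $\MatLa$, is where I expect the main obstacle. Since the conserved moment sits alone in its eigenspace of $\Delta_0$ when $\sk[q],\sk[xy]\neq0$, the commutation forces the three conditions $\MatS_{0j}=0$, $j=1,2,3$; and since the first row of $\MatM(\vectV)$ is $(1,1,1,1)$, each $\MatS_{0j}$ is the linear form obtained by applying row $j$ of $\MatM(\vectV)$ to the diagonal of $\MatLa$. Rows $1,2,3$ of the invertible matrix $\MatM(\vectV)$ are linearly independent, so these three constraints pin down the diagonal of $\MatLa$ up to a scalar, and the only candidate is the tensor solution above; positivity of any solution therefore requires $\normi<\lambda$, which closes the equivalence. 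Finally, for the ``moreover'', $\MatJ(\vectV)$ is similar to $\Delta_0$, hence its eigenvalues are $0,-\sk[q],-\sk[q],-\sk[xy]$, so by Definition~\ref{de:strstb} the pre-structure is a structure of stability exactly when $0\leq\sk[q],\sk[xy]\leq2$; then $\normep{\MatR(\vectV)}=\max(|1-\vects|)\leq1$, the collision is stable for $\normep{\cdot}$, and since the transport is an isometry for $\normepL{\cdot}$, the scheme is stable in $\normepL{\cdot}$.
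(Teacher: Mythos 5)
Your proof is correct, and it shares its two pillars with the paper's own proof: the criterion of Theorem~\ref{th:cnspstr}, and the very same weight matrix $\MatLa$ with diagonal entries $(\lambda\pm\Vx)(\lambda\pm\Vy)$. What is genuinely different is how you arrive at that matrix and how much of the stated equivalence you actually establish. The paper simply exhibits $\MatLa$, asserts that it solves $\MatJ(\vectV)\MatLa=\MatLa\,{}^t\!\MatJ(\vectV)$, and observes that it is positive definite if and only if $\normi<\lambda$; taken literally this proves only the ``if'' direction, since it does not exclude that some \emph{other} diagonal solution of the linear system could be positive definite when $\normi\geqslant\lambda$. Your argument closes precisely that gap: after writing $\MatJ(\vectV)=\MatM(\vectV)^{-1}\,{\rm diag}(0,-\sk[q],-\sk[q],-\sk[xy])\,\MatM(\vectV)$ and turning the condition into the commutation of $\Sigma=\MatM(\vectV)\MatLa\,{}^t\!\MatM(\vectV)$ with that diagonal matrix, the forced conditions $\Sigma_{0j}=0$, $j=1,2,3$ (which use that the first row of $\MatM(\vectV)$ is all ones) are three independent linear constraints on the diagonal of $\MatLa$, so the solution is pinned down up to one scalar; the tensor solution is therefore the only candidate, and its sign pattern (the four products $(\lambda\pm\Vx)(\lambda\pm\Vy)$ always contain both signs, or a zero, once $\normi\geqslant\lambda$) kills positivity --- that last half-line sign check is routine but you should make it explicit. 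The $\duqd\otimes\duqd$ factorization is also a clean way to reduce the verification to a $2\times2$ computation and to read off the eigenvalues $0,-\sk[q],-\sk[q],-\sk[xy]$ directly by similarity, where the paper instead conjugates by $\MatM(\vectz)$ and extracts them from a triangular form.

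One caveat. Your uniqueness step assumes $\sk[q]\neq0$ and $\sk[xy]\neq0$, as you note. Some such nondegeneracy is in fact unavoidable for the ``only if'' direction of the statement itself: for $\sk[q]=\sk[xy]=0$ one has $\MatJ(\vectV)=0$, and $\MatLa=\var{\MatI}{4}$ is a pre-structure for every $\vectV$; and if exactly one rate vanishes the set of entries of $\Sigma$ forced to vanish changes (for instance $\Sigma_{03}$ is no longer constrained when $\sk[xy]=0$), so that case would need a separate, analogous check. This is a defect of the statement in the degenerate cases rather than of your argument, which in the generic case is complete and strictly more detailed than the paper's proof.
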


\begin{proof}
According to the theorem \ref{th:cnspstr}, the existence of a pre-structure of stability for $\MatN=\MatJ(\vectV)$ is equivalent to the existence of a diagonal positive definite matrix $\MatLa$ such that
\begin{equation*}\label{eq:Jdefpos}
\MatJ(\vectV)\MatLa=\MatLa~^t\!\MatJ(\vectV).
\end{equation*}
A solution of this linear system is given by 
{\small\begin{equation*}\label{eq:matD}
\MatLa=\begin{pmatrix}
(\lambda+\Vx)(\lambda+\Vy)&0&0&0\\
0&(\lambda-\Vx)(\lambda+\Vy)&0&0\\
0&0&(\lambda-\Vx)(\lambda-\Vy)&0\\
0&0&0&(\lambda+\Vx)(\lambda-\Vy)
\end{pmatrix}.
\end{equation*}}
This matrix is positive definite if and only if $\normi<\lambda$, that closes the first part of the proof. The eigenvalues of $\MatJ(\vectV)$ are $0,-\sk[q],-\sk[q],-\sk[xy]$ because the matrix $\MatM(\vectz)\MatJ(\vectV)\MatM(\vectz)^{-1}$ is given by
$$\begin{pmatrix}
0&0&0&0\\
\sk[q]\Vx&-\sk[q]&0&0\\
\sk[q]\Vy&0&-\sk[q]&0\\
(2\sk[q]-\sk[xy])\Vx\Vy&\Vy(\sk[xy]-\sk[q])&\Vx(\sk[xy]-\sk[q])&-\sk[xy]
\end{pmatrix}.$$
We deduce that $$\MatP\MatJ(\vectV)\MatP^{-1}={\rm diag}(0,-\sk[q],-\sk[q],-\sk[xy]),$$ and that $\MatJ(\vectV)$ has a structure of stability if $0\leq\sk[q],\sk[xy]\leq2$. 
The norm $\normep{\cdot}$ of the collision operator $\MatR(\vectV)$ is equal to $1$ and the scheme is stable in norm $\normepL{\cdot}$.
\end{proof}

These results extend the $\Li$ stability ones obtained in the proposition \ref{th:stlitrtv}. The proposition \ref{th:stlitrtv} gives the $\Li$ stability area $\normi\leq\lambda$ for $\sk[q]\leq\sk[xy]\leq \min(1,2\sk[q])$. The weighted $L^2$ notion generalizes it to $0\leq\sk[q],\sk[xy]\leq2$. 
This result was expected because the numerical experiments of the section \ref{sub:stabeqeq} put in evidence the independence of the area of $L^2$ stability towards the relaxation parameters. The fact that we can not obtain the same type of result for the MRT scheme as for $\utilde=\vectV$ is an other evidence of the good behaviour of this latter.

Let's also note that these stability conditions are defined by opened sets. The numerical experiments seem to show that the scheme is still $L^2$ stable on the closure of these sets. However, it is not possible to proove it with this notion because the matrix $\MatLa$ is null on this closure.
This proposition leads to a natural corollary for a single relaxation parameter (BGK) that has already been proved in \cite{Rhein:2010:0}. In the BGK case, the scheme does not depend on $\utilde$. In particular, MRT and cascaded like scheme are identical. That's why the following result is valid whatever the relative velocity $\utilde$. 
 
\begin{corollary}[The BGK non intrinsic case \cite{Rhein:2010:0}]\label{th:l2pbgk}
For $\vectV=(\Vx,\Vy)\in\R^2$, consider the twisted relative velocity $\ddqqn$ BGK scheme of equilibrium
\begin{equation*}
\vectmequ = \rho\big(1,\Vx-\utx,\Vy-\uty,(\Vx-\utx)(\Vy-\uty)\big),
\end{equation*} associated with the relaxation parameter $s\in\R$. The matrix $\MatJu$ has a pre-structure of stability if and only if $\normi<\lambda$. Moreover, if $0\leq s\leq2$, then $\MatJu$ has a structure of stability. The collision matrix $\MatR(\utilde)$ is then stable for $\normep{\cdot}$ and the scheme is stable in norm $\normepL{\cdot}$.
\end{corollary}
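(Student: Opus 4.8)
The plan is to obtain the corollary as a specialization of the preceding proposition, after reducing the general relative velocity $\utilde$ to the case $\utilde=\vectV$. The essential preliminary remark is that the BGK scheme does not depend on $\utilde$: with a single relaxation parameter the relation (\ref{eq:mueq}) is automatically preserved, so the matrix $\MatJu$ is independent of the relative velocity. I would therefore start by recording this independence, which lets us evaluate $\MatJu$ at $\utilde=\vectV$ without loss of generality. For that choice the equilibrium $\vectmequ = \rho(1,\Vx-\utx,\Vy-\uty,(\Vx-\utx)(\Vy-\uty))$ degenerates to $\rho(1,0,0,0)$, so $\MatJu$ coincides with the matrix $\MatJ(\vectV)$ studied in the preceding proposition, now carrying the relaxation parameters $\vects=(0,s,s,s)$.

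It then remains to read off the preceding proposition with $\sk[q]=\sk[xy]=s$. By Theorem \ref{th:cnspstr}, a pre-structure of stability for $\MatJ(\vectV)$ is equivalent to the existence of a diagonal positive definite $\MatLa$ with $\MatJ(\vectV)\MatLa=\MatLa\,{}^t\!\MatJ(\vectV)$. The diagonal matrix $\MatLa$ exhibited there, with entries $(\lambda+\Vx)(\lambda+\Vy)$, $(\lambda-\Vx)(\lambda+\Vy)$, $(\lambda-\Vx)(\lambda-\Vy)$ and $(\lambda+\Vx)(\lambda-\Vy)$, does not involve the relaxation parameters; since the equation $\MatJ(\vectV)\MatLa=\MatLa\,{}^t\!\MatJ(\vectV)$ is linear in $\vects$ and is solved for arbitrary $(\sk[q],\sk[xy])$, the same $\MatLa$ continues to solve it on the diagonal $\sk[q]=\sk[xy]=s$. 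This $\MatLa$ is positive definite precisely when all four products are positive, that is when $\normi<\lambda$, which gives both the existence and the necessity of the pre-structure.

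Finally, for the structure of stability I would specialize the spectral computation of the preceding proof: conjugating $\MatJ(\vectV)$ by $\MatM(\vectz)$ yields a lower triangular matrix whose diagonal is $0,-\sk[q],-\sk[q],-\sk[xy]$, hence $0,-s,-s,-s$ here, so the condition (\ref{eq:strstb}) reduces to $0\leq s\leq2$. Under this condition the diagonalizing matrix $\MatP$ makes $\normep{\MatR(\utilde)}=\max(|1-s|)=1$, so the collision is stable for $\normep{\cdot}$ and, the transport being isometric for this weighted norm, the scheme is stable in $\normepL{\cdot}$. I do not expect any computational obstacle: the only point requiring care is the justification of the $\utilde$-independence of $\MatJu$, which is exactly what allows the statement for a general relative velocity to be deduced from the $\utilde=\vectV$ proposition.
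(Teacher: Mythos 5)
Your proposal is correct and follows essentially the same route as the paper: the paper also presents this corollary as an immediate specialization of the preceding proposition for $\utilde=\vectV$ with $\sk[q]=\sk[xy]=s$, justified by the observation that the BGK collision matrix $\MatJu$ is independent of $\utilde$ because the relation (\ref{eq:mueq}) is satisfied, so MRT and cascaded like schemes coincide and the result holds for every relative velocity. Your additional remarks (the explicit $\MatLa$, the triangular conjugate with spectrum $0,-s,-s,-s$, and $\normep{\MatR(\utilde)}=1$ for $0\leq s\leq2$) simply restate the ingredients of the proposition's proof and introduce no gap.
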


\begin{proposition}[$L^2$ stability for an intrinsic relative velocity scheme]\label{th:stth2relis}
Consider the twisted $\ddqqn$ scheme relative to $\utilde=\vectV=(\Vx,\Vy)\in\R^2$, of equilibrium $$\vectmeq(\vectV) = \rho(1,0,0,-\Vx\Vy),$$ associated with the relaxation parameters $\vects=(0,\sk[q],\sk[q],\sk[xy])\in\R^4$.  Suppose that $\Vx=0$ (resp. $\Vy=0$) then the associated matrix $\MatJ(\vectV)$ has a pre-structure of stability if and only if $|\Vy|<\lambda$ (resp. $|\Vx|<\lambda$). Moreover, if $0\leq\sk[q],\sk[xy]\leq2$, then $\MatJ(\vectV)$ has a structure of stability. The collision matrix $\MatR(\vectV)$ is then stable for $\normep{\cdot}$ and the scheme is stable in norm $\normepL{\cdot}$.
\end{proposition}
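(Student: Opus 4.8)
The plan is to reduce this statement to the already-established non intrinsic case rather than to recompute anything. The key observation is that when $\Vx=0$ (the case $\Vy=0$ being symmetric), the intrinsic second order equilibrium moment degenerates: $\mkueq[3]=-\rho\Vx\Vy=0$, so that $\vectmeq(\vectV)=\rho(1,0,0,0)$. But this is exactly the non intrinsic equilibrium evaluated at $\utilde=\vectV=(0,\Vy)$, for which $(\Vx-\utx)(\Vy-\uty)=0$ as well. Since the relative velocity $\utilde=\vectV$ is identical in both constructions, the matrix of moments $\MatM(\utilde)$, the equilibrium matrix $\MatB$ and the diagonal matrix $\MatD$ all coincide. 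Consequently $\MatJ(\vectV)$ is literally the same matrix as the one analysed in the non intrinsic proposition, and every conclusion transfers verbatim.

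First I would invoke Theorem \ref{th:cnspstr}: a pre-structure of stability exists if and only if there is a diagonal positive definite $\MatLa$ with $\MatJ(\vectV)\MatLa=\MatLa~^t\!\MatJ(\vectV)$. Specializing the solution exhibited in the non intrinsic case to $\Vx=0$ gives
$$\MatLa=\lambda\,{\rm diag}\big(\lambda+\Vy,\lambda+\Vy,\lambda-\Vy,\lambda-\Vy\big),$$
which is positive definite if and only if $|\Vy|<\lambda$. Since $\normi=\max(|\Vx|,|\Vy|)=|\Vy|$ when $\Vx=0$, this is precisely the advertised condition, and the equivalence inherits the ``only if'' direction from the non intrinsic proposition (any solution of the antisymmetric linear system being proportional to this $\MatLa$, so that positivity of all diagonal entries is forced).

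Next I would establish the structure of stability exactly as before: conjugating by $\MatM(\vectz)$ puts $\MatJ(\vectV)$ in lower triangular form with diagonal $(0,-\sk[q],-\sk[q],-\sk[xy])$, so its spectrum is $\{0,-\sk[q],-\sk[q],-\sk[xy]\}$. Under $0\leq\sk[q],\sk[xy]\leq2$ the eigenvalues $1-\sk$ of $\MatR(\vectV)=\var{\MatI}{4}+\MatJ(\vectV)$ lie in $[-1,1]$; since the matrix $\MatP$ produced by Theorem \ref{th:cnspstr} diagonalizes $\MatJ(\vectV)$ and hence $\MatR(\vectV)$, one obtains $\normep{\MatR(\vectV)}=\max(|1-\vects|)\leq1$. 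Combined with the isometry of the transport for $\normepL{\cdot}$, this yields stability of the scheme in that norm.

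The only genuinely non-routine point is the first one: recognizing that the intrinsic and non intrinsic schemes coincide when one component of $\vectV$ vanishes. This is what makes the argument collapse to the previous result, and it also explains why the statement is restricted to $\Vx=0$ or $\Vy=0$. For a generic $\vectV$ the coupling term $-\Vx\Vy$ in the fourth equilibrium moment survives, the antisymmetric system $\MatJ(\vectV)\MatLa-\MatLa~^t\!\MatJ(\vectV)=0$ no longer admits a positive definite diagonal solution, and the route through Theorem \ref{th:cnspstr} breaks down — so no extension off the axes is expected from this method.
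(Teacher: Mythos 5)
Your proposal is correct, and it takes a genuinely different route from the paper. The paper's own proof treats the intrinsic case directly: it exhibits, for $\Vy=0$, the diagonal solution $\MatLa={\rm diag}(\lambda+\Vx,\lambda-\Vx,\lambda-\Vx,\lambda+\Vx)$ of the linear system of theorem \ref{th:cnspstr}, checks positive definiteness, and then says the structure-of-stability argument is identical to that of proposition \ref{th:l2an}. You instead observe that when one component of $\vectV$ vanishes, the intrinsic fourth equilibrium moment $-\rho\Vx\Vy$ is zero, so $\vectmeq(\vectV)=\rho(1,0,0,0)$ coincides with the non intrinsic equilibrium relative to $\utilde=\vectV$; since the moment matrix $\MatM(\vectV)$ and the relaxation parameters are shared, $\MatJ(\vectV)$ is literally the matrix of the non intrinsic proposition, and the whole statement (pre-structure iff $\normi=|\Vy|<\lambda$, structure for $0\leq\sk[q],\sk[xy]\leq2$, stability of the collision and of the scheme) transfers verbatim. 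This reduction is sound, and your specialization $\MatLa=\lambda\,{\rm diag}(\lambda+\Vy,\lambda+\Vy,\lambda-\Vy,\lambda-\Vy)$ is exactly the paper's matrix up to the irrelevant scalar $\lambda$, confirming the two computations agree. What your route buys is economy and a structural explanation of why the proposition is confined to the axes $\Vx=0$ or $\Vy=0$ (off the axes the coupling term $-\Vx\Vy$ survives and the two schemes genuinely differ); what the paper's route buys is self-containedness, since it does not rely on the earlier proposition. One point to keep in mind: the ``only if'' direction that you inherit rests, in the paper's proof of the non intrinsic proposition, on exhibiting a single solution of the antisymmetric linear system and implicitly using that the solution space is one-dimensional; you at least flag this (``any solution being proportional to this $\MatLa$''), so your argument is at the same level of rigor as the paper's, not below it.
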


\begin{proof}
A non null solution of the equation (\ref{eq;syslinstrst}) exists if and only if one of the two components of $\vectV$ is null. Suppose that it is $\Vy$, then a solution of (\ref{eq;syslinstrst}) is given by
\begin{equation*}
\MatLa=\begin{pmatrix}
\lambda+\Vx&0&0&0\\
0&\lambda-\Vx&0&0\\
0&0&\lambda-\Vx&0\\
0&0&0&\lambda+\Vx
\end{pmatrix}.
\end{equation*}
This matrix is definite nonnegative if and only if $|\Vx|<\lambda$. The reasoning to get a structure of stability is identical to the one of the proposition \ref{th:l2an}.
\end{proof}

This proposition extends the $\Li$ stability results of the proposition \ref{th:stlitrtisV} for the directions $\Vx=0$ and $\Vy=0$. The $\Li$ notion provides areas decreasing as much as $\sk[q]$ and $\sk[xy]$ go far from each other. The $\normepL{\cdot}$ notion gives a constant optimal area in $\vectV$ for these directions while these parameters are bounded by $0$ and $2$. This phenomenon was observed numerically on the figures \ref{fig:vpddqq_6} to \ref{fig:vpddqq_10}.

We close the section by a proposition for the BGK case that is not a corollary of the proposition for the intrinsic cascaded like scheme.

\begin{proposition}[The BGK intrinsic case]
For $\vectV=(\Vx,\Vy)\in\R^2$, consider the twisted $\ddqqn$ scheme, of equilibrium
\begin{equation*}\label{eq:eqd2q4treis_bis}
\vectmequ = \rho\big(1,\Vx-\utx,\Vy-\uty,\utx\uty-\utx\Vy-\uty\Vx),
\end{equation*} BGK of relaxation parameter $s\in\R$. The associated matrix $\MatJu$ has a pre-structure of stability if and only if $\normu<\lambda$. Moreover, if $0\leq s\leq2$, then $\MatJu$ has a structure of stability. The collision matrix $\MatRu$ is then stable for $\normep{\cdot}$ and the scheme is stable in norm $\normepL{\cdot}$.
\end{proposition}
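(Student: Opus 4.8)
The plan is to apply the criterion of Theorem~\ref{th:cnspstr} in the same spirit as the proofs of Propositions~\ref{th:l2an} and~\ref{th:stth2relis}, exploiting that for a single relaxation parameter the scheme is independent of $\utilde$ because (\ref{eq:mueq}) holds. Concretely, I would evaluate $\MatJu$ at $\utilde=\vectz$, where the intrinsic equilibrium (\ref{eq:eqd2q4treis}) collapses to $\vectmeqz=\rho(1,\Vx,\Vy,0)$, so that $\MatJu=\MatJ(\vectz)=\MatM(\vectz)^{-1}\MatD(\MatE-\var{\MatI}{4})\MatM(\vectz)$ with $\MatD=\mathrm{diag}(0,s,s,s)$ and $\MatE$ the moment equilibrium matrix whose only non-zero column is $(1,\Vx,\Vy,0)$. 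In the moment basis the conjugate $\MatD(\MatE-\var{\MatI}{4})$ is lower triangular with diagonal $(0,-s,-s,-s)$, so the spectrum of $\MatJ(\vectz)$ is $\{0,-s,-s,-s\}$. Once a pre-structure exists, the structure of stability then follows from (\ref{eq:strstb}) precisely when $0\le s\le2$, and in that case $\normep{\MatRu}=\max(1,|1-s|)=1$, so the isometry of the transport yields stability in the norm $\normepL{\cdot}$.

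For the pre-structure itself I would seek a diagonal positive definite $\MatLa$ solving the antisymmetric system (\ref{eq;syslinstrst}). Guided by the $\Vy=0$ specialisation of Proposition~\ref{th:stth2relis} and by the fact that $\normu<\lambda$ is equivalent to the simultaneous positivity of the four numbers $\lambda\pm\Vx\pm\Vy$, my ansatz is
\begin{equation*}
\MatLa=\mathrm{diag}\big(\lambda+\Vx+\Vy,\;\lambda-\Vx+\Vy,\;\lambda-\Vx-\Vy,\;\lambda+\Vx-\Vy\big),
\end{equation*}
whose $j$-th entry is $\lambda$ plus the $\lambda$-scaled projection of $\vectV$ onto the $j$-th twisted velocity. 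I would verify $\MatJ(\vectz)\MatLa=\MatLa\,{}^t\!\MatJ(\vectz)$ through the explicit form $\MatJ(\vectz)=\tfrac14\,{}^t\!\MatM(\vectz)\,\MatD(\MatE-\var{\MatI}{4})\,\MatM(\vectz)$, noting that, unlike the $\vectV=\vectz$ case of Proposition~\ref{th:l2an}, this matrix is not symmetric, which is exactly why a non-trivial $\MatLa$ is needed. This $\MatLa$ is positive definite if and only if its four entries are positive, that is if and only if $\normu<\lambda$, which gives the ``if'' direction.

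For the ``only if'' direction I would argue that the diagonal solution is essentially unique up to a global scalar: each off-diagonal equation reads $[\MatJ(\vectz)]_{ij}\,\MatLa_{jj}=\MatLa_{ii}\,[\MatJ(\vectz)]_{ji}$, so whenever both $[\MatJ(\vectz)]_{ij}$ and $[\MatJ(\vectz)]_{ji}$ are non-zero the ratio $\MatLa_{ii}/\MatLa_{jj}$ is forced, and these forced ratios are precisely those of the ansatz. Hence any solution is a multiple of the $\MatLa$ above, and when $\normu\ge\lambda$ two entries have opposite signs (or one vanishes), so no positive definite solution exists and there is no pre-structure. What makes this succeed for an arbitrary $\vectV$, and explains why the result is \emph{not} a corollary of Proposition~\ref{th:stth2relis} (whose obstruction terms are proportional to $\sk[xy]-\sk[q]$), is that a single relaxation parameter $\sk[q]=\sk[xy]=s$ cancels exactly those coupling terms.

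The main obstacle I anticipate is this ``only if'' step together with the degeneracy of the triple eigenvalue $-s$: I must check that the relevant off-diagonal entries of $\MatJ(\vectz)$ are non-zero so that the ratios genuinely pin down $\MatLa$, and separately treat the degenerate directions $\Vx=0$ or $\Vy=0$, where some off-diagonal entries vanish and the solution space is larger but the positivity conclusion is unchanged. Verifying the ansatz and reading off the off-diagonal entries is routine once $\MatJ(\vectz)$ is written out through the Hadamard-type matrix $\MatM(\vectz)$.
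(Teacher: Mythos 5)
The paper states this proposition \emph{without proof} (it closes section \ref{sub:weiL2}, right after the remark that it is not a corollary of Proposition \ref{th:stth2relis}), so there is no in-paper argument to compare against line by line; the natural benchmark is the template the paper uses for Propositions \ref{th:l2an} and \ref{th:stth2relis} --- Theorem \ref{th:cnspstr} plus an explicit diagonal $\MatLa$, then the spectral argument for the structure of stability --- and your proposal follows exactly that template and is correct. Your reduction to $\utilde=\vectz$ is legitimate: for a single relaxation parameter, ${}^t\mathbf{1}\MatB={}^t\mathbf{1}$ (mass conservation of the equilibrium) and (\ref{eq:mueq}) give $\MatJu=s(\MatB-\MatI)$ with $\vectfeq=\MatB\vectf$, independent of $\utilde$. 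Your ansatz does solve (\ref{eq;syslinstrst}), and the verification becomes one line once you observe that its entries are $4\lambda$ times the equilibrium weights: for $\vectmeqz=\rho(1,\Vx,\Vy,0)$, the equilibrium distribution attached to $\vj$ is $\rho\,\ell_j/(4\lambda)$ with $\ell_j=\lambda+\vj\cdot\vectV/\lambda$, that is $(\ell_0,\ell_1,\ell_2,\ell_3)=(\lambda{+}\Vx{+}\Vy,\,\lambda{-}\Vx{+}\Vy,\,\lambda{-}\Vx{-}\Vy,\,\lambda{+}\Vx{-}\Vy)$. Hence, writing $w$ for the column vector of the $\ell_j$ and $\mathbf{1}$ for the column vector of ones,
\begin{equation*}
\MatJu=\frac{s}{4\lambda}\,w\,{}^t\!\mathbf{1}-s\,\MatI,
\qquad\text{so that}\qquad
\MatJu\MatLa=\frac{s}{4\lambda}\,w\,{}^t\!w-s\,\MatLa
\end{equation*}
is symmetric, and for diagonal $\MatLa$ symmetry of $\MatJu\MatLa$ is exactly equation (\ref{eq;syslinstrst}); moreover $\min_j\ell_j=\lambda-\normu$, so $\MatLa$ is positive definite if and only if $\normu<\lambda$. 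The same rank-one structure makes your ``only if'' rigorous precisely as you sketch it: for $s\neq0$ the off-diagonal equations read $\ell_i d_j=\ell_j d_i$, so any diagonal solution ${\rm diag}(d_0,\ldots,d_3)$ is proportional to $\MatLa$ (with $d_i=0$ forced whenever $\ell_i=0$), and since $\sum_j\ell_j=4\lambda>0$ the $\ell_j$ can never be all negative; a positive definite solution therefore exists if and only if all $\ell_j>0$, i.e. $\normu<\lambda$.

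Two small corrections, neither fatal. First, your anticipated degeneracies are at the wrong place: at $\Vx=0$ or $\Vy=0$ (with $\normu<\lambda$) no off-diagonal entry of $\MatJu$ vanishes and the solution space of (\ref{eq;syslinstrst}) stays one-dimensional, so the extra case analysis you announce there is vacuous; entries vanish only when $|\Vx+\Vy|=\lambda$ or $|\Vx-\Vy|=\lambda$, where the equations force some $d_i=0$ and positivity fails anyway, so your conclusion is unaffected. Second, the forced-ratio step (and the ``only if'' direction itself) requires $s\neq0$: for $s=0$ one has $\MatJu=0$, which has a trivial pre-structure of stability for every $\vectV$. This imprecision sits in the proposition as the paper states it rather than in your proof, but it deserves a sentence.
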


We note that this proposition generalizes the $\Li$ stability results in the BGK case (proposition \ref{th:stlitrtisbgk}). This weighted $L^2$ notion extends the set of $\vects$ corresponding to the stability area $\normu<\lambda$ to $0\leq s\leq2$.

\subsubsection{Interpretation of the results}

When we compare these propositions to the numerical results obtained in the section \ref{sub:stabeqeq}, we notice that this stability notion is only usable when the parameters $\vects$ and $\vectV$ are decoupled. For the scheme relative to $\utilde=\vectV$ with an intrinsic diffusion, we have only theoretical results for $\Vx=0$ ou $\Vy=0$, corresponding to a case where the areas in $\vectV$ are independent of $\vects$. Instead, when the area in $\vectV$ is a function of $\vects$, it is not possible to build a pre-structure of stability any more. For example, there is no pre-structure of stability when $\Vx$ and $\Vy$ are different from $0$ for the $\ddqqn$ scheme relative to $\utilde=\vectV$ with an intrinsic diffusion: for this scheme, the numerical results of the section \ref{sub:stabeqeq} exhibit a link between $\vectV$ and $\vects$. Similarly, the MRT scheme present stability areas in $\vectV$ depending on $\vects$ and it is impossible to build a pre-structure of stability for $\vectV\neq\vectz$.

The origin of this limitation seems to be the hypothesis of diagonalization of the collision. Indeed, the spectrum of this operator being equal to $0,-\sk[q],-\sk[q],-\sk[xy]$, the existence of a pre-structure of stability with such a diagonalization implies automatically the stability of the scheme for $0\leq\sk[q],\sk[xy]\leq2$. Then $\vectV$ and $\vects$ can't be linked because the former hypothesis is too requiring. The linear system of six equations and four unknowns for the $\ddqqn$ scheme is an evidence of the constraints imposed by the notion.
The existence of a pre-structure of stability requires its rank to be three that imposes $\vectV$ and $\vects$ to be decoupled here. For example, when $\utilde=\vectz$ and $\vectV\neq\vectz$, there is no solution positive definite to the equation (\ref{eq;syslinstrst}), because this rank is greater than four. That's why it is not possible to show similar results than in the case $\utilde=\vectV$ which requires the resolution of a rank three linear system. 

However, this notion gives some promising results as those exposed here and in \cite{Junk:2009:0,Rhein:2010:0}. It seems to describe well the limit of blow-up of a scheme. The main purpose now is to obtain theoretical stability results in more complex cases. Particularly, we want to complete this study for the $\ddqqn$ especially when $\utilde=\vectz$. To do so, we must be able to relax the constraint of diagonalization of the collision without penalizing the isometry of the transport.
Maybe, it would be possible to find some matrix $\MatP$ so that the norm $\normepL{\cdot}$ of the collision operator is computable without diagonalizing it.

\section*{Conclusion}

We have studied the stability of a four velocities relative velocity scheme with two different equilibria for a linear advection equation. The discussion is based on two notions of $\Li$ and weighted $L^2$ stability. It carries on the choice of the relative velocity equal to $\vectz$ (MRT scheme) or to the advection velocity (``cascaded like'' scheme). The main conclusions are the following: comparing MRT and relative velocity schemes, no scheme is ``better than the other'' in terms of $\Li$ stability; the stability areas generally just intersect. Instead, in terms of $L^2$ norm, the scheme relative to the advection velocity is more stable than the MRT scheme. This improvement is correlated to the cancellation of some dispersive terms of the equivalent equations for the scheme relative to the advection velocity. These terms stay for $\utilde=\vectz$ and are linked to instabilities at low diffusion. This result has been justified theoretically for the scheme relative to the advection velocity thanks to the weighted $L^2$ notion of stability. Further work needs to be done to obtain more precise theoretical results for the MRT scheme and for systems of conservation laws. The study also needs to be generalized to an arbitrary relative velocity $\utilde$, the present work being adapted to the two choices $\utilde=\vectz$ and $\utilde=\vectV$.

\begin{appendix}

\section{Link between the twisted $\ddqqn$ and the $\ddqqn$ schemes}\label{se:app3}

We proove that the $\Li$ and $L^2$ stability areas of the twisted $\ddqqn$ and $\ddqqn$ relative velocity schemes with a non intrinsic diffusion correspond through the composition of a rotation and a homothety. The same result is still true in the intrinsic case.

If we note $\vectv$ the velocity set of the $\ddqqn$ scheme, the twisted set of velocities is given by $\mR\vectv$ where 
$$\mR=\begin{pmatrix} 1&-1\\
1&1
\end{pmatrix},$$ 
is the composition of a rotation and a homothety of scale factor $\sqrt{2}$. This transformation allows to link the relaxation of both schemes.

\begin{lemme}[Relation between the relaxation operators]\label{th:relcor}
Let $\vectV\in\R^2$, $\utilde\in\R^2$, $(\sk[q],\sk[xy])\in\R^2$, the relative velocity $\ddqqn$ scheme associated with the equilibrium 
\begin{equation}\label{eq:eqd2q4ntre_4}
\vectmequ = \rho\big(1,\Vx-\utx,\Vy-\uty,(\Vx-\utx)^2-(\Vy-\uty)^2\big),
\end{equation} 
 for the relaxation parameters $(0,\sk[q],\sk[q],\sk[xy])$ and the relative velocity twisted $\ddqqn$ scheme associated with the equilibrium 
 \begin{multline*}\label{eq:eqrot}
\vectmequ{=}\rho\big(1,(\mR \vectV)^x{-}(\mR\utilde)^x,(\mR \vectV)^y{-}(\mR\utilde)^y,\\
((\mR \vectV)^x{-}(\mR\utilde)^x)((\mR \vectV)^y{-}(\mR\utilde)^y)\big),
\end{multline*} 
 and with the relaxation parameters  $(0,\sk[q],\sk[q],\sk[xy])$ have the same relaxation phase.
 \end{lemme}

\begin{proof}
We have defined a transformation $\mR$ sending the velocities of the $\ddqqn$ scheme on the velocities of the twisted $\ddqqn$ scheme. This naturally leads to the following application also denoted by $\mR$.

$$\begin{array}{ccccc}
\mR & : & \R[X,Y]& \to & \R[X,Y]\\
 & & P & \mapsto & \mR(P)\\
\end{array},$$
where $$\begin{array}{ccccc}
 \mR(P)& : & \espM[24](\R)& \to & \R\\
 & & \mV=\{\vj,0\leq j\leq 3\} & \mapsto & \dsp{\sum_{j=0}^{3}P(\mR\vj)\fj}\\
\end{array}.$$
Indeed the images of the moments $1,X,Y,X^2-Y^2$ by $\mR$ are
\begin{equation}\label{eq:trrel1}
\begin{split}
\mR(1)(\mV)&=1(\mV),\\ 
\mR(X)(\mV)&=\sum_{j=0}^{3}X(\mR\vj)\fj=\sum_{j=0}^{3}(\vjc{x}-\vjc{y})\fj=(X-Y)(\mV),\\
\mR(Y)(\mV)&=\sum_{j=0}^{3}Y(\mR\vj)\fj=\sum_{j=0}^{3}(\vjc{x}+\vjc{y})\fj=(X+Y)(\mV),
\end{split}
\end{equation}
and
\begin{equation}\label{eq:trrel2}
\mR(XY)(\mV)=\sum_{j=0}^{3}(\mR\vj)^x(\mR\vj)^y\fj=\sum_{j=0}^{3}((\vjc{x})^2-(\vjc{y})^2)\fj=(X^2-Y^2)(\mV).
\end{equation}
Providing these images, we can write the transformation of the relaxation of the $\ddqqn$ relative velocity scheme by $\mR$. We here choose to keep the polynomial notations to express the moments. The relaxation of the $\ddqqn$ reads

\begin{equation*}
\begin{split}
X^*(\mV)&=X(\mV)+\sk[q]((\Vx-\utx)\rho-X(\mV)),\\
Y^*(\mV)&=Y(\mV)+\sk[q]((\Vy-\uty)\rho-Y(\mV)),\\
(X^2{-}Y^2)^*(\mV)&=(X^2{-}Y^2)(\mV)+\sk[xy]\big(((\Vx{-}\utx)^2-(\Vy{-}\uty)^2)\rho-(X^2{-}Y^2)(\mV)\big).
\end{split}
\end{equation*}

Introducing the relations (\ref{eq:trrel1},\ref{eq:trrel2}), we obtain

\begin{equation*}
\begin{split}
\mR(X)^*(\mV)&=\mR(X)(\mV)+\sk[q](((\mR \vectV)^x-(\mR\utilde)^x)\rho-\mR(X)(\mV)),\\
\mR(Y)^*(\mV)&=\mR(Y)(\mV)+\sk[q](((\mR \vectV)^y-(\mR\utilde)^y)\rho-\mR(Y)(\mV)),\\
\mR(XY)^*(\mV)&=\mR(XY)(\mV)+\sk[xy]\big(((\mR \vectV)^x{-}(\mR\utilde)^x)((\mR V)^y{-}(\mR\utilde)^y)\rho{-}\mR(XY)(\mV)\big),
\end{split}
\end{equation*}

that is the relaxation of the twisted $\ddqqn$ relative velocity scheme. Note that this last step uses the fact that $X$ and $Y$ have the same relaxation parameter $\sk[q]$.
\end{proof}

\begin{proposition}[Relation between the stability areas]\label{th:trstab}
Let $(\sk[q],\sk[xy])\in\R^2$, note $\vars{\mS}{nt}\subset\R^2$ (resp. $\vars{\mS}{t}\subset\R^2$), the set of the velocities $\vectV\in\R^2$ such that the relative velocity $\ddqqn$ scheme (resp. twisted relative velocity $\ddqqn$ scheme) of equilibrium given by (\ref{eq:eqd2q4ntre_4}) (resp. (\ref{eq:eqd2q4},\ref{eq:eqd2q4tre})) and of relaxation parameters $\vects=(0,\sk[q],\sk[q],\sk[xy])$ is $L^{\infty}$ or $L^2$ stable. We have
\begin{equation}\label{eq:stsnt}
\vars{\mS}{t}=\mR\vars{\mS}{nt}.
\end{equation}
\end{proposition}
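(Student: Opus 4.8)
The plan is to reduce both stability notions to properties of the relaxation (collision) operator alone, and then to transport them from one scheme to the other via Lemma \ref{th:relcor}. The starting point is that, with the common indexing of the four distributions (velocity $\vj$ of the $\ddqqn$ scheme matched with velocity $\mR\vj$ of the twisted scheme), the relaxation phase of the $\ddqqn$ scheme at advection velocity $\vectV$ with equilibrium (\ref{eq:eqd2q4ntre_4}) coincides, as a linear map on $\R^4$, with the relaxation phase of the twisted scheme at advection velocity $\mR\vectV$ with equilibrium (\ref{eq:eqd2q4},\ref{eq:eqd2q4tre}). Writing $R_{\mathrm{nt}}(\vectV)$ and $R_{\mathrm{t}}(\vectV)$ for the two relaxation matrices of $\espM(\R)$ for the common parameters $\vects$, Lemma \ref{th:relcor} reads $R_{\mathrm{nt}}(\vectV)=R_{\mathrm{t}}(\mR\vectV)$. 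Since $\mR\vectz=\vectz$, the choice of relative velocity is preserved by this correspondence: $\utilde=\vectz$ maps to $\mR\utilde=\vectz$ (MRT) and $\utilde=\vectV$ maps to $\mR\utilde=\mR\vectV$ (``cascaded like''), so the two schemes entering the definitions of $\vars{\mS}{nt}$ and $\vars{\mS}{t}$ are genuinely matched.

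First I would treat the $\Li$ case. As recalled after Definition \ref{th:stabli}, the transport changes neither the positivity of the distributions nor the total mass, for any admissible velocity set; hence $\Li$ stability is equivalent to the nonnegativity of the relaxation matrix. Because $R_{\mathrm{nt}}(\vectV)$ and $R_{\mathrm{t}}(\mR\vectV)$ are the same matrix, one is nonnegative if and only if the other is, which gives at once $\vectV\in\vars{\mS}{nt}\iff\mR\vectV\in\vars{\mS}{t}$, that is (\ref{eq:stsnt}) for the $\Li$ notion.

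For the von Neumann $L^2$ stability I would compare the amplification matrices. The transport symbol of the twisted scheme carries the diagonal entries $e^{i\dt\vectk\cdot\mR\vj}=e^{i\dt(^t\!\mR\vectk)\cdot\vj}$, so it equals the symbol of the $\ddqqn$ scheme evaluated at $^t\!\mR\vectk$; combined with $R_{\mathrm{nt}}(\vectV)=R_{\mathrm{t}}(\mR\vectV)$ this yields $L_{\mathrm{t}}(\mR\vectV,\vectk)=L_{\mathrm{nt}}(\vectV,{}^t\!\mR\vectk)$ for the respective amplification matrices. As $^t\!\mR$ is invertible, $\vectk\mapsto{}^t\!\mR\vectk$ is a bijection of $\R^2$, whence $\max_{\vectk\in\R^2}r(L_{\mathrm{t}}(\mR\vectV,\vectk))=\max_{\vectk\in\R^2}r(L_{\mathrm{nt}}(\vectV,\vectk))$, so the necessary condition (\ref{eq:CN}) holds at $\mR\vectV$ for the twisted scheme exactly when it holds at $\vectV$ for the $\ddqqn$ scheme. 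For the weighted $L^2$ notion the argument is even simpler: the relaxation matrices being equal, the same diagonal $\MatLa$ solves (\ref{eq;syslinstrst}) for both, so the same $\MatP$ works; and the isometry of the transport for $\normepL{\cdot}$ depends only on the quasi-orthogonality of $\MatP$ (i.e. $^t\!\MatP\MatP$ diagonal) and on the lattice, not on the particular velocity set, so a structure of stability exists for one scheme precisely when it exists for the other.

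The step I expect to require the most care is the von Neumann part, since there the transport genuinely enters through the Fourier symbol and one must verify that taking the maximum over all wave vectors absorbs the geometric distortion induced by $\mR$; the change of variables $\vectk\mapsto{}^t\!\mR\vectk$ is exactly what makes this work, and I would make sure the invertibility of $\mR$ (hence of $^t\!\mR$) is used explicitly. Finally, I would note that the intrinsic case (equilibrium (\ref{eq:eqd2q4treis})) is handled identically once Lemma \ref{th:relcor} is restated for the intrinsic equilibrium, the only change being the equilibrium fed into the relaxation, which leaves the transport and all of the reductions above untouched.
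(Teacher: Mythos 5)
Your proof is correct and follows essentially the same route as the paper: Lemma \ref{th:relcor} identifies the relaxation matrices ($\vars{\MatR}{nt}(\vectV,\utilde,\vects)=\vars{\MatR}{t}(\mR\vectV,\mR\utilde,\vects)$), the $\Li$ case reduces to nonnegativity of that common matrix since transport is irrelevant, and the von Neumann $L^2$ case is settled by a bijective change of wave vector. The only cosmetic difference is that you substitute $\vectk\mapsto{}^t\!\mR\vectk$ while the paper uses $\vectk\mapsto\mR\vectk/2$; these are inverse maps of each other because $^t\!\mR\,\mR=2\,\MatI$, so the two computations are identical in substance.
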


\begin{proof}
Note $\vars{\MatR}{t}(\vectV,\utilde,\vects)\in\espM[4](\R)$, the relaxation operator of the twisted relative velocity $\ddqqn$ scheme of equilibrium (\ref{eq:eqd2q4},\ref{eq:eqd2q4tre}): it verifies
$$\vectfe=\vars{\MatR}{t}(\vectV,\utilde,\vects)\vectf.$$
Note $\vars{\MatR}{nt}(\vectV,\utilde,\vects)\in\espM[4](\R)$ its analogue for the $\ddqqn$ scheme associated with the equilibrium (\ref{eq:eqd2q4ntre_4}).
The lemma \ref{th:relcor} means that 
\begin{equation}\label{eq:rntrt}
\vars{\MatR}{nt}(\vectV,\utilde,\vects)=\vars{\MatR}{t}(\mR\vectV,\mR\utilde,\vects).
\end{equation}

The transport does not influence the $\Li$ stability because it only exchanges the particle distributions. Thus $\vars{\mS}{nt}$ is the set of the velocities $\vectV\in\R^2$ so that the matrix $\vars{\MatR}{nt}(\vectV,\utilde,\vects)$ is nonnegative. According to the relation (\ref{eq:rntrt}), it corresponds to the velocities $\vectV\in\R^2$ so that $\mR\vectV$ belongs to the $\Li$ stability area of the twisted scheme: that is equivalent to the relation (\ref{eq:stsnt}) and the lemma is proven in the $\Li$ case.
 
Instead, the transport plays a role for the $L^2$ stability. It becomes local in the Fourier space: the transport matrices associated with the $\ddqqn$ scheme ($\vars{\MatA}{nt}$) and the twisted one ($\vars{\MatA}{t}$) are given by
 \begin{equation*}
 \begin{split}
 &\vars{\MatA}{nt}(\vectk)={\rm diag}\big(\exp(i\vj\cdot \vectk), 0\leq j\leq 3\big),\quad \vectk\in\R^2,\\
 &\vars{\MatA}{t}(\vectk)={\rm diag}\big(\exp(i\mR\vj\cdot \vectk), 0\leq j\leq 3\big),\quad \vectk\in\R^2.
\end{split}
\end{equation*}
The transport matrix $\vars{\MatA}{nt}(\vectk)$ is equal to $\vars{\MatA}{t}(\mR\vectk/2)$ because $\mR\vj\cdot \mR \vectk/2=\vj\cdot\vectk$ since $\mR$ is the composition of a rotation and a homothety of scale factor $\sqrt{2}$. The amplification matrices $\vars{\MatL}{nt}$ and $\vars{\MatL}{t}$ of both schemes are then defined by
 \begin{equation*}
 \begin{split}
 &\vars{\MatL}{t}(\vectk,\vectV,\utilde,\vects)=\vars{\MatA}{t}(\vectk)~\vars{\MatR}{t}(\vectV,\utilde,\vects),\\
 &\vars{\MatL}{nt}(\vectk,\vectV,\utilde,\vects)=\vars{\MatA}{t}(\mR \vectk/2)~\vars{\MatR}{t}(\mR\vectV,\mR\utilde,\vects).
  \end{split}
\end{equation*}
We then have the following identity 
\begin{equation}\label{eq:corampl}
\vars{\MatL}{nt}(\vectk,\vectV,\utilde,\vects)=\vars{\MatL}{t}(\mR \vectk/2,\mR\vectV,\mR\utilde,\vects).
\end{equation}

The $L^2$ stability needs to evaluate the maximum of the spectral radius $r$ of this matrix for all the wavenumbers $\vectk\in\R^2$. According to (\ref{eq:corampl}),
\begin{align*}
\underset{\vectk\in\R^2}{\max}~r(\vars{\MatL}{nt}(\vectk,\vectV,\utilde,\vects))&=\underset{\vectk\in\R^2}{\max}~r(\vars{\MatL}{t}(\mR \vectk/2,\mR\vectV,\mR\utilde,\vects))\\
&=\underset{\vectk\in\R^2}{\max}~r(\vars{\MatL}{t}(\vectk,\mR\vectV,\mR\utilde,\vects)),
\end{align*}
the last equality coming from a variable change in $\R^2$. As for the case of the $\Li$ stability, only the velocities $\vectV$ finally matter: the relation between the $L^2$ stability areas is then obtained in the same way.
\end{proof}

\section{Theoretical $\Li$ stability results for the $\ddqqn$ scheme}\label{se:app4}

\subsection{For a non intrinsic diffusion}

\begin{proposition}[$\Li$ stability areas for the MRT scheme]
Let $\vectV\in\R^2$, $(\sk[q],\sk[xy])\in\R^2$, consider the $\ddqqn$ of MRT scheme with the relaxation parameters $(0,\sk[q],\sk[q],\sk[xy])$, associated with the equilibrium $$\vectmeqz = \rho(1,\Vx,\Vy,(\Vx)^2-(\Vy)^2).$$ Note $\gamma=\sk[q]/\sk[xy]$,
\begin{itemize}
\item if $0<\sk[xy]\leq\min(\sk[q],2-\sk[q])$, the scheme is $\Li$ stable for all $\vectV$ so that 
\begin{gather*}
(\Vx\pm\lambda\gamma)^2-(\Vy)^2\geqslant \lambda^2(\gamma^2-1),\\
(\Vy\pm\lambda\gamma)^2-(\Vx)^2\geqslant \lambda^2(\gamma^2-1).
\end{gather*}
\item if $\sk[q]\leq\sk[xy]\leq2\sk[q]$ and $\sk[q]\leq1$, the scheme is $\Li$ stable for all $\vectV$ so that 
\begin{gather*}
(\Vx\pm\lambda\gamma)^2-(\Vy)^2\geqslant \lambda^2(\gamma-1)^2,\\
(\Vy\pm\lambda\gamma)^2-(\Vx)^2\geqslant \lambda^2(\gamma-1)^2.
\end{gather*}
\item if $2-\sk[q]\leq\sk[xy]\leq2(2-\sk[q])$ and $\sk[q]\geqslant1$, the scheme is $\Li$ stable for all $\vectV$ so that  
\begin{gather*}
(\Vx\pm\lambda\gamma)^2-(\Vy)^2\geqslant  \lambda^2\Big((\gamma+1)^2-\frac{4}{\sk[xy]}\Big),\\
(\Vy\pm\lambda\gamma)^2-(\Vx)^2\geqslant  \lambda^2\Big((\gamma+1)^2-\frac{4}{\sk[xy]}\Big).
\end{gather*}
\item if $\sk[xy]=0$ and $0<\sk[q]\leq2$, the scheme is $\Li$ stable for $\vectV=\vectz$.
\item if $\sk[xy]=\sk[q]=0$, the scheme is unconditionally $\Li$ stable.
\item For all other $\vects$, there is no $\vectV$ corresponding to a $\Li$ stable scheme.
\end{itemize}
\end{proposition}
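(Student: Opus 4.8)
The plan is to deduce this proposition directly from its twisted counterpart, Proposition \ref{th:stlitrt0}, by transporting the result through the geometric correspondence established in Appendix \ref{se:app3}. Since the relative velocity is $\utilde=\vectz$ and $\mR\vectz=\vectz$, the MRT structure is preserved by the transformation $\mR$, so no genuinely new computation of the nonnegative-matrix conditions is needed: Proposition \ref{th:trstab} already guarantees that the $\Li$ stability set $\vars{\mS}{nt}$ of the $\ddqqn$ scheme and the set $\vars{\mS}{t}$ of the twisted scheme satisfy $\vars{\mS}{t}=\mR\vars{\mS}{nt}$. The only preliminary check is that the equilibrium stated here, $\vectmeqz=\rho(1,\Vx,\Vy,(\Vx)^2-(\Vy)^2)$, is exactly the one that Lemma \ref{th:relcor} pairs (at $\utilde=\vectz$) with the twisted non intrinsic equilibrium (\ref{eq:eqd2q4},\ref{eq:eqd2q4tre}); this is immediate from (\ref{eq:eqd2q4ntre_4}).

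First I would make the change of variables explicit. A velocity $\vectV=(\Vx,\Vy)$ belongs to $\vars{\mS}{nt}$ if and only if $\mR\vectV\in\vars{\mS}{t}$, and the twisted stability conditions of Proposition \ref{th:stlitrt0} are expressed through the rotated coordinates $\VVx,\VVy$ of the twisted velocity. Writing $\mR\vectV=(\Vx-\Vy,\Vx+\Vy)$ and feeding this into the definitions $\VVx=(\mR\vectV)^x+(\mR\vectV)^y$, $\VVy=(\mR\vectV)^x-(\mR\vectV)^y$ gives the clean identities $\VVx=2\Vx$ and $\VVy=-2\Vy$. The parameter $\gamma=\sk[q]/\sk[xy]$ is the same in both statements and is untouched by the transformation.

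Then I would substitute into each of the three generic inequalities of Proposition \ref{th:stlitrt0}. For the first family, $(\VVx\pm2\lambda\gamma)^2-(\VVy)^2\geqslant 4\lambda^2(\gamma^2-1)$ becomes $4(\Vx\pm\lambda\gamma)^2-4(\Vy)^2\geqslant 4\lambda^2(\gamma^2-1)$, and dividing by $4$ yields $(\Vx\pm\lambda\gamma)^2-(\Vy)^2\geqslant\lambda^2(\gamma^2-1)$, which is exactly the claimed condition. The companion inequality, obtained by exchanging the roles of $\VVx$ and $\VVy$, gives the symmetric condition; here the sign in $\VVy=-2\Vy$ is harmless because $\VVy$ enters only through $(\VVy\pm2\lambda\gamma)^2$ and the $\pm$ already ranges over both signs. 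The factor $4$ on the right-hand side is absorbed identically in the two remaining regimes, so $4\lambda^2(\gamma-1)^2$ and $4\lambda^2((\gamma+1)^2-4/\sk[xy])$ become $\lambda^2(\gamma-1)^2$ and $\lambda^2((\gamma+1)^2-4/\sk[xy])$. The conditions delimiting the regions in the $(\sk[q],\sk[xy])$ plane, together with the degenerate cases ($\sk[xy]=0$ forcing $\vectV=\vectz$, and $\sk[xy]=\sk[q]=0$ giving unconditional stability), involve only $\vects$ and are $\mR$-invariant, hence transfer verbatim.

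The proof is therefore essentially a bookkeeping exercise. The only point requiring care, rather than a genuine obstacle, is tracking the factor of $2$ in $\VVx=2\Vx$, $\VVy=-2\Vy$ through the squared quantities on both sides so that the scaling cancels consistently, and confirming that the sign flip on $\VVy$ does not alter the stated inequalities. Everything else follows mechanically from Proposition \ref{th:trstab} and Proposition \ref{th:stlitrt0}.
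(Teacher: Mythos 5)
Your proposal is correct and takes essentially the same route as the paper: the authors explicitly state that the appendix results for the $\ddqqn$ scheme follow from Proposition \ref{th:trstab} applied to the twisted Proposition \ref{th:stlitrt0}, which is precisely your transfer through $\mR$, including the key bookkeeping $\VVx=2\Vx$, $\VVy=-2\Vy$ with the factor $4$ cancelling on both sides and the sign flip absorbed by the $\pm$.
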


\begin{proposition}
Let $\vectV\in\R^2$, $(\sk[q],\sk[xy])\in\R^2$, consider the $\ddqqn$ scheme relative to $\utilde=\vectV$ associated with the relaxation parameters $(0,\sk[q],\sk[q],\sk[xy])$, and the equilibrium $$\vectmeq(\vectV) = \rho(1,0,0,0).$$ Let us note $\gamma=\sk[xy]/(2\sk[q]-\sk[xy])$, 
\begin{itemize}
\item if $\sk[q]\leq\sk[xy]\leq \min(1,2\sk[q])$, the scheme is $\Li$ stable on
\begin{equation}\label{eq:carreid}
\normu\leq\lambda.
\end{equation}
\item if $\sk[xy]<2\sk[q]$, $\max(\sk[q],1)\leq\sk[xy]\leq2(2-\sk[q])$, the scheme is $\Li$ stable on the intersection of (\ref{eq:carreid}) with
\begin{subequations}
\begin{gather}
(\Vx\pm\lambda\gamma)^2-(\Vy)^2\geqslant\frac{4\lambda^2}{(2\sk[q]-\sk[xy])^2}(\sk[xy]-\sk[q](2-\sk[q])),\label{eq:ega3x}\\
(\Vy\pm\lambda\gamma)^2-(\Vx)^2\geqslant\frac{4\lambda^2}{(2\sk[q]-\sk[xy])^2}(\sk[xy]-\sk[q](2-\sk[q])).\label{eq:ega4x}
\end{gather} 
\end{subequations}
\item if $0\leq\sk[xy]\leq\min(\sk[q],\sk[q](2-\sk[q]))$, the scheme is $\Li$ stable on
\begin{equation*}
\normu\leq\lambda\gamma.
\end{equation*}
\item if $\sk[q](2-\sk[q])\leq\sk[xy]\leq\min(\sk[q],2(2-\sk[q]))$, the scheme is $\Li$ stable on (\ref{eq:ega3x},\ref{eq:ega4x}).
\item if $\sk[xy]=2\sk[q]$ and $1<\sk[xy]\leq2$, the scheme is $\Li$ stable on the intersection of (\ref{eq:carreid}) and 
\begin{equation*}
\normi\leq\lambda\Big(\frac{2-\sk[xy]}{\sk[xy]}\Big).
\end{equation*}
\item if $\sk[xy]=\sk[q]=0$, the scheme is unconditionally $\Li$ stable.
\item For all other $\vects$, no $\vectV$ corresponds to a $\Li$ stable scheme.
\end{itemize}
\end{proposition}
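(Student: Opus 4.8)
The plan is to derive this statement directly from Proposition \ref{th:trstab}, which relates the $\Li$ stability area $\vars{\mS}{nt}$ of the $\ddqqn$ scheme to the area $\vars{\mS}{t}$ of the twisted $\ddqqn$ scheme through $\vars{\mS}{t}=\mR\vars{\mS}{nt}$, equivalently $\vars{\mS}{nt}=\mR^{-1}\vars{\mS}{t}$. This avoids redoing the positivity analysis of the collision matrix from scratch. Since $\mR$ acts only on the velocities and leaves $\vects$ untouched, the partition of the $(\sk[q],\sk[xy])$ plane into regions and the value of $\gamma=\sk[xy]/(2\sk[q]-\sk[xy])$ coincide with those of Proposition \ref{th:stlitrtv}; only the admissible velocities must be transported through $\mR^{-1}$.

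First I would make explicit how the adapted variables of Proposition \ref{th:stlitrtv} pull back. For a velocity $\vectV=(\Vx,\Vy)$ of the $\ddqqn$ scheme the associated twisted velocity is $\mR\vectV=(\Vx-\Vy,\Vx+\Vy)$, so the adapted coordinates of the twisted scheme satisfy
\begin{equation*}
\VVx=(\Vx-\Vy)+(\Vx+\Vy)=2\Vx,\qquad \VVy=(\Vx-\Vy)-(\Vx+\Vy)=-2\Vy.
\end{equation*}
Hence every inequality of Proposition \ref{th:stlitrtv} becomes a condition on the $\ddqqn$ velocity after the substitution $\VVx\mapsto2\Vx$, $\VVy\mapsto-2\Vy$.

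Then I would run through the cases of Proposition \ref{th:stlitrtv} and simplify. The square condition $\normi\leq\lambda$ in the twisted coordinates reads $\max(|\Vx-\Vy|,|\Vx+\Vy|)\leq\lambda$, that is $\normu\leq\lambda$, giving (\ref{eq:carreid}); similarly $\normi\leq\lambda\gamma$ becomes $\normu\leq\lambda\gamma$. The hyperbolic conditions (\ref{eq:ega3},\ref{eq:ega32}) transform, after cancelling the common factor $4$ produced by the scaling, into (\ref{eq:ega3x},\ref{eq:ega4x}); the sign flip $\VVy\mapsto-2\Vy$ is harmless because only $(\VVy)^2$ appears and the two branches merely swap labels. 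The ray condition (\ref{eq:nor1s}), $\normu\leq2\lambda(2-\sk[xy])/\sk[xy]$, pulls back via $|\Vx-\Vy|+|\Vx+\Vy|=2\max(|\Vx|,|\Vy|)$ to $\normi\leq\lambda(2-\sk[xy])/\sk[xy]$.

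The only point requiring care, and also where the statement differs cosmetically from its twisted counterpart, is the bookkeeping at the boundaries between regions. On the ray $\sk[xy]=2\sk[q]$ the subrange $\sk[xy]\leq1$ already belongs to the region $\sk[q]\leq\sk[xy]\leq\min(1,2\sk[q])$, where the extra constraint (\ref{eq:nor1s}) is inactive, so the separate ray case need only be stated for $1<\sk[xy]\leq2$. I would simply check that each translated region matches the labelling of figure \ref{fig:zonzst_rectw} under $\mR^{-1}$ and that the degenerate case $\sk[xy]=\sk[q]=0$ and the empty-area complement are preserved, $\mR$ being a bijection of $\R^2$.
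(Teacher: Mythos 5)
Your proposal is correct and takes exactly the route the paper intends: the appendix results for the $\ddqqn$ scheme are explicitly stated to follow from Proposition \ref{th:trstab} applied to the twisted result (Proposition \ref{th:stlitrtv}), which is precisely your pullback of the stability areas through $\mR^{-1}$. Your bookkeeping is accurate throughout — the substitution $\VVx\mapsto 2\Vx$, $\VVy\mapsto -2\Vy$, the norm identities $\max(|\Vx-\Vy|,|\Vx+\Vy|)=|\Vx|+|\Vy|$ and $|\Vx-\Vy|+|\Vx+\Vy|=2\max(|\Vx|,|\Vy|)$, the cancellation of the factor $4$ in the hyperbolic conditions, and the observation that the ray case need only be stated for $1<\sk[xy]\leq 2$ because (\ref{eq:nor1s}) is inactive when $\sk[xy]\leq 1$.
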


\subsection{For an intrinsic diffusion}

\begin{proposition}
Let $\vectV\in\R^2$, $(\sk[q],\sk[xy])\in\R^2$, consider the $\ddqqn$ MRT scheme associated with the relaxation parameters $(0,\sk[q],\sk[q],\sk[xy])$, and the equilibrium $$\vectmeqz = \rho(1,\Vx,\Vy,0).$$ Note $\gamma=\sk[xy]/\sk[q]$, 
\begin{itemize}

\item if $0\leq\sk[xy]\leq\min(\sk[q],2-\sk[q])$, the scheme is $\Li$ stable for all $\vectV$ such that 
\begin{equation*}
\normi\leq\frac{\lambda\gamma}{2}.
\end{equation*}
\item if $\sk[q]\leq\min(1,\sk[xy])$ and $\sk[xy]\leq2\sk[q]$ the scheme is $\Li$ stable for all $\vectV$ such that 
\begin{equation*}
\normi\leq\frac{\lambda(2-\gamma)}{2}.
\end{equation*}
\item if $\sk[q]\geqslant\max(1,2-\sk[xy])$ and $\sk[xy]\leq2(2-\sk[q])$ the scheme is $\Li$ stable for all $\vectV$ such that 
\begin{equation*}
\normi\leq\frac{\lambda\big(\frac{4}{\sk[q]}-2-\gamma\big)}{2}.
\end{equation*}
\item if $\sk[xy]=\sk[q]=0$, the scheme is unconditionally $\Li$ stable.
\item For all other $\vects$, no $\vectV$ corresponds to a $\Li$ stable scheme.
\end{itemize}
\end{proposition}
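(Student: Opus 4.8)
The plan is to avoid re-running the explicit positivity computation for the $\ddqqn$ geometry and instead to transport the result already established for the twisted scheme (Proposition~\ref{th:stlitrtis0}) through the change of lattice of Appendix~\ref{se:app3}. Recall that $\mR=\begin{pmatrix}1&-1\\1&1\end{pmatrix}$ maps the $\ddqqn$ velocity set onto the twisted one and that Proposition~\ref{th:trstab} gives $\vars{\mS}{t}=\mR\vars{\mS}{nt}$, so that a velocity $\vectV$ produces a $\Li$-stable $\ddqqn$ scheme if and only if $\mR\vectV$ produces a $\Li$-stable twisted scheme. It then only remains to rewrite each twisted stability bound, expressed in the one-norm $\normu$ of the twisted velocity, as a bound on $\vectV$.

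First I would check that the $\mR$-correspondence of Lemma~\ref{th:relcor} and Proposition~\ref{th:trstab}, proved there for a non intrinsic diffusion, stays valid in the present intrinsic MRT setting; this is the only genuinely new point. For $\utilde=\vectz$ the second-order moment relaxes towards $0$ for both schemes, the $\ddqqn$ equilibrium being $\vectmeqz=\rho(1,\Vx,\Vy,0)$ and the twisted one $\rho(1,(\mR\vectV)^x,(\mR\vectV)^y,0)$. Using the polynomial identities $\mR(1)=1$, $\mR(X)=X-Y$, $\mR(Y)=X+Y$ and $\mR(XY)=X^2-Y^2$ from the proof of Lemma~\ref{th:relcor}, I would apply $\mR$ term by term to the relaxation of the $\ddqqn$ intrinsic MRT scheme with velocity $\vectV$ and recover exactly the relaxation of the twisted intrinsic MRT scheme with velocity $\mR\vectV$; the second-order equilibria match because $\mR$ carries the moment $X^2-Y^2$ onto $XY$ and both relax towards $0$. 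Since $\Li$ stability depends only on the relaxation phase, the transport neither changing signs nor total mass, this yields the intrinsic analogue $\vars{\mS}{nt}=\mR^{-1}\vars{\mS}{t}$.

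Next I would carry out the norm conversion. Writing $\mR\vectV=(\Vx-\Vy,\Vx+\Vy)$, the elementary identity
\[
|\Vx-\Vy|+|\Vx+\Vy|=2\max(|\Vx|,|\Vy|)
\]
shows that $\normu(\mR\vectV)=2\,\normi(\vectV)$. Hence every twisted condition $\normu\leq\lambda b$ of Proposition~\ref{th:stlitrtis0}, with $b$ equal to $\gamma$, $2-\gamma$ or $4/\sk[q]-2-\gamma$ on the corresponding region of $\vects$, becomes $\normi(\vectV)\leq\lambda b/2$, which is exactly the claimed area. The regions in $(\sk[q],\sk[xy])$ are untouched since $\mR$ acts only on velocities, so the admissible triangle, its three sub-regions, the unconditional case $\sk[xy]=\sk[q]=0$ and the empty case for every other $\vects$ all carry over verbatim from Proposition~\ref{th:stlitrtis0}.

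The main obstacle is not computational but lies in justifying that the rotation-homothety bridge of Appendix~\ref{se:app3} is legitimate for the intrinsic equilibrium: once one observes that both second-order moments relax towards $0$ and that $\mR$ exchanges the defining polynomials $X^2-Y^2$ and $XY$, the reduction is immediate and the remainder is the one-line norm identity above. I therefore expect this proposition to be, in effect, a corollary of Propositions~\ref{th:stlitrtis0} and~\ref{th:trstab}.
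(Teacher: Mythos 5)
Your proposal is correct and is essentially the paper's own argument: the paper states that the appendix results for the $\ddqqn$ scheme follow from the twisted ones (here Proposition \ref{th:stlitrtis0}) via the correspondence of Proposition \ref{th:trstab}, whose extension to the intrinsic equilibrium is asserted in Appendix \ref{se:app3} and which you rightly identify as the only point needing verification. Your conversion of the twisted bounds through the identity $|\Vx-\Vy|+|\Vx+\Vy|=2\max(|\Vx|,|\Vy|)$ then reproduces exactly the stated $\Li$ stability areas.
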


\begin{proposition}
Let $\vectV\in\R^2$, $(\sk[q],\sk[xy])\in\R^2$, consider the $\ddqqn$ scheme relative to $\utilde=\vectV$ associated with the relaxation parameters $(0,\sk[q],\sk[q],\sk[xy])$, and the equilibrium $$\vectmeq(\vectV) = \rho(1,0,0,(\Vy)^2-(\Vx)^2).$$ Note $\gu[1]=(2\sk[q]-\sk[xy])/(\sk[q]-\sk[xy])$ and $\gu[2]=\sk[xy]/(\sk[q]-\sk[xy])$,
\begin{itemize}
\item if $\sk[q]<\sk[xy]\leq\min(2\sk[q],2(2-\sk[q]))$, the scheme is stable for all $\vectV$ such that 
\begin{gather*}
(\Vx\pm\frac{\lambda\gu[1]}{2})^2-(\Vy)^2\leq \frac{\lambda^2\gu[1]\gu[2]}{4},\\
(\Vx\pm\lambda\frac{\gu[1]+\gu[2]}{4})^2-(\Vy\mp\frac{\lambda}{2})^2\leq\frac{\lambda^2\varp{\gamma}{2}{2}}{4},\\
(\Vx\pm\lambda\frac{\gu[1]+\gu[2]}{4})^2-(\Vy\pm\frac{\lambda}{2})^2\leq\frac{\lambda^2\varp{\gamma}{2}{2}}{4},\\
(\Vx\pm\frac{\lambda\gu[2]}{2})^2-(\Vy)^2\leq\frac{\lambda^2}{4(\sk[q]-\sk[xy])^2}(4\varp{s}{q}{2}-2\sk[q]\sk[xy]-\varp{s}{xy}{2}+8(\sk[xy]-\sk[q])), 
\end{gather*} 
plus the analogous inequalities exchanging $\Vx$ and $\Vy$.
\item if $\sk[xy]\leq\min(\sk[q],2(2-\sk[q]))$, the scheme is stable for all $\vectV$ such that
\begin{gather*}
(\Vx\pm\frac{\lambda\gu[1]}{2})^2-(\Vy)^2\geqslant \frac{\lambda^2\gu[1]\gu[2]}{4},\\
(\Vx\pm\lambda\frac{\gu[1]+\gu[2]}{4})^2-(\Vy\mp\frac{\lambda}{2})^2\geqslant\frac{\lambda^2\varp{\gamma}{2}{2}}{4},\\
(\Vx\pm\lambda\frac{\gu[1]+\gu[2]}{4})^2-(\Vy\pm\frac{\lambda}{2})^2\geqslant\frac{\lambda^2\varp{\gamma}{2}{2}}{4},\\
(\Vx\pm\frac{\lambda\gu[2]}{2})^2-(\Vy)^2\geqslant\frac{\lambda^2}{4(\sk[q]-\sk[xy])^2}(4\varp{s}{q}{2}-2\sk[q]\sk[xy]-\varp{s}{xy}{2}+8(\sk[xy]-\sk[q])),
\end{gather*} 
plus the analogous inequalities exchanging $\Vx$ and $\Vy$.
\item if $\sk[xy]=\sk[q]\leq 1$, the scheme is $\Li$ stable for all $\vectV$ such that 
\begin{equation*}
\normi\leq\frac{\lambda}{2}.
\end{equation*}
\item if $1\leq \sk[xy]=\sk[q]\leq 4/3$, the scheme is $\Li$ stable for all $\vectV$ such that 
\begin{equation*}
\normi\leq\frac{\lambda}{2}\big(\frac{4}{s}-3\big).
\end{equation*}

\item For all other $\vects$, no $\vectV$ corresponds to a $\Li$ stable scheme.
\end{itemize}
\end{proposition}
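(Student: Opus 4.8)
The plan is to deduce this statement from the twisted result of Proposition \ref{th:stlitrtisV} by means of the rotation--homothety correspondence developed in Appendix \ref{se:app3}, rather than recomputing the nonnegativity conditions of the relaxation matrix from scratch.

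First I would establish the intrinsic analogue of Lemma \ref{th:relcor}. The map $\mR$ acts on the second order moments by $\mR(XY)=X^2-Y^2$, so the $XY$-moment of the twisted scheme corresponds to the $X^2-Y^2$-moment of the $\ddqqn$ scheme. It remains to check that the intrinsic equilibria match under the substitution $\vectV\mapsto\mR\vectV$: the twisted scheme relative to $\utilde=\vectV$ has second order equilibrium $-\Vx\Vy$, and since $\mR\vectV$ has components $\Vx-\Vy$ and $\Vx+\Vy$, evaluating this equilibrium at $\mR\vectV$ gives $-(\Vx-\Vy)(\Vx+\Vy)=(\Vy)^2-(\Vx)^2$, which is exactly the equilibrium prescribed here. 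As $X$ and $Y$ relax with the common rate $\sk[q]$, the computation of Lemma \ref{th:relcor} carries over verbatim, so the two relaxation phases coincide and the intrinsic version of Proposition \ref{th:trstab} holds: $\vectV$ makes the $\ddqqn$ scheme $\Li$ stable if and only if $\mR\vectV$ makes the twisted scheme $\Li$ stable.

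Second I would transport the stability region through $\mR$. The twisted advection velocity $\mR\vectV$ has components $\Vx-\Vy$ and $\Vx+\Vy$; inserting these into the twisted combinations $\VVx$ and $\VVy$ used in Proposition \ref{th:stlitrtisV} yields $\VVx=2\Vx$ and $\VVy=-2\Vy$. Substituting $\VVx\to 2\Vx$ and $\VVy\to-2\Vy$ into the four families (\ref{eq:hypisleq1})--(\ref{eq:hypisleq4}) and their $\geqslant$ counterparts, then dividing by $4$, reproduces precisely the inequalities stated here; the sign flip in $\VVy$ merely permutes the $\pm/\mp$ correlations, which is harmless since each family already carries both sign choices. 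Because $\mR$ leaves the relaxation parameters untouched, the case distinctions on $(\sk[q],\sk[xy])$ -- the regions ACD and ABD of Figure \ref{fig:zonzst_rectwis} together with the bounds $\sk[q]<\sk[xy]\leq\min(2\sk[q],2(2-\sk[q]))$ and $\sk[xy]\leq\min(\sk[q],2(2-\sk[q]))$ -- are inherited unchanged.

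Finally I would dispose of the degenerate segment $\sk[xy]=\sk[q]$, where $\gu[1],\gu[2]$ blow up and the scheme is BGK. Here the twisted BGK statement (Proposition \ref{th:stlitrtisbgk}) is transferred the same way: its bound $\normu\leq\lambda$ applied to $\mR\vectV$ reads $|\Vx-\Vy|+|\Vx+\Vy|=2\normi\leq\lambda$, that is $\normi\leq\lambda/2$, and the overrelaxation bound picks up the same factor $1/2$, matching the two BGK cases $\sk[xy]=\sk[q]\leq1$ and $1\leq\sk[xy]=\sk[q]\leq4/3$. The only substantive step is the equilibrium check of the first paragraph, and I expect the main (if modest) obstacle there to be careful bookkeeping of the scalings, since Lemma \ref{th:relcor} is stated only for the non intrinsic equilibrium and the relation $\mR(X^2-Y^2)=-4XY$ shows the moment transforms with a nontrivial factor that must be tracked so that the equilibrium value, and not merely the underlying moment, is correctly matched.
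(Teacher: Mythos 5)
Your proposal is correct and follows essentially the same route as the paper: the appendix propositions for the $\ddqqn$ scheme are intended to be obtained precisely by transporting the twisted results (Propositions \ref{th:stlitrtisV} and \ref{th:stlitrtisbgk}) through the rotation--homothety correspondence of Appendix \ref{se:app3}, which is exactly what you do, including the substitution $\VVx=2\Vx$, $\VVy=-2\Vy$ with the division by $4$, and the factor $1/2$ in the BGK bounds via $|\Vx-\Vy|+|\Vx+\Vy|=2\normi$. Your explicit check that the intrinsic equilibria match under $\mR$ (using the direction $\mR(XY)=X^2-Y^2$, for which no extra factor arises) even supplies a detail the paper only asserts when it states that the correspondence ``is still true in the intrinsic case.''
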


\section{Proof of the proposition \ref{th:stlitrtisV}}

The positivity of the matrix shifting from $\vectf$ to $\vectfe$ is equivalent to
\begin{subequations}
\begin{gather}
\lambda(2\sk[q]-\sk[xy])(\lambda\pm(\Vx+(-1)^i\Vy))+2(-1)^i(\sk[q]-\sk[xy])\Vx\Vy\geqslant0,\label{eq:twis1}\\
\lambda^2\sk[xy]\pm\lambda(\sk[xy]\Vx+(-1)^i(2\sk[q]-\sk[xy])\Vy)+2(-1)^i(\sk[q]-\sk[xy])\Vx\Vy\geqslant0,\label{eq:twis3}\\
\lambda^2\sk[xy]\pm\lambda(\sk[xy]\Vy+(-1)^i(2\sk[q]-\sk[xy])\Vx)+2(-1)^i(\sk[q]-\sk[xy])\Vx\Vy\geqslant0,\label{eq:twis4}\\
4\lambda^2-((2\sk[q]+\sk[xy])\lambda^2\pm\sk[xy]\lambda(\Vx+(-1)^i\Vy)-2(-1)^i(\sk[q]-\sk[xy])\Vx\Vy)\geqslant0.\label{eq:twis7}
\end{gather} 
\end{subequations}
for $i=0,1$.
We begin by the case $\sk[q]=\sk[xy]$ of a BGK scheme corresponding to the proposition \ref{th:stlitrtisbgk}. The four inequations (\ref{eq:twis1}) to (\ref{eq:twis7}) become
\begin{gather*}
\lambda\sk[q](\lambda\pm(\Vx+(-1)^i\Vy))\geqslant0,\\
\sk[q](\lambda\pm(\Vx+(-1)^i\Vy))\geqslant0,\\
\sk[q](\lambda\pm(\Vy+(-1)^i\Vx))\geqslant0,\\
4\lambda^2-\sk[q](3\lambda^2\pm\lambda(\Vx+(-1)^i\Vy)\geqslant0.
\end{gather*}
The case $\sk[q]\leq0$ is impossible because no velocity $\vectV$ would verify these four inequalities requiring $\Vx\pm\Vy$ to be simultaneously greater than $\lambda$ and inferior to $-\lambda$. Then $\sk[q]\geqslant0$ that leads to $$\normu\leq\min(\lambda,\lambda(\frac{4}{\sk[q]}-3)),$$
that gives exactly the proposition \ref{th:stlitrtisbgk} according to the choice of $\sk[q]$.
 If $\sk[xy]\neq\sk[q]$, we can write the inequalities (\ref{eq:twis1}) to (\ref{eq:twis7}) thanks to $\gu[1]$ and $\gu[2]$. If $\sk[xy]<\sk[q]$, we obtain 
\begin{subequations}
\begin{gather}
\lambda\frac{\gu}{2}(\lambda\pm(\Vx+(-1)^i\Vy))+(-1)^i\Vx\Vy\geqslant0,\quad i=0,1,\label{eq:twis21}\\
\lambda^2\frac{\gu[2]}{2}\pm\frac{\lambda}{2}(\gu[2]\Vx+(-1)^i\gu\Vy)+(-1)^i\Vx\Vy\geqslant0,\quad i=0,1,\label{eq:twis23}\\
\lambda^2\frac{\gu[2]}{2}\pm\frac{\lambda}{2}(\gu[2]\Vy+(-1)^i\gu\Vx)+(-1)^i\Vx\Vy\geqslant0,\quad i=0,1,\label{eq:twis24}\\
\frac{4-(2\sk[q]+\sk[xy])}{2(\sk[q]-\sk[xy])}\lambda^2\pm\frac{\gu[2]}{2}\lambda(\Vx+(-1)^i\Vy)+(-1)^i\Vx\Vy\geqslant0,\quad i=0,1.\label{eq:twis27}
\end{gather} 
\end{subequations}

We do the following change of variable $\VVx=\Vx+\Vy$, $\VVy=\Vx-\Vy$. Noticing that $\gu[1]-\gu[2]=2$, the inequalities (\ref{eq:twis21}) to (\ref{eq:twis27}) are equivalent to 
\begin{subequations}
\begin{gather*}
2\lambda\gu(\lambda\pm\VVx)+(\VVx)^2-(\VVy)^2\geqslant0,\label{eq:twis31}\\
2\lambda^2\gu[2]\pm2\lambda(\frac{\gu[1]+\gu[2]}{2}\VVx-\VVy)+(\VVx)^2-(\VVy)^2\geqslant0,\label{eq:twis32}\\
2\lambda^2\gu[2]\pm2\lambda(\frac{\gu[1]+\gu[2]}{2}\VVx+\VVy)+(\VVx)^2-(\VVy)^2\geqslant0,\label{eq:twis33}\\
\frac{8-(4\sk[q]+2\sk[xy])}{\sk[q]-\sk[xy]}\lambda^2\pm2\gu[2]\lambda\VVx+(\VVx)^2-(\VVy)^2\geqslant0,\label{eq:twis37}
\end{gather*} 
\end{subequations}
for $i=0$ in addition to the ones obtained exchanging $\VVx$ and $\VVy$ corresponding to $i=1$. We center these inequalities to get the identities from (\ref{eq:hypisgeq1}) to (\ref{eq:hypisgeq4}). The case $\sk[q]\leq\sk[xy]$ is obtained by changing the sense of the inequalities. It is characterized by the inequalities from (\ref{eq:hypisleq1}) to (\ref{eq:hypisleq4}).

 We begin by treating the case $\sk[q]<\sk[xy]$ corresponding to the inequalities from (\ref{eq:hypisleq1}) to (\ref{eq:hypisleq4}). The case $\sk[q]>\sk[xy]$ will be considered further. The reasoning first eliminates the couples $(\sk[q],\sk[xy])$ leading to no stable velocity and then concentrates on stable areas. First, we assume that $\sk[xy]>\max(0,2\sk[q])$ and show that there is no stable velocity $\vectV$. Since $\sk[xy]>\sk[q]$, we have $\gu[1]\gu[2]\leq0$ and the equation (\ref{eq:hypisleq1}) reads 
$$(\VVy)^2-(\VVx\pm\lambda\gu[1])^2\geqslant -\lambda^2\gu[1]\gu[2],$$
plus the two ones obtained exchanging $\VVx$ and $\VVy$. The intersection of these four hyperbolic areas is empty.

Secondly we eliminate the case $\sk[q]<\sk[xy]<0$. In this situation, $\gu[1]\gu[2]$ is nonnegative ($\gu[1]$ and $\gu[2]$ are both nonnegative) and (\ref{eq:hypisleq1}) has a solution if and only if the abscissa of the right pole of the hyperbole centered in $-\lambda\gu[1]$ is nonnegative. This reads
$$-\lambda\gu[1]+\lambda(\gu[1]\gu[2])^{\frac{1}{2}}\geqslant0\iff\gu[1]\leq\gu[2]\iff\sk[xy]\leq\sk[q].$$
The last inequality contradicts our hypothesis and there is no stable velocity when $\sk[q]<\sk[xy]<0$.

It remains to treat the case $\sk[q]<\sk[xy]\leq2\sk[q]$ for which $\gu[1]\gu[2]\geqslant0$.  The equation (\ref{eq:hypisleq1}) has some solutions $\vectV$ if and only if the abscissa of the right pole $P$ of the hyperbole centered in $\lambda\gu[1]$ is nonnegative
\begin{equation}\label{eq:absP}
\lambda\gu[1]+\lambda(\gu[1]\gu[2])^{\frac{1}{2}}\geqslant0.
\end{equation}

\begin{figure}
\begin{center}
\definecolor{ffqqqq}{rgb}{1,0,0}
\definecolor{qqqqff}{rgb}{0,0,1}
\begin{tikzpicture}[line cap=round,line join=round,>=triangle 45,x=1.0cm,y=1.0cm]
\draw[->,color=black] (-1,0) -- (4,0);
\foreach \x in {,2}
\draw[shift={(\x,0)},color=black] (0pt,2pt) -- (0pt,-2pt) node[below] {\footnotesize $\x$};
\draw[->,color=black] (0,-1) -- (0,6);
\foreach \y in {,2,4}
\draw[shift={(0,\y)},color=black] (2pt,0pt) -- (-2pt,0pt) node[left] {\footnotesize $\y$};
\draw[color=black] (0pt,-10pt) node[right] {\footnotesize $0$};
\clip(-1,-1) rectangle (4,6);
\draw[smooth,samples=100,domain=-1.0:4.0] plot(\x,{(\x)});
\draw [samples=50,domain=-0.99:0.99,rotate around={79.1:(1.6,2.4)},xshift=1.6cm,yshift=2.4cm,color=black] plot ({1.64*(1+\x*\x)/(1-\x*\x)},{0.87*2*\x/(1-\x*\x)});
\draw [samples=50,domain=-0.99:0.99,rotate around={79.1:(1.6,2.4)},xshift=1.6cm,yshift=2.4cm,color=black] plot ({1.64*(-1-\x*\x)/(1-\x*\x)},{0.87*(-2)*\x/(1-\x*\x)});
\draw[color=black, smooth,samples=100,domain=-1.0:4.0] plot(\x,{4-2*(\x)});
\begin{scriptsize}
\draw (3.6,-0.05) node[anchor=north west] {$\sk[q]$};
\draw (-0.7,6.) node[anchor=north west] {$\sk[xy]$};
\end{scriptsize}
\end{tikzpicture}
\end{center}
\caption[]{Hyperbole $4\varp{s}{q}{2}-2\sk[q]\sk[xy]-\varp{s}{xy}{2}+8(\sk[xy]-\sk[q])=0,$ and straight lines $\sk[xy]=2-\sk[q]$ and $\sk[xy]=\sk[q]$.} 
\label{fig:hyps_dhtwis}
\end{figure}

\begin{figure}
\begin{center}
\definecolor{qqffff}{rgb}{0,1,1}
\definecolor{qqttff}{rgb}{0,0.2,1}
\definecolor{qqccqq}{rgb}{0,0.8,0}
\definecolor{uququq}{rgb}{0.25,0.25,0.25}
\definecolor{ffqqtt}{rgb}{1,0,0.2}
\definecolor{qqqqff}{rgb}{0,0,1}
\definecolor{xdxdff}{rgb}{0.49,0.49,1}
\begin{tikzpicture}[scale=1.,line cap=round,line join=round,>=triangle 45,x=1.0cm,y=1.0cm]
\draw[->,color=black] (-3,0) -- (3,0);
\draw[shift={(2,0)},color=black] (0pt,2pt) -- (0pt,-2pt) node[below] {\footnotesize $-\lambda\gu[1]$};
\draw[shift={(-2,0)},color=black] (0pt,2pt) -- (0pt,-2pt) node[below] {\footnotesize $\lambda\gu[1]$};
\draw[->,color=black] (0,-3) -- (0,3);
\draw[shift={(0,2)},color=black] (2pt,0pt) -- (-2pt,0pt) node[left] {\footnotesize $-\lambda\gu[1]$};
\draw[shift={(0,-2)},color=black] (2pt,0pt) -- (-2pt,0pt) node[left] {\footnotesize $\lambda\gu[1]$};
\draw[color=black] (0pt,-10pt) node[right] {\footnotesize $0$};
\clip(-3,-3) rectangle (3,3);
\draw [domain=-6:6] plot(\x,{(--4--2*\x)/2});
\draw [domain=-6:6] plot(\x,{(-4--2*\x)/-2});
\draw [domain=-6:6] plot(\x,{(-4--2*\x)/2});
\draw [domain=-6:6] plot(\x,{(-4-2*\x)/2});
\draw [samples=50,domain=-0.99:0.99,rotate around={0:(-2,0)},xshift=-2cm,yshift=0cm,color=black] plot ({2.64*(1+\x*\x)/(1-\x*\x)},{2.6*2*\x/(1-\x*\x)});
\draw [samples=50,domain=-0.99:0.99,rotate around={0:(-2,0)},xshift=-2cm,yshift=0cm,color=black] plot ({2.64*(-1-\x*\x)/(1-\x*\x)},{2.6*(-2)*\x/(1-\x*\x)});
\draw [samples=50,domain=-0.99:0.99,rotate around={180:(2,0)},xshift=2cm,yshift=0cm,color=black] plot ({2.64*(1+\x*\x)/(1-\x*\x)},{2.6*2*\x/(1-\x*\x)});
\draw [samples=50,domain=-0.99:0.99,rotate around={180:(2,0)},xshift=2cm,yshift=0cm,color=black] plot ({2.64*(-1-\x*\x)/(1-\x*\x)},{2.6*(-2)*\x/(1-\x*\x)});
\draw [samples=50,domain=-0.99:0.99,rotate around={-90:(0,2)},xshift=0cm,yshift=2cm,color=black] plot ({2.64*(1+\x*\x)/(1-\x*\x)},{2.6*2*\x/(1-\x*\x)});
\draw [samples=50,domain=-0.99:0.99,rotate around={-90:(0,2)},xshift=0cm,yshift=2cm,color=black] plot ({2.64*(-1-\x*\x)/(1-\x*\x)},{2.6*(-2)*\x/(1-\x*\x)});
\draw [samples=50,domain=-0.99:0.99,rotate around={90:(0,-2)},xshift=0cm,yshift=-2cm,color=black] plot ({2.64*(1+\x*\x)/(1-\x*\x)},{2.6*2*\x/(1-\x*\x)});
\draw [samples=50,domain=-0.99:0.99,rotate around={90:(0,-2)},xshift=0cm,yshift=-2cm,color=black] plot ({2.64*(-1-\x*\x)/(1-\x*\x)},{2.6*(-2)*\x/(1-\x*\x)});
\begin{scriptsize}
\fill [color=uququq] (0.63,0) circle (1.5pt);
\draw[color=uququq] (0.84,0.38) node {$P$};
\fill [color=uququq] (0.74,0.74) circle (1.5pt);
\draw[color=uququq] (0.95,1.12) node {$A$};
\fill [color=uququq] (0.74,-0.74) circle (1.5pt);
\draw[color=uququq] (0.92,-0.35) node {$B$};
\fill [color=uququq] (-0.74,-0.74) circle (1.5pt);
\draw[color=uququq] (-0.5,-0.35) node {$C$};
\fill [color=uququq] (-0.74,0.74) circle (1.5pt);
\draw[color=uququq] (-0.5,1.12) node {$D$};
\draw (2.5,-0.05) node[anchor=north west] {$\VVx$};
\draw (-0.65,3.) node[anchor=north west] {$\VVy$};
\end{scriptsize}
\end{tikzpicture}
\end{center}
\caption{Layout of the bounds of the $\Li$ stability areas corresponding to (\ref{eq:hypisleq1}) and (\ref{eq:hypisleq4}) for the twisted $\ddqqn$ scheme relative to $\utilde=\vectV$ with an intrinsic diffusion.} 
\label{fig:zonehyp_rectwis1}
\end{figure}

The area satisfying these inequations corresponds to the points $A,B,C,D$ on the figure \ref{fig:zonehyp_rectwis1}. It increases as the abscissa of $P$ increases.
The inequality (\ref{eq:absP}) is equivalent to $\gu[1]\leq0$, that is verified since $\sk[q]<\sk[xy]\leq2\sk[q]$.
 
For the equation (\ref{eq:hypisleq4}), there are two cases. Hence $$4\varp{s}{q}{2}-2\sk[q]\sk[xy]-\varp{s}{xy}{2}+8(\sk[xy]-\sk[q])\leq0,$$ then the reasoning is the same as the one made in the case $\sk[xy]>\max(0,\sk[q])$ to show that there is no stable velocity. Hence $$4\varp{s}{q}{2}-2\sk[q]\sk[xy]-\varp{s}{xy}{2}+8(\sk[xy]-\sk[q])\geqslant0,$$ then the abscissa of the right pole of the hyperbole centered in $\lambda\gu[2]$ must be nonnegative:
$$\lambda\gu[2]+\frac{\lambda}{|\sk[q]-\sk[xy]|}(4\varp{s}{q}{2}-2\sk[q]\sk[xy]-\varp{s}{xy}{2}+8(\sk[xy]-\sk[q]))^{\frac{1}{2}}\geqslant0.$$
The parameter $\gu[2]$ being negative, this is equivalent to $$\frac{1}{(\sk[q]-\sk[xy])^2}(4\varp{s}{q}{2}-2\sk[q]\sk[xy]-\varp{s}{xy}{2}+8(\sk[xy]-\sk[q]))\geqslant\varp{\gamma}{2}{2},$$
and so to $$(\sk[q]-\sk[xy])(4\sk[q]+2\sk[xy]-8)\geqslant0\iff\sk[xy]\leq2(2-\sk[q]),$$
since $\sk[q]\leq\sk[xy]$.

We now summarize the results we have obtained for $\sk[q]<\sk[xy]$. We have proven that if  $\sk[q]<\sk[xy]$, the inequalities (\ref{eq:hypisleq1}) and (\ref{eq:hypisleq4}) have solutions for if $\sk[xy]\leqslant\min(2\sk[q],2(2-\sk[q]))$. It remains to prove that (\ref{eq:hypisleq2}) and (\ref{eq:hypisleq3}) have also solutions in this case.

The conditions (\ref{eq:hypisleq2}) and (\ref{eq:hypisleq3}) lead to a non empty area in $\vectV$ since the intersection $I$ (figure \ref{fig:zonehyp_rectwis2}) between the hyperbole given by (\ref{eq:hypisleq2}) centered in $(-\lambda(\gu[1]+\gu[2])/2,\lambda)$ and the straight line $\VVy=0$ has a negative abscissa. The same condition on the intersection of the hyperbole centered in $(-\lambda(\gu[1]+\gu[2])/2,-\lambda)$ and $\VVy=0$ is requested for the equation (\ref{eq:hypisleq3}). We focus on the hyperbole centered in $(-\lambda(\gu[1]+\gu[2])/2,\lambda)$ associated with (\ref{eq:hypisleq2}), because the one associated with (\ref{eq:hypisleq3}) leads to the same result. The intersections of the hyperbole of equation
$$(\VVx+\lambda\frac{\gu[1]+\gu[2]}{2})^2-(\VVy-\lambda)^2=\lambda^2\varp{\gamma}{2}{2},$$
with $\VVy=0$ verify $$(\VVx+\lambda\frac{\gu[1]+\gu[2]}{2})^2-\lambda^2=\lambda^2\varp{\gamma}{2}{2}.$$
Their abscissas are then given by
$$-\lambda\frac{\gu[1]+\gu[2]}{2}\pm\lambda(\varp{\gamma}{2}{2}+1)^{\frac{1}{2}}.$$

\begin{figure}
\begin{center}
\definecolor{qqffff}{rgb}{0,1,1}
\definecolor{qqttff}{rgb}{0,0.2,1}
\definecolor{ttzzqq}{rgb}{0.2,0.6,0}
\definecolor{uququq}{rgb}{0.25,0.25,0.25}
\definecolor{ffqqtt}{rgb}{1,0,0.2}
\definecolor{qqqqff}{rgb}{0,0,1}
\definecolor{xdxdff}{rgb}{0.49,0.49,1}
\definecolor{qqffff}{rgb}{0,1,1}
\definecolor{qqttff}{rgb}{0,0.2,1}
\definecolor{ffqqtt}{rgb}{1,0,0.2}
\definecolor{ttccqq}{rgb}{0.2,0.8,0}
\definecolor{qqqqff}{rgb}{0,0,1}
\definecolor{uququq}{rgb}{0.25,0.25,0.25}
\definecolor{xdxdff}{rgb}{0.49,0.49,1}
\begin{tikzpicture}[scale=0.5,line cap=round,line join=round,>=triangle 45,x=1.0cm,y=1.0cm]
\draw[->,color=black] (-8,0) -- (8,0);
\draw[->,color=black] (0,-8) -- (0,8);
\draw[shift={(2,0)},color=black] (0pt,2pt) -- (0pt,-2pt) node[below] {\footnotesize $-\lambda\frac{\gu[1]+\gu[2]}{2}$};
\draw[->,color=black] (0,-6) -- (0,6);
\draw[shift={(0,2)},color=black] (2pt,0pt) -- (-2pt,0pt) node[left] {\footnotesize $\lambda$};
\draw[shift={(0,-2)},color=black] (2pt,0pt) -- (-2pt,0pt) node[left] {\footnotesize $-\lambda$};
\draw[color=black] (0pt,-10pt) node[right] {\footnotesize $0$};
\clip(-8,-8) rectangle (8,8);
\draw [domain=-8:8] plot(\x,{(--16-4*\x)/4});
\draw [domain=-8:8] plot(\x,{(-0--2*\x)/2});
\draw [domain=-8:8] plot(\x,{(-16-4*\x)/4});
\draw [samples=50,domain=-0.99:0.99,rotate around={0:(2,2)},xshift=2cm,yshift=2cm,color=black] plot ({2.83*(1+\x*\x)/(1-\x*\x)},{2.83*2*\x/(1-\x*\x)});
\draw [samples=50,domain=-0.99:0.99,rotate around={0:(2,2)},xshift=2cm,yshift=2cm,color=black] plot ({2.83*(-1-\x*\x)/(1-\x*\x)},{2.83*(-2)*\x/(1-\x*\x)});
\draw [samples=50,domain=-0.99:0.99,rotate around={-180:(-2,-2)},xshift=-2cm,yshift=-2cm,color=black] plot ({2.83*(1+\x*\x)/(1-\x*\x)},{2.83*2*\x/(1-\x*\x)});
\draw [samples=50,domain=-0.99:0.99,rotate around={-180:(-2,-2)},xshift=-2cm,yshift=-2cm,color=black] plot ({2.83*(-1-\x*\x)/(1-\x*\x)},{2.83*(-2)*\x/(1-\x*\x)});
\draw [samples=50,domain=-0.99:0.99,rotate around={90:(-2,2)},xshift=-2cm,yshift=2cm,color=black] plot ({2.83*(1+\x*\x)/(1-\x*\x)},{2.83*2*\x/(1-\x*\x)});
\draw [samples=50,domain=-0.99:0.99,rotate around={90:(-2,2)},xshift=-2cm,yshift=2cm,color=black] plot ({2.83*(-1-\x*\x)/(1-\x*\x)},{2.83*(-2)*\x/(1-\x*\x)});
\draw [samples=50,domain=-0.99:0.99,rotate around={-90:(2,-2)},xshift=2cm,yshift=-2cm,color=black] plot ({2.83*(1+\x*\x)/(1-\x*\x)},{2.83*2*\x/(1-\x*\x)});
\draw [samples=50,domain=-0.99:0.99,rotate around={-90:(2,-2)},xshift=2cm,yshift=-2cm,color=black] plot ({2.83*(-1-\x*\x)/(1-\x*\x)},{2.83*(-2)*\x/(1-\x*\x)});
\draw [domain=-8:8] plot(\x,{(-16-4*\x)/-4});
\draw [domain=-8:8] plot(\x,{(-0--2*\x)/-2});
\draw [domain=-8:8] plot(\x,{(--16-4*\x)/-4});
\begin{scriptsize}
\fill [color=black] (-1.5,0.) circle (1.5pt);
\draw[color=black] (-1.5,0.3) node {$I$};
\fill [color=black] (-0.83,2) circle (1.5pt);
\draw[color=black] (-0.61,2.47) node {$A$};
\fill [color=black] (2,0.83) circle (1.5pt);
\draw[color=black] (2.,1.31) node {$B$};
\fill [color=black] (0.83,-2) circle (1.5pt);
\draw[color=black] (1.07,-1.67) node {$C$};
\fill [color=black] (-2,-0.83) circle (1.5pt);
\draw[color=black] (-2.15,-0.5) node {$D$};
\draw (7.,-0.05) node[anchor=north west] {$\VVx$};
\draw (-1.2,8.) node[anchor=north west] {$\VVy$};
\end{scriptsize}
\end{tikzpicture}
\end{center}
\caption{Layout of the bounds of the $\Li$ stability areas corresponding to (\ref{eq:hypisleq2}) and (\ref{eq:hypisleq3}) for the twisted $\ddqqn$ scheme relative to $\utilde=\vectV$ with an intrinsic diffusion.} 
\label{fig:zonehyp_rectwis2}
\end{figure}

We check when the abscissa of the point $I$ of the figure \ref{fig:zonehyp_rectwis2} is negative
  $$-\lambda\frac{\gu[1]+\gu[2]}{2}-\lambda(\varp{\gamma}{2}{2}+1)^{\frac{1}{2}}\leq0\iff\Big(\frac{\gu[1]+\gu[2]}{2}\Big)^2\leq \varp{\gamma}{2}{2}+1\iff \varp{s}{q}{2}\leq \varp{s}{xy}{2}+(\sk[q]-\sk[xy])^2.$$
This is equivalent to $\sk[xy](\sk[xy]-\sk[q])\geqslant0$, that is valid because $\sk[xy]\leq\sk[q]$.

We now focus on the case $\sk[xy]<\sk[q]$ corresponding to the inequalities from (\ref{eq:hypisgeq1}) to (\ref{eq:hypisgeq4}). Let's begin by eliminating the case $2\sk[q]\leq\sk[xy]<\sk[q]$. In this case (\ref{eq:hypisgeq1}) has solutions if and only if both poles of the hyperbole centered in $-\lambda\gu[1]$ ($\gu[1]\leq0$) have nonnegative abscissas that reads
$$-\lambda(\gu[1]+(\gu[1]\gu[2])^{\frac{1}{2}})\geqslant0.$$
Because of the negativity of $\gu[1]$, this is equivalent to $\sk[q]\leq\sk[xy]$ that contradicts our hypothesis.

Second, we eliminate the area $2(2-\sk[q])\leq\sk[xy]<\sk[q]$. This is done showing that there is no solution to (\ref{eq:hypisgeq4}). In this area, we have $$4\varp{s}{q}{2}-2\sk[q]\sk[xy]-\varp{s}{xy}{2}+8(\sk[xy]-\sk[q])\geqslant0,$$
(figure \ref{fig:hyps_dhtwis}) so that (\ref{eq:hypisgeq4}) has a solution if and only if the abscissas of both poles of the hyperbole centered in $\lambda|\gu[2]|$ are nonnegative. This reads $$\lambda|\gu[2]|-\frac{\lambda}{|\sk[q]-\sk[xy]|}(4\varp{s}{q}{2}-2\sk[q]\sk[xy]-\varp{s}{xy}{2}+8(\sk[xy]-\sk[q]))^{\frac{1}{2}}\geqslant0,$$ 
that is equivalent to $\sk[xy]\leq2(2-\sk[q])$. This contradicts our hypothesis.

It remains to eliminate the area $\sk[xy]<\min(0,\sk[q],2(2-\sk[q]))$ corresponding to $\gu[1]\geqslant0$ and $\gu[2]\leq0$. If $\sk[q]\leq0$, then $\frac{\gu[1]+\gu[2]}{2}\leq0$ and the inequality (\ref{eq:hypisgeq2}) is non empty if the abscissas of the intersections of the hyperbole centered in $(-\lambda(\gu[1]+\gu[2])/2,\lambda)$ with $\VVy=0$ are both nonnegative. This is equivalent to $$-\lambda\frac{\gu[1]+\gu[2]}{2}-\lambda(\varp{\gamma}{2}{2}+1)^{\frac{1}{2}}\geqslant0\iff\Big(\frac{\gu[1]+\gu[2]}{2}\Big)^2\geqslant \varp{\gamma}{2}{2}+1\iff \varp{s}{q}{2}\geqslant \varp{s}{xy}{2}+(\sk[q]-\sk[xy])^2.$$
This is equivalent to $\sk[xy](\sk[xy]-\sk[q])\leq0$, that is verified for $\sk[xy]\geqslant\sk[q]$ since $\sk[xy]<0$. This enters in contradiction with our hypothesis. If $\sk[q]>0$, the reasoning is symmetric with respect to the ordinates axis ($\VVx=0$) and leads to the same contradiction.

To close the proof, it remains to guarantee the existence of at least one velocity stable in the triangle defined by $0\leq\sk[xy]\leq\min(\sk[q],2(2-\sk[q]))$.
 The equation (\ref{eq:hypisgeq1}) has a non empty stability area if and only if the two poles of the hyperbole centered in $\lambda\gu[1]$ have a nonnegative abscissa: this case, illustrated by the figure \ref{fig:zonehyp_dhtw}, is equivalent to $$\lambda(\gu[1]-(\gu[1]\gu[2])^{\frac{1}{2}})\geqslant0.$$ 
This inequality is equivalent to $\gu[1]\geqslant0$, that is true for $\sk[xy]\leq\sk[q]$.

For the equation (\ref{eq:hypisgeq4}), we distinguish two cases: the first is 
$$4\varp{s}{q}{2}-2\sk[q]\sk[xy]-\varp{s}{xy}{2}+8(\sk[xy]-\sk[q])\geqslant0.$$
There is a non empty stability area if and only if the two poles of the hyperbole centered in $\lambda\gu[2]$ have nonnegative abscissas (represented on the figure \ref{fig:zonehyp_dhtw}): this reads
$$\lambda\gu[2]-\frac{\lambda}{|\sk[q]-\sk[xy]|}(4\varp{s}{q}{2}-2\sk[q]\sk[xy]-\varp{s}{xy}{2}+8(\sk[xy]-\sk[q]))^{\frac{1}{2}}\geqslant0.$$ 
This inequality is equivalent to $\sk[xy]\leq2(2-\sk[q])$. Otherwise, $$4\varp{s}{q}{2}-2\sk[q]\sk[xy]-\varp{s}{xy}{2}+8(\sk[xy]-\sk[q])\leq0,$$
and the equation (\ref{eq:hypisgeq4}) reads 
\begin{equation*}
(\VVy)^2-(\VVx\pm\lambda\gu[2])^2\leq-\frac{\lambda^2}{(\sk[q]-\sk[xy])^2}(4\varp{s}{q}{2}-2\sk[q]\sk[xy]-\varp{s}{xy}{2}+8(\sk[xy]-\sk[q])).
\end{equation*}

This inequation has always solutions: the area of stability in $\vectV$ is analogous to the one represented on the figure \ref{fig:zonehyp_rectw}.

The inequations (\ref{eq:hypisgeq2}) and (\ref{eq:hypisgeq3}), illustrated on the figure \ref{fig:zonehyp_rectwis3}, have solutions since the two intersections of the hyperbole centered in $(\lambda(\gu[1]+\gu[2])/2,-\lambda)$ with $\VVy=0$ have nonnegative abscissas
$$\lambda\frac{\gu[1]+\gu[2]}{2}-\lambda(\varp{\gamma}{2}{2}+1)^{\frac{1}{2}}\geqslant0\iff\Big(\frac{\gu[1]+\gu[2]}{2}\Big)^2\geqslant \varp{\gamma}{2}{2}+1\iff \varp{s}{q}{2}\geqslant \varp{s}{xy}{2}+(\sk[q]-\sk[xy])^2.$$
This is equivalent to $\sk[xy](\sk[xy]-\sk[q])\leq0$, that is verified for $\sk[xy]\leq\sk[q]$.
\begin{figure}
\begin{center}
\definecolor{qqffff}{rgb}{0,1,1}
\definecolor{ffqqtt}{rgb}{1,0,0.2}
\definecolor{ttccqq}{rgb}{0.2,0.8,0}
\definecolor{uququq}{rgb}{0.25,0.25,0.25}
\definecolor{xdxdff}{rgb}{0.49,0.49,1}
\definecolor{qqqqff}{rgb}{0,0,1}
\begin{tikzpicture}[scale=1.,line cap=round,line join=round,>=triangle 45,x=1.0cm,y=1.0cm]
\draw[->,color=black] (-4,0) -- (4,0);
\draw[->,color=black] (0,-4) -- (0,4);
\draw[shift={(2,0)},color=black] (0pt,2pt) -- (0pt,-2pt) node[below] {\footnotesize $\lambda\frac{\gu[1]+\gu[2]}{2}$};
\draw[shift={(-2,0)},color=black] (0pt,2pt) -- (0pt,-2pt) node[below] {\footnotesize $-\lambda\frac{\gu[1]+\gu[2]}{2}$};
\draw[->,color=black] (0,-4) -- (0,4);
\draw[shift={(0,1)},color=black] (2pt,0pt) -- (-2pt,0pt) node[left] {\footnotesize $\lambda$};
\draw[shift={(0,-1)},color=black] (2pt,0pt) -- (-2pt,0pt) node[left] {\footnotesize $-\lambda$};
\draw[color=black] (0pt,-10pt) node[right] {\footnotesize $0$};
\clip(-4,-4) rectangle (4,4);
\draw [domain=-6:6] plot(\x,{(-6--2*\x)/-2});
\draw [domain=-6:6] plot(\x,{(--1-1*\x)/-1});
\draw [domain=-6:6] plot(\x,{(-6-2*\x)/2});
\draw [domain=-6:6] plot(\x,{(--1--1*\x)/1});
\draw [samples=50,domain=-0.99:0.99,rotate around={0:(2,-1)},xshift=2cm,yshift=-1cm,color=black] plot ({0.71*(1+\x*\x)/(1-\x*\x)},{0.71*2*\x/(1-\x*\x)});
\draw [samples=50,domain=-0.99:0.99,rotate around={0:(2,-1)},xshift=2cm,yshift=-1cm,color=black] plot ({0.71*(-1-\x*\x)/(1-\x*\x)},{0.71*(-2)*\x/(1-\x*\x)});
\draw [samples=50,domain=-0.99:0.99,rotate around={-180:(-2,1)},xshift=-2cm,yshift=1cm,color=black] plot ({0.71*(1+\x*\x)/(1-\x*\x)},{0.71*2*\x/(1-\x*\x)});
\draw [samples=50,domain=-0.99:0.99,rotate around={-180:(-2,1)},xshift=-2cm,yshift=1cm,color=black] plot ({0.71*(-1-\x*\x)/(1-\x*\x)},{0.71*(-2)*\x/(1-\x*\x)});
\draw [samples=50,domain=-0.99:0.99,rotate around={90:(1,2)},xshift=1cm,yshift=2cm,color=black] plot ({0.71*(1+\x*\x)/(1-\x*\x)},{0.71*2*\x/(1-\x*\x)});
\draw [samples=50,domain=-0.99:0.99,rotate around={90:(1,2)},xshift=1cm,yshift=2cm,color=black] plot ({0.71*(-1-\x*\x)/(1-\x*\x)},{0.71*(-2)*\x/(1-\x*\x)});
\draw [domain=-6:6] plot(\x,{(--2-2*\x)/2});
\draw [domain=-6:6] plot(\x,{(--6.37--2.1*\x)/2.13});
\draw [samples=50,domain=-0.99:0.99,rotate around={-90:(-1,-2)},xshift=-1cm,yshift=-2cm,color=black] plot ({0.71*(1+\x*\x)/(1-\x*\x)},{0.71*2*\x/(1-\x*\x)});
\draw [samples=50,domain=-0.99:0.99,rotate around={-90:(-1,-2)},xshift=-1cm,yshift=-2cm,color=black] plot ({0.71*(-1-\x*\x)/(1-\x*\x)},{0.71*(-2)*\x/(1-\x*\x)});
\draw [domain=-6:6] plot(\x,{(--3-1*\x)/-1});
\draw [domain=-6:6] plot(\x,{(--2--2*\x)/-2});
\begin{scriptsize}
\fill [color=uququq] (0.05,0.81) circle (1.5pt);
\draw[color=uququq] (0.06,1.09) node {$A$};
\fill [color=uququq] (0.81,-0.05) circle (1.5pt);
\draw[color=uququq] (0.9,0.34) node {$B$};
\fill [color=uququq] (-0.81,0.05) circle (1.5pt);
\draw[color=uququq] (-0.86,0.29) node {$C$};
\fill [color=uququq] (-0.05,-0.81) circle (1.5pt);
\draw[color=uququq] (0.25,-0.72) node {$D$};
\draw (3.5,-0.05) node[anchor=north west] {$\VVx$};
\draw (-0.6,4.) node[anchor=north west] {$\VVy$};
\end{scriptsize}
\end{tikzpicture}
\end{center}
\caption{Layout of the bounds of the $\Li$ stability areas corresponding to (\ref{eq:hypisgeq2}) and (\ref{eq:hypisgeq3}) for the twisted scheme relative to $\utilde=\vectV$ with an intrinsic diffusion.} 
\label{fig:zonehyp_rectwis3}
\end{figure}
\end{appendix}

\bibliographystyle{plain}
\bibliography{Bibliographie}

\begin{thebibliography}{10}

\bibitem{Asi:2008:0}
P.~Asinari.
\newblock {G}eneralized local equilibrium in the {C}ascaded {L}attice
  {B}oltzmann method.
\newblock {\em Phys.Rev}, 519, 2008.

\bibitem{Augier:2014:0}
A.~Augier, F.~Dubois, B.~Graille, and P.~Lallemand.
\newblock On rotational invariance of lattice {B}oltzmann schemes.
\newblock {\em Computers and {M}athematics with Applications}, 67(2):239--255,
  2014.

\bibitem{Yong:2006:0}
M.~K. Banda, W.-A. Yong, and A.~Klar.
\newblock A {S}tability {N}otion for {L}attice {B}oltzmann {E}quations.
\newblock {\em SIAM J. Sci. Comput.}, 27(6):2098--2111, 2006.

\bibitem{BGK:1954:0}
P.~L. Bhatnagar, E.~P. Gross, and M.~Krook.
\newblock The {L}attice {B}oltzmann equation: theory and applications.
\newblock {\em Physics Reports}, (94):145--197, 1954.

\bibitem{Chen:1998:0}
S.~Chen and G.~D. Doolen.
\newblock Lattice {B}oltzmann {M}ethod for fluid flows.
\newblock {\em Annu. Rev. Fluid Mech.}, 30,~329--364, 1998.

\bibitem{Dell:2013:0}
S.~Dellacherie.
\newblock Construction and {A}nalysis of {L}attice {B}oltzmann {M}ethods
  {A}pplied to 1{D} {C}onvection-{D}iffusion {E}quation.
\newblock {\em Acta. Appl. Math}, 2013.

\bibitem{Dellar:2002:0}
P.~J. Dellar.
\newblock Lattice kinetic schemes for magnetohydrodynamics.
\newblock {\em J. Comput. Phys.}, 179(1):95--126, 2002.

\bibitem{dHu:1992:0}
D.~d'Humi\`eres.
\newblock Generalized {L}attice-{B}oltzmann equations.
\newblock In {\em {Rarefied Gas Dynamics: Theory and simulation}}, volume 159,
  pages 450--458. AIAA Progress in astronomics and aeronautics, 1992.

\bibitem{Qian:1992:0}
D.~d'Humi\`eres, Y.~H. Qian, and P.~Lallemand.
\newblock Lattice {BGK} models for {N}avier-{S}tokes equation.
\newblock {\em EPL (Europhysics Letters)}, 17(6):479, 1992.

\bibitem{Dub:2008:0}
F.~Dubois.
\newblock {E}quivalent partial differential equations of a lattice {B}oltzmann
  scheme.
\newblock {\em Computers and {M}athematics with Applications}, 55:1441--1449,
  2008.

\bibitem{Dub:2009:0}
F.~Dubois.
\newblock Third order equivalent equation of {L}attice {B}oltzmann scheme.
\newblock {\em Discrete Contin. Dyn. Syst.}, 23(1-2):221--248, 2009.

\bibitem{Fev:2014:0}
F.~Dubois, T.~F{\'e}vrier, and B.~Graille.
\newblock Lattice {B}oltzmann schemes with relative velocities.
\newblock {\em Accepted in Communications in {C}omputational {P}hysics}.

\bibitem{Fev:2014:3}
F.~Dubois, T.~F{\'e}vrier, and B.~Graille.
\newblock On the stability of a relative velocity lattice {B}oltzmann scheme
  for compressible {N}avier-{S}tokes equations.
\newblock {\em Submitted to C.R. m{\'e}canique}.

\bibitem{Fev:2014:1}
T.~F{\'e}vrier.
\newblock Extension et analyse des sch{\'e}mas de {B}oltzmann sur r{\'e}seau :
  les sch{\'e}mas {\`a} vitesse relative.
\newblock {\em Ph{D} thesis, Univ. Orsay}, 2014.

\bibitem{Geier:2006:1}
M.~Geier.
\newblock Ab initio derivation of the cascaded {L}attice {B}oltzmann
  {A}utomaton.
\newblock {\em Ph{D} thesis, Univ. Freiburg}, 2006.

\bibitem{Geier:2006:0}
M.~Geier, A.~Greiner, and J.~C. Korvink.
\newblock Cascaded digital {L}attice {B}oltzmann automata for high {R}eynolds
  number flow.
\newblock {\em Phys.Rev. E}, 73,~066705, 2006.

\bibitem{Ginz:2010:0}
I.~Ginzburg, D.~d'Humi\`eres, and A.~Kuzmin.
\newblock Optimal {S}tability of {A}dvection-{D}iffusion {L}attice {B}oltzmann
  {M}odels with {T}wo {R}elaxation {T}imes for {P}ositive/{N}egative
  {E}quilibrium.
\newblock {\em J. Stat. Phys}, 139:1090--1143, 2010.

\bibitem{Gra:2014:0}
B.~Graille.
\newblock Approximation of mono-dimensional hyperbolic systems : {A} lattice
  {B}oltzmann scheme as a relaxation method.
\newblock {\em J. Comp. Phys.}, 266(3179757):74--88, 2014.

\bibitem{He:1999:0}
X.~He, S.~Chen, and R.~Zhang.
\newblock A lattice {B}oltzmann scheme for incompressible multiphase flow and
  its application of {R}ayleigh-{T}aylor instability.
\newblock {\em J. Comp. Phys.}, 152:642--663, 1999.

\bibitem{Junk:2005:0}
M.~Junk, A.~Klar, and L.-S. Luo.
\newblock Asymptotic analysis of the lattice {B}oltzmann equation.
\newblock {\em J. Comput. Phys.}, 210:676--704, 2005.

\bibitem{Junk:2009:0}
M.~Junk and W.-A. Yong.
\newblock Weighted ${L}^2$-stability of the lattice {B}oltzmann method.
\newblock {\em SIAM J. Num. Anal.}, 47(3):1651--1665, 2009.

\bibitem{LalLuo:2000:0}
P.~Lallemand and L.-S. Luo.
\newblock {T}heory of the {L}attice {B}oltzmann method: {D}ispersion,
  dissipation, isotropy, {G}alilean invariance and stability.
\newblock {\em Phys.Rev.}, 61:6546, 2000.

\bibitem{LasThe:1993:0}
P.~Lascaux and R.~Theodor.
\newblock {\em Analyse num{\'e}rique matricielle appliqu{\'e}e {\`a} l'art de
  l'ing{\'e}nieur (tome 1 : m{\'e}thodes directes)}.
\newblock Masson, 1993.

\bibitem{Ricot:2009:1}
S.~Mari{\'e}, D.~Ricot, and P.~Sagaut.
\newblock Comparison between {L}attice {B}oltzmann method and {N}avier-{S}tokes
  high order schemes for computational aeroacoustics.
\newblock {\em J. Comput. Phys.}, 228(4):1056--1070, 2009.

\bibitem{Rhein:2010:0}
M.~Rheinl{\"a}nder.
\newblock On the stability structure for lattice {B}oltzmann schemes.
\newblock {\em Computers and {M}athematics with Applications}, 59:2150--2167,
  2010.

\bibitem{Sterling:1996:0}
J.~D. Sterling and S.~Chen.
\newblock Stability {A}nalysis of {L}attice {B}oltzmann methods.
\newblock {\em J. Comp. Phys.}, 123:196--206, 1996.

\bibitem{War:1974:0}
R.~F. Warming and B.~J. Hyett.
\newblock The modified equation approach to the stability and accuracy analysis
  of finite difference methods.
\newblock {\em J. Comp. Physics}, 14:159--179, 1974.

\end{thebibliography}

\end{document}